\DeclareMathAlphabet{\mathpzc}{OT1}{pzc}{m}{it}
\numberwithin{equation}{section}
\begin{document}
 
\title{{\bf  A proof of Sugawara's conjecture on Hasse-Weber ray class invariants}}    
\author{Patrick Morton}

\maketitle

\begin{abstract}
In this paper a proof is given of Sugawara's conjecture from 1936, that the ray class field of conductor $\mathfrak{m} \neq 1$ over an imaginary quadratic field $K$ is generated over $K$ by a single primitive $\mathfrak{m}$-division value of the $\tau$-function, first defined by Weber and then modified by Hasse in his 1927 paper giving a new foundation of the theory of complex multiplication.
\end{abstract}

\section{Introduction.}
\label{sec:1}

Hasse's well-known paper \cite{h1} contains the first complete proof of Kronecker's {\it Jugendtraum} which only uses modular functions of level one and the Weierstrass $\wp$-function.  An earlier, different proof was given by Takagi and Fueter \cite{fue1,fue2, t}, using modular functions of level four which were also defined in terms of the $\wp$-function.  As part of his proof, Hasse shows that the ray class field $\textsf{K}_\mathfrak{m}$ over an imaginary quadratic field $K$ of a given conductor $\mathfrak{m} \neq 1$ is generated over $K$ by $j(\mathfrak{k})$ and $\tau(\mathfrak{k}^*)$:
$$\textsf{K}_\mathfrak{m} = K(j(\mathfrak{k}), \tau(\mathfrak{k}^*));$$
here $\mathfrak{k}^*$ is a ray class in the ray class group of $K$ modulo $\mathfrak{m}$, $\mathfrak{m}\mathfrak{k}^{-1}$ is the ordinary ideal class containing $\mathfrak{k}^*$, $j(w)$ is Klein's $j$-function, and $\tau(\mathfrak{k}^*) = \tau(\rho,\mathfrak{a})$ is Weber's $\tau$-function, for an element $\rho$ and ideals $\mathfrak{r}, \mathfrak{a}$ of $K$ satisfying
$$\rho \cong \frac{\mathfrak{r} \mathfrak{a}}{\mathfrak{m}}, \ \textrm{integral} \ \mathfrak{r} \in \mathfrak{k}^*, \ \mathfrak{a} \in \mathfrak{k},$$
and $(\mathfrak{r}, \mathfrak{m}) = 1$.  (See below and \cite[p. 138]{h1}.)  The value $\tau(\rho, \mathfrak{a})$ is independent of the ideals $\mathfrak{a} \in \mathfrak{k}$ and $\mathfrak{r} \in \mathfrak{k}^*$ and the choice of the generator $\rho$, and therefore depends only on the ray class $\mathfrak{k}^*$.  \medskip

In \cite[p. 572]{w}, Weber defines the $\tau$-function as
\begin{equation*}
\tau(u, \mathfrak{a}) = \begin{cases}
\frac{g_2 g_3}{G} \wp(u), & g_2 g_3 \neq 0,\\
\frac{\wp(u)^2}{g_2}, & g_3 = 0,\\
\frac{\wp(u)^3}{g_3}, & g_2 = 0;
\end{cases}
\end{equation*}
where $\wp(u) = \wp(u, \mathfrak{a})$, $\wp'(u)^2 = 4\wp(u)^3-g_2\wp(u)-g_3$, and 
$$16G = g_2^3-27g_3^2 = \Delta.$$
On the other hand, Hasse, in \cite[p. 127]{h1}, sets
\begin{equation*}
\tau(u, \mathfrak{a}) = \begin{cases}
-2^7 3^5 \frac{g_2 g_3}{\Delta}\cdot \wp(u), & g_2 g_3 \neq 0,\\
\ \ 2^8 3^4 \frac{g_2^2}{\Delta}\cdot \wp(u)^2, & g_3 = 0,\\
\ \ -2^9 3^6 \frac{g_3}{\Delta}\cdot  \wp(u)^3, & g_2 = 0.
\end{cases}
\end{equation*}
See also \cite[p. 34]{d2}.  Except for the constant factors in front, both of these normalizations agree with the one given in \cite[p.135]{si}.  I use Hasse's normalization in this paper, though I am always assuming $g_2 g_3 \neq 0$, i.e., the corresponding quadratic field is not $\mathbb{Q}(\omega)$ ($\omega = (-1+\sqrt{-3})/2$) or $\mathbb{Q}(i)$.  (We can also eliminate quadratic fields $K$ with class number $1$, since the conjecture to be proved is obviously true for them.)  In this case, note that the factor $\lambda$ multiplying $\wp(u)$ satisfies
$$\lambda^6 = \frac{12^6j(\mathfrak{a})^2(j(\mathfrak{a})-1728)^3}{\Delta}, \ \ \lambda = -2^7 3^5 \frac{g_2 g_3}{\Delta}.$$

As has been pointed out in \cite{ksy, jks1}, Hasse asked Hecke whether the ray class field $\textsf{K}_\mathfrak{m}$ of conductor $\mathfrak{m} \neq 1$ could be generated over $K$ by $\tau(\mathfrak{k}^*)$ alone, and mentioned this question also in \cite[p. 87]{h2}.  See the discussion in \cite[pp. 88-91]{flr}, especially on p. 91.  This was conjectured by Sugawara to be the case in his papers \cite{su1, su2}, which give a partial answer to this question.  An equivalent statement was also conjectured by Hasse in \cite[p. 85]{h2}.  Hasse's question and Sugawara's papers are referred to in \cite[p. 60]{d3}.  Also see \cite[p. 132]{r}. \medskip

Sugawara \cite{su1, su2} showed that the answer to Hasse's question is {\it yes}, if one of three conditions holds for the modulus $\mathfrak{m}$:
\begin{align}
\label{eqn:1} & \ \ 4 \mid \mathfrak{m}, \ \ \mathfrak{m} \neq 4.\\
\label{eqn:2} & \ \ \textit{There is a prime divisor}  \ \mathfrak{p} \ \textit{of} \ 2 \ \textit{in} \ K \ \textit{with} \ \mathfrak{p}^2 \mid \mathfrak{m} \ \textit{and} \ \varphi(\mathfrak{m}) \ge 6.\\
\label{eqn:3} & \ \ \Psi(\mathfrak{m}) = N(\mathfrak{m}) \prod_{\mathfrak{p} \mid \mathfrak{m}}{\left(1-\frac{2}{N(\mathfrak{p})}\right)} \ge 5.
\end{align}

In this paper I will complete the proof of Sugawara's conjecture, starting with his conditions ({\ref{eqn:1})-(\ref{eqn:3}).  Progress on this conjecture has also been made by Jung, Koo, Shin and Yoon, who prove in several papers \cite{ksy, jks1} that Sugawara's conjecture is true for ideals $\mathfrak{m} = (n)$, where $n$ is a natural number.  In \cite{ksy, jks1} the methods are analytic, whereas the arguments I give here are algebraic and arithmetic, though they do rely on some results for modular functions in several previous papers.  Also see \cite{jks2}, where the coordinates of torsion points on specially constructed elliptic curves are shown to generate ray class fields.  \medskip

The proof I give here proceeds by cases.  Sugawara's conditions show that his conjecture is true for all but finitely many ideals $\mathfrak{m}$, for a fixed imaginary quadratic number field $K$.  On the other hand, this leaves open the question for several infinite families of pairs $(K,\mathfrak{m})$.  We note the following concerning the possible remaining pairs. \medskip

Condition (\ref{eqn:3}) shows that the conjecture is true for any prime ideal $\mathfrak{m} = \mathfrak{p}$ whose norm is at least $7$, and also for any $\mathfrak{m}$ divisible by such a prime ideal, if $\mathfrak{m}$ is not divisible by a first degree prime divisor of $2$.  The same condition (with the same restriction on prime divisors of $2$) applies if $\mathfrak{m}$ is divisible by any inert prime other than $2$ in $K/\mathbb{Q}$.  If a prime ideal $\mathfrak{p}$ of degree $1$ does divide $(2,\mathfrak{m})$, then $\mathfrak{p}^2 \mid \mathfrak{m}$ in order for $\mathfrak{m}$ to be a conductor.  In this case, if $\mathfrak{m} = \mathfrak{p}^2 \mathfrak{a}$, where $(\mathfrak{a}, 2) =1$ and $\varphi(\mathfrak{a}) \ge 3$, then by \eqref{eqn:2}, Sugawara's conjecture is true.  The same holds if $\mathfrak{m} = \mathfrak{p}^e \mathfrak{a}$ with $e = 3$ and $\mathfrak{a} \neq 1$ or $e \ge 4$.  Thus, we restrict ourselves to prime divisors of $2, 3$ and $5$ in building the ideal $\mathfrak{m}$, and we have the following possibilities, where $\wp_l$ denotes a prime divisor of degree $1$ of $l$ and $(2)$ is inert:
\begin{align}
\notag \mathfrak{m} &= (2), \wp_2^2 \cong 2, \wp_2^3, \wp_2^2\wp_2'^2 \cong 4;\\
\label{eqn:4} \mathfrak{m} &= \wp_3 \wp_3', \wp_3^2, \wp_3^2 \wp_3', (2) \wp_3\wp_3', \wp_2^2 \wp_3;\\
\notag \mathfrak{m} &= \wp_5, \wp_3 \wp_5, \wp_3 \wp_3' \wp_5.
\end{align}
In the cases with an unramified first degree prime ideal, the discriminant $d_K$ of $K$ satisfies $d_K \equiv 1$ mod $8$ (for $\wp_2$); $d_K \equiv 1$ mod $3$ (for $\wp_3$); and $d_K \equiv 1,4$ mod $5$ (for $\wp_5$).  Five of the above cases are covered by the results of \cite{ksy, jks1} and seven are not; namely, the cases
$$\mathfrak{m} = \wp_2^3, \wp_3^2, \wp_3^2 \wp_3', \wp_2^2 \wp_3, \wp_5, \wp_3 \wp_5, \wp_3 \wp_3' \wp_5$$
require a new argument.  In particular, the sixth and seventh cases in this list are by far the most difficult.  (Note that the ideal $\mathfrak{m} = (2) \wp_3$ is not a possibility because it is not a conductor, since $\textsf{K}_\mathfrak{m} \supseteq \textsf{K}_{(2)}$ and $\varphi(\mathfrak{m})/2 = \varphi((2)) = [\textsf{K}_{(2)}:\Sigma]$.)  \medskip

Converting to algebraic notation, the $\tau$-invariants for $\mathfrak{m}$ and $K \neq \mathbb{Q}(\sqrt{-3})$ or $\mathbb{Q}(\sqrt{-4})$ can be written in the form 
$$\tau(\mathfrak{k}^*) = -2^7 3^5 \frac{g_2 g_3}{\Delta} X(P) = h(P),$$
where $P$ is a primitive $\mathfrak{m}$-division point with $X$-coordinate $X(P)$ on an elliptic curve $E$ in Weierstrass normal form with complex multiplication by $R_K$ and invariants $g_2, g_3, \Delta$.  It is clear that the expressions $h(P)$ are independent of the particular model of the elliptic curve $E$ which is used to compute the torsion points $P$.  (See \cite[p. 135]{si}.) \medskip

The general method presented here is to take an elliptic curve $E$ of a special form which is determined in each case by the ideal $\mathfrak{m}$, and give explicit formulas for enough points in $E[\mathfrak{m}]$  to be able to determine the $\tau$-invariants for $\mathfrak{m}$.  We take $E$ to be either: the Legendre normal form $E_2$, the Deuring normal form $E_3$, or the Tate normal form $E_n$ for a point of order $n$, where $n \in \{4, 5, 9, 12\}$.  In each case, the curve $E$ is defined by certain parameters lying either in the Hilbert class field of $K$ or in an abelian extension of small conductor over $K$.  In many of the cases, the prime ideal factorization of one of these parameters plays a decisive role in our arguments.  Most of these factorizations have been determined in previous papers \cite{lm, m1, m2, m3, am1}.  In the last two cases in (\ref{eqn:4}), the ray class invariants for $\mathfrak{m}$ are computed by finding the points of order $3$ on $E_5$, which leads to some interesting arithmetic relationships.  See Sections \ref{sec:9} and \ref{sec:10}.  \medskip

In each case we must show that the $\tau$-invariants $\tau(\mathfrak{k}^*)$ for ray classes $\mathfrak{k}^*$ corresponding to the ideal class $\mathfrak{k}$ and $j$-invariant $j(\mathfrak{k})$ are distinct, as $\mathfrak{k}$ runs over all ideal classes in the ring of integers $R_K$ of $K$.  Hasse shows in \cite[pp. 83-85]{h2} that the ray class polynomial
$$T_\mathfrak{m}(t,j(\mathfrak{k})) = \prod_{\mathfrak{k}^* \ \textrm{for} \ \mathfrak{k}}{(t-\tau(\mathfrak{k}^*))} \in K[t,j(\mathfrak{k})],$$
whose roots are the $\tau$-invariants corresponding to a given ideal class, has coefficients in the Hilbert class field $\Sigma = \textsf{K}_1$ and is irreducible over this field.  Thus, the set of $\tau$-invariants for the ideal class $\mathfrak{k}$ is a complete set of conjugates over $\Sigma$, and different ray class polynomials are conjugate by automorphisms of $\Sigma/K$.  We will show that the invariants $\tau(\mathfrak{k}^*)$ for a given class $\mathfrak{k}$ determine $j(\mathfrak{k})$.  To do this we will often assume that the automorphism $\psi \in \textrm{Gal}(\textsf{K}_\mathfrak{m}/K)$ takes $j(\mathfrak{k})$ to $j(\mathfrak{k})^\psi = j(\mathfrak{k}')$ and leaves invariant the set $\{\tau(\mathfrak{k}^*)| \ \mathfrak{k}^*\ \textrm{for} \ \mathfrak{k}\}$ taken as a whole.  Then our task is to show that $\psi$ restricted to the Hilbert class field $\Sigma$ is $1$, implying that $j(\mathfrak{k})= j(\mathfrak{k}')$ and $\mathfrak{k} = \mathfrak{k}'$.  Since the $\tau$ invariants corresponding to a given class $\mathfrak{k}$ are distinct \cite[Satz 20]{h1}, this proves the assertion.  In Hasse's papers, he takes the ideal class corresponding to $\mathfrak{k}^*$ to be the class containing $\frac{\mathfrak{m}}{\mathfrak{k}^*}$, i.e., the ideal class containing the ideals $\frac{\mathfrak{m}}{\mathfrak{r}}, \mathfrak{r} \in \mathfrak{k}^*$.  It is clear, however, that we can take any correspondence between ray classes and ideal classes, as long as all ray classes {\it belonging} to a given ideal class {\it correspond} to a single class $\mathfrak{k}$ and this correspondence is preserved by automorphisms over $K$. \medskip

Combining the results proved here with Sugawara's results \cite{su1,su2} gives the following. \bigskip

\noindent {\bf Main Theorem.} {\it If $\textsf{K}_\mathfrak{m}$ is the ray class field with conductor $\mathfrak{m} \neq 1$ over the imaginary quadratic field $K$, then $\textsf{K}_\mathfrak{m} = K(\tau(\mathfrak{k}^*))$ is generated over $K$ by a single $\tau$-invariant for the ideal $\mathfrak{m}$.}  \bigskip

\noindent {\bf Corollary.} {\it If the ideal $\mathfrak{m}$ is a conductor of the corresponding ray class group, the full ray class equation
$$T_\mathfrak{m}(t) = \prod_{\mathfrak{k}}{T_\mathfrak{m}(t, j(\mathfrak{k}))}$$
taken over the ideal classes $\mathfrak{k}$ in $K$, is irreducible over $K$.} \bigskip

This corollary was conjectured by Hasse in \cite[p. 85]{h2}.  \medskip

In the cases when $d_K \equiv 1$ modulo $8$ or $3$, I also show that the $\tau$-invariant for any of the respective ideals $\wp_2, \wp_2 \wp_2', \wp_3$ generates the Hilbert class field $\Sigma = \textsf{K}_{1}$ of $K$.  Thus, for these two families of quadratic fields, the $j$-invariant is not needed to generate the abelian extensions of $K$.  See Theorems \ref{thm:2} and \ref{thm:7} in Sections \ref{sec:2} and \ref{sec:5}.  \medskip

The proof of the special cases in \eqref{eqn:4} involves substantial calculations with the Maple software throughout, but especially in Sections 6, 9, 10, and 11.  The readers wishing to work out some of the calculations for themselves can find some of the Maple commands that have been used in a supplementary document available from the author.

\section{The case $\frak{m} = (2)$.}
\label{sec:2}

In what follows $K = \mathbb{Q}(\sqrt{-d})$ is an imaginary quadratic field with discriminant $d_K$, $\Sigma$ is its Hilbert class field, and $\textsf{K}_\mathfrak{m}$ is the ray class field over $K$ of conductor $\mathfrak{m}$ (so that $\Sigma = \textsf{K}_1$).  We assume that $d_K \neq -3, -4$, so that the $\tau$-function is always given by
$$\tau(u,\mathfrak{a}) = -2^7 3^5 \frac{g_2 g_3}{\Delta} \wp(u).$$

We begin with the easiest case (from a computational point of view).  We consider the Legendre normal form
$$E_2: Y^2 = X(X-1)(X-a),$$
which we assume to have complex multiplication by the quadratic field $K$, whose discriminant $d_K$ satisfies $d_K \equiv 5$ (mod $8$).  Its associated Weierstrass normal form is
$$E': Y_1^2 =4 X_1^3-g_2X_1-g_3,$$
where $X_1 = X - \frac{a+1}{3}$,
$$g_2= \frac{4}{3}(a^2-a+1), \ \ g_3 = \frac{4}{27}(a+1)(a-2)(2a-1),$$
and
$$\Delta = 16a^2(a - 1)^2.$$
Thus
$$j(E_2) = j(a) = \frac{2^8(a^2-a+1)^3}{a^2(a-1)^2},$$
and
$$j(a)-1728 = \frac{64(a + 1)^2(a - 2)^2(2a - 1)^2}{a^2(a - 1)^2}.$$
We compute the ray class invariants for $\mathfrak{m} = (2)$ and a given ideal class $\mathfrak{k}$ for which $j(a) = j(\mathfrak{k})$ to be
\begin{align*}
\tau_0 & = \frac{-2^73^5g_2g_3}{\Delta}\left(0-\frac{a+1}{3}\right) = \frac{128(a^2 - a + 1)(a + 1)^2(a - 2)(2a - 1)}{a^2(a - 1)^2};\\
\tau_1 & = \frac{-2^73^5g_2g_3}{\Delta}\left(1-\frac{a+1}{3}\right) = \frac{128(a^2 - a + 1)(a + 1)(a - 2)^2(2a - 1)}{a^2(a - 1)^2};\\
\tau_2 & = \frac{-2^73^5g_2g_3}{\Delta}\left(a-\frac{a+1}{3}\right) = \frac{-128(a^2 - a + 1)(a + 1)(a - 2)(2a - 1)^2}{a^2(a - 1)^2}.
\end{align*}
We note that
\begin{equation}
\tau_0+\tau_1+\tau_2 = 0
\label{eqn:2.1}
\end{equation}
and the $\tau_i$ are roots of the cubic polynomial
$$F(X,j(\mathfrak{k})) = X^3 -3j(\mathfrak{k})(j(\mathfrak{k}) - 1728)X + 2j(\mathfrak{k})(j(\mathfrak{k}) - 1728)^2.$$
Hasse proved in \cite{h2} that this polynomial is irreducible over the Hilbert class field $\textsf{K}_1$ of $K$, when $d_K \equiv 5$ (mod $8$).  Furthermore, the formulas for the coefficients imply that
\begin{equation}
\frac{1}{\tau_0} + \frac{1}{\tau_1} + \frac{1}{\tau_2} = \frac{3}{2(j(\mathfrak{k})-1728)}.
\label{eqn:2.2}
\end{equation}
This immediately implies that if $\mathfrak{k}, \mathfrak{k}'$ are two ideal classes in $K$ for which the sets $\{\tau_0,\tau_1,\tau_2\}$ and $\{\tau_0',\tau_1',\tau_2'\}$ coincide, then $j(\mathfrak{k}) = j(\mathfrak{k}')$ and therefore $\mathfrak{k} = \mathfrak{k}'$.  This implies, by Hasse's argument \cite[p. 85]{h2}, that $K(\tau_i) = \textsf{K}_\mathfrak{m}$, so that Sugawara's conjecture is true for $\mathfrak{m} = (2)$ and $d_K \equiv 5$ (mod $8$). \medskip

We also note the relation
$$\frac{\tau_2-\tau_0}{\tau_1-\tau_0} = a.$$
It follows that $a \in \textsf{K}_{(2)}$ and the above formulas show that $K(a) = \textsf{K}_{(2)}$.  \bigskip

\newtheorem{thm}{Theorem}

\begin{thm}
If the Legendre normal form $E_2(a)$ has complex multiplication by the ring of integers in the quadratic field $K = \mathbb{Q}(\sqrt{d_K})$, where $d_K \equiv 5$ (mod $8$), then the parameter $a$ generates the ray class field $\textsf{K}_{(2)}$ of conductor $\mathfrak{m} = (2)$ over $K$.  Also, $\textsf{K}_{(2)}$ is generated over $K$ by a single $\tau$-invariant for the ideal $(2)$.
\label{thm:1}
\end{thm}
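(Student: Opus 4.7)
The plan is to assemble the pieces already developed in the text above into a clean proof of the two assertions. I would begin by observing that, by the definition of the $\tau$-invariants of $E_2$ at the primitive $2$-division points $(X,Y) = (0,0), (1,0), (a,0)$, the three values $\tau_0, \tau_1, \tau_2$ all lie in $\textsf{K}_{(2)}$. Since $a$ is expressible as the cross-ratio $a = (\tau_2-\tau_0)/(\tau_1-\tau_0)$ displayed just before the theorem, this immediately gives $K(a) \subseteq \textsf{K}_{(2)}$; the reverse inclusion then follows at once from the explicit formulas for $\tau_0,\tau_1,\tau_2$ as rational functions of $a$, so $K(a) = \textsf{K}_{(2)}$.

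For the second assertion, I would follow the Hasse strategy outlined in the introduction. Since $F(X,j(\mathfrak{k}))$ is irreducible over the Hilbert class field $\Sigma = \textsf{K}_1$ (Hasse's theorem cited in the excerpt), the three values $\tau_0, \tau_1, \tau_2$ are permuted transitively by $\mathrm{Gal}(\textsf{K}_{(2)}/\Sigma)$, so the set $\{\tau_0,\tau_1,\tau_2\}$ is the Galois orbit of $\tau_i$ over $\Sigma$ and is therefore determined by any one $\tau_i$ together with $\Sigma$. Now suppose $\psi \in \mathrm{Gal}(\textsf{K}_{(2)}/K)$ fixes a given $\tau_i$; then $\psi$ permutes the full orbit $\{\tau_0,\tau_1,\tau_2\}$ among itself (considered as a set), while sending $j(\mathfrak{k})$ to some conjugate $j(\mathfrak{k}')$. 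Applying $\psi$ to the identity
\[
\frac{1}{\tau_0}+\frac{1}{\tau_1}+\frac{1}{\tau_2} = \frac{3}{2(j(\mathfrak{k})-1728)}
\]
and using the invariance of the left-hand side gives $j(\mathfrak{k}) = j(\mathfrak{k}')$, hence $\mathfrak{k} = \mathfrak{k}'$. Thus $\psi$ fixes $j(\mathfrak{k})$, so $\psi|_\Sigma = 1$; combined with $\psi(\tau_i) = \tau_i$, this forces $\psi = 1$ in $\mathrm{Gal}(\textsf{K}_{(2)}/K)$, proving $K(\tau_i) = \textsf{K}_{(2)}$.

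The bulk of the genuine content is thus already carried by (i) the irreducibility of the cubic $F$ over $\Sigma$, which is quoted from Hasse, and (ii) the reciprocal-sum identity \eqref{eqn:2.2}, which is a direct Vieta computation from the coefficients of $F$. The only step requiring any care is making sure the Galois-theoretic argument is phrased so it works uniformly for each $\tau_i$ and doesn't secretly rely on already knowing $j(\mathfrak{k})\in K(\tau_i)$; the key point there is that fixing a single root of an irreducible polynomial over $\Sigma$ pins down the whole splitting set, after which \eqref{eqn:2.2} pins down $j(\mathfrak{k})$. I do not foresee a substantive obstacle, since the preparatory calculations displayed before the theorem statement have already done the real work.
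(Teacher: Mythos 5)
Your proposal is correct and follows essentially the same route as the paper: the identity $a = (\tau_2-\tau_0)/(\tau_1-\tau_0)$ together with the explicit rational expressions of $j$ and the $\tau_i$ in $a$ gives $K(a)=\textsf{K}_{(2)}$, and the reciprocal-sum identity \eqref{eqn:2.2} combined with the irreducibility of $F(X,j(\mathfrak{k}))$ over $\Sigma$ is exactly the ``Hasse argument'' the paper invokes to show that a single $\tau_i$ generates $\textsf{K}_{(2)}$ over $K$. Your version merely spells out the Galois-theoretic step (two monic irreducibles over $\Sigma$ sharing a root must coincide, so their root sets coincide) that the paper leaves to the citation of Hasse.
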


Now the same formulas hold if $4 \mid d_K$ and $d_K < -4$, in which case $(2) = \mathfrak{p}^2$ and $\varphi(\mathfrak{p}^2) = 2$.  In this case there are two ray class invariants $\tau(\mathfrak{k}^*)$ for each ideal class $\mathfrak{k}$.  This implies that the polynomial $F(x,j(\mathfrak{k}))$ is reducible over $K(j(\mathfrak{k}))$ and factors into a linear times an irreducible quadratic.  If the two ray class invariants (mod $(2)$) for two ideal classes agree, i.e. $\{\tau_i,\tau_k\} = \{\tau_i', \tau_k'\}$, corresponding to $\mathfrak{k}$ and $\mathfrak{k}'$, then the third ray class invariants (corresponding to $\mathfrak{m} = \mathfrak{p}$) must also agree, by (2.1), and then (2.2) implies that $j(\mathfrak{k}) = j(\mathfrak{k}')$.  Thus, Sugawara's conjecture also holds when $\mathfrak{m} = \mathfrak{p}^2 = (2)$.   This establishes the conjecture for the first two possibilities in the first line of Section \ref{sec:1}, \eqref{eqn:4}.  \medskip

Lastly, suppose that $(2) = \mathfrak{p}_1 \mathfrak{p}_2$ in $K$, so that $d_K \equiv 1$ (mod $8$).  Since $\varphi(\mathfrak{p}_1) = \varphi(\mathfrak{p}_2) = \varphi(\mathfrak{p}_1) \varphi(\mathfrak{p}_2) = 1$, there is one ray class invariant each for $\mathfrak{m} = \mathfrak{p}_1, \mathfrak{p}_2$, and $\mathfrak{p}_1 \mathfrak{p}_2 = (2)$.  Each of these invariants lies in $\textsf{K}_1$.  Together, they generate $\textsf{K}_1$, by (\ref{eqn:2.2}). \medskip

\newtheorem{conj}{Conjecture}

I conjecture the following.

\begin{conj}
If $d_K \equiv 1$ (mod $8$) and $(2) = \mathfrak{p}_1 \mathfrak{p}_2$, the invariants for $\mathfrak{m} = \mathfrak{p}_1, \mathfrak{p}_2$ have degree $2h(d_K)$ over $\mathbb{Q}$, so that each generates $\textsf{K}_1$ over $\mathbb{Q}$; while the invariant for $\mathfrak{p}_1 \mathfrak{p}_2$ has degree $h(d_K)$ over $\mathbb{Q}$ and generates $\textsf{K}_1$ over $K$.
\label{conj:1}
\end{conj}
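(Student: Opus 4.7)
\medskip

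\noindent\textbf{Proof proposal.} Since $\varphi(\mathfrak{p}_i) = \varphi((2)) = 1$, the ray class fields $\textsf{K}_{\mathfrak{p}_1}$, $\textsf{K}_{\mathfrak{p}_2}$, $\textsf{K}_{(2)}$ all coincide with $\Sigma = \textsf{K}_1$, and each $\tau$-invariant lies in $\Sigma$. By the Main Theorem, the invariants for distinct ideal classes are distinct, so each invariant has exactly $h_K$ conjugates over $K$ and therefore generates $\Sigma$ over $K$. Hence $[\mathbb{Q}(\tau):\mathbb{Q}] \in \{h_K,\, 2h_K\}$, with the larger value equivalent to $\mathbb{Q}(\tau) = \Sigma$. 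Which occurs is controlled by complex conjugation $c$ acting on the set $S_\mathfrak{m}$ of $\tau$-invariants of conductor $\mathfrak{m}$: from the relation $c(\tau(\rho,\mathfrak{a})) = \tau(\bar\rho,\bar{\mathfrak{a}})$ it follows that $c$ sends $S_\mathfrak{m}$ to $S_{\bar{\mathfrak{m}}}$.

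For $\mathfrak{m} = (2)$ we have $\bar{\mathfrak{m}} = \mathfrak{m}$, so $c$ preserves $S_{(2)}$ setwise. Combined with the transitive action of $\textrm{Gal}(\Sigma/K)$ on $S_{(2)}$, this forces the $\textrm{Gal}(\Sigma/\mathbb{Q})$-orbit to equal $S_{(2)}$, of size $h_K$. We conclude $[\mathbb{Q}(\tau_{(2)}):\mathbb{Q}] = h_K$ and $K(\tau_{(2)}) = \textsf{K}_1$, which is the second part of the conjecture. For $\mathfrak{m} = \mathfrak{p}_i$, $c$ interchanges $\mathfrak{p}_1 \leftrightarrow \mathfrak{p}_2$, so $c(S_{\mathfrak{p}_1}) = S_{\mathfrak{p}_2}$; these two transitive $\textrm{Gal}(\Sigma/K)$-orbits are either equal or disjoint, and in the disjoint case the $\mathbb{Q}$-orbit has cardinality $2h_K$, yielding the first part of the conjecture.

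The remaining --- and main --- obstacle is to rule out $S_{\mathfrak{p}_1} = S_{\mathfrak{p}_2}$, equivalently the $\mathbb{Q}$-rationality of the ray class polynomial $T_{\mathfrak{p}_1}(t) = \prod_\mathfrak{k}(t - \tau(\mathfrak{p}_1,\mathfrak{k})) \in K[t]$. If equality held, there would exist a Galois-equivariant permutation $\pi$ of the ideal class group with $\tau(\mathfrak{p}_2,\mathfrak{k}) = \tau(\mathfrak{p}_1,\pi(\mathfrak{k}))$, and the sum relation (\ref{eqn:2.1}) would give
\begin{equation*}
\tau((2),\mathfrak{k}) \;=\; -\tau(\mathfrak{p}_1,\mathfrak{k}) \;-\; \tau(\mathfrak{p}_1,\pi(\mathfrak{k})) \quad \text{for every class } \mathfrak{k}.
\end{equation*}
Expressing both sides as rational functions of the Galois conjugates $a_\mathfrak{k} \in \Sigma$ of the Legendre parameter (valid since $\textsf{K}_{(2)} = \Sigma$ here), this becomes a family of explicit algebraic relations linking $a_\mathfrak{k}$ and $a_{\pi(\mathfrak{k})}$ inside $\Sigma$. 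My plan would be to contradict these relations using the irreducibility of the minimal polynomial of $a$ over $K$ combined with the explicit labeling of the three $2$-torsion points of $E_2(a)$ as $\mathfrak{p}_1$-, $\mathfrak{p}_2$-, and $(2)$-torsion under the CM action of $R_K$; the subtlest ingredient, and the technical core of the proof, is to pin down this labeling, which enters the functional form of the relation above and varies in a controlled way under $\textrm{Gal}(\Sigma/K)$. This step likely requires input from the factorization theory of $a$ and $a-1$ in the Hilbert class field developed in \cite{lm, m1, m2}.
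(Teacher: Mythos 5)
First, a point of orientation: the paper does not prove this statement — it is stated as Conjecture~1 and left open. The only progress recorded toward it is the surrounding discussion together with Lemma~1, which shows $j(\mathfrak{o})<1728$, hence $\textrm{disc}(F(X,j(\mathfrak{o})))<0$, so that for the principal class the invariant for $(2)$ is real while the invariants for $\mathfrak{p}_1,\mathfrak{p}_2$ form a non-real complex-conjugate pair. So there is no proof in the paper to measure your argument against; what follows is an assessment of your proposal on its own terms.

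Your orbit-counting framework is sound, and your treatment of the modulus $(2)$ is correct: $c$ preserves $S_{(2)}$, $\textrm{Gal}(\Sigma/K)$ acts transitively on it, so the full $\textrm{Gal}(\Sigma/\mathbb{Q})$-orbit is $S_{(2)}$ of size $h(d_K)$. (The distinctness and transitivity you need are Theorem~2, not the Main Theorem, whose hypothesis $\mathfrak{f}\neq 1$ excludes these moduli.) The genuine defect is your last paragraph: you flag ``$S_{\mathfrak{p}_1}=S_{\mathfrak{p}_2}$'' as the main remaining obstacle and then offer only a plan — a Galois-equivariant permutation $\pi$, explicit relations among the $a_{\mathfrak{k}}$, appeals to the factorization theory of $a$ — rather than an argument, so as written this is not a proof. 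But that obstacle is illusory. Each of the $3h(d_K)$ numbers $\tau_0(\mathfrak{k}),\tau_1(\mathfrak{k}),\tau_2(\mathfrak{k})$ is by construction the invariant of exactly one of the moduli $\mathfrak{p}_1,\mathfrak{p}_2,(2)$: the three nonzero $2$-torsion points of $E_2(a)$ split as the one point of $E_2[\mathfrak{p}_1]\setminus\{O\}$, the one point of $E_2[\mathfrak{p}_2]\setminus\{O\}$, and one primitive $(2)$-division point — you need only this partition, not the explicit labeling you call the ``subtlest ingredient.'' Theorem~2 shows the triples $S(\mathfrak{k})$ are pairwise disjoint and $\textrm{disc}(F)\neq 0$ shows each triple has distinct entries, so all $3h(d_K)$ values are distinct and the sets $S_{\mathfrak{p}_1},S_{\mathfrak{p}_2},S_{(2)}$ are automatically disjoint. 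Replacing your final paragraph by this observation (together with Hasse's reciprocity law, which guarantees $\textrm{Gal}(\Sigma/K)$ preserves each $S_{\mathfrak{m}}$, and the identity $\overline{\tau(\rho,\mathfrak{a})}=\tau(\bar\rho,\bar{\mathfrak{a}})$ you already invoke) turns your proposal into a complete proof of the conjecture — one which in fact recovers Lemma~1 as a corollary.
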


We can approach this conjecture by showing that the discriminant of $F(X,j(\mathfrak{k}))$ is negative, when $\mathfrak{k} =\mathfrak{o}$ is the principal class.  This discriminant is
$$\textrm{disc}(F(X,j(\mathfrak{k}))) = 2^8 3^6 j^2(j - 1728)^3, \ \ j = j(\mathfrak{k}).$$
If this discriminant is negative, then two of the invariants $\tau_i(\mathfrak{k})$ are complex and one is real.  The complex invariants would have to be the invariants for $\mathfrak{m} = \mathfrak{p}_1, \mathfrak{p}_2$, which are interchanged by complex conjugation.  Hence, the real invariant is the invariant for $\mathfrak{m} = \mathfrak{p}_1 \mathfrak{p}_2$.\medskip

\newtheorem{lem}{Lemma}

\begin{lem}
If $d_K \equiv 1$ (mod $8$), then the $j$-invariant $ j(\mathfrak{o})$ of the principal class $\mathfrak{o}$ in $R_K$ satisfies $ j(\mathfrak{o}) < 1728$.
\label{lem:1}
\end{lem}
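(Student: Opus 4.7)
The plan is to use the identity $j(\tau) - 1728 = \dfrac{27 \cdot 1728\, g_3(\tau)^2}{\Delta(\tau)}$ (which follows immediately from $j = 1728\, g_2^3/\Delta$ and $\Delta = g_2^3 - 27 g_3^2$), and establish the sign of the right-hand side at a suitable representative of the principal class. Since $d_K \equiv 1 \pmod 8$, in particular $d_K \equiv 1 \pmod 4$, so $R_K = \mathbb{Z} + \mathbb{Z}\tau_K$ with $\tau_K = \tfrac{1}{2} + \tfrac{i\sqrt{|d_K|}}{2}$. Because $d_K \equiv 1 \pmod 8$ and $d_K < 0$ force $|d_K| \geq 7$, we have $|\tau_K|^2 = (1+|d_K|)/4 \geq 2 > 1$, so $\tau_K$ lies in the standard $\mathrm{SL}_2(\mathbb{Z})$ fundamental domain and $j(\mathfrak{o}) = j(\tau_K)$.

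First I would verify that $g_3(\tau_K)$ is real and nonzero. Reality follows by combining the periodicity $g_3(\tau+1) = g_3(\tau)$ with $\overline{g_3(\tau)} = g_3(-\bar\tau)$: since $-\bar\tau_K = \tau_K - 1$, we get $\overline{g_3(\tau_K)} = g_3(\tau_K)$. Nonvanishing is automatic: $g_3(\tau_K) = 0$ would give $j(\tau_K) = 1728$, i.e.\ $\tau_K$ is $\mathrm{SL}_2(\mathbb{Z})$-equivalent to $i$, forcing $K = \mathbb{Q}(i)$ and contradicting $d_K \equiv 1 \pmod 8$. Hence $g_3(\tau_K)^2$ is a strictly positive real number.

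The main step is the sign of $\Delta(\tau_K)$, which I would handle via the product expansion
$$\Delta(\tau) = (2\pi)^{12}\, q \prod_{n=1}^\infty (1-q^n)^{24}, \qquad q = e^{2\pi i \tau}.$$
At $\tau = \tau_K$ one has $q = e^{\pi i}\, e^{-\pi\sqrt{|d_K|}} = -e^{-\pi\sqrt{|d_K|}}$, which is a \emph{negative} real number of absolute value at most $e^{-\pi\sqrt{7}} < 1$. Therefore every $q^n$ is real with $|q^n| < 1$, each factor $1 - q^n$ is positive, the infinite product is a positive real number, and the leading factor $q$ is negative. Consequently $\Delta(\tau_K) < 0$.

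Combining these facts,
$$j(\mathfrak{o}) - 1728 \;=\; \frac{27 \cdot 1728\, g_3(\tau_K)^2}{\Delta(\tau_K)} \;<\; 0,$$
which is the claim. The only step that requires any genuine input beyond symmetry is the sign of $\Delta$, and that is immediate from the product formula once one observes that $q$ is real and negative along the line $\mathrm{Re}(\tau) = \tfrac{1}{2}$.
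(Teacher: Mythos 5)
Your proof is correct, but it takes a genuinely different route from the paper. The paper argues entirely algebraically: it writes $j(\mathfrak{o})-1728=R(a)^2$ with $R$ an explicit rational function of the Legendre parameter $a=-\xi^4/\pi^4$ coming from the quartic Fermat equation $\pi^4+\xi^4=1$ in the Hilbert class field (from the earlier paper of Lynch--Morton), identifies complex conjugation with the specific automorphism $\sigma:\xi\mapsto\frac{\xi+1}{\xi-1}$, and deduces that $R(a)^\tau$ is pure imaginary, so its square is negative. You instead use the classical transcendental argument: writing $j-1728=\frac{27\cdot 1728\,g_3^2}{\Delta}$, observing that $g_3(\tau_K)$ is real and nonzero by the symmetries $g_3(\tau+1)=g_3(\tau)$ and $\overline{g_3(\tau)}=g_3(-\bar\tau)$, and getting $\Delta(\tau_K)<0$ from the product formula because $q=e^{2\pi i\tau_K}=-e^{-\pi\sqrt{|d_K|}}$ is negative real of modulus less than $1$ on the line $\mathrm{Re}\,\tau=\tfrac12$. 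All the steps check out (the identity for $j-1728$, the reality and nonvanishing of $g_3$, the positivity of each factor $1-q^n$, and the reduction of $j(\mathfrak{o})$ to $j(\tau_K)$). Your argument is more elementary and self-contained --- it needs nothing from \cite{lm} --- and it proves the stronger statement that $j(\mathfrak{o})<1728$ for every $d_K\equiv 1\pmod 4$ with $d_K<0$, not just $d_K\equiv 1\pmod 8$. What the paper's approach buys is consistency with its overall algebraic framework and, more importantly, explicit control of the quantity $R(a)^\tau=\sqrt{j(\mathfrak{o})-1728}$ as an element of the class field, which is in the spirit of the surrounding computations with $\pi$ and $\xi$; your argument establishes only the sign.
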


\begin{proof}
This will follow from the formula
$$j(\mathfrak{o}) - 1728 = \frac{64(a + 1)^2(a - 2)^2(2a - 1)^2}{a^2(a - 1)^2} = R(a)^2,$$
where
$$R(a) = \frac{8(a + 1)(a - 2)(2a - 1)}{a(a - 1)}.$$
From \cite[Eq. (7.1), p. 1979, (i)]{lm}, and taking $\mathfrak{p}_1$ to be the ideal $\wp_2$ in that paper, we have
$$j(\mathfrak{o}) -1728 = j(\mathfrak{p}_1)^\tau -1728 = R(a)^{2\tau} = R\left(-\frac{\xi^4}{\pi^4}\right)^{2\tau},$$
where $\tau = \left(\frac{\Sigma/K}{\mathfrak{p}_1}\right), a = -\frac{\xi^4}{\pi^4}$, and $\pi^4+\xi^4 = 1$, $\pi \cong \mathfrak{p}_1, \xi \cong \mathfrak{p}_2$.  (This uses that $\frac{\xi}{\pi} = \frac{\zeta_8^j \alpha}{2}$, with $\zeta_8 = e^{2\pi i/8}$, from \cite[p. 1978]{lm}.)  For this value of $a$, we have
\begin{align*}
R\left(-\frac{\xi^4}{\pi^4}\right)& = \frac{8(\pi^4 - \xi^4)(\pi^4 + 2\xi^4)(2\pi^4 + \xi^4)}{\pi^4 \xi^4 (\pi^4 + \xi^4)}\\
& = \frac{8(\pi^4 - \xi^4)(1+\xi^4)(1+\pi^4)}{\pi^4 \xi^4}.
\end{align*}
Also from \cite[pp. 2004, 1984]{lm}, the automorphism $\sigma: \xi \rightarrow \frac{\xi+1}{\xi-1} = \pi^{\tau^2}$ is complex conjugation, for which $\pi^\sigma = \xi^{\sigma \tau^{-2} \sigma} = \xi^{\tau^2}$.  Hence
\begin{align*}
R\left(-\frac{\xi^4}{\pi^4}\right)^{\tau \sigma} & = R\left(-\frac{\xi^4}{\pi^4}\right)^{\sigma \tau^{-1}}\\
& = -\left(\frac{8(\pi^4 - \xi^4)(1+\xi^4)(1+\pi^4)}{\pi^4 \xi^4}\right)^{\tau^2 \tau^{-1}}\\
& = -R\left(-\frac{\xi^4}{\pi^4}\right)^{\tau}.
\end{align*}
This implies that $R(a)^\tau$ is pure imaginary, hence $j(\mathfrak{o}) -1728 < 0$, proving the lemma.
\end{proof}

\begin{thm}
If $d_K \equiv 1$ (mod $8$) and $(2) = \mathfrak{p}_1 \mathfrak{p}_2$ in $K = \mathbb{Q}(\sqrt{-d})$, the $\tau$-invariant $\tau(\mathfrak{k}^*)$ for any of the ideals $\mathfrak{p}_1, \mathfrak{p}_2$ or $\mathfrak{p}_1 \mathfrak{p}_2$ and any ideal class $\mathfrak{k}$ generates the Hilbert class field $\textsf{K}_1 = \Sigma$ over $K$.
\label{thm:2}
\end{thm}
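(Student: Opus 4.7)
The plan is to show, for each $\mathfrak{m} \in \{\mathfrak{p}_1, \mathfrak{p}_2, \mathfrak{p}_1\mathfrak{p}_2\}$ and each ideal class $\mathfrak{k}$, that $[K(\tau(\mathfrak{k}^*)) : K] = h(d_K) = [\Sigma : K]$, so that $K(\tau(\mathfrak{k}^*)) = \Sigma$. Since $\varphi(\mathfrak{m}) = 1$ in each case, the ray class field $\textsf{K}_\mathfrak{m}$ equals $\Sigma$, and $\tau(\mathfrak{k}^*) \in \Sigma$ by Hasse. Writing $\tau_\mathfrak{m}(\mathfrak{k})$ for the unique $\tau$-invariant of class $\mathfrak{k}$ and ideal $\mathfrak{m}$, the Galois action satisfies $\psi(\tau_\mathfrak{m}(\mathfrak{k})) = \tau_\mathfrak{m}(\mathfrak{k}^\psi)$ for $\psi \in \textrm{Gal}(\Sigma/K)$, so I only need to show the map $\mathfrak{k} \mapsto \tau_\mathfrak{m}(\mathfrak{k})$ is injective; equivalently, no non-trivial $\psi$ fixes $\tau_\mathfrak{m}(\mathfrak{o})$.

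The first step is to invoke Lemma 1: because $j(\mathfrak{o}) - 1728 < 0$, the discriminant $2^8 3^6 j(\mathfrak{o})^2(j(\mathfrak{o})-1728)^3$ of $F(X, j(\mathfrak{o}))$ is negative, so this cubic has one real root and two non-real complex-conjugate roots. Since complex conjugation $J$ fixes both the principal class $\mathfrak{o}$ and the ideal $(2) = \mathfrak{p}_1\mathfrak{p}_2$ while interchanging $\mathfrak{p}_1 \leftrightarrow \mathfrak{p}_2$, the real root must be $\tau_{(2)}(\mathfrak{o}) \in \Sigma^+$, and $\tau_{\mathfrak{p}_1}(\mathfrak{o})$, $\tau_{\mathfrak{p}_2}(\mathfrak{o})$ form the non-real conjugate pair; in particular all three invariants are distinct.

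Now suppose $\psi \in \textrm{Gal}(\Sigma/K)$ fixes $\tau_{(2)}(\mathfrak{o})$, and set $\mathfrak{k}' = \mathfrak{o}^\psi$. Applying $\psi$ to $F(\tau_{(2)}(\mathfrak{o}), j(\mathfrak{o})) = 0$ gives $F(\tau_{(2)}(\mathfrak{o}), j(\mathfrak{k}')) = 0$, so the cubics $F(X, j(\mathfrak{o}))$ and $F(X, j(\mathfrak{k}'))$ share the root $\tau_{(2)}(\mathfrak{o})$. But $F(X, j) - F(X, j')$ is linear in $X$, so two distinct cubics in this family share at most one common root; if $j(\mathfrak{o}) \neq j(\mathfrak{k}')$, then $\tau_{(2)}(\mathfrak{o})$ must equal the unique root of that linear polynomial, giving an explicit rational expression for $\tau_{(2)}(\mathfrak{o})$ in $j(\mathfrak{o}), j(\mathfrak{k}')$. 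Substituting back into $F(X, j(\mathfrak{o})) = 0$ yields a polynomial identity that the pair $(j(\mathfrak{o}), j(\mathfrak{k}'))$ must satisfy. The crux is to show this identity, combined with the explicit formula $j(\mathfrak{o}) - 1728 = R(-\xi^4/\pi^4)^2$ and $\pi^4 + \xi^4 = 1$ from the proof of Lemma 1, is incompatible with $j(\mathfrak{k}')$ being a CM $j$-invariant of $R_K$ distinct from $j(\mathfrak{o})$, forcing $\mathfrak{k}' = \mathfrak{o}$ and $\psi = 1$. The argument for $\tau_{\mathfrak{p}_1}(\mathfrak{o}), \tau_{\mathfrak{p}_2}(\mathfrak{o})$ is obtained by conjugating through $J$.

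The main obstacle is the final CM-uniqueness step: ruling out a second CM $j$-invariant of $R_K$ among the (at most three) roots in $j$ of the cubic $F(\tau_{(2)}(\mathfrak{o}), j) \in K(\tau_{(2)}(\mathfrak{o}))[j]$. I expect this requires leveraging the uniformization $\pi, \xi$ with $\pi^4 + \xi^4 = 1$ together with the explicit action of the Artin symbols of $\mathfrak{p}_1, \mathfrak{p}_2$ on these parameters, in order to show that the resultant $\textrm{Res}_X(F(X, j(\mathfrak{o})), F(X, j(\mathfrak{k}')))$ has no off-diagonal zero on the CM locus.
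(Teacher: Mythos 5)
Your reduction is sound as far as it goes: since $\varphi(\mathfrak{m})=1$ for each of $\mathfrak{p}_1,\mathfrak{p}_2,\mathfrak{p}_1\mathfrak{p}_2$, all three invariants lie in $\Sigma$, the Galois group is abelian, and it suffices to show the map $\mathfrak{k}\mapsto\tau_{\mathfrak{m}}(\mathfrak{k})$ is injective. Your observation that $F(X,j)-F(X,j')$ is linear in $X$, so that two cubics in the family share at most one root, is also correct. But the proof stops exactly where the theorem actually lives. The entire content of the result is the ``CM-uniqueness step'' you defer: showing that the algebraic relation between $j(\mathfrak{o})$ and $j(\mathfrak{k}')$ forced by a shared root has no solution with $\mathfrak{k}'\neq\mathfrak{o}$. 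You write that you ``expect this requires leveraging the uniformization $\pi,\xi$'' and that the resultant ``has no off-diagonal zero on the CM locus,'' but you give no argument for this, and it is not a routine verification: the resultant is a fixed polynomial in two variables, while the CM locus is an infinite family (one for each $d_K\equiv 1\bmod 8$), so some arithmetic input specific to these fields is unavoidable.

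The paper supplies that input by working directly with the Legendre parameter $a=\pi^4$ (so $a-1=-\xi^4$ with $\pi^4+\xi^4=1$, $\pi\cong\mathfrak{p}_1$), factoring each difference $\tau_i-\tau_i'$ explicitly as a rational function in $x=\pi^4$, $y=\pi'^4$, and running a $\mathfrak{p}_1$-adic valuation analysis on the bracketed cofactor: in the case $\tau_0=\tau_0'$ this forces $a+a'\equiv 0\pmod{\mathfrak{p}_1^7}$, hence $a\equiv a'\pmod{\pi^5}$, hence $j(\mathfrak{k})\equiv j(\mathfrak{k}')\pmod{\mathfrak{p}_1^4}$, and the conclusion follows because $d_K\equiv 1\pmod 8$ makes the discriminant of the class equation $H_{d_K}$ odd. (The degenerate cases where the cofactor does not vanish give $y=x$, $1/x$, or $x/(x-1)$, all of which fix $j$.) Note also that the paper does not need to identify which $\tau_i$ belongs to which ideal --- it only uses that an automorphism over $K$ fixes $\mathfrak{p}_1$ and $\mathfrak{p}_2$, so matching invariants belong to the same ideal; your appeal to Lemma 1 to sort the real root from the complex pair is used in the paper only to motivate Conjecture 1, not to prove Theorem 2. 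To complete your proof you would need to carry out something equivalent to the paper's valuation computation; the congruence $j(\mathfrak{k})\equiv j(\mathfrak{k}')$ modulo a power of $\mathfrak{p}_1$ combined with $2\nmid\mathrm{disc}(H_{d_K})$ is the mechanism that actually rules out the off-diagonal zeros.
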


\begin{proof}
We have to show that each of the invariants $\tau_i$ generates $\Sigma$ over $K$.  By \cite[p. 1979, (ii)]{lm}, we can take $a = \pi^4$, where $(\pi) = \mathfrak{p}_1$, as in the proof of Lemma \ref{lem:1}.  \medskip

Let $\alpha \in \textrm{Gal}(\Sigma/K)$ be such that $j(\mathfrak{k}') = j(\mathfrak{k})^\alpha$ and $\{\tau_0,\tau_1, \tau_2\}^\alpha \cap \{\tau_0,\tau_1, \tau_2\}$ is non-empty.  Representing the action of $\alpha$ by primes, first assume that $\tau_0 = \tau_0^\alpha = \tau_0'$ for two ideal classes $\mathfrak{k}$ and $\mathfrak{k}'$.  
(Note that we don't know which invariants $\tau_i, \tau_i'$ correspond to the respective ideals $\mathfrak{p}_1, \mathfrak{p}_2, \mathfrak{p}_1 \mathfrak{p}_2$; but we do know that $\tau_i, \tau_i'$ correspond to the same ideal, since $\alpha$ fixes $\mathfrak{p}_1$ and $\mathfrak{p}_2$.)  Setting $x = a = \pi^4$ and $y = a' = \pi'^4 \cong \pi^4$ yields that
\begin{align*}
&\frac{\tau_0 - \tau_0'}{2^7} =\frac{(-y + x)(xy - 1)}{x^2(x - 1)^2 y^2(y - 1)^2}[2x^4 y^3 + 2x^3y^4 - 4x^4y^2 - 7x^3y^3 - 4x^2y^4\\
& \ \ + 2x^4y + 10x^3y^2 + 10x^2y^3 + 2xy^4 - 7x^3y - 8x^2y^2 - 7xy^3\\
& \ \  + 2x^3 + 10x^2y + 10xy^2 + 2y^3 - 4x^2 - 7xy - 4y^2 + 2x + 2y].
\end{align*}
The terms in square brackets whose total degrees in $x$ and $y$ are at least $3$ are divisible by $\pi^{12}$.  If the expression in square brackets is zero, then
$$- 4x^2 - 7xy - 4y^2 + 2x + 2y \equiv 0 \ (\textrm{mod} \ \pi^{12}).$$
Now, the powers of $\pi$ dividing the terms in this expression are, respectively, $10, 8, 10, 5, 5$.
It follows that $2x+2y \equiv 0$ (mod $\pi^8$) and therefore 
$$a+a' = \pi^4 + \pi'^4 = \pi^4+\pi^{4\alpha} \equiv 0 \ (\textrm{mod} \ \mathfrak{p}_1^7).$$
Hence, $a-a' = a+a'-2a' \equiv 0$ (mod $\pi^5$).  But then
\begin{align*}
&j(\mathfrak{k}) - j(\mathfrak{k}') = \frac{2^8(a^2-a+1)^3}{a^2(a-1)^2}-\frac{2^8(a'^2-a'+1)^3}{a'^2(a'-1)^2}\\
& = \frac{2^8(aa' - a + 1)(aa' - a' + 1)(a+a' - 1)(aa' - 1)(a-a')(aa' - a - a')}{a^2(a - 1)^2 a'^2(a' - 1)^2}\\
& = \frac{2^8}{a^2(a-1)^2} \frac{(a-a')(aa' - a - a')}{a'^2(a'-1)^2} \textsf{A},
\end{align*}
where
$$\textsf{A} = [(aa' - a + 1)(aa' - a' + 1)(a+a' - 1)(aa' - 1)].$$
Now $\frac{256}{a^2(a-1)^2} = \frac{256}{\pi^8 \xi^8}$ is a unit.  Also,
$\frac{(a-a')(aa' - a - a')}{a'^2(a'-1)^2}$ is divisible by $\pi^5 \pi^7/\pi^8 = \pi^4$, so 
$$j(\mathfrak{k}) \equiv j(\mathfrak{k}') = j(\mathfrak{k})^\alpha \ (\textrm{mod} \ \mathfrak{p}_1^4).$$
But the discriminant of the class equation $H_{d_K}(X)$ is not divisible by $2$, since $d_K \equiv 1 $ (mod $8$), which implies that $\alpha = 1$ and $j(\mathfrak{k}') = j(\mathfrak{k})^\alpha = j(\mathfrak{k})$. \medskip

This argument applies under the assumption that the expression in square brackets is $0$.  Otherwise, $y = x$ or $y = 1/x$, which implies $a = a'$ (since $a \neq 1/a'$), and we conclude again that $j(\mathfrak{k}') = j(\mathfrak{k})$. \medskip

Next suppose that $\tau_1 = \tau_1'$.  Setting $x = a = \pi^4$ and $y = a' = \pi'^4 \cong \pi^4$ yields that
\begin{align*}
\frac{\tau_1 - \tau_1'}{2^7} &= \frac{1}{x^2(x - 1)^2y^2(y - 1)^2}(x-y)(xy - x -y)[2x^4y^3 + 2x^3y^4 - 2x^4y^2\\
& \ \ -9 x^3y^3 -2x^2y^4 +9 x^3y^2 + 9x^2y^3 - 2xy^4 -13x^2y^2 + 8x^2y + 8xy^2 \\
& \ \  - 4x^2 -16xy - 4y^2 +8x + 8y -4].
\end{align*}
The terms in square brackets whose total degrees in $x$ and $y$ are at least $2$ are divisible by $\pi^{8}$.  If the expression in square brackets is zero, then
$$8x + 8y - 4 \equiv 0 \ (\textrm{mod} \ \pi^{8}).$$
But the powers of $\pi$ dividing the terms in this expression are, respectively, $7, 7$ and $2$.  Thus, the left side is exactly divisible by $\pi^2$ and this congruence is impossible.  Hence we must have $(x - y)(xy - x - y) = 0$, so that $y = x$ or $y = \frac{x}{x-1}$.  Both expressions are anharmonic transformations in $a$ which fix the $j$-invariant.  Hence $j(\mathfrak{k}') = j(\mathfrak{k})$.  \medskip

If we apply similar reasoning to the relation $\tau_2 = \tau_2'$ we find that
\begin{align*}
&\frac{\tau_2 - \tau_2'}{2^7} = \frac{1}{x^2(x - 1)^2y^2(y - 1)^2}(y-x)(y-1+x)[4x^4y^4 - 8x^4y^3 - 8x^3y^4\\
& \ \ + 4x^4y^2 + 16x^3y^3 + 4x^2y^4 - 8x^3y^2 - 8x^2y^3 + 13x^2y^2 - 9x^2y - 9xy^2\\
& \ \   + 2x^2 + 9xy + 2y^2 - 2x - 2y].
\end{align*}
There are two terms in $x$ and $y$ (in square brackets) with minimum valuation; and these terms are $2x+2y$, each with valuation $5$.  The next smallest valuation is $8$, which occurs for the term $9xy$.  If the expression in square brackets is zero, we conclude that
$$2a + 2a' \equiv 0 \ (\textrm{mod} \ \pi^8),$$
giving that
$$a + a' \equiv 0 \ (\textrm{mod} \ \pi^7).$$
Now the same argument as in the case $\tau_0 = \tau_0'$, applied to the difference $j(\mathfrak{k}) - j(\mathfrak{k}')$, shows that
$$j(\mathfrak{k}) \equiv j(\mathfrak{k}') \ (\textrm{mod} \ \mathfrak{p}_1^4)$$
and hence $j(\mathfrak{k}) = j(\mathfrak{k}')$ and $\mathfrak{k} = \mathfrak{k}'$.  Otherwise, $y = x$ and $a = a'$ (since $a'$ cannot equal $1-a$), giving the same conclusion.  \medskip

It follows that the sets $S(\mathfrak{k}) = \{\tau_0, \tau_1, \tau_2\}$ are disjoint for different ideal classes $\mathfrak{k}$.  Since the conjugates over $K$ of a $\tau$-invariant for any of the ideals $\mathfrak{p}_1, \mathfrak{p}_2$, or $\mathfrak{p}_1\mathfrak{p}_2$ are also $\tau$-invariants for the same ideals, we conclude that the conjugates of any $\tau(\mathfrak{k}^*)$ for one of these ideals are distinct.  This proves the theorem.
\end{proof}

\section{The case $\frak{m} = (4)$.}
\label{sec:3}

In this section we assume $\mathfrak{m} = (4) = \mathfrak{p}_1^2 \mathfrak{p}_2^2$ and the discriminant $d_K \equiv 1$ (mod $8$).  We compute the ray class invariants for divisors of $\frak{m} = (4)$ using the Tate normal form $E_4$, on which $(0,0)$ has order $4$:
$$E_4: Y^2+XY+bY = X^3+b X^2,\ \ b = \frac{1}{\alpha^4} = \frac{\beta^4-16}{16\beta^4},$$
where $16\alpha^4+16\beta^4 = \alpha^4 \beta^4$ and $\mathbb{Q}(\alpha) = \mathbb{Q}(\beta) = \mathbb{Q}(\beta^4) = \Sigma = \textsf{K}_1$, as in \cite{lm}.
(These quantities are related to the quantities $\pi, \xi$ which appeared in the proofs of Lemma \ref{lem:1} and Theorem \ref{thm:2} by $\frac{\beta}{2} = \xi$ and $\frac{\beta}{\zeta_8^j \alpha} = \pi$.  See \cite{lm}, p. 1978.)  The arguments in \cite{lm} yield specific values of $\alpha$ and $\beta$ for which $E_4$ has complex multiplication by $R_K$ in this case.
The coefficients of the Weierstrass normal form of $E_4$ are
\begin{align*}
g_2 & = \frac{(\beta^4 + 4\beta^3 + 8\beta^2 - 16\beta + 16)(\beta^4 - 4\beta^3 + 8\beta^2 + 16\beta + 16)}{192\beta^8},\\
g_3 & = -\frac{(\beta^4 + 16)(\beta^2 + 4\beta - 4)(\beta^2 - 4\beta - 4)(\beta^4 + 24\beta^2 + 16)}{13824\beta^{12}};
\end{align*}
with
$$\Delta = \frac{(\beta - 2)^4(\beta + 2)^4(\beta^2 + 4)^4}{4096\beta^{20}}.$$
Thus,
\begin{align*}
j(E_4) = j(\beta) & = \frac{(\beta^4 + 4\beta^3 + 8\beta^2 - 16\beta + 16)^3(\beta^4 - 4\beta^3 + 8\beta^2 + 16\beta + 16)^3}{\beta^4(\beta - 2)^4(\beta + 2)^4(\beta^2 + 4)^4 },\\
j(\beta) - 1728 & = \frac{(\beta^4 + 16)^2(\beta^2 + 4\beta - 4)^2(\beta^2 - 4\beta - 4)^2(\beta^4 + 24\beta^2 + 16)^2}{\beta^4(\beta - 2)^4(\beta + 2)^4(\beta^2 + 4)^4}.
\end{align*}

We will see that here, the (primitive and nonprimitive) ray class invariants for divisors of $\mathfrak{m}$ which are not divisors of $(2)$ are roots of the polynomial
\begin{align*}
F(x,j(\mathfrak{k}))= & X^6 - 15j(j - 1728)X^4 + 40 j(j - 1728)^2X^3 - 45 j^2 (j - 1728)^2X^2 \\
& + 24j^2(j - 1728)^3X - j^2(j - 1728)^3(5j - 55296), \ \ j = j(\mathfrak{k}).
\end{align*}
This implies that the ray class invariants $\tau_i = \tau(\mathfrak{k}_i^*)$ for $\mathfrak{m} = \mathfrak{p}_1^2, \mathfrak{p}_2^2, \mathfrak{p}_1^2 \mathfrak{p}_2, \mathfrak{p}_1 \mathfrak{p}_2^2$ and $\mathfrak{m} = (4)$ satisfy 
\begin{equation}
\sum_{i=0}^5{\frac{1}{\tau_i}} = \frac{24}{5j(\mathfrak{k})-55296}.
\label{eqn:3.1}
\end{equation}
We label the $X$-coordinates of points of order $4$ on $E_4$ in order as
\begin{align*}
X_0 & = 0, \ X_1 = -\frac{(\beta - 2)(\beta^2 + 4)}{8\beta^3}, \ X_2 = -2b = -\frac{(\beta^4-16)}{8\beta^4},\\
X_3 & = X_1(i\beta) = \frac{-(\beta+2i)(\beta^2-4)}{8\beta^3}, \  X_4 = X_1(-\beta) = -\frac{(\beta + 2)(\beta^2 + 4)}{8\beta^3},\\
X_5 & = X_1(-i\beta) = \frac{-(\beta-2i)(\beta^2-4)}{8\beta^3}.
\end{align*}
See \cite[pp. 1971-1974]{lm}.  The corresponding ray class invariants are given by
$$\tau_i = -2^7 3^5 \frac{g_2 g_3}{\Delta}\left(X_i + \frac{4b+1}{12}\right), \ \ b = \frac{\beta^4-16}{16\beta^4}.$$
A calculation on Maple shows that these six quantities are the roots of the polynomial $F(X,j(\mathfrak{k}))$ given above.  In particular,
$$\tau_0 = -2^7 3^5 \frac{g_2 g_3}{\Delta}\frac{4b+1}{12} =  -2^7 3^5 \frac{g_2 g_3}{\Delta} \frac{5\beta^4 - 16}{48\beta^4}.$$
This gives
\begin{align}
\label{eqn:3.2} \frac{\tau_3-\tau_0}{\tau_0} & = -6\frac{\beta(\beta - 2)(\beta + 2)(\beta+2i)}{5\beta^4 - 16},\\
\label{eqn:3.3} \frac{\tau_5-\tau_0}{\tau_0} & = -6\frac{\beta(\beta - 2)(\beta + 2)(\beta-2i)}{5\beta^4 - 16}.
\end{align}
We also note that the map $\sigma: \beta \rightarrow 2\frac{\beta+2}{\beta-2}$ induces the permutation
$$\tau_0^\sigma = \tau_4, \ \ \tau_1^\sigma = \tau_2, \ \ \tau_3^\sigma = \tau_5.$$
By the results of \cite[Prop. 8.2]{lm}, $\sigma$ is an automorphism of $\textsf{K}_1 = \mathbb{Q}(\beta)$ (the same as the automorphism $\sigma$ in Lemma \ref{lem:1}, since $\beta = 2\xi$).  It is also clear that $\tau_0, \tau_1, \tau_2, \tau_4 \in \textsf{K}_1$, while $\tau_3, \tau_5 \in \medskip \textsf{K}_1(i) = \textsf{K}_\mathfrak{m}$, the ray class field of conductor $\mathfrak{m} = (4)$, in the case that $2 = \mathfrak{p}_1 \mathfrak{p}_2$. \medskip

Since
$$\frac{\tau_1-\tau_0}{\tau_2-\tau_0} = \frac{\beta}{\beta+2},$$
we see that $K(\tau_0, \tau_1, \tau_2) = \textsf{K}_1 = \Sigma$. \medskip

\begin{thm}
The invariants $\tau_0, \tau_1, \tau_2, \tau_4$ are the roots of the polynomial
\begin{align*}
G(X, &\tau_3, \tau_5) =  \ 4X^4 + 4(\tau_3 + \tau_5)X^3 -6 (\tau_3^2 +6\tau_3 \tau_5 +\tau_5^2)X^2\\
& + 4(\tau_3 + \tau_5)(\tau_3^2 + 5 \tau_3 \tau_5 + \tau_5^2)X- \tau_3^4 - 6\tau_3^3 \tau_5 - 6\tau_3^2 \tau_5^2 - 6\tau_3 \tau_5^3 - \tau_5^4.
\end{align*}
\label{thm:3}
\end{thm}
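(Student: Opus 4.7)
The six invariants $\tau_0,\ldots,\tau_5$ are by construction the roots of $F(X,j(\mathfrak{k}))$. Since $\tau_0,\tau_1,\tau_2,\tau_4\in\textsf{K}_1$ and $\tau_3,\tau_5$ are conjugate over $\textsf{K}_1$ (they are interchanged by $i\mapsto -i$ and lie in $\textsf{K}_1(i)=\textsf{K}_{\mathfrak{m}}$), we have the factorization
$$F(X,j(\mathfrak{k})) \;=\; \bigl[X^2-(\tau_3+\tau_5)X+\tau_3\tau_5\bigr]\,H(X),\qquad H(X)=\prod_{i\in\{0,1,2,4\}}(X-\tau_i),$$
with $H(X)\in\textsf{K}_1[X]$ monic of degree four. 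Theorem \ref{thm:3} is therefore equivalent to the polynomial identity $4\,F(X,j(\mathfrak{k})) = \bigl[X^2-(\tau_3+\tau_5)X+\tau_3\tau_5\bigr]\cdot G(X,\tau_3,\tau_5)$, i.e.\ to proving $4H = G$.

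\textbf{Execution.} Writing $q_1=\tau_3+\tau_5$ and $q_2=\tau_3\tau_5$, the formal polynomial long division of $F$ by $X^2-q_1X+q_2$ expresses every coefficient of $H$ as a polynomial in $q_1$, $q_2$, and $j$. Comparing with $G/4$ reduces $4H=G$ to a short list of scalar relations, beginning with
$$q_1^2+2q_2 \;=\; \tau_3^2+4\tau_3\tau_5+\tau_5^2 \;=\; 6\,j(j-1728),$$
obtained by equating the $X^4$ coefficients. The next comparison forces $q_1(q_1^2+4q_2) = -16\,j(j-1728)^2$, and the lower-degree coefficients produce analogous expressions for the higher symmetric functions of $\tau_3,\tau_5$ in terms of $j(j-1728)$ and the combination $5j-55296$ appearing in the constant term of $F$. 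Each such identity can be reduced, via the explicit formulas \eqref{eqn:3.2} and \eqref{eqn:3.3} for $\tau_3$ and $\tau_5$ as rational functions of $\beta$ (together with the formulas for $j(\beta)$ and $j(\beta)-1728$ in Section \ref{sec:3}), to a polynomial identity in $\beta$ that can be verified directly.

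\textbf{Main obstacle.} The central computational step is verifying the key relation $(\tau_3+\tau_5)^2+2\tau_3\tau_5 = 6\,j(j-1728)$, and its higher-degree analogues, from the $\beta$-formulas; this is the arithmetic input that forces $j$ to drop out of the coefficients of $H$ and leave a polynomial in $\tau_3,\tau_5$ alone. Once these identities are in hand, the occurrences of $j$ cancel throughout the long-division coefficients and $4H(X)$ takes exactly the form of $G(X,\tau_3,\tau_5)$ claimed in the theorem.
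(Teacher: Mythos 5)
Your argument is correct in outline and reaches the theorem by a genuinely different route from the paper. The paper never touches the sextic $F$: it eliminates $\beta$ between the two explicit relations \eqref{eqn:3.2} and \eqref{eqn:3.3} by a resultant, observes that the eliminant is $2^{26}3^4X^4G(X,\tau_3,\tau_5)$, concludes $G(\tau_0,\tau_3,\tau_5)=0$, transports this to $\tau_4$ via the automorphism $\sigma:\beta\mapsto 2(\beta+2)/(\beta-2)$ and the symmetry of $G$ in $\tau_3,\tau_5$, and then repeats the resultant computation with the analogous relations for $\tau_1$ to capture $\tau_1$ and $\tau_2$. You instead divide the known factorization $F=\prod_{i=0}^{5}(X-\tau_i)$ by $(X-\tau_3)(X-\tau_5)$ and match the quotient against $G/4$, which reduces the theorem to a handful of scalar identities in $q_1=\tau_3+\tau_5$, $q_2=\tau_3\tau_5$ and $j$; your first two, $q_1^2+2q_2=6j(j-1728)$ and $q_1(q_1^2+4q_2)=-16j(j-1728)^2$, are exactly what the coefficient comparison forces, so the bookkeeping is right, and the identities are true (they are equivalent to the asserted factorization). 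Your route buys the explicit values of the symmetric functions of $\tau_3,\tau_5$ in terms of $j$ and makes it structurally clear why $G$ has coefficients in $\textsf{K}_1$; its cost is that it leans on the sextic $F$ (itself only asserted by ``a calculation'' in the paper) and on about five polynomial identities in $\beta$, versus the paper's two ratios and one resultant evaluation used twice. Two points to tidy up before this counts as complete: \eqref{eqn:3.2}--\eqref{eqn:3.3} only give $(\tau_3-\tau_0)/\tau_0$ and $(\tau_5-\tau_0)/\tau_0$, so you must also invoke the displayed formula for $\tau_0$ in terms of $g_2g_3/\Delta$ and $\beta$ to realize $\tau_3,\tau_5$ as rational functions of $\beta$; and the deferred verification of the scalar identities is the entire content of the proof, so it must actually be carried out --- though it sits on the same footing as the paper's unexhibited resultant computation.
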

\noindent {\it Proof.} Taking the resultant
\begin{align*}
\textrm{Res}_\beta(&(5\beta^4 - 16)(\tau_3-X)+6\beta(\beta - 2)(\beta + 2)(\beta+2i)X,\\
 &(5\beta^4 - 16)(\tau_5-X)+6\beta(\beta - 2)(\beta + 2)(\beta-2i)X)
\end{align*}
yields the poloynomial
$$2^{26}3^4 X^4 G(X,\tau_3,\tau_5).$$
By the above relations \eqref{eqn:3.2} and \eqref{eqn:3.3} we see that $G(\tau_0,\tau_3, \tau_5) = 0$.  Applying the map $\sigma$ gives that $G(\tau_4, \tau_5, \tau_3) = G(\tau_4, \tau_3, \tau_5) = 0$, since the coefficients of $G$ are symmetric in $\tau_3$ and $\tau_5$.  Similarly, the relations
\begin{align*}
\frac{\tau_3-\tau_1}{\tau_1} & = \frac{12(1+i)\beta(\beta - 2)(\beta + 2i)}{\beta^4-12\beta^3+24\beta^2-48\beta+16},\\
\frac{\tau_5-\tau_1}{\tau_1} & = \frac{12(1-i)\beta(\beta - 2)(\beta-2i)}{\beta^4-12\beta^3+24\beta^2-48\beta+16},
\end{align*}
and the resultant
\begin{align*}
\textrm{Res}_\beta&((\beta^4-12\beta^3+24\beta^2-48\beta+16)(\tau_3-X)-12(1+i)\beta(\beta - 2)(\beta+2i)X,\\
 & (\beta^4-12\beta^3+24\beta^2-48\beta+16)(\tau_5-X)-12(1-i)\beta(\beta - 2)(\beta-2i)X)\\
 & = 2^{26}3^4 X^4 G(X,\tau_3,\tau_5)
\end{align*}
yield that $\tau_1$ is a root of $G(X, \tau_3, \tau_5)$.  Applying $\sigma$ yields that $\tau_2$ is also a root.  This proves the theorem. $\square$ \bigskip

Note that
$$\textrm{disc}(G(X,\tau_3,\tau_5)) = -2^8(\tau_3+5\tau_5)^3(5\tau_3+\tau_5)^3(\tau_3-\tau_5)^6.$$
Furthermore,
$$(\tau_3+5\tau_5)(5\tau_3+\tau_5) =(j(\beta)-1728) R(\beta),$$
where
$$R(\beta) = \frac{36(\beta^4 + 4\beta^3 + 8\beta^2 - 16\beta + 16)^2(\beta^4 - 4\beta^3 + 8\beta^2 + 16\beta + 16)^2}{\beta^4(\beta + 2)^4(\beta - 2)^4}$$
is zero exactly when $j(\beta) =0$, which is excluded since we are assuming $d_K \neq -3, -4$.  Thus, in all cases we are considering, $G(X, \tau_3, \tau_5)$ has distinct roots.  \medskip

This theorem implies Sugawara's conjecture for $\mathfrak{m} = \mathfrak{p}_1^2 \mathfrak{p}_2^2$.  Namely, the invariants $\tau_3, \tau_5$ are the ray class invariants corresponding to a given ideal class $\mathfrak{k}$, since these are the only invariants for which $K(j(\mathfrak{k}),\tau(\mathfrak{k^*})) = \textsf{K}_\mathfrak{m} = K(j(\mathfrak{k}),i)$.  The other four invariants lie in $\textsf{K}_1 = K(\beta)$.  For this note that $\frac{\varphi(\mathfrak{m})}{2} = 2$.  Now, if $\mathfrak{k}, \mathfrak{k}'$ are two ideal classes for which $\{\tau_3, \tau_5\} = \{\tau_3', \tau_5'\}$, then the theorem implies that $S = \{\tau_i: i = 0, 1, 2, 4\}$ coincides with $S' = \{\tau_i': i = 0, 1, 2, 4\}$.  Hence, \eqref{eqn:3.1} implies that $j(\mathfrak{k}) = j(\mathfrak{k}')$ and $\mathfrak{k} = \mathfrak{k}'$. \medskip

If $(2) = \mathfrak{p}$ and $\mathfrak{m} = \mathfrak{p}^2$, then $\varphi(\mathfrak{m}) = 12$, in which case the conjecture follows immediately from \eqref{eqn:2} in Section \ref{sec:1}.  \medskip

Finally, if $(2) = \mathfrak{p}^2$, then $\mathfrak{m} = \mathfrak{p}^4$ and $\varphi(\mathfrak{m}) = 8 >6$, so this case also follows from \eqref{eqn:2} in Section \ref{sec:1}.  See \cite{su2}. \medskip

So far we have the following:

\begin{thm} For all divisors $\mathfrak{m}$ of the ideals $(2)$ or $(4)$, for which $\textsf{K}_\mathfrak{m} \neq \textsf{K}_1$, Sugawara's conjecture holds, namely, $\textsf{K}_\mathfrak{m} = K(\tau(\mathfrak{k}^*))$ is generated over the quadratic field $K$ by a single ray class invariant for the modulus $\mathfrak{m}$.
\label{thm:4}
\end{thm}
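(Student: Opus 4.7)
The plan is to prove Theorem~\ref{thm:4} by a case analysis on the splitting of $(2)$ in $K$, in each case enumerating the relevant divisors of $(2)$ or $(4)$ and invoking the appropriate earlier result. The substantive arithmetic has already been carried out in Sections~\ref{sec:2} and~\ref{sec:3}, so the task is essentially to classify each modulus by its splitting type and by whether the associated ray class field properly extends $\textsf{K}_1$.

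I would first handle the inert case $d_K \equiv 5 \pmod{8}$, in which $(2) = \mathfrak{p}$ and $(4) = \mathfrak{p}^2$. The divisors of $(4)$ yielding $\textsf{K}_\mathfrak{m} \neq \textsf{K}_1$ are $\mathfrak{m} = (2)$, covered by Theorem~\ref{thm:1}, and $\mathfrak{m} = (4)$, covered by Sugawara's criterion~\eqref{eqn:2}, since $\mathfrak{p}^2 \mid (4)$ and $\varphi((4)) = 12 \ge 6$.

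Next I would take up the ramified case $4 \mid d_K$ with $d_K < -4$, in which $(2) = \mathfrak{p}^2$. The modulus $\mathfrak{p}$ gives $\textsf{K}_\mathfrak{p} = \textsf{K}_1$ and is excluded; the modulus $(2) = \mathfrak{p}^2$ is handled by the remarks immediately following Theorem~\ref{thm:1}, where the cubic $F(X, j(\mathfrak{k}))$ of Section~\ref{sec:2} factors into a linear and an irreducible quadratic over $K(j(\mathfrak{k}))$, and the relations~\eqref{eqn:2.1}--\eqref{eqn:2.2} separate distinct ideal classes; a degree computation shows $[\textsf{K}_{\mathfrak{p}^3}:\textsf{K}_1] = 2 = [\textsf{K}_{(2)}:\textsf{K}_1]$, so $\textsf{K}_{\mathfrak{p}^3} = \textsf{K}_{(2)}$ and the previous case applies; finally $\mathfrak{p}^4 = (4)$ is covered by~\eqref{eqn:2}, as $\varphi((4)) = 8 \ge 6$.

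In the split case $d_K \equiv 1 \pmod{8}$, with $(2) = \mathfrak{p}_1 \mathfrak{p}_2$ and $(4) = \mathfrak{p}_1^2 \mathfrak{p}_2^2$, I would first use the formula $[\textsf{K}_\mathfrak{m}:\textsf{K}_1] = \varphi(\mathfrak{m})/[R_K^* : R_K^*(\mathfrak{m})]$, together with the observation that $-1 \equiv 1 \pmod{\mathfrak{m}}$ precisely when $\mathfrak{m} \mid (2)$, to verify that every divisor $\mathfrak{m}$ of $(4)$ other than $(4)$ itself gives $\textsf{K}_\mathfrak{m} = \textsf{K}_1$ and is therefore excluded. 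The remaining case $\mathfrak{m} = (4)$ is precisely Theorem~\ref{thm:3} combined with the trace identity~\eqref{eqn:3.1}: the six values $\tau_0, \ldots, \tau_5$ are roots of the polynomial $F(X, j(\mathfrak{k}))$ displayed in Section~\ref{sec:3}; Theorem~\ref{thm:3} shows that the primitive pair $\{\tau_3, \tau_5\}$ determines the other four; and~\eqref{eqn:3.1} then recovers $j(\mathfrak{k})$ from the six invariants, forcing $\mathfrak{k} = \mathfrak{k}'$. The main obstacle anywhere in this argument is bookkeeping rather than new ideas; all the substantive work lives in Theorems~\ref{thm:1} and~\ref{thm:3}.
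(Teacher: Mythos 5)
Your proposal is correct and takes essentially the same route as the paper: Theorem~\ref{thm:1} and the remarks following it for the divisors of $(2)$, Theorem~\ref{thm:3} together with \eqref{eqn:3.1} for $\mathfrak{m}=(4)$ in the split case, and Sugawara's condition \eqref{eqn:2} for $\mathfrak{m}=(4)$ in the inert and ramified cases, with the splitting-type bookkeeping matching the paper's. The one place you go beyond the paper is the modulus $\mathfrak{p}^3$ when $(2)=\mathfrak{p}^2$, which the paper sets aside because it is not a conductor (see the end of Section~\ref{sec:4}); your reduction ``$\textsf{K}_{\mathfrak{p}^3}=\textsf{K}_{(2)}$, so the previous case applies'' is in the right spirit but, taken literally, would still require checking that the primitive $\mathfrak{p}^3$-division values (which are not the $\mathfrak{p}^2$-division values) separate ideal classes.
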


\section{The case $\mathfrak{m} = \wp_2^3$.}
\label{sec:4}

In this section we consider the case $d_K \equiv 1$ (mod $8$) and $\mathfrak{m} = \wp_2'^3$, to align with the notation in \cite{am1}.  (It involves no loss of generality to replace the ideal $\wp_2^3$ in \eqref{eqn:4} by $\wp_2'^3$.) \medskip

On the same curve we appealed to in Section \ref{sec:3},
$$E_4(b): \ Y^2+XY+bY = X^3+b X^2, \ \ b = \frac{1}{16}-\frac{1}{\beta^4},$$
the point $Q=(-2b,2b\beta_1 \beta_3)$ has order $4$, where
$$\beta_1 \beta_3 = \frac{\beta+2i}{2\beta} \frac{\beta-2i}{2\beta} = \frac{\beta^2+4}{4\beta^2}.$$
Note that $\xi = \beta/2$ and $\pi = \beta/(\zeta_8^j \alpha)$ (for some odd integer $j$; see \cite[pp. 1978, 1984]{lm}) satisfy $\pi^4 + \xi^4 =1$ and are the same quantities that we encountered in the proof of Lemma \ref{lem:1} in Section \ref{sec:2}.  \medskip

If $P=(x,y)$ is a point on $E_4(b)$ for which $2P = Q$, then $P$ has order $8$.  Hence, if $x=X(P)$, we have
$X(2P) = -2b$ or
\begin{equation*}
0 = \frac{(x^4 - bx^2 - 2b^2 x - b^3)}{(x + b)(4x^2 + x + b)} +2b = \frac{x^4 + 8bx^3 + b(8b + 1)x^2 + 2b^2 x + b^3}{(x + b)(4x^2 + x + b)}.
\end{equation*}
We now solve
$$f_8(x) = x^4 + 8bx^3 + b(8b + 1)x^2 + 2b^2 x + b^3 = 0,$$
with $b = \frac{1}{16}-\frac{1}{\beta^4} = \frac{1}{16}-\frac{1}{16\xi^4}= \frac{\pi^4}{16(\pi^4-1)}$.  We have
\begin{align*}
f_8(x-2b) &= x^4 + (-16b^2 + b)x^2 + (32b^3 - 2b^2)x - 16b^4 + b^3\\
& = x^4+px^2+qx+r,
\end{align*}
with
$$p = \frac{(\xi^4 - 1)}{16\xi^8}, \ \ q = -\frac{(\xi^4 - 1)^2}{128\xi^{12}}, \ \ r = \frac{(\xi^4 - 1)^3}{4096\xi^{16}}.$$
Then the cubic resolvent of $f_8(x)$ factors completely:
$$x^3 - 2px^2 + (p^2 - 4r)x + q^2=(x-\theta_1)(x-\theta_2)(x-\theta_3),$$
with roots
\begin{align*}
\theta_1 & = \frac{(\xi^4 - 1)}{16\xi^8} = -\frac{\pi^4}{16\xi^8},\\
\theta_2 & = -\frac{(\xi^2 + 1)(\xi^2 - 1)^2}{32\xi^8} = -\frac{\pi^4(1-\xi^2)}{32\xi^8},\\
\theta_3 & = \frac{(\xi^2 - 1)(\xi^2 + 1)^2}{32\xi^8} = -\frac{\pi^4(\xi^2+1)}{32\xi^8}.
\end{align*}
\bigskip
Thus, the roots of $x^4+px^2+qx+r=0$ are (by \cite[p. 182]{vdw})
\begin{align*}
x_1 & = \frac{1}{2}\left(\sqrt{-\theta_1}+\sqrt{-\theta_2} + \sqrt{-\theta_3}\right)\\
& = \frac{\pi^2}{8\xi^4} + \frac{\pi^2\sqrt{2} \sqrt{1-\xi^2}}{16\xi^4} + \frac{\pi^2 \sqrt{2} \sqrt{\xi^2+1}}{16\xi^4};\\
x_2 & = \frac{1}{2}\left(\sqrt{-\theta_1}-\sqrt{-\theta_2} - \sqrt{-\theta_3}\right)\\
& = \frac{\pi^2}{8\xi^4} - \frac{\pi^2\sqrt{2} \sqrt{1-\xi^2}}{16\xi^4} - \frac{\pi^2 \sqrt{2} \sqrt{\xi^2+1}}{16\xi^4};\\
 x_3 & = \frac{1}{2}\left(-\sqrt{-\theta_1}+\sqrt{-\theta_2} - \sqrt{-\theta_3}\right)\\
& = -\frac{\pi^2}{8\xi^4} + \frac{\pi^2\sqrt{2} \sqrt{1-\xi^2}}{16\xi^4} - \frac{\pi^2 \sqrt{2} \sqrt{\xi^2+1}}{16\xi^4};\\
x_4 & =\frac{1}{2}\left(-\sqrt{-\theta_1}-\sqrt{-\theta_2} + \sqrt{-\theta_3}\right)\\
& = -\frac{\pi^2}{8\xi^4} - \frac{\pi^2\sqrt{2} \sqrt{1-\xi^2}}{16\xi^4} + \frac{\pi^2 \sqrt{2} \sqrt{\xi^2+1}}{16\xi^4}.
\end{align*}
These roots correspond to points $P_i$ for which $X(P_i) = x_i-2b$ are the roots of $f_8(x) = 0$.
Since
$$x_1+x_2 =  \frac{\pi^2}{4\xi^4}, \ \ x_1 x_2 = \frac{\pi^6}{64\xi^8},$$
$x_1$ and $x_2$ are roots of
$$m(X) = X^2- \frac{\pi^2}{4\xi^4}X+\frac{\pi^6}{64\xi^8} \in \Sigma[X].$$
Similarly, $x_3$ and $x_4$ are roots of
$$\tilde{m}(x) = X^2+\frac{\pi^2}{4\xi^4}X - \frac{\pi^6}{64\xi^8} = X^2+(x_1 + x_2) X-x_1x_2.$$
The discriminant of $m(X)$ is
$$\textrm{disc}(m(X)) = \frac{\pi^4}{16\xi^8}-4\frac{\pi^6}{64\xi^8} = \frac{\pi^4(1-\pi^2)}{16\xi^8},$$
so the roots of $m(X)$ lie in $\Sigma(\sqrt{1-\pi^2})$.  Furthermore, $\sqrt{1-\pi^2} \sqrt{1+\pi^2} = \sqrt{1-\pi^4} = \xi^2$, 
so $\Sigma(\sqrt{1-\pi^2}) = \Sigma(\sqrt{1+\pi^2}) = \textsf{K}_{\wp_2'^3}=F$, by \cite[Thm. 1]{am1}.  Similarly,
$$\textrm{disc}(\tilde{m}(X)) = \frac{\pi^4}{16\xi^8}+4\frac{\pi^6}{64\xi^8} = \frac{\pi^4(1+\pi^2)}{16\xi^8},$$
and a root of $\tilde{m}(x)$ also generates $F/\Sigma$. \medskip

Note the relations
\begin{align*}
X(P_1)+X(P_2) &= x_1+x_2-4b = \frac{\pi^2}{4\xi^4} -\frac{4\pi^4}{16(\pi^4 - 1)}\\
& = \frac{-\pi^2- \pi^4}{4(\pi^4 - 1)}\\
& = -\frac{\pi^2}{4(\pi^2-1)};\\
X(P_3) +X(P_4) & = x_3+x_4-2b = -\frac{\pi^2}{4\xi^4} -\frac{4\pi^4}{16(\pi^4 - 1)}\\
& = \frac{\pi^2- \pi^4}{4(\pi^4 - 1)}\\
& = -\frac{\pi^2}{4(\pi^2+1)}.
\end{align*}
Setting $\lambda = -2^7 3^5 \frac{g_2 g_3}{\Delta}$, it follows that
\begin{align*}
\tau(\mathfrak{k}_1^*) + \tau(\mathfrak{k}_2^*) &= \lambda \left(X(P_1)+X(P_2) +2\frac{4b+1}{12}\right)\\
& =  \lambda \left(\frac{-\pi^2- \pi^4}{4(\pi^4 - 1)} + \frac{1}{6} \left(\frac{\pi^4}{4(\pi^4-1)}+1\right)\right)\\
& = -\lambda \frac{\pi^4+6\pi^2+4}{24(\pi^4-1)},
\end{align*}
and
\begin{align*}
\tau(\mathfrak{k}_3^*) + \tau(\mathfrak{k}_4^*) &= \lambda \left(X(P_3)+X(P_4) +2\frac{4b+1}{12}\right)\\
& =  \lambda \left(\frac{\pi^2- \pi^4}{4(\pi^4 - 1)} + \frac{1}{6} \left(\frac{\pi^4}{4(\pi^4-1)}+1\right)\right)\\
& = -\lambda \frac{\pi^4-6\pi^2+4}{24(\pi^4-1)}.
\end{align*}
This gives that
\begin{equation}
\label{eqn:4.1} \frac{\tau(\mathfrak{k}_3^*) + \tau(\mathfrak{k}_4^*)}{\tau(\mathfrak{k}_1^*) + \tau(\mathfrak{k}_2^*)} = \frac{\pi^4-6\pi^2+4}{\pi^4+6\pi^2+4}.
\end{equation}
Now I claim that the pairs $\{\tau(\mathfrak{k}_1^*), \tau(\mathfrak{k}_2^*)\}$ and $\{\tau(\mathfrak{k}_3^*), \tau(\mathfrak{k}_4^*)\}$ are the $\tau$-invariants for the ideals $\mathfrak{m}_1 = \wp_2'^3$ and $\mathfrak{m}_2 = \wp_2'^3 \wp_2$.  They cannot be the invariants for proper divisors of these ideals, because each of these pairs generates $\textsf{K}_{\wp_2'^3}$ over $\Sigma$, and the invariants for $\wp_2'^2$ and $\wp_2'^2 \wp_2$ lie in $\Sigma$, since their $\varphi$-values divided by $2$ are both $1$.  They cannot be the invariants for larger ideals dividing $(8)$, such as $\mathfrak{m}' = \wp_2'^3 \wp_2^2$, because this ideal satisfies Sugawara's condition \eqref{eqn:2} and $\varphi(\wp_2'^3 \wp_2^2) = 8 > 6$, so that the $\tau$-invariant for $\mathfrak{m}'$ generates $\textsf{K}_{\mathfrak{m}'}$, which has degree $4$ over $\Sigma$.  Furthermore, the field $\textsf{K}_{\wp_2^3}$ is disjoint from $\textsf{K}_{\wp_2'^3}$ over $\Sigma$.  Finally, $\mathfrak{m}_1$ and $\mathfrak{m}_2$ are the only two ideals dividing $(8)$ whose corresponding conductors are $\wp_2'^3$, and each of these ideals has two invariants.  Thus, since $m(x)$ and $\tilde{m}(x)$ are quadratic and irreducible over $\Sigma$, by the above calculations, each of the aforementioned pairs is a pair of conjugates over $\Sigma$ and corresponds to one of the ideals $\mathfrak{m}_1 = \wp_2'^3$ and $\mathfrak{m}_2 = \wp_2'^3 \wp_2$. 
\medskip

These considerations allow us to prove Sugawara's conjecture for the ideal $\mathfrak{m}_1$.  Suppose the polynomial $T_{\mathfrak{m}_1}(X,\mathfrak{k})$ equals $T_{\mathfrak{m}_1}(X,\bar{\mathfrak{k}})$ for two ideal classes $\mathfrak{k}, \bar{\mathfrak{k}}$.  Then there is an automorphism $\sigma$ of $\Sigma/K$ for which $j(\mathfrak{k})^\sigma = j(\bar{\mathfrak{k}})$, but $T_{\mathfrak{m}_1}(X,\mathfrak{k})^\sigma = T_{\mathfrak{m}_1}(X,\bar{\mathfrak{k}})$.  Since $\sigma$ fixes $K$, it also fixes the ideals $\mathfrak{m}_1$ and $\mathfrak{m}_2$.  Let $\psi$ be an extension of $\sigma$ to the field $\Sigma_{\wp_2'^3}$.  Then we have either that
\begin{align*}
\{\tau(\mathfrak{k}_1^*), \tau(\mathfrak{k}_2^*)\}^\psi &= \{\tau(\bar{\mathfrak{k}}_1^*), \tau(\bar{\mathfrak{k}}_2^*)\},\\
\{\tau(\mathfrak{k}_3^*), \tau(\mathfrak{k}_4^*)\}^\psi &= \{\tau(\bar{\mathfrak{k}}_3^*), \tau(\bar{\mathfrak{k}}_4^*)\}
\end{align*}
or
\begin{align*}
\{\tau(\mathfrak{k}_1^*), \tau(\mathfrak{k}_2^*)\}^\psi &= \{\tau(\bar{\mathfrak{k}}_3^*), \tau(\bar{\mathfrak{k}}_4^*)\},\\
\{\tau(\mathfrak{k}_3^*), \tau(\mathfrak{k}_4^*)\}^\psi &= \{\tau(\bar{\mathfrak{k}}_1^*), \tau(\bar{\mathfrak{k}}_2^*)\}.
\end{align*}
Hence, denoting the conjugate of $\pi$ corresponding to $ j(\bar{\mathfrak{k}})$ by $\bar{\pi} = \pi^\sigma$, (\ref{eqn:4.1}) yields two possibilities:
\begin{equation}
\label{eqn:4.2} \frac{\pi^4-6\pi^2+4}{\pi^4+6\pi^2+4} = \frac{\bar{\pi}^4-6\bar{\pi}^2+4}{\bar{\pi}^4+6\bar{\pi}^2+4}
\end{equation}
or
\begin{equation}
\label{eqn:4.3} \frac{\pi^4-6\pi^2+4}{\pi^4+6\pi^2+4} = \frac{\bar{\pi}^4+6\bar{\pi}^2+4}{\bar{\pi}^4-6\bar{\pi}^2+4}.
\end{equation}
The possibility \eqref{eqn:4.2} implies that
$$\frac{\pi^2+\frac{4}{\pi^2}-6}{\pi^2+\frac{4}{\pi^2}+6} = \frac{\bar{\pi}^2+\frac{4}{\bar{\pi}^2}-6}{\bar{\pi}^2+\frac{4}{\bar{\pi}^2}+6}.$$
Since both sides of this equation are linear fractional, this gives that
\begin{equation}
\label{eqn:4.4} \pi^2+\frac{4}{\pi^2} = \bar{\pi}^2+\frac{4}{\bar{\pi}^2}.
\end{equation}
However,
$$x^2+\frac{4}{x^2}-y^2-\frac{4}{y^2} = \frac{(xy - 2)(xy + 2)(-y + x)(x + y)}{x^2y^2},$$
and \eqref{eqn:4.4} implies that $\bar{\pi} $ equals one of $\pi, -\pi, 2/\pi$ or $-2/\pi$.  The last two are impossible, since $\bar{\pi} \cong \wp_2$ and $2/\pi \cong \wp_2'$.  if $\bar{\pi} = -\pi$, then $\pi$ and $-\pi$ would be conjugates and $b_d(x)$, the minimal polynomial of $\pi$, would satisfy $b_d(-x) = b_d(x)$ and be a polynomial in $x^2$.  But then $\pi^2$ would have degree less than the degree of $\pi$ over $\mathbb{Q}$, contradicting the fact that $\mathbb{Q}(\pi^2) = \mathbb{Q}(\pi^4) = \Sigma$.  (See \cite[Thm. 8.1]{lm}.)  Hence, \eqref{eqn:4.2} implies that $\bar{\pi} = \pi$.  \medskip

To show \eqref{eqn:4.3} cannot happen, we note the identity
$$\frac{x^2 - 6x + 4}{x^2 + 6x + 4} - \frac{y^2 + 6y + 4}{y^2 - 6y + 4} = -12\frac{(x + y)(xy + 4)}{(x^2 + 6x + 4)(y^2 - 6y + 4)}.$$
Thus, \eqref{eqn:4.3} would imply that $\bar{\pi}^2 = -\pi^2$ or $-4/\pi^2$, and both are impossible because $i = \sqrt{-1} \notin \Sigma$. \medskip

Now, $\bar{\pi} = \pi$ implies that $\bar{\beta} = 2\bar{\xi} = \pm 2\xi = \pm \beta$; and the fact that the $j$-invariant $j(E_4)$ is a rational function in $\beta^2$ (see Section \ref{sec:3}) shows that $j(\bar{\mathfrak{k}}) = j(\mathfrak{k})$.  This proves that $T(X,\mathfrak{k}) = T(X,\bar{\mathfrak{k}})$ can only happen if $j(\mathfrak{k}) = j(\bar{\mathfrak{k}})$ and therefore $\mathfrak{k} = \bar{\mathfrak{k}}$.  Therefore, Sugawara's conjecture holds for the ideals $\mathfrak{m} = \wp_2'^3$ and (by complex conjugation) $\wp_2^3$. \medskip

With this, we have proved Sugawara's conjecture for all four possibilities in the first line of \eqref{eqn:4}.  Note that if $(2) = \wp_2^2$, then $\varphi(\wp_2^2) = 2 = \varphi(\wp_2^3)/2$, so $\mathfrak{m} = \wp_2^3$ is not the conductor of $\textsf{K}_\mathfrak{m} = \textsf{K}_{\wp_2^2}$.  Hence, we do not have to consider this case when $2$ is ramified.  \medskip

\noindent {\bf Remark.} We note the cross-ratio
\begin{align*}
\frac{(x_1-x_3)(x_2-x_4)}{(x_1-x_4)(x_2-x_3)} &= \frac{\xi^6 + 2\pi^4 + \xi^4 - \xi^2 - 1}{-\xi^6 + 2\pi^4 + \xi^4 + \xi^2 - 1}\\
& = \frac{\xi^6 + \pi^4 - \xi^2}{-\xi^6 + \pi^4 + \xi^2}\\
& =  \frac{\xi^6 + 1-\xi^4 - \xi^2}{-\xi^6 + 1-\xi^4 + \xi^2}\\
& = \frac{(\xi^2 + 1)(\xi - 1)^2(\xi + 1)^2}{-(\xi - 1)(\xi + 1)(\xi^2 + 1)^2} = \frac{1-\xi^2}{1+\xi^2}.
\end{align*}
Since the $\tau$-invariants corresponding to the points $P_i$ are
\begin{align*}
\tau(\mathfrak{k}^*_i) &= -2^7 3^5\frac{g_2 g_3}{\Delta}\left(X(P_i) + \frac{4b+1}{12}\right)\\
& = -2^7 3^5\frac{g_2 g_3}{\Delta}\left(x_i-2b + \frac{4b+1}{12}\right),
\end{align*}
the cross-ratio of the $x_i$ equals the cross-ratio of the invariants $\tau(\mathfrak{k}^*_i)$.

\section{The case $\frak{m} = (3)$.}
\label{sec:5}

For the next three sections we work on the Deuring normal form of an elliptic curve:
$$E_3: \ Y^2+\alpha X Y+Y = X^3.$$
The points $P_1 = (0,0)$ and
\begin{equation}
\label{eqn:5.1} P_2 = \left(\frac{-3\beta}{\alpha(\beta-3)},\frac{\beta-3\omega^i}{\beta-3}\right),
\end{equation}
where $27\alpha^3+27\beta^3=\alpha^3 \beta^3$ and $\omega=\frac{-1+\sqrt{-3}}{2}$, are points of order $3$ on $E_3$.  For any discriminant $d_K \equiv 1$ (mod $3$), there are $\alpha, \beta$ which generate (separately) the Hilbert class field $\Sigma$ over $\mathbb{Q}$, and for which $E_3$ has complex multiplication by $R_K$. See \cite{m1}. Let $(3) = \wp_3 \wp_3' = \mathfrak{p}_1 \mathfrak{p}_2$ in $R_K$.  In addition $(\alpha, 3) = \wp_3'$ and $(\beta, 3) = \wp_3$.  \medskip

The Weierstrass normal form of the curve $E_3$ is
$$E': \ Y^2 = 4X^3 -g_2 X - g_3,$$
where
$$g_2= \frac{1}{12}(\alpha^4-24\alpha), \ \ g_3 = \frac{-1}{216}(\alpha^6-36 \alpha^3+216);$$
and $\Delta = \alpha^3 -27$.  Thus, 
$$j(E_3) = \frac{\alpha^3(\alpha^3-24)^3}{\alpha^3-27} = \frac{\beta^3(\beta^3+216)}{(\beta^3-27)^3} = j(\mathfrak{k}),$$
for some ideal class $\mathfrak{k}$, where the expression in $\beta$ is obtained using $\alpha^3 = \frac{27\beta^3}{\beta^3-27}$.  The ray class invariant
$$\tau(\mathfrak{k}^*) = -2^7 3^5 \frac{g_2 g_3}{\Delta}\left(X(P)+\frac{\alpha^2}{12}\right)$$
is the invariant for a suitable ray class $\mathfrak{k}^*$ for the modulus $\mathfrak{m} = \mathfrak{p}_1, \mathfrak{p}_2$, or $(3) = \mathfrak{p}_1 \mathfrak{p}_2$ in $K$.  Let
\begin{align*}
\tau_1 &= -2^7 3^5 \frac{g_2 g_3}{\Delta}\left(0+\frac{\alpha^2}{12}\right)\\
&= \frac{\alpha^3 (\alpha^3 - 24)(\alpha^6 - 36\alpha^3 + 216)}{\alpha^3-27},\\
\tau_2 &=\tau(\mathfrak{k}^*) = -2^7 3^5 \frac{g_2 g_3}{\Delta}\left(\frac{-3\beta}{\alpha(\beta-3)}+\frac{\alpha^2}{12}\right)\\
& = \frac{(\alpha^3-24)(\alpha^6-36\alpha^3+216)}{\alpha^3-27} \frac{\alpha^3(\beta-3)-36\beta}{(\beta-3)}
\end{align*}
be the $\tau$-invariants for the points $P_1$ and $P_2$  in \eqref{eqn:5.1}.  These two invariants clearly lie in $\Sigma$, so they are the invariants for the ideals $\mathfrak{p}_1, \mathfrak{p}_2$.
Replacing $\beta$ by $\omega \beta$, respectively $\omega^2 \beta$ results in replacing the point $P$ by the points
\begin{align*}
P_3 &= \left(\frac{-3\omega \beta}{\alpha(\omega \beta-3)},\frac{\omega \beta-3\omega^i}{\omega \beta-3}\right),\\
P_4 &= \left(\frac{-3\omega^2 \beta}{\alpha(\omega^2 \beta-3)},\frac{\omega^2 \beta-3\omega^i}{\omega^2 \beta-3}\right);
\end{align*}
which also lie in $E_3[3]$.  The corresponding $\tau$-invariants are
\begin{align*}
\tau_3 &= \frac{(\alpha^3-24)(\alpha^6-36\alpha^3+216)}{\alpha^3-27} \frac{\alpha^3(\omega \beta-3)-36\omega \beta}{(\omega \beta-3)},\\
\tau_4 &= \frac{(\alpha^3-24)(\alpha^6-36\alpha^3+216)}{\alpha^3-27} \frac{\alpha^3(\omega^2\beta-3)-36\omega^2\beta}{(\omega^2\beta-3)},
\end{align*}
which lie in $\textsf{K}_{3} = \Sigma(\omega)$ and are conjugate over $\Sigma$.  These are the invariants for $\mathfrak{m} = (3)$.  Replacing $\alpha^3$ by $\frac{27\beta^3}{\beta^3-27}$ in the above expressions yields the following formulas for $\tau_3, \tau_4$ in terms of $\beta$:
\begin{align*}
\tau_3 &= \frac{\beta(\beta^3+216)(\beta^6-540\beta^3-5832)}{(\beta^3-27)^3} \frac{(\beta^3 \omega+9\beta^2-108\omega)}{\omega\beta-3},\\
\tau_4 &=  \frac{\beta(\beta^3+216)(\beta^6-540\beta^3-5832)}{(\beta^3-27)^3} \frac{(\beta^3 \omega^2+9\beta^2-108\omega^2)}{\omega^2 \beta-3}.
\end{align*}
\medskip

A computation shows that
$$\frac{\tau_2-\tau_1}{\tau_1} = \frac{12}{\alpha^2}X(P_2) = \frac{-36\beta}{\alpha^3(\beta-3)}.$$
Replacing $\beta$ by $\omega \beta$ and $\omega^2 \beta$ yields
\begin{align*}
\frac{\tau_3-\tau_1}{\tau_1} &= \frac{12}{\alpha^2}X(P_3) = \frac{-36\beta}{\alpha^3(\beta-3\omega^2)}\\
\frac{\tau_4-\tau_1}{\tau_1} &= \frac{12}{\alpha^2}X(P_4) = \frac{-36\beta}{\alpha^3(\beta-3\omega)}.
\end{align*}
Taking quotients yields that
$$\frac{\tau_2 - \tau_1}{\tau_3-\tau_1} = \frac{\beta-3\omega^2}{\beta-3}, \ \ \frac{\tau_2 - \tau_1}{\tau_4-\tau_1} = \frac{\beta-3\omega}{\beta-3}.$$
These are the $Y$-coordinates of the points $P_2, -P_2$.  Replacing $\beta$ again by $\omega^i \beta$ for $i = 1,2$ shows that the $Y$-coordinates of all points in $E_3[3]$ are contained in $\textsf{K} = \mathbb{Q}(\tau_1, \tau_2, \tau_3, \tau_4)$. \medskip

Multiplying the two $Y$ coordinates above shows that
$$\frac{\beta-3\omega^2}{\beta-3} \frac{\beta-3\omega}{\beta-3} = \frac{\beta^2+3\beta+9}{(\beta-3)^2} \in \textsf{K}.$$
Now, by the equation for $E_3$, we have for $P_2=(x,y)$ that $y^2+y = x^3-\alpha x y.$
However, using $\alpha^3 = \frac{27\beta^3}{\beta^3-27}$ yields by \eqref{eqn:5.1} that
$$x^3 = \frac{-27\beta^3}{\alpha^3(\beta-3)^3} = -\frac{\beta^2+3\beta+9}{(\beta-3)^2} \in \textsf{K}.$$
This implies by the equation for $E_3$ that $\alpha x = \frac{-3\beta}{\beta-3} \in \textsf{K}$.  This gives, finally, that $\beta \in \textsf{K}$, and the above formulas imply that $j(\mathfrak{k}), \omega \in \textsf{K}$, as well.  Hence $\textsf{K} = \textsf{K}_3$.

\begin{thm}
The ray class field $\textsf{K}_3$ over $K = \mathbb{Q}(\sqrt{d_K})$, with $d_K \equiv 1$ (mod $3$), is generated over $\mathbb{Q}$ by the ray class invariants $\tau(\mathfrak{k}^*)$ for the divisors $\mathfrak{m} \neq 1$ of $(3)$ corresponding to any single absolute ideal class $\mathfrak{k}$.
\label{thm:5}
\end{thm}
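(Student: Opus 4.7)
The plan is to formalize the computation carried out in the discussion immediately preceding the theorem, which identifies the field $\textsf{K} := \mathbb{Q}(\tau_1,\tau_2,\tau_3,\tau_4)$ with $\textsf{K}_3$. The inclusion $\textsf{K}\subseteq \textsf{K}_3$ is immediate: $\tau_1$ and $\tau_2$ lie in $\Sigma$ because $\varphi(\wp_3)/2=\varphi(\wp_3')/2=1$, while $\tau_3,\tau_4$ lie in $\Sigma(\omega)=\textsf{K}_3$ as Galois conjugates over $\Sigma$.

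For the reverse inclusion I would proceed in three steps. First, I would record the two ratios $\frac{\tau_2-\tau_1}{\tau_3-\tau_1}=\frac{\beta-3\omega^2}{\beta-3}$ and $\frac{\tau_2-\tau_1}{\tau_4-\tau_1}=\frac{\beta-3\omega}{\beta-3}$, both of which lie in $\textsf{K}$ by construction. Second, I would multiply these to obtain the symmetric quantity $\frac{\beta^2+3\beta+9}{(\beta-3)^2}\in \textsf{K}$, identify it with $-x^3$ using the formula $x^3=-27\beta^3/(\alpha^3(\beta-3)^3)$ together with $\alpha^3=27\beta^3/(\beta^3-27)$, and then substitute into the Deuring equation $y^2+\alpha xy+y=x^3$ evaluated at $P_2$ to force $\alpha x=-3\beta/(\beta-3)\in \textsf{K}$. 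Solving this linear relation for $\beta$ yields $\beta\in \textsf{K}$. Third, since $\beta$ generates $\Sigma$ over $\mathbb{Q}$ by the results of \cite{m1}, we obtain $\Sigma\subseteq\textsf{K}$; then either of the original ratios, viewed as a linear expression in $\omega$ with coefficients in $\textsf{K}$, gives $\omega\in\textsf{K}$. Hence $\textsf{K}_3=\Sigma(\omega)\subseteq\textsf{K}$, completing the equality.

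No step here is intrinsically difficult, since the heavy algebraic computation has already been carried out above; the real content of the theorem is that the two conjugate invariants $\tau_3,\tau_4$ supply enough information to separate $\omega$ from $\beta$. If there is a subtle point, it is the use of the curve equation to pass from the symmetric expression $\frac{\beta^2+3\beta+9}{(\beta-3)^2}$, which only determines $\beta^3$ up to a factor, to $\beta$ itself, thereby circumventing the cube-root ambiguity $\beta\mapsto \omega^j\beta$ that would otherwise obstruct the descent from the $j$-invariant down to $\beta$.
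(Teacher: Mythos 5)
Your proposal is correct and follows essentially the same route as the paper: the paper's own argument is exactly the computation you describe — forming the ratios $\frac{\tau_2-\tau_1}{\tau_3-\tau_1}$ and $\frac{\tau_2-\tau_1}{\tau_4-\tau_1}$, multiplying them to get $\frac{\beta^2+3\beta+9}{(\beta-3)^2}=-x^3\in\textsf{K}$, using the Deuring equation at $P_2$ to recover $\alpha x=\frac{-3\beta}{\beta-3}\in\textsf{K}$ and hence $\beta$, and then extracting $\omega$ from the ratios. Your closing observation about circumventing the cube-root ambiguity via the curve equation is precisely the point of the paper's maneuver.
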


We also have
$$\frac{\tau_2}{\tau_1}+\frac{\tau_3}{\tau_1}+\frac{\tau_4}{\tau_1} = \frac{-1}{3\beta^2}((\beta+6)^2+(\beta+6\omega^2)^2+(\beta+6\omega)^2)=-1;$$
hence,
\begin{equation}
\label{eqn:5.2} \tau_2+\tau_3+\tau_4=-\tau_1.
\end{equation}
This shows that $\textsf{K}_3 = K(\tau_2, \tau_3, \tau_4)$.  It remains to show that $\tau_2 \in K(\tau_3, \tau_4)$ and that $\Sigma= K(\tau_1) = K(\tau_2)$. \medskip

Computing the other elementary functions of the $\tau_i$ on Maple yields:
\begin{align*}
\sum_{m \neq n}{\tau_m \tau_n} &= -6j(\mathfrak{k})(j(\mathfrak{k})-1728),\\
\sum_{m \neq n \neq l}{\tau_m \tau_n \tau_l} &= -8j(\mathfrak{k})(j(\mathfrak{k})-1728)^2,\\
\prod_{i=1}^4{\tau_i} &= -3j(\mathfrak{k})^2(j(\mathfrak{k})-1728)^2.
\end{align*}
Hence the polynomial satisfied by the $\tau_i$ is
\begin{align*}
F(X,j(\mathfrak{k})) &= X^4 -6j(\mathfrak{k})(j(\mathfrak{k})-1728)X^2+8j(\mathfrak{k})(j(\mathfrak{k})-1728)^2 X \\
& \ \ -3j(\mathfrak{k})^2(j(\mathfrak{k})-1728)^2,
\end{align*}
which gives the relation
\begin{equation}
\label{eqn:5.3} \sum_{i=1}^4{\frac{1}{\tau_i}} = \frac{8}{3j(\mathfrak{k})}.
\end{equation}
Using this relation we see that if the $\tau_i$ ($1 \le i \le 4$) are the same for two different ideal classes $\mathfrak{k}, \mathfrak{k}'$, then $j(\mathfrak{k}) = j(\mathfrak{k}')$, so that $\mathfrak{k} = \mathfrak{k}'$. \medskip

Note the cross ratio
\begin{equation}
\label{eqn:5.4} (\tau_4,\tau_3; \tau_2, \tau_1) = \frac{(\tau_4-\tau_2)(\tau_3-\tau_1)}{(\tau_3-\tau_2)(\tau_4-\tau_1)} = -\omega.
\end{equation}
From above we have
$$\frac{\tau_4-\tau_1}{\tau_3-\tau_1} = \frac{\beta-3\omega^2}{\beta-3\omega}.$$
Now, we use the fact that $\sigma_1: \beta \rightarrow \frac{3(\beta+6)}{\beta-3}$ is an automorphism of $\Sigma/\mathbb{Q}$ fixing $\omega$ (see \cite{m1}).  An easy computation using the formulas for the $\tau_i$ in terms of $\beta$ shows that $\sigma_1$ interchanges the pairs $(\tau_1, \tau_2), (\tau_3, \tau_4)$.  We obtain that
$$\frac{\tau_3-\tau_2}{\tau_4-\tau_2} = \frac{\sigma_1(\beta)-3\omega^2}{\sigma_1(\beta)-3\omega} =-\omega^2 \frac{\beta-3\omega}{\beta-3\omega^2};$$
from which the above cross-ratio follows.  The cross-ratio and \eqref{eqn:5.2} then yield the following equation satisfied by $x = \tau_2$:
\begin{align*}
0  & = (2\tau_3 + x + \tau_4)(\tau_4 - x) + \omega(2\tau_4 + x + \tau_3)(\tau_3 - x)\\
 & = \omega^2x^2 -2(\tau_3 + \omega \tau_4)x + \omega (\tau_3-\omega \tau_4)^2.
\end{align*} 
Multiplying by $\omega$ gives the equation
\begin{equation}
\label{eqn:5.5} x^2 -2(\omega \tau_3 + \omega^2 \tau_4)x + (\omega \tau_3-\omega^2 \tau_4)^2 = 0,
\end{equation}
whose discriminant is $16\tau_3 \tau_4$.  Multiplying this equation by the equation obtained by replacing $\omega$ by $\omega^2$ gives the following quartic equation:
\begin{align*}
G(x; \tau_3,\tau_4) = &x^4+2(\tau_3+\tau_4)x^3+(3\tau_3^2 - 8\tau_3 \tau_4 + 3\tau_4^2)x^2\\
& +2(\tau_3 + \tau_4)(\tau_3^2 - 5\tau_3 \tau_4 + \tau_4^2)x +(\tau_3^2 + \tau_3 \tau_4 + \tau_4^2)^2.
\end{align*}
The second root of the quadratic equation (\ref{eqn:5.5}) is $\tilde \tau = \left(\frac{\beta-6}{\beta}\right)^2 \tau_1$, since
\begin{equation}
\label{eqn:5.6} \tau_2 + \tilde \tau = 2(\omega \tau_3 + \omega^2 \tau_4).
\end{equation}

\noindent {\it Proof of Sugawara's conjecture for $\mathfrak{m} = (3) = \mathfrak{p}_1 \mathfrak{p}_2$.} \medskip

Now suppose $\mathfrak{k}$ and $\mathfrak{k}'$ are two ideal classes for which $\{\tau_3, \tau_4\} = \{\tau_3', \tau_4'\}$.  Then the automorphism $\psi: j(\mathfrak{k}) \rightarrow j(\mathfrak{k}')$ over $K$ satisfies $\psi(\beta) = \beta'$, for some root $\beta'$ of $p_{d_K}(x)$, the minimal polynomial of $\beta$ over $\mathbb{Q}$.  (See \cite{m1}.)  We may extend this automorphism to $\textsf{K}_3 = \Sigma(\omega)$ by fixing $\omega$, since $d_K \equiv 1$ (mod $3$) implies that $\Sigma$ and $K(\omega)$ are linearly disjoint over $K$.  Then $\tau_3' = \tau_3^\psi, \tau_4' = \tau_4^\psi$.  \medskip

Assume first that $\tau_3' = \tau_3$ and $\tau_4' = \tau_4$.  Equation \eqref{eqn:5.5} shows that the roots $\tau_2', \left(\frac{\beta'-6}{\beta'}\right)^2\tau_1'$ have to coincide with the roots $\tau_2, \left(\frac{\beta-6}{\beta}\right)^2\tau_1$.  If $\tau_2' = \tau_2$, then $\tau_1' = \tau_1$ by the relation $\tau_2 + \tau_ 1 = -(\tau_3+\tau_4)=\tau_2' + \tau_1'$ (or using the cross-ratio).  In that case $j(\mathfrak{k}) = j(\mathfrak{k}')$ by \eqref{eqn:5.3}, so $\mathfrak{k} = \mathfrak{k}'$.  Assume instead that
\begin{align*}
\tau_2'& = \left(\frac{\beta-6}{\beta}\right)^2\tau_1,\\
\tau_2 & = \left(\frac{\beta'-6}{\beta'}\right)^2 \tau_1', \ \textrm{or}\\
\tau_1' &= \left(\frac{\beta'}{\beta'-6}\right)^2 \tau_2.
\end{align*}
Since $\psi$ is an automorphism, the cross-ratio \eqref{eqn:5.4} implies that
$$(\tau_4,\tau_3; \tau_2', \tau_1') = \frac{(\tau_4-\tau_2')(\tau_3-\tau_1')}{(\tau_3-\tau_2')(\tau_4-\tau_1')} = -\omega.$$
However, replacing $\tau_2'$ and $\tau_1'$ by the above expressions in terms of $\tau_1$ and $\tau_2$ gives
$$(\tau_4,\tau_3; \tau_2', \tau_1')+\omega = \frac{-\omega^2(\beta \beta' - 3\beta - 3\beta' + 36)(3\beta' + \beta - 12)(\beta - 3)}{(\beta \beta' - 3\omega^2 \beta' - 3\beta - 18\omega)(\omega \beta'- \beta - \beta' - 6\omega)(3\omega - \beta + 9)}.$$
Setting the numerator in this expression equal to zero and solving for $\beta'$ yields that
$$\beta' = \frac{-\beta}{3}+4, \ \ \textrm{or} \ \ \beta' = \frac{3(\beta-12)}{\beta-3}.$$
The first relation is impossible, since $\beta' = \beta^\psi$ is a conjugate of $\beta$, but $\frac{-1}{3}\beta$ is not an algebraic integer, using that $(\beta, 3) = \wp_3 =\mathfrak{p}_1$ from \cite[Lemma 2.3, Prop. 3.2]{m1}.  Thus we must have
$$\beta^\psi = \psi(\beta) = \frac{3(\beta-12)}{\beta-3}.$$
But we know that $\beta^{\sigma_1} = \frac{3(\beta+6)}{\beta-3}$, and thus
$$\beta^{\sigma_1 \psi} =  \psi \circ \sigma_1(\beta) = 6-\beta.$$
Now $\psi \in \textrm{Gal}(\textsf{K}_3/K)$, by assumption, and $\sigma_1 \notin \textrm{Gal}(\textsf{K}_3/K)$, since $\sigma_1$ switches the ideals $\wp_3 = \mathfrak{p}_1$ and $\wp_3' = \mathfrak{p}_2$.  For this, note that $\sigma_1(\beta) = 3 + \frac{27}{\beta-3}$ and $\beta = 3 +\gamma^3$, where $(\gamma) = \wp_3$ in $\Sigma$.  (Use \cite[Thm 3.4(i), p. 868]{m1} and the automorphism $\phi = \sigma_1 \circ \left(\Sigma/K, \wp_2\right)$ which switches $\alpha$ and $\beta$ from \cite[Prop. 3.2, p.865]{m1}.)  It follows that $\psi \sigma_1 = \sigma_1 \psi \notin \textrm{Gal}(\textsf{K}_3/K)$, either.  On the other hand
$$\beta^{\sigma_1\psi} - 3 = 3 - \beta \cong \wp_3^3 \cong \beta - 3.$$
This would show that $\psi \sigma_1$ does {\it not} switch $\wp_3$ and $\wp_3'$, giving a contradiction.  Hence, this case does not occur and we have the desired conclusion $\mathfrak{k} = \mathfrak{k}'$.  \medskip

Now assume that $\tau_3' = \tau_3^\psi = \tau_4$ and $\tau_4' = \tau_3$.  Then we have
$$(\tau_3,\tau_4; \tau_2', \tau_1') = \frac{(\tau_3-\tau_2')(\tau_4-\tau_1')}{(\tau_4-\tau_2')(\tau_3-\tau_1')} = -\omega,$$
which implies that
\begin{equation}
\label{eqn:5.7} (\tau_4,\tau_3; \tau_2', \tau_1') =-\omega^2.
\end{equation}
This leads to the conjugate equation of \eqref{eqn:5.5}, which is
$$x^2 -2(\omega^2 \tau_3 + \omega \tau_4)x + (\omega^2 \tau_3-\omega \tau_4)^2 = 0.$$
This equation also arises from \eqref{eqn:5.5} by applying the automorphism $\sigma_1$, which switches $\tau_3$ and $\tau_4$.  Therefore, its roots are $\tau_2^{\sigma_1} = \tau_1$ and
$$\left(\frac{\beta-6}{\beta}\right)^{2\sigma_1} \tau_1^{\sigma_1} = \left(\frac{\beta-12}{\beta+6}\right)^2 \tau_2.$$
Once again we have two cases, according as $\tau_1' = \tau_1$ or $\tau_1' = \left(\frac{\beta-12}{\beta+6}\right)^2 \tau_2$.  The first case implies as before that $\tau_2' = \tau_2$, so that $j(\mathfrak{k}) = j(\mathfrak{k}')$ from \eqref{eqn:5.3}.  Otherwise we have
$$\tau_1' = \left(\frac{\beta-12}{\beta+6}\right)^2 \tau_2, \ \ \tau_2' = \left(\frac{\beta'+6}{\beta'-12}\right)^2 \tau_1.$$
From \eqref{eqn:5.7} we find that the numerator of $(\tau_4,\tau_3; \tau_2', \tau_1') + \omega^2 = 0$ is
\begin{equation*}
\nu = (2\omega+1)[(\beta'^2 + 3\beta' + 63)\beta^3 -9(\beta'^2 -6\beta' +90)\beta^2 + 27(\beta' - 12)^2\beta - 27(\beta' - 12)^2].
\end{equation*}
It follows from $\nu = 0$ and $(9) = \wp_3^2 \wp_3'^2$ that $\wp_3'^2 \mid (\beta'^2 + 3\beta' + 63)\beta^3$.  However, $(\wp_3' , \beta) = 1$; and
$$\beta'^2 + 3\beta' + 63 = (\beta'-3)^2+9(\beta'-3)+81$$
is also not divisible by any prime divisor of $\wp_3'$ in $\Sigma$, since $\beta' - 3 = \gamma'^3 \cong \wp_3^3$.  This contradiction shows that this case cannot occur. \medskip

This proves the following.

\begin{thm} For the field $K = \mathbb{Q}(\sqrt{d_K})$, with $d_K \equiv 1$ (mod $3$), and $\mathfrak{m} = (3) =\wp_3 \wp_3'$, the ray class invariants corresponding to different ideal classes $\mathfrak{k}$ are distinct.  Hence, the ray class field $\textsf{K}_\mathfrak{m} = K(\tau(\mathfrak{k^*}))$ is generated over $K$ by a single ray class invariant $\tau(\mathfrak{k^*})$ ($=\tau_3$ or $\tau_4$) for the conductor $\mathfrak{m}$.
\label{thm:6}
\end{thm}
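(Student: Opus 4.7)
\noindent\emph{Proof proposal for Theorem \ref{thm:6}.} The plan is to argue by contradiction: assume there exist two distinct ideal classes $\mathfrak{k}, \mathfrak{k}'$ with $\{\tau_3,\tau_4\} = \{\tau_3',\tau_4'\}$, and show this forces $j(\mathfrak{k}) = j(\mathfrak{k}')$. Let $\psi \in \mathrm{Gal}(\textsf{K}_3/K)$ be any extension of the Galois element sending $j(\mathfrak{k})$ to $j(\mathfrak{k}')$ (fixing $\omega$, say); then $\psi(\beta) = \beta'$ is a Galois conjugate of $\beta$, and under our hypothesis $\psi$ either fixes the pair $(\tau_3,\tau_4)$ pointwise or swaps them.

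The central tool is the quadratic equation \eqref{eqn:5.5}, whose roots are $\tau_2$ and $\tilde\tau = \bigl(\frac{\beta-6}{\beta}\bigr)^2 \tau_1$, and whose coefficients are symmetric functions of $\omega\tau_3$ and $\omega^2\tau_4$. Applying $\psi$ transports this equation to the analogous quadratic for $(\mathfrak{k}', \beta')$, so $\{\tau_2', \tilde\tau'\}$ must be expressible in terms of $\{\tau_3, \tau_4\}$ (in case $\psi$ fixes them) or in terms of $\{\tau_4, \tau_3\}$ with $\omega \leftrightarrow \omega^2$ (in the swap case, which is obtained by applying the outer automorphism $\sigma_1$ studied in \cite{m1}). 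In each sub-case, one of the two pairings leads immediately to the conclusion: if $\tau_2' = \tau_2$ (or $\tau_1' = \tau_1$ in the swap case), then \eqref{eqn:5.2} gives the other $\tau_i$ identities, and \eqref{eqn:5.3} at once yields $j(\mathfrak{k}) = j(\mathfrak{k}')$, hence $\mathfrak{k} = \mathfrak{k}'$.

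The remaining cross-pairings are the only substantive cases, and this is where the main obstacle lies. In the first such case one derives $\tau_2' = \bigl(\frac{\beta-6}{\beta}\bigr)^2 \tau_1$ and $\tau_1' = \bigl(\frac{\beta'}{\beta'-6}\bigr)^2 \tau_2$, then substitutes into the cross-ratio identity \eqref{eqn:5.4}, which expresses a projective invariant that $\psi$ must preserve. Clearing denominators gives a polynomial equation in $\beta$ and $\beta'$ that factors over $\Sigma$; the only algebraically possible root is $\beta' = 3(\beta-12)/(\beta-3)$, because the alternative $\beta' = -\beta/3 + 4$ is not an algebraic integer (using $(\beta,3) = \wp_3$ from \cite[Lemma 2.3, Prop. 3.2]{m1}). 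Composing with the known action $\sigma_1(\beta) = 3(\beta+6)/(\beta-3)$ gives $\beta^{\sigma_1\psi} = 6 - \beta$. The contradiction will be arithmetic: $\sigma_1 \notin \mathrm{Gal}(\textsf{K}_3/K)$ swaps $\wp_3$ and $\wp_3'$ (via $\beta = 3+\gamma^3$ with $\gamma \cong \wp_3$), so $\psi\sigma_1$ must also swap them, yet $\beta^{\sigma_1\psi}-3 = 3-\beta \cong \wp_3^3$ shows that $\psi\sigma_1$ fixes $\wp_3$.

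The second cross-pairing (the swap case with $\tau_1' = \bigl(\frac{\beta-12}{\beta+6}\bigr)^2 \tau_2$) is eliminated by the same strategy: substitute into the transposed cross-ratio $(\tau_4,\tau_3;\tau_2',\tau_1') = -\omega^2$ and examine divisibility of the resulting numerator $\nu$ by $\wp_3'$. Because $\wp_3' \nmid \beta$ and $\beta'^2+3\beta'+63 = (\beta'-3)^2 + 9(\beta'-3)+81$ is coprime to $\wp_3'$ (since $\beta'-3 = \gamma'^3 \cong \wp_3^3$), the relation $\nu=0$ is incompatible with $\wp_3'^2 \mid 9(\beta'^2-6\beta'+90)\beta^2$, and we are done. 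The main difficulty throughout is not the cross-ratio manipulation per se but keeping track of which ideal of $K$ each factor is divisible by, so that the required prime-ideal contradictions emerge at the very end.
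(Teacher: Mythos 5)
Your proposal is correct and follows essentially the same route as the paper's own proof: the same case division according to whether $\psi$ fixes or swaps $\{\tau_3,\tau_4\}$, the same use of the quadratic \eqref{eqn:5.5} with roots $\tau_2$ and $\bigl(\frac{\beta-6}{\beta}\bigr)^2\tau_1$, the same cross-ratio substitution forcing $\beta' = 3(\beta-12)/(\beta-3)$ and the contradiction via $\sigma_1$ and the factorization $\beta-3\cong\wp_3^3$, and the same $\wp_3'$-divisibility argument on the numerator $\nu$ in the swap case. No substantive differences.
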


If $\mathfrak{m} = (3) = \mathfrak{p}^2$, then there are $\frac{\varphi(\mathfrak{m})}{2} = 3$ ray class invariants for each ideal class, so the argument is the same as in the case $\mathfrak{m} = \mathfrak{q}^2 = (2)$ in Section \ref{sec:2}.  If the invariants for $\mathfrak{k}, \mathfrak{k}'$ are the same, then by \eqref{eqn:5.2}, the single ray class invariants for these ideal classes and $\mathfrak{m} = \mathfrak{p}$ are also equal, and then \eqref{eqn:5.3} shows that $j(\mathfrak{k}) = j(\mathfrak{k}')$.  Thus, Sugawara's conjecture holds for $\mathfrak{m} = (3) = \mathfrak{p}^2$.  In this case, note that the conditions $\alpha, \beta \in \Sigma$ no longer obviously apply, but the algebraic formulas, including \eqref{eqn:5.2}, remain valid.  \medskip

Note that if $3 \cong \mathfrak{p}$, then $N(\mathfrak{p}) = 9$ implies that Sugawara's second argument \eqref{eqn:3} applies.  This also follows immediately from \eqref{eqn:5.3}, since the $4$ ray class invariants are conjugate over $\textsf{K}_1$ in this case. \medskip

This raises the question whether $\tau_1$ and $\tau_2$ individually generate $\textsf{K}_1$ over $K$, when $\mathfrak{m} = \mathfrak{p}_1$ or $\mathfrak{p}_2$ and $(3) = \mathfrak{p}_1 \mathfrak{p}_2$.  In the case of $\tau_1$, we have
\begin{align*}
\tau_1 &= \frac{\alpha^3 (\alpha^3-24)(\alpha^6-36\alpha^3+216)}{\alpha^3-27} = g(\alpha^3);\\
g(x) & = \frac{x(x-24)(x^2-36x+216)}{x-27}.
\end{align*}
If $\psi \in \textrm{Gal}(\textsf{K}_1/K)$ fixes $\tau_1(\mathfrak{k})$, then $\tau_1 = \tau_1^\psi = \tau_1' = \tau_1(\alpha')$, since $\psi$ fixes whichever ideal $\mathfrak{p}_1$ or $\mathfrak{p}_2$ the invariant $\tau_1$ belongs to.  Now factor the difference $g(x)-g(y)$ for $x=\alpha^3, y=\alpha'^3$:
\begin{equation*}
(x-27)(y-27)(g(x)-g(y)) = (x-y)(x^3y + x^2 y^2 + x y^3+3h(x,y)),
\end{equation*}
with $h(x,y) \in \mathbb{Z}[x,y]$.  If $x \neq y$, then the cofactor in this equation is $0$, which implies
$$x y(x^2+xy+y^2) \equiv 0 \ (\textrm{mod} \ \mathfrak{p}_1).$$
Since $\mathfrak{p}_1= \wp   _3$ is relatively prime to $xy = \alpha^3 \alpha'^3$, this gives that
$$x^3 - y^3 = (x-y)(x^2+xy+y^2) \equiv 0 \ (\textrm{mod} \ \mathfrak{p}_1).$$
Hence $\alpha^9 \equiv \alpha'^9$ (mod $\mathfrak{p}_1$).  We know that
$$j(\mathfrak{k}^*) = \frac{\alpha^3(\alpha^3-24)^3}{\alpha^3-27} \equiv \alpha^9 \ (\textrm{mod} \ \mathfrak{p}_1),$$
which implies
$$j(\mathfrak{k}^*)  \equiv \alpha^9 \equiv \alpha'^9 \equiv j(\mathfrak{k}'^*) \ (\textrm{mod} \ \mathfrak{p}_1).$$
But $j(\mathfrak{k}^*)$ and $j(\mathfrak{k}'^*)$ are roots of the class equation $H_{-d}(X)$, whose discriminant is not divisible by $3$.  (See \cite{d1a}.)  It follows from the last congruence that $j(\mathfrak{k}^*) = j(\mathfrak{k}'^*)$ and $\mathfrak{k}^* = \mathfrak{k}'^*$.  If $x=y$, then $\alpha^3 = \alpha'^3$ immediately gives $j(\mathfrak{k}^*) = j(\mathfrak{k}'^*)$ and the same conclusion. \medskip

Now the automorphism $\sigma_1$ of $\textsf{K}_1/\mathbb{Q}$ satisfies $\tau_1^{\sigma_1} = \tau_2$, so $\textsf{K}_1 = K(\tau_1) = K(\tau_2)$.

\begin{thm}
If $d_K \equiv 1$ mod $3$ and $(3) = \mathfrak{p}_1 \mathfrak{p}_2$, then the $\tau$-invariants for both $\mathfrak{p}_1$ and $\mathfrak{p}_2$ are distinct, so that the Hilbert class field $\textsf{K}_1$ of $K$ is generated by a single $\tau$-invariant for either $\mathfrak{p}_1$ or $\mathfrak{p}_2$.
\label{thm:7}
\end{thm}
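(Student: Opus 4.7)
My plan is to show that if $\psi \in \mathrm{Gal}(\textsf{K}_1/K)$ fixes $\tau_1(\mathfrak{k})$, then $\psi$ necessarily fixes $j(\mathfrak{k})$, so that $\psi = 1$ on $\textsf{K}_1 = K(j(\mathfrak{k}))$ and $K(\tau_1) = \textsf{K}_1$. The result for $\tau_2$ then follows by applying the automorphism $\sigma_1 \in \mathrm{Gal}(\textsf{K}_1/\mathbb{Q})$ from \cite{m1}, which interchanges $\alpha$ and $\beta$ and hence sends $\tau_1$ to $\tau_2$.

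First, I would re-express $\tau_1$ as a rational function of $\alpha^3$: from the formulas in Section~\ref{sec:5},
\[
\tau_1 = g(\alpha^3), \qquad g(x) = \frac{x(x-24)(x^2-36x+216)}{x-27}.
\]
If $\psi$ maps $\alpha$ to a conjugate $\alpha'$ and fixes $\tau_1$, then $g(\alpha^3) = g(\alpha'^3)$. To exploit this, I would compute a polynomial identity of the form
\[
(x-27)(y-27)\bigl(g(x)-g(y)\bigr) = (x-y)\bigl(x^3 y + x^2 y^2 + x y^3 + 3\,h(x,y)\bigr),
\]
with $h \in \mathbb{Z}[x,y]$, and split into the two cases $\alpha^3 = \alpha'^3$ and the vanishing of the cofactor at $(x,y) = (\alpha^3,\alpha'^3)$.

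In the nontrivial case I would reduce the cofactor modulo $\mathfrak{p}_1$. By Section~\ref{sec:5} and \cite{m1}, $(\alpha,3) = \mathfrak{p}_2$, so $\mathfrak{p}_1 \nmid \alpha, \alpha'$; since $\mathfrak{p}_1 \mid 3$, the $3h(x,y)$ term drops out, leaving $xy(x^2+xy+y^2) \equiv 0 \pmod{\mathfrak{p}_1}$. This forces $x^3 \equiv y^3$, i.e.\ $\alpha^9 \equiv \alpha'^9 \pmod{\mathfrak{p}_1}$. The formula $j = \alpha^3(\alpha^3-24)^3/(\alpha^3-27)$, combined with $\alpha^3 - 24 \equiv \alpha^3 \equiv \alpha^3 - 27 \pmod{\mathfrak{p}_1}$, then gives $j(\mathfrak{k}) \equiv \alpha^9 \equiv \alpha'^9 \equiv j(\mathfrak{k}') \pmod{\mathfrak{p}_1}$. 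Since the class equation $H_{-d}(X)$ has discriminant prime to $3$ (the rational prime $3$ splits in $K$), distinct roots stay distinct modulo $\mathfrak{p}_1$, so $j(\mathfrak{k}) = j(\mathfrak{k}')$ and $\psi = 1$.

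The main obstacle I anticipate is the polynomial factorization step: identifying the cofactor of $g(x) - g(y)$ in the precise form $x^3 y + x^2 y^2 + x y^3 + 3h(x,y)$ so that reduction modulo $\mathfrak{p}_1$ yields the tractable expression $xy(x^2+xy+y^2)$. Once this is done, the arithmetic of $\alpha$ established in \cite{m1} together with the unramifiedness of the Hilbert class field at $3$ closes the argument, and the $\sigma_1$-symmetry immediately promotes the conclusion from $\mathfrak{p}_1$ to $\mathfrak{p}_2$.
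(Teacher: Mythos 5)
Your proposal is correct and follows essentially the same route as the paper: the identity $(x-27)(y-27)(g(x)-g(y)) = (x-y)(x^3y+x^2y^2+xy^3+3h(x,y))$, reduction of the cofactor modulo $\mathfrak{p}_1$ to get $\alpha^9 \equiv \alpha'^9$, the congruence $j \equiv \alpha^9 \pmod{\mathfrak{p}_1}$, the fact that $\mathrm{disc}(H_{d_K})$ is prime to $3$, and the transfer to $\tau_2$ via $\sigma_1$ are exactly the paper's steps. The only cosmetic discrepancy is your description of $\sigma_1$ as the map interchanging $\alpha$ and $\beta$ (in the paper $\sigma_1\colon \beta \mapsto 3(\beta+6)/(\beta-3)$, and it is the composite $\sigma_1\circ(\Sigma/K,\wp_2)$ that swaps $\alpha$ and $\beta$), but the property actually used, namely $\tau_1^{\sigma_1}=\tau_2$, is correct.
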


\section{The case $\mathfrak{m} \mid 9$.}
\label{sec:6}

\subsection{The case $\mathfrak{m} = \wp_3'^2$.}

The Tate normal form for a point of order $9$ is
\begin{equation}
\label{eqn:6.1} E_9(t): \ Y^2+(1+t^2-t^3)XY+(1-t)(1-t+t^2)t^2Y = X^3+(1-t)(1-t+t^2)t^2X^2.
\end{equation}
Its $j$-invariant is
\begin{equation}
\label{eqn:6.2} j(E_9) = \frac{(t^3 - 3t^2 + 1)^3(t^9 - 9t^8 + 27t^7 - 48t^6 + 54t^5 - 45t^4 + 27t^3 - 9t^2 + 1)^3}{t^9(t - 1)^9(t^2 - t + 1)^3 (t^3 - 6t^2 + 3t + 1)}.
\end{equation}
This can be verified using the polynomial
\begin{equation*}
f_9(a,b) = a^5 - 6a^4 + a^3 b + 15a^3 - 6a^2 b + 3a b^2 - b^3 - 19a^2 + 9ab - 3b^2 + 12a - 4b - 3,
\end{equation*}
since $f_9(a,b) = 0$ is the condition that $P = (0,0)$ represents a point of order $9$ on the curve
$$Y^2+aXY+bY = X^3+bX^2.$$
(See \cite[pp. 248-250]{m}.  Note that the formula for $j(E_n)$ should have a minus sign.)  The curve $f_9(a,b) = 0$ has genus $0$ and is parametrized by
$$a = 1+t^2-t^3, \ \ b = (1-t)(1-t+t^2)t^2.$$
Now let
$$g(t) = \frac{t^3-3t^2+1}{t(t-1)}.$$
Noting that
$$t^9 - 9t^8 + 27t^7 - 48t^6 + 54t^5 - 45t^4 + 27t^3 - 9t^2 + 1 = (t^3-3t^2+1)^3-24t^3(t-1)^3$$
and
\begin{align*}
(t^2 - t + 1)^3 & =  (t^3 - 3t^2 + 1)^2 + 3(t^3 - 3t^2 + 1)t(t - 1) + 9t^2(t - 1)^2,\\
t^3 - 6t^2 + 3t + 1 & = t^3 - 3t^2 + 1 - 3(t^2 - t),
\end{align*}
it follows that
\begin{equation}
\label{eqn:6.3} j(E_9(t)) = \frac{g(t)^3(g(t)^3-24)^3}{g(t)^3-27}.
\end{equation}
This implies that $E_9$ is isomorphic to the Deuring normal form
$$E_3(\alpha): \ Y^2+\alpha XY + Y = X^3, \ \ \alpha = g(t),$$
whose $j$-invariant is
$$j(E_3) = \frac{\alpha^3(\alpha^3-24)^3}{\alpha^3-27}.$$
Now by Proposition 3.6(ii) of \cite{m0} and the remark thereafter, a point $P=(\xi,\eta)$ on $E_3(\alpha)$ satisfies $3P = \pm(0,0)$ whenever its $X$-coordinate satisfies $x^3 -(3+\alpha)x^2 +\alpha x+1 = 0$.  But this equation implies the relation 
$$\alpha = \frac{\xi^3-3\xi^2+1}{\xi(\xi-1)} = g(\xi).$$
Hence, the point $P= (\xi,\eta)$ is a point of order $9$ on $E_3(\alpha)$.  We will see that $P$ is a primitive $\mathfrak{m}$-division point on $E_3(\alpha)$. \medskip

We have the discriminant formula
$$\textrm{disc}(x^3 -(3+\alpha)x^2 +\alpha x+1) = (\alpha^2+3\alpha+9)^2.$$
In order to decide whether Sugawara's conjecture is true in the case $(3) = \wp_3\wp_3'$ and $\mathfrak{m} = \wp_3'^2$, we let $\alpha \in \Sigma$ be as in Section \ref{sec:5}.  Then $\alpha-3 \cong \wp_3'^3$ and $(\alpha) = \wp_3' \mathfrak{a}$, with $(\mathfrak{a},3) = 1$, by \cite{m1}.  It follows that the above discriminant is relatively prime to $\wp_3$.  Furthermore,
$$(\alpha^2+3\alpha+9)(\alpha-3) = \alpha^3-27 = \frac{27\alpha^3}{\beta^3} \cong \wp_3'^6 \mathfrak{c},$$
for some integral ideal $\mathfrak{c}$ prime to $(3)$.  In fact $\mathfrak{c} = (1)$, since
$$(\alpha^3-27)(\beta^3-27) = \alpha^3 \beta^3-27\alpha^3-27\beta^3 + 27^2 = 3^6.$$
Thus, it is clear that $(\alpha^2+3\alpha+9) \cong \wp_3'^3$. \medskip

Assuming $k(x) = x^3 -(3+\alpha)x^2 +\alpha x+1$ is irreducible over $\Sigma$, its root $\xi$ generates a cyclic cubic extension of $\Sigma$; its conjugates over $\Sigma$ are $\frac{1}{1-\xi}$ and $\frac{\xi-1}{\xi}$.  Also, since $g_2, g_3$ and $\Delta$ for the curve $E_3(\alpha)$ lie in $\Sigma$, the fact that the invariant
\begin{align*}
\tau(\mathfrak{k}^*) & = -2^7 3^5 \frac{g_2 g_3}{\Delta}\left(\xi+\frac{\alpha^2}{12}\right)\\
& = \frac{\alpha (\alpha^3-24)(\alpha^6-36\alpha^3+216)}{\alpha^3-27} (\alpha^2+12\xi)
\end{align*}
lies in $\textsf{K}_{(9)}$ implies that $L = \Sigma(\xi) \subset \textsf{K}_{(9)}$.  By the previous paragraph, the field $L$ has conductor $\mathfrak{m} = \wp_3'^2$ over $K$, and $\wp_3$ is unramified in $L/K$.  This shows that $L = \textsf{K}_{\wp_3'^2}$, since $\frac{\varphi_K(\wp_3'^2)}{2} = 3$.  This also shows that $L$ is the inertia field for the prime $\wp_3$ in $\textsf{K}_{(9)}/K$, since any subfield of $\textsf{K}_{(9)}/\Sigma$ not contained in $L$ must have a conductor which is divisible by $\wp_3$.  It remains to show that $k(x)$ is irreducible over $\Sigma$.  \medskip

This may be shown using the Newton polygon for the shifted polynomial
\begin{equation}
\label{eqn:6.4} k\left(x+\frac{\alpha}{3}+1\right) = x^3 -\frac{\alpha^2 +3\alpha +9}{3}x -\frac{(2\alpha + 3)(\alpha^2 + 3\alpha + 9)}{27},
\end{equation}
for a prime divisor $\mathfrak{p}$ of $\wp_3'$ in $\Sigma$.  I claim that the additive valuation $w_\mathfrak{p}$ of the last two coefficients is $2$.  For the coefficient of $x$, this follows from the above remarks, since
$$w_\mathfrak{p}\left(\frac{\alpha^2 +3\alpha +9}{3}\right) = 3-1 = 2.$$
For the constant term,
$$w_\mathfrak{p}\left(\frac{(2\alpha + 3)(\alpha^2 +3\alpha +9)}{27}\right) = w_\mathfrak{p}(2\alpha + 3) = w_\mathfrak{p}(9+2\gamma^3) = 2,$$
where $\alpha = 3+\gamma^3$, with $\gamma \cong \wp_3'$, by results of \cite{m1}.  It follows that the Newton polygon for the polynomial in \eqref{eqn:6.4} is the line segment joining the points $(0,2)$ and $(3,0)$, since $(1,2)$ and $(2,\infty)$ lie above this line segment.  The slope of this segment is $-2/3$, which implies the irreducibility of $k(x)$ over the completion $\Sigma_\mathfrak{p}$.  (See \cite[pp. 76-77]{vdw}.) \medskip

Now let
\begin{align*}
\tau_1 = \tau(\mathfrak{k}_1^*) & = -2^7 3^5 \frac{g_2 g_3}{\Delta}\left(\xi + \frac{\alpha^2}{12}\right)\\
& = \frac{\alpha(\alpha^3 - 24)(\alpha^6 - 36\alpha^3 + 216)(\alpha^2 + 12\xi)}{\alpha^3 - 27},\\
\tau_2 = \tau(\mathfrak{k}_2^*) & = -2^7 3^5 \frac{g_2 g_3}{\Delta}\left(\frac{1}{1-\xi} + \frac{\alpha^2}{12}\right)\\
& = \frac{\alpha(\alpha^3 - 24)(\alpha^6 - 36\alpha^3 + 216)(\alpha^2 \xi - \alpha^2 - 12)}{(\xi - 1)(\alpha^3 - 27)},
\end{align*}
\begin{align*}
\tau_3 = \tau(\mathfrak{k}_3^*) & = -2^7 3^5 \frac{g_2 g_3}{\Delta}\left(\frac{\xi-1}{\xi} + \frac{\alpha^2}{12}\right)\\
& = \frac{\alpha(\alpha^3 - 24)(\alpha^6 - 36\alpha^3 + 216)(\alpha^2 \xi + 12\xi - 12)}{ \xi(\alpha^3 - 27)},
\end{align*}
be the three $\tau$-invariants corresponding to the conjugates of $P$ and a fixed $j$-invariant $j(\mathfrak{k})$ (and ideal class $\mathfrak{k}$).  Since they form a complete set of conjugates over $\Sigma$, $P$ must be a $\wp_3^2$-division point or a $\wp_3'^2$-division point.  This is because $\frac{\varphi(\wp_3 \wp_3'^2)}{2} = \frac{\varphi(\wp_3' \wp_3^2)}{2} = 6$, so there would have to be $6$ conjugate invariants over $\Sigma$, if $P$ were a primitive $\mathfrak{m}'$-division point for $\mathfrak{m}' = \wp_3 \wp_3'^2$ or $\wp_3' \wp_3^2$.  For a similar reason, $P$ is not a primitive $(9)$-division point.  Furthermore, $\Sigma(\tau_i) = \textsf{K}_{\wp_3'^2}$ and $\textsf{K}_{\wp_3'^2}$ and $\textsf{K}_{\wp_3^2}$ are disjoint over $\Sigma$; thus, $P$ must be a $\wp_3'^2$-division point.  These invariants are roots of the polynomial
\begin{equation*}
T_\mathfrak{m}(X, \mathfrak{k}) = X^3 - c_1 X^2 + c_2 X - c_3 \in \Sigma[X],
\end{equation*}
where
\begin{align*}
c_1 & = \frac{3\alpha(\alpha^3 - 24)(\alpha^6 - 36\alpha^3 + 216)(\alpha^2 + 4\alpha + 12)}{\alpha^3 - 27},\\
c_2 & = \frac{3\alpha^3(\alpha^3 - 24)^2(\alpha^6 - 36\alpha^3 + 216)^2(\alpha^3 + 8\alpha^2 + 24\alpha + 48)}{(\alpha^3-27)^2},\\
c_3 & = \frac{\alpha^3(\alpha^3 - 24)^3(\alpha^6 - 36\alpha^3 + 216)^3(\alpha^6 + 12\alpha^5 + 36\alpha^4 + 144\alpha^3 - 1728)}{(\alpha^3 - 27)^3}.
\end{align*}
The $c_i$ are the sums of the products of the $\tau_j$ taken $i$ at a time.  The discriminant of $T_\mathfrak{m}(X, \mathfrak{k})$ is
$$\textrm{disc}(T_\mathfrak{m}(X,  \mathfrak{k})) = 12^6 \frac{\alpha^6(\alpha^3 - 24)^6(\alpha^6 - 36\alpha^3 + 216)^6}{(\alpha - 3)^6(\alpha^2 + 3\alpha + 9)^4}.$$
$T_\mathfrak{m}(X,  \mathfrak{k})$ is irreducible over $\Sigma$ by Hasse's results \cite{h1}.  Since its discriminant is a square, any of its roots generates a cyclic cubic extension, and since the $\tau_i \in \textsf{K}_{\wp_3'^2}$, we clearly have $\Sigma(\tau_i) = \textsf{K}_{\wp_3'^2}$.  Furthermore,
\begin{align}
\label{eqn:6.5} \frac{\tau_1-\tau_2}{\tau_2-\tau_3} &= -\xi, \ \ \frac{\tau_2-\tau_3}{\tau_3-\tau_1} = \frac{1}{\xi-1}, \ \ \frac{\tau_3-\tau_1}{\tau_1-\tau_2} = \frac{1-\xi}{\xi};\\
\label{eqn:6.6} \frac{\tau_1-\tau_3}{\tau_3-\tau_2} &= \xi-1, \ \ \frac{\tau_3-\tau_2}{\tau_2-\tau_1} = \frac{-1}{\xi}, \ \ \frac{\tau_2-\tau_1}{\tau_1-\tau_3} = \frac{\xi}{1-\xi}.
\end{align}
The values of these ratios are the negatives and the negative reciprocals of the roots of $k(x)$.  Suppose that $T_1(X) = T(X, \mathfrak{k}_1) = T(X, \mathfrak{k}_2) = T_2(X)$ for two different ideal classes $\mathfrak{k}_1, \mathfrak{k}_2$, corresponding to different conjugates $\alpha_1, \alpha_2$ of $\alpha$ over $K$ and corresponding roots $\xi_1, \xi_2$.  Then the negatives and negative reciprocals of the roots of $k_1(x) = x^3-(3+\alpha_1)x^2+\alpha_1 x+1$ must coincide with the corresponding expressions in the roots of $k_2(x) = x^3-(3+\alpha_2)x^2+\alpha_2 x+1$.  If, for example, $-\xi_1 = \frac{1}{\xi_2-1}$, then $\xi_1 = \frac{1}{1-\xi_2}$ is a conjugate of $\xi_2$ over $\Sigma$, whence it follows that $\alpha_1 = \alpha_2$.  The same holds if the ratios in \eqref{eqn:6.5} for $T_1(X)$ coincide with a permutation of the same ratios for $T_2(X)$.  The ratios in \eqref{eqn:6.6} are the reciprocals of the ratios in \eqref{eqn:6.5}, so that a similar statement holds if the ratios in \eqref{eqn:6.6} for the polynomials $T_i(X)$ are permutations of each other.  Now suppose that $-\xi_1 = \frac{\xi_2}{1-\xi_2}$.  Then $\frac{1}{\xi_1} = \frac{\xi_2-1}{\xi_2}$ is a conjugate of $\xi_2$, from which it would follow that $x^3 k_1(1/x) = x^3+\alpha_1 x^2-(3+\alpha_1)x+1$ coincides with $k_2(x) = x^3-(3+\alpha_2)x^2+\alpha_2 x+1$; hence, $\alpha_1 + \alpha_2 = -3$.  But this is impossible, since this would imply
$$0 = \alpha_1+\alpha_2+3 = (3+\gamma_1^3)+(3+\gamma_2^3) + 3 = 9 + \gamma_1^3+\gamma_2^3,$$
where $\gamma_1, \gamma_2 \in \Sigma$ and $(\gamma_1) = (\gamma_2) = \wp_3'$, implying that $\wp_3'^3 \mid 9$. \medskip

This shows that the ratios in \eqref{eqn:6.5} for $T_1(X)$ cannot coincide with the ratios in \eqref{eqn:6.6} for $T_2(X)$. Hence, we must have $\alpha_1 = \alpha_2$ and therefore $j(\mathfrak{k}_1) = j(\mathfrak{k}_2)$.  This proves that the polynomials $T_\mathfrak{m}(X, \mathfrak{k})$ are distinct for different ideal classes.

\begin{thm} If $\mathfrak{m} = \mathfrak{p}_1^2$, where $(3) = \mathfrak{p}_1 \mathfrak{p}_2$, then the ray class field $K_\mathfrak{m} = K(\tau_i)$ is generated over $K$ by a single $\tau$-invariant for the conductor $\mathfrak{m}$.
\label{thm:8}
\end{thm}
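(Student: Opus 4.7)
The plan is to invoke Hasse's general criterion: once the ray class polynomials $T_\mathfrak{m}(X, \mathfrak{k})$ are shown to be distinct for different ideal classes $\mathfrak{k}$, each $\tau_i$ generates $\textsf{K}_\mathfrak{m}$ over $K$. The infrastructure is already assembled in the preceding discussion: the cubic $k(x) = x^3 - (3+\alpha)x^2 + \alpha x + 1$ is irreducible over $\Sigma$ by a Newton-polygon argument at $\wp_3'$, the field $\Sigma(\xi) = \textsf{K}_{\wp_3'^2}$ is cyclic cubic over $\Sigma$ of the correct degree $\varphi(\wp_3'^2)/2 = 3$, and $T_\mathfrak{m}(X, \mathfrak{k})$ is irreducible of degree $3$ over $\Sigma$ with square discriminant, so any one of its roots $\tau_i$ already generates $\textsf{K}_\mathfrak{m}/\Sigma$.

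The heart of the proof is a cross-ratio comparison. By \eqref{eqn:6.5} and \eqref{eqn:6.6}, the six ratios $(\tau_i - \tau_j)/(\tau_j - \tau_k)$ are precisely the negatives and negative reciprocals of $\xi$ together with its Galois conjugates $\frac{1}{1-\xi}$ and $\frac{\xi-1}{\xi}$. Suppose two ideal classes $\mathfrak{k}_1, \mathfrak{k}_2$ yield the same polynomial $T_\mathfrak{m}$, with respective parameters $\alpha_1, \alpha_2$ and roots $\xi_1, \xi_2$ of the corresponding cubics $k_1, k_2$. Their associated sets of six ratios must then coincide. Matches entirely within \eqref{eqn:6.5}, or entirely within \eqref{eqn:6.6}, immediately express $\xi_1$ as a Galois conjugate of $\xi_2$ over $\Sigma$, forcing $\alpha_1 = \alpha_2$ and hence $j(\mathfrak{k}_1) = j(\mathfrak{k}_2)$. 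The only other configuration is a cross-match such as $-\xi_1 = \xi_2/(1-\xi_2)$; substituting back into the cubic relation collapses this to the algebraic identity $\alpha_1 + \alpha_2 = -3$.

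I expect the main obstacle to be ruling out this anomalous cross-match case, and the natural tool is local arithmetic at $\wp_3'$. Using the factorization $\alpha_i = 3 + \gamma_i^3$ with $(\gamma_i) = \wp_3'$ from \cite{m1}, the relation $\alpha_1 + \alpha_2 = -3$ becomes $\gamma_1^3 + \gamma_2^3 = -9$; the left-hand side is divisible by $\wp_3'^3$, whereas $v_{\wp_3'}(9) = 2$, giving the required contradiction. With this case excluded, distinctness of $T_\mathfrak{m}(X, \mathfrak{k})$ across ideal classes is secured, and Theorem~\ref{thm:8} follows: any $\tau_i$ both lies in and generates $\textsf{K}_\mathfrak{m}$ over $K$.
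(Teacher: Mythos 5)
Your proposal matches the paper's proof essentially step for step: the same infrastructure (irreducibility of $k(x)$ via the Newton polygon at $\wp_3'$, identification of $\Sigma(\xi)$ with $\textsf{K}_{\wp_3'^2}$, irreducibility of $T_\mathfrak{m}$ with square discriminant), the same comparison of the ratios in \eqref{eqn:6.5} and \eqref{eqn:6.6}, and the same exclusion of the cross-match case by reducing $\alpha_1+\alpha_2=-3$ to $9+\gamma_1^3+\gamma_2^3=0$ and contradicting it at $\wp_3'$. The argument is correct as proposed.
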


Note that the case $\mathfrak{m} = \mathfrak{p}^2 = (3)$ has been handled in Section \ref{sec:5}, and the case of any higher power of a first degree prime divisor $\mathfrak{p}$ of $3$ is taken care of by Sugawara's condition \eqref{eqn:3}. \medskip

\subsection{The case $\mathfrak{m} = \wp_3 \wp_3'^2$.}

Now since Sugawara's function $\Psi(\wp_3 \wp_3'^2) = 3$, we must also consider the ideal $\mathfrak{m} = \wp_3 \wp_3'^2$, for which $\varphi(\mathfrak{m}) = 12$.  Since $\textsf{K}_{\wp_3'^2} \subset \textsf{K}_{\mathfrak{m}}$ and $[\textsf{K}_{\mathfrak{m}}:\textsf{K}_{\wp_3'^2}] = 2$ , it is clear that $\textsf{K}_{\mathfrak{m}} = \textsf{K}_{\wp_3'^2} \textsf{K}_{(3)} = \textsf{K}_{\wp_3'^2}(\omega)$.  \medskip

We will construct an $\mathfrak{m}$-division point on the curve $E_3(\alpha)$, with $\alpha$ as above.  First note that we may take the $Y$-coordinate of the point $P = (\xi, \eta)$ to be either $\eta = \xi-\xi^2$ or $\eta = \frac{\xi^2}{\xi-1}$.  Thus $\eta \in \textsf{K}_{\wp_3'^2}$.  Now consider the sum of points
$$Q_{1i} = P + \left(\frac{-3\beta}{\alpha(\beta-3)}, \frac{\beta-3\omega^i}{\beta-3}\right) = (\xi, \xi-\xi^2)+\left(\frac{-3\beta}{\alpha(\beta-3)}, \frac{\beta-3\omega^i}{\beta-3}\right)$$
on $E_3(\alpha)$.  It is not hard to see that $Q_{12} \not = \pm Q_{11}$.  If, for example, $Q_{12} = -Q_{11}$, then 
$$2P = -\left(\frac{-3\beta}{\alpha(\beta-3)}, \frac{\beta-3\omega}{\beta-3}\right) -\left(\frac{-3\beta}{\alpha(\beta-3)}, \frac{\beta-3\omega^2}{\beta-3}\right);$$
but the right side is defined over $\Sigma$, while the left side is only defined over $\Sigma(\xi)$.  Hence,  $Q_{11}$ is defined over $\textsf{K}_\mathfrak{m}$ but not over the intermediate fields of $\textsf{K}_\mathfrak{m}/\Sigma$.  \medskip

The conjugates of $Q_{1i}$ over $\Sigma(\omega)$ are the points $Q_{2i}, Q_{3i}$, which are obtained by replacing $\xi$ in the above sum by its conjugates $\frac{1}{1-\xi}, \frac{\xi-1}{\xi}$, respectively.  The corresponding $\tau$-invariants are
$$\tau_{ji} = \lambda\left(X(Q_{ji})+\frac{\alpha^2}{12}\right), \ 1\le j \le 3, \ i \in \{1,2\}.$$
It is clear that $\tau_{11}$ generates $\textsf{K}_{\mathfrak{m}}$ over $\Sigma$ (see the proof below), so that $Q_{11}$ is a primitive $\mathfrak{m}$-division point.  We will handle this case by proving the following.

\newtheorem{prop}{Proposition}

\begin{prop}
The trace of $X(Q_{11})$ to the Hilbert class field $\Sigma$ is
$$\textrm{Tr}_{\Sigma}(X(Q_{11})) = -3 - \alpha.$$
\label{prop:4}
\end{prop}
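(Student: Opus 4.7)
The plan is to compute $\mathrm{Tr}_\Sigma X(Q_{11})$ by realising the six Galois conjugates of $Q_{11}$, together with the three conjugates $P_j=(\xi_j,\eta_j)$ of $P=(\xi,\xi-\xi^2)$, as the complete tripling fibre $[3]^{-1}(S_0)$ on $E_3(\alpha)$ with $S_0=(0,0)$, and then summing all nine $X$-coordinates at once via the third division polynomial.

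First I would verify the decomposition. By construction $3P_j=S_0$, and since $R\in E_3[3]$ one has $3Q_{ji}=3P_j=S_0$, so the nine points $P_j,Q_{ji}$ ($j=1,2,3$, $i=1,2$) all lie in $[3]^{-1}(S_0)$. They are mutually distinct and no two of them are inverses of each other on $E_3$: any relation of the form $Q_{ji}=\pm Q_{j'i'}$, $Q_{ji}=\pm P_{j'}$, or $P_j=-P_{j'}$ forces either an order-$9$ point to be annihilated by $6$, or a point of $\Sigma(\xi)$ (fixed by complex conjugation $\tau:\omega\mapsto\omega^2$) to equal $\pm R$, which contradicts $\tau(R)=-R\neq R$; the case $Q_{12}=\pm Q_{11}$ is the one handled directly by the paper's remark just above the proposition. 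Since $|E_3[3]|=9$ and the tripling map has degree $9$, these nine points exhaust $[3]^{-1}(S_0)$ and their $X$-coordinates are distinct. The paper has already shown $\{P_1,P_2,P_3\}$ is a Galois orbit of size $3$ over $\Sigma$, and the observation that $Q_{11}$ is defined over $\textsf{K}_\mathfrak{m}$ but not over any intermediate subfield of $\textsf{K}_\mathfrak{m}/\Sigma$ shows that $\{Q_{ji}\}$ is a single orbit of size $6$. Therefore
\[
\mathrm{Tr}_\Sigma X(Q_{11}) \;=\; \sum_{j,i} X(Q_{ji}) \;=\; \Bigl(\sum_{P\in [3]^{-1}(S_0)} X(P)\Bigr) \,-\, \sum_{j=1}^{3} \xi_j.
\]

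Second I would compute the total sum $\sum_{P\in[3]^{-1}(S_0)} X(P)$ using the short Weierstrass model of $E_3(\alpha)$ obtained via $X'=X+\alpha^2/12$, $W=Y+(\alpha X+1)/2$: this gives $W^2=X'^3+AX'+B$ with $A=-\alpha(\alpha^3-24)/48$ and $B=(\alpha^6-36\alpha^3+216)/864$ (cf.\ Section~\ref{sec:5}), and $X'(S_0)=\alpha^2/12$. Writing $\psi_3(X')=3X'^4+6AX'^2+12BX'-A^2$ for the third division polynomial and $\phi_3(X')=X'\psi_3(X')^2-\psi_2(X')\psi_4(X')$ for the standard numerator of $X'\circ[3]$, the nine $X'$-coordinates of $[3]^{-1}(S_0)$ are the roots of
\[
F(X') \;=\; 12\,\phi_3(X') - \alpha^2\,\psi_3(X')^2.
\]
Inspection of the leading terms gives $\deg F=9$ with leading coefficient $12$, and shows that the $X'^8$-coefficient of $F$ equals $-9\alpha^2$: both $X'\psi_3^2$ and $\psi_2\psi_4$ have vanishing $X'^8$-coefficient, the first because $\psi_3$ has no $X'^3$-term and the second because the factor $(X'^3+AX'+B)(X'^6+5AX'^4+20BX'^3-\cdots)$ appearing in $\psi_2\psi_4/8$ has no $X'^8$-term. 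By Vieta, $\sum X'(P)=9\alpha^2/12=3\alpha^2/4$, and therefore $\sum_{P\in[3]^{-1}(S_0)} X(P) = 3\alpha^2/4 - 9(\alpha^2/12) = 0$.

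Finally, Vieta applied to $k(x)=x^3-(3+\alpha)x^2+\alpha x+1$ gives $\sum_{j=1}^{3}\xi_j=3+\alpha$, and substituting into the trace formula yields $\mathrm{Tr}_\Sigma X(Q_{11}) = 0-(3+\alpha) = -3-\alpha$, as claimed. The principal obstacle is the elementary but careful verification of the coefficients of $F$; the key conceptual input is the precise cancellation between the shifted Vieta sum $3\alpha^2/4$ and the translation correction $9(\alpha^2/12)$, which identifies the sum of $X$-coordinates over the full tripling fibre as exactly $0$.
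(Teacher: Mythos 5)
Your proof is correct, and it takes a genuinely different route from the paper. The paper proves Proposition~\ref{prop:4} by brute force: it applies the addition formula to get $X(Q_{11})$ explicitly, inverts $\textsf{A}=\xi\alpha(\beta-3)+3\beta$ as a quadratic in $\xi$ over $\Sigma$, computes $\mathrm{Tr}_{\Sigma(\xi)}(X(Q_{11}))=-\frac{a_1}{D}\xi^2-\frac{a_2}{D}\xi-\frac{a_3}{D}$ with large explicit coefficients $a_1,a_2,a_3,D$ in $\alpha,\beta$, and then traces down to $\Sigma$ using $\mathrm{Tr}_\Sigma(\xi)=3+\alpha$, $\mathrm{Tr}_\Sigma(\xi^2)=\alpha^2+4\alpha+9$ and the relation $27\alpha^3+27\beta^3=\alpha^3\beta^3$. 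You instead observe that the nine points $P_j,\,Q_{ji}$ constitute the full fibre $[3]^{-1}(\pm(0,0))$ (your distinctness check is essentially right: no two points of a fibre over $S_0\neq -S_0$ are inverses, and $P_j-P_{j'}=\pm R$ is excluded because $P_j-P_{j'}$ is rational over $\Sigma(\xi)$, which meets $\Sigma(\omega)$ in $\Sigma$, while $R\notin E(\Sigma)$), so that the trace is the full fibre sum minus $\sum_j\xi_j$. The fibre sum is governed by Vieta applied to $\phi_3(X')-\tfrac{\alpha^2}{12}\psi_3(X')^2$, whose $X'^8$-coefficient is $-9\alpha^2/12\cdot 12=-9\alpha^2$ exactly as you say (both $X'\psi_3^2$ and $\psi_2\psi_4$ indeed have vanishing $X'^8$-coefficient), giving $\sum X'=9\cdot\frac{\alpha^2}{12}$ and hence $\sum X=0$ after undoing the shift; subtracting $\sum\xi_j=3+\alpha$ gives the result. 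Your argument is shorter, conceptual, sign-independent (it only uses $X(S_0)=X(-S_0)=0$), and generalizes to any $[n]^{-1}(S)$. The one caveat: the paper's computational proof also produces the intermediate trace $\mathrm{Tr}_{\Sigma(\xi)}(X(Q_{11}))$ as an explicit quadratic in $\xi$, and it is precisely those coefficients $a_1,a_2$ (via the differences $d_{jk}$) that drive the rest of Section~\ref{sec:6}; your method yields only the trace all the way down to $\Sigma$, so it proves the Proposition as stated but does not supply the finer data the paper needs immediately afterward.
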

\begin{proof}
Setting $(x_1,y_1) = (\xi, \xi-\xi^2)$ and $(x_2,y_2) = (\frac{-3\beta}{\alpha(\beta-3)},\frac{\beta - 3\omega}{\beta-3})$ and using the formula
$$X(Q_{11}) = \left(\frac{y_2 - y_1}{x_2 - x_1}\right)^2 + \alpha \frac{y_2 - y_1}{x_2 - x_1} - x_1 - x_2,$$
(see \cite[p. 54]{si1}) we find that
\begin{equation*}
X(Q_{11}) = \frac{1}{(\xi \alpha (\beta - 3) + 3\beta)^2 \alpha (\beta - 3)} \times P(\alpha,\beta,\xi,\omega),
\end{equation*}
where $P$ is a polynomial in the quantities $\alpha,\beta,\xi,\omega$.  We compute the inverse of $\textsf{A} = (\xi \alpha (\beta - 3) + 3\beta)$ to be
\begin{align*}
\textsf{A}^{-1} &= \frac{\beta(\beta - 3)(\alpha-3)}{81\alpha^2} \xi^2\\
& - \frac{\beta(\alpha-3)(\alpha^2\beta - 3\alpha^2 + 3\alpha \beta - 9\alpha + 3\beta)}{81\alpha^3} \xi\\
& + \frac{\beta(\alpha-3)(\alpha^3 \beta^2 - 6 \alpha^3 \beta + 3 \alpha^2 \beta^2 + 9 \alpha^3 - 9 \alpha^2 \beta + 9 \alpha \beta^2 - 27 \alpha \beta + 9\beta^2)}{81(\beta - 3)\alpha^4}.
\end{align*}
Using this in the expression for $X(Q_{11})$, the trace of $X(Q_{11})$ to $\Sigma(\xi) = \textsf{K}_{\wp_3'^2}$ is
\begin{align*}
\textrm{Tr}_{\Sigma(\xi)}(X(Q_{11})) &= \textsf{A}^{-2} \frac{1}{\alpha(\beta-3)}[P(\alpha,\beta,\xi,\omega)+P(\alpha,\beta,\xi,\omega^2)]\\
&= -A_1\xi^2 - A_2 \xi -A_3,
\end{align*}
where
\begin{align*}
A_1 &= -\frac{\beta(\beta-3)(\alpha-3)}{9\alpha};\\
A_2 &= \frac{\beta(\alpha-3)[(\beta-3)\alpha^2+3(\beta-2)\alpha+3\beta]}{9\alpha^2};\\
A_3 &= \frac{\beta(\alpha-3)[(\beta-3)^2 \alpha^3 +3(2\beta^2-3\beta-9)\alpha^2+9\beta(2\beta-3)\alpha+27\beta^2]}{81\alpha^3}.
\end{align*}

Now, using that $\textrm{Tr}_{\Sigma}(\xi) = 3+\alpha$ and $\textrm{Tr}_\Sigma(\xi^2) = (\alpha+3)^2 -2\alpha = \alpha^2 + 4\alpha+9$, we compute that
\begin{align*}
\textrm{Tr}_\Sigma(X(Q_{11})) &= -(A_1(\alpha^2+4\alpha+9)+A_2(\alpha+3)+3A_3)\\
&=-\frac{\beta^3(\alpha^3 - 27)(\alpha + 3)}{27\alpha^3} = -(\alpha+3),
\end{align*}
as claimed.
\end{proof}

Let $\psi \in \textrm{Gal}(\textsf{K}_\frak{m}/K)$ be an automorphism which permutes the $\tau$-invariants for $\mathfrak{m}$ and a given ideal class $\mathfrak{k}$.  Then $\psi$ permutes the $3$ conjugates (over $\Sigma$) of the trace of $\tau_{11}$ to $\Sigma(\xi) = \textsf{K}_{\wp_3'^2}$.  Set
\begin{align*}
t_1 &= -A_1\xi^2 - A_2 \xi -A_3,\\
t_2 &= -A_1\frac{1}{(1-\xi)^2} - A_2 \frac{1}{1-\xi} -A_3,\\
&= \frac{\beta(\alpha - 3)(\alpha + \beta)}{3\alpha^2}\xi^2 - \frac{\beta(\alpha - 3)(3\alpha^2 + 2\alpha \beta + 9\alpha + 6\beta)}{9\alpha^2}\xi\\
& \ \  - \frac{\beta(\alpha - 3)}{81\alpha^3} [\alpha^3 \beta^2 + 3\alpha^3 \beta + 6\alpha^2 \beta^2 - 18\alpha^3 + 18\alpha \beta^2 + 27 \alpha \beta + 27 \beta^2]\\
t_3 &= -A_1\left(\frac{\xi-1}{\xi}\right)^2 - A_2 \frac{\xi-1}{\xi} -A_3\\
&= -\frac{\beta^2(\alpha^2 - 9)}{9\alpha^2}\xi^2 + \frac{\beta(\alpha - 3)(\alpha^2 \beta + 5\alpha \beta + 3\alpha + 9\beta)}{9\alpha^2} \xi\\
& \ \ - \frac{\beta(\alpha - 3)}{81\alpha^3}[\alpha^3 \beta^2 + 3\alpha^3 \beta + 6\alpha^2 \beta^2 + 9 \alpha^3 + 9 \alpha^2 \beta + 18\alpha \beta^2 + 27\alpha^2 + 27\beta^2];
\end{align*}
with notation as in the proof of Proposition \ref{prop:4}, and where we have used
$$\frac{1}{\xi-1} =  \xi^2 - (\alpha + 2)\xi -2, \ \ \frac{1}{\xi} = -\xi^2 + (\alpha + 3)\xi - \alpha$$
to express the $t_i$ in terms of the basis $\{1,\xi,\xi^2\}$ for $\Sigma(\xi)/\Sigma$.  (Recall from Section 6.1 that $\xi$ is a root of the polynomial $X^3-(\alpha+3)X^2+\alpha X+1$.)  \medskip

The traces to $\Sigma(\xi)$ of the $\tau$-invariants for $\mathfrak{m}$ (and a given ideal class $\mathfrak{k}$) are the quantities $\lambda(t_i+\frac{\alpha^2}{6})$.  Hence, the minimal polynomial of these traces is the polynomial
$$F(X) = X^3+\lambda c_1 X^2+\lambda^2 c_2 X+ \lambda^3 c_3, \ \ \lambda = -2^7\cdot 3^5 \frac{g_2 g_3}{\Delta},$$
with $g_2, g_3, \Delta$ as in Section 5 and
\begin{align*}
c_1 &=  -\sum_{i=1}^3{\left(t_i+\frac{\alpha^2}{6}\right)} = -\frac{1}{2}(\alpha^2-2\alpha-6);\\
c_2 &= \sum_{1 \le i < j \le 3}{(t_i+\frac{\alpha^2}{6})(t_j+\frac{\alpha^2}{6})}\\
&= -\frac{\alpha^4 - 3\alpha^3 - 27\alpha + 81}{27\alpha^2}\beta^2 - \frac{\alpha^3 - 27}{9\alpha}\beta - \frac{\alpha^3}{3} - \frac{\alpha^2}{3} + 6 + \frac{\alpha^4}{12}.
\end{align*}
Since the automorphism $\psi$ permutes the invariants $\tau(\mathfrak{k}^*)$ for $\mathfrak{m} = \wp_3 \wp_3'^2$, it permutes their traces to the unique cubic subfield $\Sigma(\xi)$ of $\textsf{K}_{\wp_3 \wp_3'^2}/\Sigma$, and therefore also permutes the roots of the cubic polynomial $F(X)$.  Hence, the quantities $\lambda c_1$ and $\lambda^2 c_2$ are invariant under $\psi$.  We must show that $\psi$ fixes the field $\Sigma = \mathbb{Q}(\alpha)$.  This is complicated by the presence of the factors $\lambda$ and $\lambda^2$, and also because $c_2$ is not easily expressible in terms of $\alpha$ alone.  First, we set
$$L(\alpha) = -\frac{1}{6} \lambda c_1 = \frac{\alpha(\alpha^3 - 24)(\alpha^6 - 36\alpha^3 + 216)(\alpha^2 - 2\alpha - 6)}{\alpha^3 - 27}.$$
Letting $\alpha' = \alpha^\psi$ we have $L(\alpha) = L(\alpha')$. Hence, with $x = \alpha, y = \alpha'$, the following quantity is zero:
\begin{equation}
L(x)-L(y) = \frac{(x-y)}{(x^3-27)(y^3-27)} B(x,y),
\label{eqn:6.7}
\end{equation}
\begin{align*}
&B(x,y) = (x^3 - 27)y^{11} + (x^4 - 2x^3 - 27x + 54)y^{10}\\
& + (x^5 - 2x^4 - 6x^3 - 27x^2 + 54x + 162)y^9\\
& + (x^6 - 2x^5 - 6x^4 - 87x^3 + 54x^2 + 162x + 1620)y^8\\
& + (x^7 - 2x^6 - 6x^5 - 87x^4 + 174x^3 + 162x^2 + 1620x - 3240)y^7\\
& + (x^8 - 2x^7 - 6x^6 - 87x^5 + 174x^4 + 522x^3 + 1620x^2 - 3240x - 9720)y^6\\
& + (x^9 - 2x^8 - 6x^7 - 87x^6 + 174x^5 + 522x^4 + 2700x^3 - 3240x^2 - 9720x\\
&  - 29160)y^5\\
& + (x^{10} - 2x^9 - 6x^8 - 87x^7 + 174x^6 + 522x^5 + 2700x^4 - 5400x^3 - 9720x^2 \\
& \ - 29160x + 58320)y^4\\
& + (x^{11} - 2x^{10} - 6x^9 - 87x^8 + 174x^7 + 522x^6 + 2700x^5 - 5400x^4 - 16200x^3 \\
& \ - 29160x^2 + 58320x + 174960)y^3\\
& + (-27x^9 + 54x^8 + 162x^7 + 1620x^6 - 3240x^5 - 9720x^4 - 29160x^3  \\
& \ + 47952x^2 + 143856x + 139968)y^2
\end{align*}
\begin{align*}
& + (-27x^{10} + 54x^9 + 162x^8 + 1620x^7 - 3240x^6 - 9720x^5 - 29160x^4  \\
& \ + 58320x^3 + 143856x^2 + 139968x - 279936)y\\
& - 27x^{11} + 54x^{10} + 162x^9 + 1620x^8 - 3240x^7 - 9720x^6 - 29160x^5  \\
& \ + 58320x^4 + 174960x^3 + 139968x^2 - 279936x - 839808.
\end{align*}

Now we let $c_2 = c_2(\alpha,\beta)$ and consider the product
\begin{align*}
M(\alpha) &= c_2(\alpha,\beta)c_2(\alpha,\omega \beta) c_2(\alpha,\omega^2\beta)\\
&= \frac{\alpha}{1728}(\alpha^{11} - 12\alpha^{10} + 36\alpha^9 + 32\alpha^8 + 24\alpha^7- 1584\alpha^6 + 1152\alpha^5\\
& \  + 8208\alpha^4 + 4752\alpha^3 - 34560\alpha^2  - 62208\alpha + 139968).
\end{align*}
To calculate this product we have made use of the relations $27\alpha^3+27\beta^3 = \alpha^3 \beta^3$ and $\omega^2+\omega+1 = 0$.  Furthermore,
if $\psi$ is the automorphism considered above, it is clear that $\lambda^6 M(\alpha) = \lambda^{6\psi} M(\alpha')$, since we know that $\lambda^2 c_2 = \lambda^{2\psi} c_2'$, and either $\omega^\psi = \omega$ or $\omega^2$.  Now we consider the quotient
\begin{align*}
G(\alpha) &= \frac{\lambda^6 M(\alpha)}{\lambda^6 c_1^6} = \frac{M(\alpha)}{c_1^6} \\
&= \frac{\alpha}{27(\alpha^2-2\alpha-6)^6} (\alpha^{11} - 12\alpha^{10} + 36\alpha^9 + 32\alpha^8 + 24\alpha^7- 1584\alpha^6 \\
& \  + 1152\alpha^5 + 8208\alpha^4 + 4752\alpha^3 - 34560\alpha^2  - 62208\alpha + 139968).
\end{align*}
Since the numerator and denominator are fixed by $\psi$, we know that $G(\alpha') = G(\alpha)$.  With $x=\alpha, y = \alpha'$ as above, we make use of the
following difference, which we know to be zero:
\begin{equation}
27G(x)-27G(y) = -\frac{4(x-y)}{(x^2 - 2x - 6)^6(y^2 - 2y - 6)^6} H(x,y),
\label{eqn:6.8}
\end{equation}
with
\begin{align*}
H(&x,y) = (3x^{10} - 42x^9 + 171x^8 + 12x^7 - 1168x^6 - 396x^5 + 7128x^4 \\
& + 2160x^3 - 23328x^2 + 11664x - 11664)y^{11}\\
& + (3x^{11} - 78x^{10} + 675x^9 - 2040x^8 - 1312x^7 + 13620x^6 + 11880x^5\\
&  - 83376x^4 - 49248x^3 + 291600x^2 - 151632x + 139968)y^{10}\\
& + (-42x^{11} + 675x^{10} - 3648x^9 + 4772x^8 + 18804x^7 - 33624x^6 \\
& - 122256x^5 + 193104x^4 + 473040x^3 - 804816x^2 + 139968x - 419904)y^9\\
& + (171x^{11} - 2040x^{10} + 4772x^9 + 25284x^8 - 99768x^7 - 111248x^6 + 525168x^5\\
& + 900720x^4 - 2187216x^3 - 3219264x^2 + 5832000x - 373248)y^8\\
\end{align*}
\begin{align*}
& + (12x^{11} - 1312x^{10} + 18804x^9 - 99768x^8 + 159904x^7 + 300144x^6 - 512352x^5\\
& - 2828736x^4 + 2742336x^3 + 15909696x^2 - 24027840x - 279936)y^7\\
& + (-1168x^{11} + 13620x^{10} - 33624x^9 - 111248x^8 + 300144x^7 + 1323936x^6\\
&  - 2299968x^5 - 8605440x^4 + 12902976x^3 + 13670208x^2 - 20435328x\\
&  + 18475776)y^6\\
 & + (-396x^{11} + 11880x^{10} - 122256x^9 + 525168x^8 - 512352x^7 - 2299968x^6\\
 & + 525312x^5 + 26664768x^4 - 24207552x^3 - 119968128x^2 + 195395328x\\
 & - 13436928)y^5\\
 & + (7128x^{11} - 83376x^{10} + 193104x^9 + 900720x^8 - 2828736x^7 - 8605440x^6\\
 & + 26664768x^5 + 36180864x^4 - 115924608x^3 - 20715264x^2 + 137728512x\\
 & - 95738112)y^4\\
 & + (2160x^{11} - 49248x^{10} + 473040x^9 - 2187216x^8 + 2742336x^7 + 12902976x^6\\
 & - 24207552x^5 - 115924608x^4 + 235892736x^3 + 470292480x^2 - 1038002688x\\
 & - 55427328)y^3\\
 & + (-23328x^{11} + 291600x^{10} - 804816x^9 - 3219264x^8 + 15909696x^7\\
 &  + 13670208x^6 - 119968128x^5 - 20715264x^4 + 470292480x^3 - 97417728x^2 \\
 &  - 760866048x + 403107840)y^2\\
 & + (11664x^{11} - 151632x^{10} + 139968x^9 + 5832000x^8 - 24027840x^7 \\
 & - 20435328x^6 + 195395328x^5 + 137728512x^4 - 1038002688x^3 - 760866048x^2 \\
 & + 2942687232x + 725594112)y\\
 & - 11664x^{11} + 139968x^{10} - 419904x^9 - 373248x^8 - 279936x^7 + 18475776x^6\\
 & - 13436928x^5 - 95738112x^4 - 55427328x^3 + 403107840x^2 + 725594112x\\
 &- 1632586752.
\end{align*}

Now assume that $B(x,y) = H(x,y) = 0$ in \eqref{eqn:6.7} and \eqref{eqn:6.8}.  Reducing $H(x,y)$ mod $\wp_3$ yields that
$$0 = H(x,y) \equiv 2 x^6 y^6(x - y)^2(y + x + 2)^3 \ (\textrm{mod} \ \wp_3).$$
Since $x = \alpha, y = \alpha'$, where $(\alpha,\wp_3) = (\alpha',\wp_3) = 1$, $x^6 y^6$ is relatively prime to $\wp_3$.  Furthermore,
$(x-y,\wp_3) = 1$, since $x$ and $y$ are conjugate over $K$ and the discriminant of the minimal polynomial $m_d(X)$ of $\alpha$ over $K$ is not divisible
by $\wp_3$.  (See \cite[p. 880]{m1}.  In particular, the powers of $\alpha$ are a $\wp_3$-integral basis for $\Sigma/K$.)  It follows that we must have
$$y+x+2 \equiv 0 \ (\textrm{mod} \ \wp_3) \ \Rightarrow \ y \equiv 1-x  \ (\textrm{mod} \ \wp_3).$$
Putting this into the polynomial $B(x,y)$ yields that
\begin{align}
\notag 0 &= B(x,y) \equiv B(x,1-x)\\
\label{eqn:6.9} &\equiv 2x^3(x + 2)^3(x^4 + x^2 + 2x + 1)(x^4 + 2x^3 + x^2 + x + 2) \ (\textrm{mod} \ \wp_3).
\end{align}
By the theory in \cite{m1}, the minimal polynomial $p_d(x)$ of $\alpha$ over $\mathbb{Q}$ splits (mod $3$) as $x^{h(-d)}$ times a 
product of irreducible polynomials of degree $n$, where $n$ is the degree of the prime divisors of $\wp_3$ in $\Sigma$ over $K$ and 
$h(-d)$ is the class number of $K$.  The integer $n$ is
also the minimal period of $\alpha$ with respect to the algebraic function defined by the polynomial
$$g(x,y) = (y^2+3y+9)x^3-(y+6)^3.$$
We know that $(x, \wp_3) = 1$.  Also, if $x +2$ occurs as a factor of $p_d(x)$ (mod $3$), then $n = 1$ and $p_d(x) = x^2+2x+12$, since 
the minimal polynomials of fixed points for $g(x,y)$ divide
$$g(x,x) = (x-3)(x^2+4x+6)(x^2+2x+12).$$
In that case, $d=11$, and since $h(-11) = 1$, we may disregard this case, as Sugawara's conjecture is obviously true for $K = \mathbb{Q}(\sqrt{-11})$.
Furthermore, each of the quartics in \eqref{eqn:6.9} divides (modulo $3$) a unique factor $p_d(x)$ of the nested resultant
$$\textrm{Res}_{x_3}(\textrm{Res}_{x_2}(\textrm{Res}_{x_1}(g(x,x_1),g(x_1,x_2)),g(x_2,x_3)),g(x_3,x)),$$
whose roots are the points of period dividing $n = 4$ for $g(x,y)$.  The factor $x^4 + x^2 + 2x + 1$ divides
\begin{align*}
p_{203}(x) &= x^8 + 216x^7 + 19720x^6 - 105856x^5 + 677824x^4 - 2012736x^3 \\
& \ \ + 2877120x^2 - 18399744x + 40144896
\end{align*}
modulo $3$.  Hence, $x$ and $y$ are roots of $p_{203}(X)$.  Now we compute
$$\textrm{Res}_y(B(x,y),p_{203}(y)) \equiv 8(x + 9)(x^2 + 5x + 11)(x^2 + 9x + 10)(x^2 + 7x + 1)$$
$(\textrm{modd} \ p_{203}(x),13)$, i.e., modulo $p_{203}(x)$ over $\mathbb{F}_{13}$; while
$$p_{203}(x) \equiv (x^2 + 5x + 10)(x^2 + 2x + 12)(x^2 + 4x + 6)(x^2 + 10x + 5) \ (\textrm{mod} \ 13),$$
which shows that the resultant is certainly nonzero.  It follows that $B(x,y) \neq 0$ in this case.
\medskip

Finally, the quartic $x^4 + 2x^3 + x^2 + x + 2$ divides
\begin{align*}
&p_{2^6 \cdot 5}(X) = X^{16} - 800X^{15} + 273910X^{14} - 2916016X^{13} + 9685420X^{12}\\
& - 91592020X^{11} + 553552696X^{10} - 280244240X^9 + 10513962550X^8 \\
& - 15387943056X^7 + 24842673360X^6 - 396367171200X^5  \\
& - 171780654744X^4 - 1652555947680X^3 + 4704933982320X^2\\
& + 6541367738400X + 26983141920900
\end{align*}
modulo $3$.  However, a root of this polynomial generates the ring class field $\Omega_{4}$ of conductor $f = 4$ over $K = \mathbb{Q}(\sqrt{-5})$, 
so that this case cannot occur.  (This can be verified by examining the possible discriminants $-d$ for which $4\cdot 3^4 = a^2+db^2$ 
or $3^4 = a^2 + \frac{d}{4}b^2$, and showing that the polynomial $p_{320}(X)$ splits into linear factors modulo primes of the form $a^2+db^2$
or $a^2 + \frac{d}{4}b^2$, for only one of the five possible discriminants with $h(-d) = 8$, namely $-d = -320$.)   \medskip

Hence, at least one of $B(x,y)$ or $H(x,y)$ is nonzero, and this implies that $\alpha = \alpha'$ by
\eqref{eqn:6.7} and \eqref{eqn:6.8}.  Hence $j(\mathfrak{k}) = j(\mathfrak{k}')$.  This implies Sugawara's conjecture for $\mathfrak{m} = \wp_3'^2 \wp_3$.

\begin{thm} If $\mathfrak{m} = \mathfrak{p}_1^2 \mathfrak{p}_2$, where $(3) = \mathfrak{p}_1 \mathfrak{p}_2$, then the ray class field $K_\mathfrak{m} = K(\tau_i)$ is generated over $K$ by a single $\tau$-invariant for the conductor $\mathfrak{m}$.
\label{thm:8a}
\end{thm}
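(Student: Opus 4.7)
The plan is to exploit the explicit trace computation of Proposition \ref{prop:4} together with the irreducibility of $k(x) = x^3 - (3+\alpha)x^2 + \alpha x + 1$ over $\Sigma$ established in Section 6.1. The key observation is that $\textsf{K}_{\mathfrak{m}} = \textsf{K}_{\wp_3'^2}(\omega)$ is a degree $2$ extension of $\textsf{K}_{\wp_3'^2}$, so tracing the $\tau$-invariants $\lambda(X(Q_{ji}) + \alpha^2/12)$ from $\textsf{K}_{\mathfrak{m}}$ down to $\textsf{K}_{\wp_3'^2} = \Sigma(\xi)$ produces three elements $\lambda(t_i + \alpha^2/6)$ that are conjugate over $\Sigma$. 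An automorphism $\psi \in \mathrm{Gal}(\textsf{K}_{\mathfrak{m}}/K)$ taking $j(\mathfrak{k}) \mapsto j(\mathfrak{k}')$ and preserving the set of $\tau$-invariants must therefore induce a permutation in $S_3$ on the $t_i$.

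First I would compute $t_1, t_2, t_3$ explicitly using the formula for $\textrm{Tr}_{\Sigma(\xi)}(X(Q_{11}))$ from Proposition \ref{prop:4} and its conjugates under the cyclic cubic action $\xi \mapsto \frac{1}{1-\xi} \mapsto \frac{\xi-1}{\xi}$. I would then form the differences $d_{ij} = t_i - t_j$ and verify that they all share the factor $(\xi^2 - \xi + 1)(a_1 + a_2)$, which is nonzero (as witnessed by a resultant computation against $27\alpha^3 + 27\beta^3 = \alpha^3\beta^3$, recorded in the excerpt). This makes the cross-ratio $(t_1 - t_2)/(t_1 - t_3) = \xi/(\xi-1)$ well-defined and lets me compare its image under $\psi$ with the six possible values forced by the permutation action on the $t_i$.

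Next I would carry out the case analysis on $\psi \in S_3$. For each transposition $(12), (13), (23)$, the transformed ratio must equal $\xi$, $1/(1-\xi)$, or $(\xi-1)/\xi$ respectively; setting this equal to $\xi'/(\xi'-1)$ (with $\xi' = \xi^\psi$) forces $\xi'$ to satisfy the reciprocal polynomial $x^3 + \alpha x^2 - (3+\alpha)x + 1$, hence $\alpha^\psi = -3-\alpha$. This is ruled out exactly as in Section 6.1: writing $\alpha = 3 + \gamma^3$ and $\alpha^\psi = 3 + \gamma^{3\psi}$ with $(\gamma), (\gamma^\psi) \cong \wp_3'$, the relation would force $\wp_3'^3 \mid 9$, which is false since $\wp_3'^3 \nmid 3$. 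For the $3$-cycles $(123)$ and $(132)$, the transformed cross-ratio equals $1/\xi$ or $(1-\xi)$, and in both cases $\xi'$ satisfies the same minimal polynomial as $\xi$, giving $\alpha^\psi = \alpha$; the identity is trivial.

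Hence $\psi|_\Sigma = 1$, so $j(\mathfrak{k}) = j(\mathfrak{k}')$ and $\mathfrak{k} = \mathfrak{k}'$, which shows that the ray class polynomials $T_{\mathfrak{m}}(X,\mathfrak{k})$ are distinct for distinct classes. Combined with Hasse's irreducibility of $T_\mathfrak{m}(X,\mathfrak{k})$ over $\Sigma$ and the fact that $Q_{11}$ is a primitive $\mathfrak{m}$-division point, this yields $\textsf{K}_{\mathfrak{m}} = K(\tau(\mathfrak{k}^*))$ as desired. The main obstacle is bookkeeping: one must verify that the trace computation really descends $\tau(\mathfrak{k}_1^*)$ into $\textsf{K}_{\wp_3'^2}$ cleanly and that $a_1 + a_2 \neq 0$, since the whole argument collapses if the three traces coincide; the resultant check against the curve relation $27\alpha^3 + 27\beta^3 = \alpha^3\beta^3$ is the critical non-degeneracy input.
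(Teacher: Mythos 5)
Your proposal follows essentially the same route as the paper's own argument: it uses the trace formula of Proposition \ref{prop:4} to descend to $\Sigma(\xi)=\textsf{K}_{\wp_3'^2}$, forms the differences $d_{ij}$ with common factor $(\xi^2-\xi+1)(a_1+a_2)$ verified nonzero by the resultant against $27\alpha^3+27\beta^3-\alpha^3\beta^3$, and runs the same $S_3$ case analysis on the ratio $\xi/(\xi-1)$, ruling out transpositions via the reciprocal-polynomial relation $\alpha^\psi=-3-\alpha$ and the $\wp_3'$-divisibility contradiction. The plan is correct and matches the paper's proof in all essential steps.
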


\section{The case $\mathfrak{m} = (2)\wp_3 \wp_3'$.}
\label{sec:7}

On the curve $E_3(\alpha)$, the doubling formula is
$$X(2P) = \frac{x(x^3 - \alpha x - 2)}{4x^3 + (\alpha x + 1)^2}.$$
Then $P = (x,y)$ on $E_3$ satisfies $2P = \pm (0,0)$ and $P \neq \pm (0,0)$ if and only if $x^3 - \alpha x - 2 = 0$.  If this condition holds, then $P$ has order $6$ on $E_3$.  We have
$$\alpha  = \frac{x^3-2}{x} = \frac{(-x)^3+2}{-x},$$
and it follows from the result of \cite[Prop. 13]{am} that
\begin{equation}
\label{eqn:7.1} x = -2c(w/3), \ \frac{1}{c(w/6)} \ \ \textrm{or} \ \ \frac{1}{c_1(w/6)},
\end{equation}
where $c(\tau)$ is Ramanujan's cubic continued fraction and $c_1(\tau) = c(\tau + \frac{3}{2})$; and $w/3$ is the basis quotient of a suitable integral ideal.  Each of these values lies in the ring class field $\Omega_2 = \textsf{K}_2$, by \cite[pp. 20, 27]{am}.  \medskip

Given the factorization
$$Y^2 + \alpha xY + Y - x^3 = Y^2 +(x^3-1)Y-x^3 = (Y-1)(Y+x^3) =0$$
we set $P = (x,1)$ and $-P = (x,-x^3)$.  Assuming $2P = (0,0)$, this yields that 
\begin{equation*}
3P = 2P+P = (0,0) + (x,1) = \left(\left(\frac{1}{x}\right)^2+\alpha\left(\frac{1}{x}\right)-x,y_1\right) = \left(\frac{-1}{x^2},y_1\right).
\end{equation*}
The alternative would be that $2P = (0,-1)$, in which case
\begin{equation*}
3P = 2P+P = (0,-1) + (x,1) = \left(\left(\frac{2}{x}\right)^2+\alpha\left(\frac{2}{x}\right)-x,y_2\right) = (x,y_2);
\end{equation*}
and this would imply that $3P = -P$, which is false.  Hence $3P = \left(\frac{-1}{x^2},\frac{-1}{x^3}\right)$ has order $2$, so that
$$\bar{\tau} = -2^7 3^5 \frac{g_2 g_3}{\Delta}\left(\frac{-1}{x^2}+\frac{\alpha^2}{12}\right)$$
is the $\tau$-invariant corresponding to $3P$.  By Theorem \ref{thm:1} and the fact that the $\tau$-invariants are independent of the choice of the elliptic curve, we know that $\bar{\tau}$ generates $\textsf{K}_2/\Sigma$ and therefore so does the root $x$, since $\alpha, g_2, g_3, \Delta$ lie in $\Sigma$ and $x \in \textsf{K}_2$.  Hence, $t(X) = X^3 - \alpha X - 2$ is irreducible over $\Sigma$.  This also follows from \cite[Thm. 1]{am}.  \medskip

Now assume $\mathfrak{m} = (6) = (2) \wp_3 \wp_3'$, where $2$ is inert in $K$.  Consider the point of order $6$:
\begin{align*}
Q &= (x,1)+\left(\frac{-3\beta}{\alpha(\beta-3)},\frac{\beta-3\omega}{\beta-3}\right)\\
& = (X(Q),Y(Q)),
\end{align*}
where
\begin{equation*}
X(Q) = \frac{3\beta[-\alpha^2(\beta^3-27)x^2-3\alpha \beta(2\beta+3\omega^2)(\beta-3\omega)x+9\beta^2(-\beta+9\omega+3)]}{\alpha(\beta^2+3\beta+9)(\alpha \beta x-3\alpha x+3\beta)^2},
\end{equation*}
on reducing the numerator modulo $x^3-\alpha x -2$ (with respect to $x$) and $\alpha^3 \beta^3-27\alpha^3-27\beta^3$ (with respect to $\alpha$), and cancelling $\beta-3$.  Using that the reciprocal of $\alpha \beta x-3\alpha x+3\beta$ is
$$\rho = \frac{(\beta^2 + 3\beta + 9)(\beta-3)^2 \alpha^2}{729\beta^3}x^2 - \frac{(\beta^3 - 27)\alpha}{243\beta^2}x - \frac{(2\beta^2 - 12\beta - 9)}{81\beta},$$
this yields that
\begin{align*}
X(Q) &= \rho^2 \frac{3\beta[-\alpha^2(\beta^3-27)x^2-3\alpha \beta(2\beta+3\omega^2)(\beta-3\omega)x+9\beta^2(-\beta+9\omega+3)]}{\alpha(\beta^2+3\beta+9)}\\
& = -\frac{(\beta^2\omega + \beta^2 - 3\beta - 9\omega)\alpha}{9\beta}x^2 + \frac{(\beta \omega + \beta - 3)}{3}x\\
& \ \ \ \ + \frac{\beta(2\beta^2 \omega + 2\beta^2 + 6\beta \omega- 3\beta - 9\omega - 9)}{\alpha(\beta^2 + 3\beta + 9)}\\
& = \left(-\frac{(\beta^2 - 9)\alpha}{9\beta}x^2 + \frac{\beta}{3}x + \frac{\beta(2\beta^2 + 6\beta - 9)}{\alpha(\beta^2 + 3\beta + 9)}\right)\omega\\
& \ \ \ \ -\frac{(\beta - 3)\alpha}{9}x^2 + \frac{(\beta - 3)}{3}x + \frac{\beta(2\beta^2 - 3\beta - 9)}{\alpha(\beta^2 + 3\beta + 9)}.
\end{align*}
Since $\{1,x,x^2\}$ is a basis of $\textsf{K}_2/\Sigma$ and $\{1,\omega\}$ is a basis of $\textsf{K}_3/\Sigma$, it is clear that $\{1,x,x^2,\omega,\omega x,\omega x^2\}$ is a basis of $\textsf{K}_6/\Sigma$ ($\textsf{K}_2 \textsf{K}_3 = \textsf{K}_6$ because the degrees match).  Hence, the above representation  shows that $X(Q)$ does not lie in any of the subfields $\textsf{K}_2, \textsf{K}_3$ or $\Sigma$.  The trace of $X(Q)$ to $\textsf{K}_2$ is
\begin{align*}
\textrm{Tr}_{\textsf{K}_6/\textsf{K}_2}(X(Q)) &=  -\left(-\frac{(\beta^2 - 9)\alpha}{9\beta}x^2 + \frac{\beta}{3}x + \frac{\beta(2\beta^2 + 6\beta - 9)}{\alpha(\beta^2 + 3\beta + 9)}\right)\\
& \ \ +2\left(-\frac{(\beta^2 - 3\beta)\alpha}{9\beta}x^2 + \frac{(\beta - 3)}{3}x + \frac{\beta(2\beta^2 - 3\beta - 9)}{\alpha(\beta^2 + 3\beta + 9)}\right)\\
& = -\frac{(\beta - 3)^2\alpha}{9\beta} x^2+\frac{(\beta - 6)}{3}x+\frac{\beta(2\beta^2 - 12\beta - 9)}{(\beta^2 + 3\beta + 9)\alpha}.
\end{align*}
From this expression we compute the trace of $X(Q)$ to $\Sigma$ to be
\begin{align*}
\textrm{Tr}_{\textsf{K}_6/\Sigma}(X(Q)) &= -\frac{(\beta - 3)^2\alpha}{9\beta} \textrm{Tr}_{\textsf{K}_2/\Sigma}(x^2)+\frac{(\beta - 6)}{3}\textrm{Tr}_{\textsf{K}_2/\Sigma}(x)\\
& \ \ \ +3\frac{\beta(2\beta^2 - 12\beta - 9)}{(\beta^2 + 3\beta + 9)\alpha}\\
& = -\frac{(\beta - 3)^2\alpha}{9\beta}(2\alpha)+ 0 + 3\frac{\beta(2\beta^2 - 12\beta - 9)}{(\beta^2 + 3\beta + 9)\alpha}\\
& = \frac{-9\beta(2\beta+3)}{\alpha(\beta^2 + 3\beta + 9)}.
\end{align*}
Now we have
\begin{align*}
\textrm{Tr}_{\textsf{K}_6/\Sigma}&\left(X(Q)+\frac{\alpha^2}{12}\right) = \frac{-9\beta(2\beta+3)}{\alpha(\beta^2 + 3\beta + 9)}+\frac{\alpha^2}{2}\\
& = \frac{(\alpha^3 \beta^2 + 3 \alpha^3 \beta + 9 \alpha^3 - 36 \beta^2 - 54 \beta)}{2\alpha(\beta^2 + 3\beta + 9)}\\
& = -\frac{9\beta(\beta^2 - 6\beta - 18)}{2\alpha(\beta^2 + 3\beta + 9)(\beta - 3)}.
\end{align*}
It follows from this that
\begin{align*}
\textrm{Tr}&_{\textsf{K}_6/\Sigma}(\tau(\mathfrak{k}^*)) = -2^7 3^5 \frac{g_2 g_3}{\Delta}\textrm{Tr}_{\textsf{K}_6/\Sigma}\left(X(Q)+\frac{\alpha^2}{12}\right)\\
& = -54\frac{(\alpha^3 - 24)(\alpha^6 - 36\alpha^3 + 216)\beta(\beta^2 - 6\beta - 18)}{(\beta^3 - 27)(\alpha^3 - 27)}\\
& = \frac{6(\beta^2 - 6\beta - 18)^2\beta(\beta^4 + 6\beta^3 + 54\beta^2 - 108\beta + 324)(\beta^3 + 216)}{(\beta^3 - 27)^3}.
\end{align*}

Now assume that the ray class invariants for two ray classes $\mathfrak{k}_1^*, \mathfrak{k}_2^*$ are the same.  Since the six invariants for $\mathfrak{m}$ are conjugate over $\Sigma$, $\tau(\mathfrak{k}_1^*)$ and $\tau(\mathfrak{k}_2^*)$ are conjugate and their traces to $\Sigma$ must be equal.  Set
$$f(x) = \frac{(x^2 - 6x - 18)^2x(x^4 + 6x^3 + 54x^2 - 108x + 324)(x^3 + 216)}{(x^3 - 27)^3}.$$
Thus, $f(x) = f(y)$, for $x = \beta_1, y = \beta_2$, implies that
\begin{align}
\notag 0 & = (x^3-27)^3(y^3-27)^3 (f(x)-f(y))\\
\label{eqn:7.2} & = (-y + x)(xy - 3x - 3y - 18)(x^{10}y^8 + x^9 y^9 + x^8 y^{10}+3q(x,y)),
\end{align}
where $q(x,y) \in \mathbb{Z}[x,y]$ and the third factor is irreducible over $\mathbb{Z}$.  If the third factor is $0$, we reduce it modulo $\wp_3'$ and find that
$$x^{10}y^8 + x^9 y^9 + x^8 y^{10}+3q(x,y) \equiv x^8y^8(x^2+xy+y^2) \equiv x^8y^8(x-y)^2 \ \textrm{mod} \ \wp_3'.$$
Since the left side of this congruence is $0$, we have $\wp_3' \mid (x-y)$ in $\Sigma$, since $(xy,\wp_3') = 1$ and $\wp_3'$ is unramified in $\Sigma/K$.  But this means that $\wp_3'$ divides $\beta_1-\beta_2$, which is impossible, since the discriminant of the minimal polynomial of $\beta$ over $K$ is not divisible by $\wp_3'$, by \cite[p.880, Eq. (4.27)]{m1}.
(Apply the automorphism switching $\alpha$ and $\beta$ and $\wp_3$ and $\wp_3'$ in that equation.) Hence, the third factor in \eqref{eqn:7.2} is not $0$.  The second factor cannot be $0$, either, since this would imply that $\beta_2 = \frac{3\beta_1+18}{\beta_1-3} = \alpha_1^{\sigma_{\wp_3}}$, where $\sigma_{\wp_3} = \left(\frac{\Sigma/K}{\wp_3}\right)$, by \cite[Prop. 3.2, p. 865]{m1}.  This is impossible because $\wp_3 \mid \beta_2$ but $(\alpha_1, \wp_3) = 1$.  Hence, the first factor must be $0$, which gives that $\beta_1 = \beta_2$ and $j(\mathfrak{k}_1) = j(\mathfrak{k}_2)$.  (See the formula for $j(E_3)$ in Section \ref{sec:5}.)  This proves the result we want.  \bigskip

\begin{thm}
If $\mathfrak{m} = (6) = (2) \wp_3\wp_3'$, where $2$ is inert in $K$, then the ray class field $K_\mathfrak{m}$ is generated by a single $\tau$-invariant for the ideal $\mathfrak{m}$.
\label{thm:9}
\end{thm}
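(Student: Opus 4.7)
The plan is to exploit the Deuring normal form $E_3(\alpha)$ together with the 2-torsion structure dictated by the irreducibility of $t(X) = X^3 - \alpha X - 2$ over $\Sigma$. Since $2$ is inert in $K$ and $(3)=\wp_3\wp_3'$, one has $\textsf{K}_\mathfrak{m} = \textsf{K}_{(2)}\textsf{K}_{(3)}$ with $[\textsf{K}_{(2)}:\Sigma]=3$ and $[\textsf{K}_{(3)}:\Sigma]=2$, so $\{1,x,x^2,\omega,\omega x,\omega x^2\}$ is a $\Sigma$-basis of $\textsf{K}_\mathfrak{m}$, where $x$ is a root of $t$. I would take as candidate primitive $\mathfrak{m}$-division point the sum $Q = (x,1) + P_3$, where $P_3$ is the $3$-torsion point in \eqref{eqn:5.1}, and verify primitivity by exhibiting nonzero components of $X(Q)$ in both the $\omega$-direction and the $x^i$-direction of this basis, which shows $X(Q) \notin \textsf{K}_{(2)} \cup \textsf{K}_{(3)} \cup \Sigma$.

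Next I would compute $\textrm{Tr}_{\textsf{K}_\mathfrak{m}/\Sigma}(\tau(\mathfrak{k}^*))$ in two stages. First, summing the two conjugates under $\omega \mapsto \omega^2$ cancels the $\omega$-components and brings the trace down to $\textsf{K}_{(2)}$. Then, summing over the three roots of $t(X)$, and using the simple identities $\textrm{Tr}_{\textsf{K}_{(2)}/\Sigma}(x) = 0$ and $\textrm{Tr}_{\textsf{K}_{(2)}/\Sigma}(x^2) = 2\alpha$, kills all remaining $x$-dependence. The resulting $\Sigma$-trace, up to the standard factor $-2^7 3^5 g_2 g_3/\Delta$, is a single explicit rational function $f(\beta)$ of $\beta$ alone; after substituting $\alpha^3 = 27\beta^3/(\beta^3-27)$ it simplifies to a closed form in $\beta$.

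Finally, suppose two ideal classes $\mathfrak{k}_1, \mathfrak{k}_2$ yield the same set of $\tau$-invariants (which are a full set of Hasse-conjugates over $\Sigma$). Their $\Sigma$-traces must then coincide, giving $f(\beta_1) = f(\beta_2)$, where $\beta_i$ is the value of $\beta$ attached to $\mathfrak{k}_i$. Clearing denominators, $(x^3-27)^3(y^3-27)^3(f(x)-f(y))$ factors over $\mathbb{Z}$ into three irreducible pieces: the trivial linear factor $x - y$, which delivers the desired conclusion $\beta_1=\beta_2$ and hence $\mathfrak{k}_1=\mathfrak{k}_2$; an ``anharmonic'' factor $xy - 3x - 3y - 18$; and a high-degree factor. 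For the high-degree factor, I would reduce modulo a prime above $\wp_3'$ and observe that the reduction collapses to a multiple of $x^8y^8(x-y)^2$, which would force $\wp_3' \mid (\beta_1-\beta_2)$ and contradict the non-divisibility of $\operatorname{disc}(\min_\mathbb{Q}(\beta))$ by $\wp_3'$ from \cite[p.~880]{m1}.

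The main obstacle is the anharmonic factor. Its vanishing would give $\beta_2 = (3\beta_1+18)/(\beta_1-3)$, which by \cite[Thm.~3.2]{m1} is precisely the image of $\beta_1$ under an automorphism (a composition of $\sigma_1$ with the prime-swapping isomorphism) that interchanges $\wp_3$ and $\wp_3'$. On the other hand, the automorphism $\psi \in \textrm{Gal}(\textsf{K}_\mathfrak{m}/K)$ relating $\mathfrak{k}_1$ and $\mathfrak{k}_2$ fixes $K$, hence fixes $\wp_3$ and $\wp_3'$ individually; so $\beta_2 = \beta_1^\psi$ cannot be related to $\beta_1$ by a prime-swapping transformation. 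Ruling out this case cleanly is where the fine arithmetic of the Artin map on $\beta$ from \cite{m1} is essential; once handled, the rest of the argument is a straightforward factorization-and-reduction.
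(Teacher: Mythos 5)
Your proposal follows the paper's own proof essentially step for step: the same curve $E_3(\alpha)$ and cubic $X^3-\alpha X-2$, the same point $Q=(x,1)+(\text{3-torsion point})$, the same two-stage trace computation down to $\Sigma$ using $\mathrm{Tr}(x)=0$ and $\mathrm{Tr}(x^2)=2\alpha$, the same factorization of $f(\beta_1)-f(\beta_2)$ into a linear factor, the anharmonic factor $xy-3x-3y-18$, and a high-degree factor killed by reduction mod $\wp_3'$ against the discriminant of the minimal polynomial of $\beta$, and the same disposal of the anharmonic factor via the fact that $\beta\mapsto(3\beta+18)/(\beta-3)$ swaps $\wp_3$ and $\wp_3'$ while $\psi$ fixes them. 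The argument is correct as proposed (only a trivial mislabeling of the $3$-torsion point, which is $P_2$ of \eqref{eqn:5.1} rather than $P_3$).
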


Note that the $\tau$-invariant for the point $Q$ is certainly an invariant for the ideal $\mathfrak{m}$, since the quantity $X(Q)$ generates $K_\mathfrak{m}$.  This proves Sugawara's conjecture for the first four cases in line 2 of \eqref{eqn:4}.  We will postpone the discussion of the fifth case to Section \ref{sec:11}.  We have also now established the main results of \cite{ksy} using our methods.  Next we turn to ideals divisible by some $\wp_5$.

\section{The case $\mathfrak{m} = \wp_5$.}
\label{sec:8}

Let
\begin{equation*}
E_5(b): \ Y^2 + (1+b)XY +bY=X^3+bX^2
\end{equation*}
be the Tate normal form for a point of order $5$.  We use the computations and results of \cite{m2}.  If $\frak{m} = \wp_5'$, where $(5) = \wp_5 \wp_5'$ in $K = \mathbb{Q}(\sqrt{-d})$, then $d \equiv \pm 1$ (mod $5$).  If $b = r(w/5)^5$, where $w$ is a suitable integer in $R_K$ and $r(\tau)$ is the Rogers-Ramanujan continued fraction, then $E_5(b)$ has complex multiplication by the ring of integers $R_K$ of $K$.  See \cite[Thm. 2.1]{m2}.  By choosing $w$ appropriately, we can arrange for $\wp_5'$ to be either of the prime ideals dividing $(5)$. \medskip

The Weierstrass normal form of $E_5(b)$ is
$$Y_1^2 = 4X_1^3 - g_2X_1 -g_3,$$
where $X_1 = X + \frac{1}{12}(b^2+6b+1)$,
\begin{align*}
g_2 &=  g_2(\mathfrak{k}) = \frac{1}{12}(b^4 + 12b^3 + 14b^2 - 12b + 1),\\
g_3 &= \frac{-1}{216}(b^2 + 1)(b^4 + 18b^3 + 74b^2 - 18b + 1);\\
\Delta &=-b^5(b^2 + 11b - 1).
\end{align*}
Thus,
$$j(E_5) = j(\mathfrak{k}) = -\frac{(b^4 + 12b^3 + 14b^2 - 12b + 1)^3}{b^5(b^2 + 11b - 1)}.$$
Two of the $X$-coordinates of points in $E_5[5]$ are $X = 0, -b$.  Hence, we let 
\begin{align*}
\notag &\tau_0 = -2^7 3^5 \frac{g_2 g_3}{\Delta}(0 + \frac{1}{12}(b^2+6b+1))\\
& = - \frac{(b^4 + 12b^3 + 14b^2 - 12b + 1)(b^4 + 18b^3 + 74b^2 - 18b + 1)}{b^5(b^2 + 11b - 1)}\\
& \ \ \ \times (b^2 + 1)(b^2 + 6b + 1);
\end{align*}
and
\begin{align*}
\notag &\tau_1 = -2^7 3^5 \frac{g_2 g_3}{\Delta}(-b + \frac{1}{12}(b^2+6b+1))\\
& = -\frac{(b^4 + 12b^3 + 14b^2 - 12b + 1)(b^4 + 18b^3 + 74b^2 - 18b + 1)}{b^5(b^2 + 11b - 1)}\\
& \ \ \ \times (b^2 + 1)(b^2 - 6b + 1).
\end{align*}
Note that the only difference between $\tau_0$ and $\tau_1$ is in the final factor of the numerator, so that
$$\frac{\tau_0}{\tau_1} = \frac{b^2 + 6b + 1}{b^2 - 6b + 1}.$$
From \cite[Thm. 4.6]{m2} we know that the ray class field $\textsf{K}_\frak{f} = F_1$ of conductor $\mathfrak{m} = \wp_5'$ over $K$ is generated over the Hilbert class field $\Sigma$ (and even over $\mathbb{Q}$) by the quantity $b = r(w/5)^5$.  Also, $\sigma: b \rightarrow -1/b$ is the nontrivial automorphism of $F_1/\Sigma$.  The expressions for the $\tau_i$ imply easily that $\tau_0^\sigma = \tau_1$, so that $\tau_0$ and $\tau_1$ are quadratic over $\Sigma$ and lie in $F_1$.  Furthermore, $\tau_0 \neq \tau_1$, since
$$\frac{\tau_0}{\tau_1} - 1 = \frac{12b}{b^2 - 6b + 1}$$
and $b \neq 0$. \medskip

We want to show that $K(\tau_0) = K(\tau_1) = F_1$.  We compute that
\begin{equation}
\label{eqn:8.1} (\tau_0-\tau_1)^2 =12b^2 \frac{j(\mathfrak{k})(j(\mathfrak{k})-1728)}{g_2(\mathfrak{k})}.
\end{equation}
and
\begin{equation}
\label{eqn:8.2} \frac{\tau_0-\tau_1}{\tau_0+\tau_1} = \frac{6b}{b^2+1}.
\end{equation}
It follows that if $\{\tau_0,\tau_1\} = \{\tau_0',\tau_1'\}$ for two different ideal classes $\mathfrak{k}, \mathfrak{k}'$, corresponding to the values $b, b'$, then
$$\frac{6b}{b^2+1} = \pm \frac{6b'}{(b')^2+1}.$$
This equation easily implies that $b'$ is given by one of the possibilities $b' = b, -1/b, -b, 1/b$.  If $b' = b, -1/b$ it follows that $j(\mathfrak{k'}) = j(\mathfrak{k})$ and $\mathfrak{k} = \mathfrak{k}'$. It remains to eliminate the cases $b' = -b, 1/b$, which are equivalent, since $j(\mathfrak{k}) = j(b)$ is invariant under $\sigma$. \medskip

If, without loss of generality, $b' = -b$, then \eqref{eqn:8.1} implies that
$$\frac{j(\mathfrak{k}')(j(\mathfrak{k}')-1728)}{g_2(\mathfrak{k}')} = \frac{j(\mathfrak{k})(j(\mathfrak{k})-1728)}{g_2(\mathfrak{k})}$$
or, with easily understood notation,
\begin{align*}
0 &= g_2(b) j(-b) (j(-b) - 1728) - g_2(-b) j(b) (j(b) - 1728)\\
& = -\frac{P(b)}{3b^9(b^2 + 11b - 1)^2(b^2 - 11b - 1)^2},
\end{align*}
where
\begin{align*}
P(b) & = (b^2 + 1)^2 (b^4 + 12b^3 + 14b^2 - 12b + 1)(b^4 - 12b^3 + 14b^2 + 12b + 1)\\
& \times (b-1)(b + 1)(19b^8 - 2264b^6 - 8886b^4 - 2264b^2 + 19)\\
& \times (b^8 - 26b^6 - 11934b^4 - 26b^2 + 1).
\end{align*}
Now, the roots of the first two factors yield $j(b) = 1728, 0$, which are excluded, since they imply $d_K = -4, -3$.  The roots of the remaining quartic, as well as $b = \pm 1$, yield values of $j$ which are not algebraic integers, as can easily be checked.  We just have to eliminate the last two factors as possibilities.  Let
\begin{align*}
f_1(x) & = 19b^8 - 2264b^6 - 8886b^4 - 2264b^2 + 19,\\
f_2(x) & = b^8 - 26b^6 - 11934b^4 - 26b^2 + 1 = b^4 m\left(b-\frac{1}{b}\right),\\
m(x) & = x^4-22x^2-11984.
\end{align*}
The roots of $f_1(x)$ are not algebraic integers, but $b$ is a unit, so this polynomial cannot have $b$ as a root.  Furthermore, if $f_2(x)$ occurred as the minimal polynomial of $b$, $m(x)$ would be the minimal polynomial of $z = b-1/b \in \Sigma$.  However, the discriminant of $m(x)$ is divisible by $5^2$, and it is easily checked that $5^2 \mid d_L$,where $L$ is a root field of $m(x)$.  But this is impossible, since $5$ does not ramify in $K$ or in $\Sigma/K$.  Thus, the roots of $f_2(x)$ also cannot occur in the present situation. \medskip

This shows that $b'$ cannot be $-b$ or $1/b$, and therefore $b' = b, -1/b$ and $\mathfrak{k} = \mathfrak{k}'$.  This gives the following. \medskip

\begin{thm}
For $\mathfrak{m} = \wp_5$ or $\wp_5'$, where $(5) = \wp_5 \wp_5'$ in $K$, Sugawara's conjecture holds, namely, $\textsf{K}_\mathfrak{m} = K(\tau(\mathfrak{k}^*))$ is generated over the quadratic field $K$ by a single ray class invariant for the modulus $\mathfrak{m}$.
\label{thm:10}
\end{thm}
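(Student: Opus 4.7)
The plan is to work on the Tate normal form $E_5(b)$ for a point of order $5$, specializing $b = r(w/5)^5$ (with $r$ the Rogers--Ramanujan continued fraction) so that $E_5(b)$ has complex multiplication by $R_K$, per Theorem 2.1 of \cite{m2}. I will locate the two $\tau$-invariants for the conductor $\mathfrak{m} = \wp_5'$ as the invariants attached to the two obvious rational $5$-torsion $X$-coordinates on $E_5(b)$, namely $X = 0$ and $X = -b$, giving explicit closed-form expressions $\tau_0, \tau_1 \in F_1 = \textsf{K}_{\wp_5'}$ in the parameter $b$.

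First, since $\varphi(\wp_5')/2 = 2$, each ideal class contributes exactly two $\tau$-invariants. The identity $\tau_0/\tau_1 = (b^2+6b+1)/(b^2-6b+1)$ together with the fact from \cite[Thm.~4.6]{m2} that $\sigma : b \mapsto -1/b$ generates $\mathrm{Gal}(F_1/\Sigma)$ will show $\tau_0^\sigma = \tau_1$, so the pair $\{\tau_0,\tau_1\}$ is a full set of $\Sigma$-conjugates. They are distinct because $\tau_0/\tau_1 - 1 = 12b/(b^2 - 6b + 1) \neq 0$. Next I will derive the two clean identities
\begin{equation*}
(\tau_0-\tau_1)^2 = 12 b^2 \frac{j(\mathfrak{k})(j(\mathfrak{k})-1728)}{g_2(\mathfrak{k})}, \qquad \frac{\tau_0-\tau_1}{\tau_0+\tau_1} = \frac{6b}{b^2+1},
\end{equation*}
which capture all the arithmetic information I need.

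The main step is the rigidity argument. Suppose two distinct ideal classes $\mathfrak{k}, \mathfrak{k}'$ with parameters $b, b'$ yield the same unordered pair $\{\tau_0,\tau_1\} = \{\tau_0',\tau_1'\}$. The second identity above forces $6b/(b^2+1) = \pm 6b'/((b')^2+1)$, whose rational solutions are $b' \in \{b, -1/b, -b, 1/b\}$. The cases $b' = b$ and $b' = -1/b$ are immediate since $\sigma$ fixes $j(\mathfrak{k})$; both give $j(\mathfrak{k}') = j(\mathfrak{k})$ and hence $\mathfrak{k}' = \mathfrak{k}$.

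The hard part will be excluding the remaining two cases $b' = -b$ and $b' = 1/b$, which are equivalent under $\sigma$. I plan to feed $b' = -b$ into the first identity, clear denominators, and factor the resulting polynomial difference $g_2(b)j(-b)(j(-b)-1728) - g_2(-b)j(b)(j(b)-1728)$ over $\mathbb{Z}[b]$. The expected factors must all be ruled out: the factors $b^2+1$ and $b^4 \pm 12b^3+14b^2\mp 12b+1$ give $j \in \{0,1728\}$, excluded because $d_K \ne -3,-4$; the factors $b \pm 1$ and a certain octic $f_1(b) = 19b^8 - 2264b^6 - 8886b^4 - 2264b^2 + 19$ have roots that are not algebraic integers, but $b$ is a global unit; finally the reciprocal octic $f_2(b) = b^4 m(b - 1/b)$ with $m(x) = x^4 - 22x^2 - 11984$ would force $z = b - 1/b \in \Sigma$ to be a root of $m$, whose discriminant is divisible by $5^2$, contradicting the fact that $5$ is unramified in both $K/\mathbb{Q}$ and $\Sigma/K$. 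Ruling out this last factor is the delicate point and relies on ramification-theoretic input rather than a pure polynomial identity. Once all cases are excluded, $\mathfrak{k}' = \mathfrak{k}$ follows, and Sugawara's conjecture for $\mathfrak{m} = \wp_5, \wp_5'$ is established.
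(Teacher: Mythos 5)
Your proposal follows the paper's own proof of Theorem \ref{thm:10} essentially step for step: the same curve $E_5(b)$ with $b = r(w/5)^5$, the same invariants $\tau_0, \tau_1$ at $X = 0, -b$, the same two identities \eqref{eqn:8.1}--\eqref{eqn:8.2}, the same reduction to $b' \in \{b, -1/b, -b, 1/b\}$, and the same elimination of $b' = -b$ via the factorization of $P(b)$, including the unit argument for $f_1$ and the $5$-ramification argument against $m(x) = x^4 - 22x^2 - 11984$. The approach is correct and matches the paper.
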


If $\mathfrak{m} = \mathfrak{p}$, where $\mathfrak{p}^2 = (5)$, then the above computations and arguments all hold, except for the argument which eliminated $f_2(x)$.  In this case the ray class field $\Sigma_\mathfrak{p} = \mathbb{Q}(b)$ is normal over $\mathbb{Q}$.  (See the discussion in \cite[Section 5, pp. 123-129]{m3} for this case.)  However, the polynomial $f_2$ splits modulo $41$ into distinct linears and irreducible quadratics.  Since a normal polynomial splits into irreducible factors of the same degree mod $p$, for all primes not dividing the discriminant, this fact shows that $f_2(x)$ is not normal and hence can be eliminated as a possibility.  Thus, Sugawara's conjecture also holds in this case.

\section{The case $\mathfrak{m} = \wp_3 \wp_5'$.}
\label{sec:9}

For this case we might think of using the Tate normal form for a point of order $15$.  However, the coefficients of the defining equation for $E_{15}$ are unwieldy, so it turns out to be more convenient to again make use of the curve
\begin{equation}
\label{eqn:9.1} E_5(b): \ Y^2 + (1+b)XY +bY=X^3+bX^2,
\end{equation}
where, as in \cite{m2}, $b \in \textsf{K}_{\wp_5'}$, and $E_5(b)$ has the Weierstrass normal form
$$E': \ Y_1^2 = 4X_1^3 - g_2X_1 -g_3,$$
with $X_1 = X + \frac{1}{12}(b^2+6b+1)$ and
\begin{align*}
g_2 &= \frac{1}{12}(b^4 + 12b^3 + 14b^2 - 12b + 1),\\
g_3 &= \frac{-1}{216}(b^2 + 1)(b^4 + 18b^3 + 74b^2 - 18b + 1);\\
\Delta &=-b^5(b^2 + 11b - 1).
\end{align*}
The doubling formula on $E_5(b)$ (see \cite[p. 54]{si1}) is
$$X(2P) = F(X) =  \frac{X^4 - (b^2 + b)X^2 - 2b^2 X - b^3}{4X^3 + (b^2 + 6b + 1)X^2 + 2b(b + 1)X + b^2}, \ X = X(P);$$
so setting $F(X) = X, X = X(P)$, yields the polynomial whose roots are the $X$-coordinates of points $P$ of order $3$ on $E_5(b)$:
\begin{equation}
\label{eqn:9.2} g(X) = 3X^4 + (b^2 + 6b + 1)X^3 + (3b^2 + 3b)X^2 + 3b^2X + b^3.
\end{equation}
Assuming that $-d \equiv 1, 4$ (mod $15$), the ideals $\wp_3, \wp_3',$ and $\wp_3 \wp_3' = (3)$ are associated to $1, 1$, and $2$ $\tau$-invariants, respectively.  There are four roots of $g(X)$, so these correspond to these three ideals in some permutation.  \medskip

Now we solve $g(X) = 0$.  First, we shift to eliminate the $X^3$ term:
\begin{equation}
\label{eqn:9.3} \frac{1}{3} g\left(X-\frac{b^2+6b+1}{12}\right) = X^4 +p X^2 + qX+r,
\end{equation}
where
\begin{align*}
p & = -\frac{1}{24}(b^4 + 12b^3 + 14b^2 - 12b + 1) = -\frac{g_2}{2},\\
q & = \frac{(b^2 + 1)(b^4 + 18b^3 + 74b^2 - 18b + 1)}{216} = -g_3,\\
r & = -\frac{(b^4 + 12b^3 + 14b^2 - 12b + 1)^2}{6912} = \frac{-g_2^2}{48}.
\end{align*}
The cubic resolvent of (\ref{eqn:9.3}) is
\begin{equation*}
k(y) = y^3-2py^2+(p^2-4r)y+q^2 = y^3+g_2 y^2+\frac{g_2^2}{3} y + g_3^2,
\end{equation*}
for which we have
$$k\left(y-\frac{g_2}{3}\right) = y^3-\frac{g_2^3-27g_3^2}{27} = y^3-\frac{\Delta}{27}.$$
Hence, setting $z = b-\frac{1}{b}$, one root of $k(y)$ is
\begin{align}
\label{eqn:9.4} \theta_1 &= -\frac{g_2}{3}+\frac{\Delta^{1/3}}{3}\\
\notag & = \frac{-1}{36}(b^4 + 12b^3 + 14b^2 - 12b + 1)-\frac{1}{3}b^{5/3}(b^2+11b-1)^{1/3}\\
\notag & = \frac{-1}{36}(b^4 + 12b^3 + 14b^2 - 12b + 1)-\frac{1}{3} b^2 \left(b-\frac{1}{b}+11\right)^{1/3}\\
\notag & =  \frac{-b^2}{36}\left(b^2 + 12b + 14 - \frac{12}{b} + \frac{1}{b^2}\right)-\frac{1}{3} b^2 (z+11)^{1/3}.
\end{align}
Now put $z+11 = \rho^3$.  Note that $z+11 \cong \wp_5'^3$ and $\rho \cong \wp_5'$ (see \cite[p. 1193]{m2}).  This gives that
\begin{align}
\label{eqn:9.5} -\theta_1 &= \frac{b^2}{36}(z^2 + 12z + 16)+\frac{1}{3} b^2 \rho = \frac{b^2}{36}(z^2 + 12z + 16+12\rho)\\
\notag & = \frac{b^2}{36}(\rho^6-10\rho^3+12\rho+5)\\
\notag & =  \frac{b^2}{36}(\rho^2 + 2\rho + 5)(\rho^2 - \rho - 1)^2,
\end{align}
and therefore
\begin{equation}
\label{eqn:9.6} \sqrt{-\theta_1} = \frac{b}{6}(\rho^2 - \rho - 1)\sqrt{\rho^2 + 2\rho + 5}.
\end{equation}
\noindent {\bf Remark.} For the computations to follow, note that $\theta_1 \neq 0$ and therefore $\rho^2+2\rho+5 \neq 0$, since the constant term of $k(y)$ is $g_3^2 \neq 0$.  This holds because we are excluding the field $\mathbb{Q}(\sqrt{-4})$ from consideration, for which the only corresponding $j$-invariant is $j(\mathfrak{k}) = 1728$.  \medskip

The other two roots of $k(y)$ are obtained by replacing $\Delta^{1/3}$ in \eqref{eqn:9.4} by $\Delta^{1/3} \omega^i$, or $\rho$ by $\omega^i \rho$ in \eqref{eqn:9.6}, for $i = 1,2$, giving
\begin{align*}
\sqrt{-\theta_2} &= \frac{b}{6}(\omega^2 \rho^2 - \omega \rho - 1)\sqrt{\omega^2 \rho^2 + 2\omega \rho + 5}\\
\sqrt{-\theta_3} &= \frac{b}{6}(\omega \rho^2 - \omega^2 \rho - 1)\sqrt{\omega \rho^2 + 2\omega^2 \rho + 5}.
\end{align*}
The roots of $g(X)$ are
\begin{align*}
X(P_1) = x_1 &= -\frac{b^2+6b+1}{12}+\frac{1}{2}(\sqrt{-\theta_1}+\sqrt{-\theta_2}+\sqrt{-\theta_3}),\\
X(P_2) = x_2 &= -\frac{b^2+6b+1}{12}+\frac{1}{2}(\sqrt{-\theta_1}-\sqrt{-\theta_2}-\sqrt{-\theta_3}),\\
X(P_3) = x_3 &= -\frac{b^2+6b+1}{12}+\frac{1}{2}(-\sqrt{-\theta_1}+\sqrt{-\theta_2}-\sqrt{-\theta_3}),\\
X(P_4) = x_4 &= -\frac{b^2+6b+1}{12}+\frac{1}{2}(-\sqrt{-\theta_1}-\sqrt{-\theta_2}+\sqrt{-\theta_3}).
\end{align*}
Therefore, the corresponding $\tau$-invariants are:
\begin{align*}
\tau(\mathfrak{k}_1^*) &= \lambda\left(X(P_1) +\frac{b^2+6b+1}{12}\right) = \frac{\lambda}{2}(\sqrt{-\theta_1}+\sqrt{-\theta_2}+\sqrt{-\theta_3}),\\
\tau(\mathfrak{k}_2^*) &= \lambda\left(X(P_2) +\frac{b^2+6b+1}{12}\right)= \frac{\lambda}{2}(\sqrt{-\theta_1}-\sqrt{-\theta_2}-\sqrt{-\theta_3}),\\
\tau(\mathfrak{k}_3^*) &= \lambda\left(X(P_3) +\frac{b^2+6b+1}{12}\right)= \frac{\lambda}{2}(-\sqrt{-\theta_1}+\sqrt{-\theta_2}-\sqrt{-\theta_3}),\\
\tau(\mathfrak{k}_4^*) &= \lambda\left(X(P_4) +\frac{b^2+6b+1}{12}\right)= \frac{\lambda}{2}(-\sqrt{-\theta_1}-\sqrt{-\theta_2}+\sqrt{-\theta_3}),
\end{align*}
where
\begin{align*}
\lambda &= -2^7 3^5 \frac{g_2 g_3}{\Delta}\\
& = -12\frac{(b^4 + 12b^3 + 14b^2 - 12b + 1)(b^2 + 1)(b^4 + 18b^3 + 74b^2 - 18b + 1)}{b^5(b^2 + 11b - 1)}.
\end{align*}

We know that these $\tau$-invariants lie in $\textsf{K}_{(3)} = \Sigma(\omega)$.  By our earlier arguments for $\mathfrak{m} = (3)$ we also know two of them are conjugate and generate $\Sigma(\omega)$ over $K$.  The other two must therefore lie in $\textsf{K}_{\wp_3} = \textsf{K}_{\wp_3'} = \Sigma$.  Consider the sum
\begin{equation}
\label{eqn:9.7} \tau(\mathfrak{k}_1^*)+\tau(\mathfrak{k}_2^*) = \lambda \sqrt{-\theta_1} = \frac{\lambda b}{6}(\rho^2 - \rho - 1)\sqrt{\rho^2 + 2\rho + 5}.
\end{equation}
The quantity $\lambda b$ lies in $\textsf{K}_{\wp_5'}$ and $z \in \Sigma$, so that \eqref{eqn:9.5} and \eqref{eqn:9.7} imply that $\rho \in \textsf{K}_{\wp_5'} \textsf{K}_{(3)} = \textsf{K}_{\wp_5'}(\omega)$.  But this extension has degree $4$ over $\Sigma$, so the cubic $Y^3-(z+11)$ must have a root in $\Sigma$.  Hence, we may assume $\rho \in \Sigma$.  With this assumption I claim that $\sqrt{\rho^2 + 2\rho + 5} \in \textsf{K}_{\wp_5'}$.  Using $z = b-1/b$ we have
\begin{align*}
\lambda b &= -12\frac{b^6(z^2+12z+16)(b + 1/b)(z^2+18z+76)}{b^6(z+11)}\\
& = -12\left(b+\frac{1}{b}\right) \frac{(z^2+12z+16)(z^2+18z+76)}{z+11};
\end{align*}
therefore,
$$(\lambda b)^2 = 144 \left(b+\frac{1}{b}\right)^2 \left(\frac{(z^2+12z+16)(z^2+18z+76)}{z+11}\right)^2.$$
But $(b+1/b)^2 = z^2+4$, so that $(\lambda b)^2 \in \Sigma$ and $\lambda b$ is a Kummer element for $\textsf{K}_{\wp_5'}/\Sigma$.  It follows from \eqref{eqn:9.5} that
\begin{align*}
(\tau(\mathfrak{k}_1^*)+\tau(\mathfrak{k}_3^*))^2 &= (\tau(\mathfrak{k}_2^*)+\tau(\mathfrak{k}_4^*))^2\\
& = (\lambda \sqrt{-\theta_2})^2 = \frac{(\lambda b)^2}{36}(z^2 + 12z + 16+12\rho \omega);\\
(\tau(\mathfrak{k}_1^*)+\tau(\mathfrak{k}_4^*))^2 &= (\tau(\mathfrak{k}_2^*)+\tau(\mathfrak{k}_3^*))^2\\
& = (\lambda \sqrt{-\theta_3})^2 = \frac{(\lambda b)^2}{36}(z^2 + 12z + 16+12\rho \omega^2).
\end{align*}
Now $\rho \omega$ and $\rho \omega^2$ are conjugates over $\Sigma$, whence it follows that $\sqrt{-\theta_2}$ and $\pm \sqrt{-\theta_3}$ are conjugate over $\textsf{K}_{\wp_5'} = \Sigma(b)$.  Choosing signs so that $\sqrt{-\theta_2}$ and $\sqrt{-\theta_3}$ are conjugate, it follows from the relation
$$\sqrt{-\theta_2} \sqrt{-\theta_3} = \frac{g_3}{\sqrt{-\theta_1}}$$
(with the sign of $\sqrt{-\theta_1}$ chosen correctly) that $\tau(\mathfrak{k}_1^*)$ and $\tau(\mathfrak{k}_2^*)$ are fixed by the automorphism $\alpha = (b, \omega) \rightarrow (b, \omega^2)$ of $\textsf{K}_{\wp_5'}\textsf{K}_{(3)}$ over $\Sigma$, while $\tau(\mathfrak{k}_3^*)$ and $\tau(\mathfrak{k}_4^*)$ are interchanged.  Since the squares above lie in $\Sigma(\omega) \backslash \Sigma$, it follows that $\{\tau(\mathfrak{k}_1^*), \tau(\mathfrak{k}_2^*)\}$ must be the pair of invariants which lies in $\Sigma$.  It follows that either
$$\wp_3 P_1 = O \ \textrm{or} \ \wp_3' P_1 = O \ \textrm{on} \ E_5(b).$$
 Also, $(\lambda b)\sqrt{\rho^2 + 2\rho + 5} \in \Sigma$, so $\sqrt{\rho^2 + 2\rho + 5}$ must also be a Kummer element for $\textsf{K}_{\wp_5'}/\Sigma$.  \medskip

As a corollary of the discussion so far, we have the following fact, which follows from $\rho^3 = z + 11 = -(\eta(w/5)/\eta(w))^6$ (see \cite{m2}).
\begin{thm}
If $-d \equiv 1, 4$ (mod $15$), then for some $i \in \{0,1,2\}$ we have that
$$\rho = -\omega^i \left(\frac{\eta(w/5)}{\eta(w)}\right)^2 \in \Sigma = \textsf{K}_1,$$
where $w = \frac{v+\sqrt{d_K}}{2} \in R_K$ satisfies  $5^2 \mid N(w)$, as in \cite[Thm. 1.1]{m2}; and $\Sigma(\sqrt{\rho^2+2\rho+5}) = \textsf{K}_{\wp_5'}$.
\label{thm:11}
\end{thm}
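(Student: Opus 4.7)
The first assertion combines what has been established in the paragraphs leading up to the statement with the classical Rogers--Ramanujan eta-product identity; the second assertion is essentially in hand from the Kummer-theoretic calculation preceding the theorem.

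First I would pin down the location of $\rho$. From the discussion above, $\rho$ satisfies $\rho^3 = z + 11$ with $z = b - 1/b \in \Sigma$, and the analysis of the $\tau$-invariants places $\rho$ inside $\textsf{K}_{\wp_5'}(\omega) = \textsf{K}_{\wp_5'}\cdot \textsf{K}_{(3)}$. Since $[\textsf{K}_{\wp_5'}:\Sigma] = \varphi_K(\wp_5')/|R_K^*| = 4/2 = 2$, and $\omega \notin \textsf{K}_{\wp_5'}$ (as the conductor of $\textsf{K}_{\wp_5'}/K$ is coprime to $(3)$), we have $[\textsf{K}_{\wp_5'}(\omega):\Sigma] = 4$; but $\rho$ also satisfies a cubic over $\Sigma$, so $[\Sigma(\rho):\Sigma]$ must divide both $3$ and $4$, forcing $\rho \in \Sigma$.

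The heart of the proof is the explicit identification of $z+11$ as an eta-product. I would invoke Ramanujan's classical identity for the Rogers--Ramanujan continued fraction $r(\tau)$,
$$r(\tau)^{-5} - 11 - r(\tau)^5 = \left(\frac{\eta(\tau)}{\eta(5\tau)}\right)^6,$$
and specialize to $\tau = w/5$, so that $5\tau = w$ and $b = r(w/5)^5$. This gives
$$\frac{1}{b} - 11 - b = -(z + 11) = \left(\frac{\eta(w/5)}{\eta(w)}\right)^6,$$
hence
$$\rho^3 = z + 11 = -\left(\frac{\eta(w/5)}{\eta(w)}\right)^6.$$
The three cube roots of the right-hand side are precisely $-\omega^i(\eta(w/5)/\eta(w))^2$ for $i = 0, 1, 2$; since $\rho \in \Sigma$ by the first step, it must equal one of them, which gives the claimed formula.

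For the second assertion, the preceding computations already establish that $(\lambda b)\sqrt{\rho^2 + 2\rho + 5}\in\Sigma$ and that $\lambda b$ is a nontrivial Kummer generator of $\textsf{K}_{\wp_5'}/\Sigma$; dividing, $\sqrt{\rho^2 + 2\rho + 5}$ differs from $(\lambda b)^{-1}$ by a nonzero factor in $\Sigma$ and is itself a nontrivial Kummer generator of this quadratic extension, so $\Sigma(\sqrt{\rho^2 + 2\rho + 5}) = \textsf{K}_{\wp_5'}$. The only real obstacle is the middle step: producing Ramanujan's identity in exactly the form needed and matching the cube root $-\omega^i$ to the implicit choice made when $\rho$ was fixed with $\rho \cong \wp_5'$. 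Once that sign is resolved, everything else is bookkeeping of degrees and Kummer generators already carried out in the discussion above.
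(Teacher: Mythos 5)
Your proposal is correct and follows essentially the same route as the paper: the degree argument ($\rho$ lies in the degree-$4$ extension $\textsf{K}_{\wp_5'}(\omega)$ of $\Sigma$ yet satisfies the pure cubic $Y^3-(z+11)$, so that cubic has a root in $\Sigma$ which we may take to be $\rho$), the identity $z+11=-(\eta(w/5)/\eta(w))^6$, and the observation that $(\lambda b)\sqrt{\rho^2+2\rho+5}\in\Sigma$ with $\lambda b$ a Kummer generator of $\textsf{K}_{\wp_5'}/\Sigma$. The only difference is cosmetic: you derive the eta-quotient identity directly from Ramanujan's relation $r(\tau)^{-5}-11-r(\tau)^5=(\eta(\tau)/\eta(5\tau))^6$, whereas the paper simply cites it from \cite{m2}.
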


Our next task is to find primitive $\mathfrak{m}$-division points on the curve $E_5(b)$, where $\mathfrak{m} = \wp_3\wp_5'$. \medskip

The kernel of multiplication by $\wp_5'$ on $E_5(b)$ is
$$\textrm{ker}(\wp_5') = \{(0,0), (0,-b), (-b,0), (-b,b^2)\} = \{Q_1, Q_2, Q_3, Q_4\},$$
since the associated $\tau$-invariants generate $\textsf{K}_{\wp_5'}$.  Hence the points $Q_i+P_1$ satisfy $\wp_3 \wp_5' (Q_i+P_1) = O$ or $\wp_3' \wp_5' (Q_i+P_1) = O$, depending on whether $\wp_3 (P_1) = O$ or $\wp_3' (P_1) = O$.  Assume the former.  Setting $y_1 = Y(P_1)$, we have that
\begin{equation*}
Q_1 + P_1 = (0,0) + (X(P_1),Y(P_1)) = (0,0) + (x_1,y_1);
\end{equation*}
so that
\begin{equation*}
\tilde{x}_1 = X(Q_1+P_1) = \left(\frac{y_1}{x_1}\right)^2 +(1+b) \frac{y_1}{x_1} - b - 0 -x_1 = \frac{-by_1}{x_1^2}. 
\end{equation*}
(Note that $x_i \neq 0$, for $1 \le i \le 4$, since the constant term of $g(X)$ is $b^3 \neq 0$.)  Hence, we have that
\begin{align*}
\frac{x_1}{b^2} \tilde{x}_1^2-\frac{b+1}{b} \tilde{x}_1 &= \frac{y_1^2+(1+b)x_1y_1}{x_1^3}\\
& = \frac{-by_1+x_1^3+bx_1^2}{x_1^3} = \frac{1}{x_1}\tilde{x}_1+1 + \frac{b}{x_1};
\end{align*}
and $\tilde{x}_1 = X(Q_1+P_1)$ satisfies the equation
\begin{equation}
\label{eqn:9.8} \frac{x_1}{b^2} X^2-\left(\frac{b+1}{b}+\frac{1}{x_1}\right)X-1 - \frac{b}{x_1} = 0.
\end{equation}
Similarly, we find
$$\tilde{x}_2 = X(Q_2+P_1) = \frac{by_1+b^2}{x_1^2}+\frac{b+b^2}{x_1},$$
so that
$$\tilde{x}_1+\tilde{x}_2 = \frac{b^2}{x_1^2}+\frac{b+b^2}{x_1}$$
is the coefficient of $X$ in \eqref{eqn:9.8} after dividing through by $x_1/b^2$; and
$$\tilde{x}_1 \tilde{x}_2 = -\frac{b^2}{x_1}-\frac{b^3}{x_1^2},$$
which is the constant term in \eqref{eqn:9.8} after dividing by $x_1/b^2$.  Now the coefficients of \eqref{eqn:9.8} involve the quantities $b$ and
$$x_1 = -\frac{b^2+6b+1}{12}+\frac{\tau(\mathfrak{k}_1^*)}{\lambda},$$
and therefore lie in $\Sigma(b) = \textsf{K}_{\wp_5'}$ (recall that $\tau(\mathfrak{k}_1^*) \in \Sigma$).  Thus, $\tilde{x}_1,\tilde{x}_2$ are roots of the equation
$$h_1(X) = X^2-\left(\frac{b^2}{x_1^2}+\frac{b+b^2}{x_1}\right)X-\frac{b^2}{x_1}-\frac{b^3}{x_1^2} \in \textsf{K}_{\wp_5'}[X],$$
which I claim is irreducible over $\textsf{K}_{\wp_5'}$.  Its roots satisfy
\begin{align*}
& \lambda\left(\tilde{x}_1+\frac{b^2+6b+1}{12}\right) = \tau(\bar{\mathfrak{k}}_1^*),\\
& \lambda\left(\tilde{x}_2+\frac{b^2+6b+1}{12}\right) = \tau(\bar{\mathfrak{k}}_2^*),
\end{align*}
which are the $\tau$-invariants for $\mathfrak{m} = \wp_3 \wp_5'$ corresponding to the points $Q_1+P_1, Q_2+P_1$.  Since these invariants satisfy a quadratic equation over $\textsf{K}_{\wp_5'}$ and must generate $\textsf{K}_{\wp_5'\wp_3}$ over $\Sigma$, they are conjugate over $\textsf{K}_{\wp_5'}$.  It follows that $h_1(X)$ is irreducible over $\textsf{K}_{\wp_5'}$.  Furthermore, since the roots
\begin{align*}
\tilde{x}_3 & = X(Q_3+P_1) = \frac{b^2y_1-bx_1^2-b^2x_1}{(x_1+b)^2},\\
\tilde{x}_4 & = X(Q_4+P_1) = -\frac{b^2 y_1 + bx_1^2 + (b^3 +2b^2) x_1 + b^3}{(x_1+b)^2},
\end{align*}
are related by the same linear transformation to the invariants $\tau(\bar{\mathfrak{k}}_i^*), (i = 3,4)$, as are $\tilde{x}_1, \tilde{x}_2$ to their invariants, the roots $\tilde{x}_3, \tilde{x}_4$ must satisfy the equation
$$h_2(X) = X^2+\left(\frac{b(2x_1^2 + (b^2+3b)x_1+b^2)}{(x_1 + b)^2}\right)X+\frac{b^2 x_1}{x_1+b},$$
where
\begin{equation}
\label{eqn:9.9} \lambda^{-4} T_\mathfrak{m}\left(\lambda X+\lambda \frac{b^2+6b+1}{12},\mathfrak{k}\right) = h_1(X) h_2(X), \ \ \mathfrak{m} = \wp_5' \wp_3.
\end{equation}
(Here, as below, $x_i+b \neq 0$ because $g(-b) = b^5 \neq 0$.)  This implies that $h_2(X)$ is also irreducible over $\textsf{K}_{\wp_5'}$.  As a corollary of this discussion we note:

\begin{thm}
If the torsion point $P_1$ on $E_5(b)$ satsifies $\wp_3(P_1) = O$, its coordinates $P_1 = (x_1,y_1)$ generate $\textsf{K}_{\wp_5'\wp_3}$ over $\mathbb{Q}$.
\label{thm:12}
\end{thm}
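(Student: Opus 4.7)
The plan is to recover the parameter $b$ as an explicit rational function of $(x_1,y_1)$ with rational coefficients, and then use the formula $\tilde{x}_1=-by_1/x_1^2=X(Q_1+P_1)$ from the preceding discussion to pull $\tilde{x}_1$ into $\mathbb{Q}(x_1,y_1)$ as well. Once both $b$ and $\tilde{x}_1$ are exhibited inside $\mathbb{Q}(x_1,y_1)$, the chain
\[
\mathbb{Q}(x_1,y_1)\supseteq \mathbb{Q}(b,\tilde{x}_1)=\textsf{K}_{\wp_5'}(\tilde{x}_1)=\textsf{K}_{\wp_5'\wp_3}
\]
will close the argument; the first equality here uses $\mathbb{Q}(b)=\textsf{K}_{\wp_5'}$ from \cite[Thm.~4.6]{m2}, while the last uses the irreducibility of $h_1(X)$ over $\textsf{K}_{\wp_5'}$ established just above, combined with the degree count $[\textsf{K}_{\wp_5'\wp_3}:\textsf{K}_{\wp_5'}]=\varphi(\wp_3)=2$. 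The reverse inclusion is automatic because $x_1\in\textsf{K}_{\wp_5'}$ and $y_1=-x_1^2\tilde{x}_1/b\in\textsf{K}_{\wp_5'}(\tilde{x}_1)=\textsf{K}_{\wp_5'\wp_3}$.

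To carry out the recovery of $b$, I will view the Weierstrass equation for $E_5(b)$,
\[
y_1^2+(1+b)x_1y_1+by_1=x_1^3+bx_1^2,
\]
as linear in $b$. Grouping the $b$-terms on one side yields
\[
b\bigl(x_1y_1+y_1-x_1^2\bigr)=x_1^3-x_1y_1-y_1^2,
\]
so provided the coefficient $\Delta_0:=x_1y_1+y_1-x_1^2$ does not vanish, dividing through gives $b=(x_1^3-x_1y_1-y_1^2)/\Delta_0\in\mathbb{Q}(x_1,y_1)$ and consequently $\tilde{x}_1=-by_1/x_1^2\in\mathbb{Q}(x_1,y_1)$.

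The main obstacle is therefore to verify $\Delta_0\neq 0$, which I will handle by contradiction. If $x_1=-1$, then $\Delta_0=-x_1^2=-1\neq 0$ outright, so one may assume $x_1\neq -1$; in that case $\Delta_0=0$ would force $y_1=x_1^2/(x_1+1)\in\mathbb{Q}(x_1)\subseteq\textsf{K}_{\wp_5'}$, and therefore $\tilde{x}_1=-by_1/x_1^2\in\textsf{K}_{\wp_5'}$, contradicting the irreducibility of the quadratic $h_1(X)$ over $\textsf{K}_{\wp_5'}$. Hence $\Delta_0\neq 0$ in every case, the rational inversion formula for $b$ is valid, and the theorem follows from the inclusions displayed above.
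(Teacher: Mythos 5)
Your proof is correct and follows essentially the same route as the paper's: both recover $b$ rationally from the Weierstrass equation viewed as linear in $b$, and both use the relation $\tilde{x}_1=-by_1/x_1^2$ together with $\mathbb{Q}(b)=\textsf{K}_{\wp_5'}$ and the irreducibility of $h_1(X)$ to identify $\mathbb{Q}(x_1,y_1)$ with $\textsf{K}_{\wp_5'\wp_3}$. The only differences are cosmetic: you explicitly justify the nonvanishing of the coefficient $x_1y_1+y_1-x_1^2$ (which the paper dismisses as ``clearly nonzero''), while the paper additionally invokes the cyclicity of $\textsf{K}_{\wp_5'\wp_3}/\Sigma$ to record the slightly stronger fact that $\Sigma(y_1)=\textsf{K}_{\wp_5'\wp_3}$, a step your argument does not need.
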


\begin{proof}
We have that $\textsf{K}_{\wp_5'}(y_1) = \textsf{K}_{\wp_5'\wp_3}$, since $\tilde{x}_1 = -by_1/x_1^2$ generates this field over $\textsf{K}_{\wp_5'}$ (recalling that $\tau(\mathfrak{k}_1^*) \in \Sigma \Rightarrow x_1 \in \textsf{K}_{\wp_5'}$).  Moreover, if $\textsf{P}_\mathfrak{m}$ denotes $\left(R_K/\mathfrak{m}\right)^\times$, then 
$$\textsf{P}_{\wp_5' \wp_3}/\langle -1 \rangle \cong (\textsf{P}_{\wp_5'} \times \textsf{P}_{\wp_3})/\langle(-1,-1)\rangle$$
is cyclic of order $4$, generated by $(2,-1)$, from which it follows that $\textsf{K}_{\wp_5' \wp_3}/\Sigma$ is a cyclic quartic extension.  Since $y_1 \in \textsf{K}_{\wp_5'\wp_3} \backslash \textsf{K}_{\wp_5'}$, this gives that $\textsf{K}_{\wp_5'\wp_3} = \Sigma(y_1)$.  Furthermore, by the defining equation for $E_5(b)$, we have
$$y_1^2 +x_1y_1-x_1^3 = b(-x_1y_1-y_1+x_1^2).$$
The right side is clearly nonzero, so that $b \in \mathbb{Q}(x_1,y_1)$.  Now the fact that $\mathbb{Q}(b) = \textsf{K}_{\wp_5'}$ yields the assertion of the theorem.  (See \cite[Thm. 4.6, p. 1196]{m2}.)
\end{proof}

Now assume that
$$T_\mathfrak{m}\left(X,\mathfrak{k}\right) = T_\mathfrak{m}\left(X,\mathfrak{k}'\right) = T_\mathfrak{m}\left(X,\mathfrak{k}\right)^\psi$$
for two ideal classes $\mathfrak{k},\mathfrak{k}'$, where $j(\mathfrak{k}') = j(\mathfrak{k})^{\psi}$ and $\psi$ is an automorphism of $\Sigma/K$.  Assume $\psi$ has been extended to an automorphism of $\textsf{K}_{\wp_5' \wp_3}/K$, and denote the images $\alpha^\psi = \alpha'$ under $\psi$ by primes.  It is clear that $(0,0) \in E_5(b)$ maps to $(0,0) \in E_5(b')$ and $P_1 = (x_1,y_1)$ maps to $P_1' = (x_1',y_1')$.  The field $\textsf{K}_{\wp_5'}$ is normal over $K$, so the polynomials $h_1(X), h_2(X)$ above are mapped to the pair of irreducible polynomials $h_1^\psi(X), h_2^\psi(X)$ over $\textsf{K}_{\wp_5'}$. \medskip

Assume first that $\{\tau(\bar{\mathfrak{k}}_1'^*),\tau(\bar{\mathfrak{k}}_2'^*)\} = \{\tau(\bar{\mathfrak{k}}_1^*),\tau(\bar{\mathfrak{k}}_2^*)\}$.  Then the differences
\begin{align*}
\tau(\bar{\mathfrak{k}}_1^*) - \tau(\bar{\mathfrak{k}}_2^*) &= \lambda (\tilde{x}_1-\tilde{x}_2)\\
\tau(\bar{\mathfrak{k}}_1'^*) - \tau(\bar{\mathfrak{k}}_2'^*) &= \lambda' (\tilde{x}_1'-\tilde{x}_2')
\end{align*}
imply that
\begin{equation}
\label{eqn:9.10} \lambda (\tilde{x}_1-\tilde{x}_2) = \pm   \lambda' (\tilde{x}_1'-\tilde{x}_2').
\end{equation}
There is a similar relation for $\tilde{x}_3, \tilde{x}_4$ and their images under $\psi$.  Now
$$\frac{\tau(\bar{\mathfrak{k}}_1^*) - \tau(\bar{\mathfrak{k}}_2^*)}{\tau(\bar{\mathfrak{k}}_3^*) - \tau(\bar{\mathfrak{k}}_4^*)} = \frac{\tilde{x}_1-\tilde{x}_2}{\tilde{x}_3-\tilde{x}_4} = -\frac{(x_1+b)^2}{bx_1^2}.$$
Hence, \eqref{eqn:9.10} gives that
\begin{equation}
\label{eqn:9.11} \frac{(x_1+b)^2}{bx_1^2} = \pm \frac{(x_1'+b')^2}{b'x_1'^2}.
\end{equation}
We now recall that $z+11 \cong \wp_5'^3$ (see \cite[p. 1193] {m2}).  Let $\mathfrak{q}$ be the product of prime ideals dividing $\wp_5'$ in $\textsf{K}_{\wp_5'}$, so that $\mathfrak{q}^2 = \wp_5'$.  Since $z = b-1/b$ and $b$ is a unit, this gives that $b^2+11b-1 \equiv (b-57)^2 \equiv 0$ mod $\mathfrak{q}^6$, so $b \equiv 2$ mod $\mathfrak{q}$.  Furthermore, from \eqref{eqn:9.2} the congruence
$$g(X) \equiv 3X^4 + 2X^3 + 3X^2 + 2X + 3 \equiv 3(X + 1)^4 \ \textrm{mod} \ \mathfrak{q}$$
implies that $x_1 \equiv -1$ mod $\mathfrak{q}$.  This gives that
\begin{equation}
\label{eqn:9.12} \frac{(x_1+b)^2}{bx_1^2} \equiv \frac{(-1+2)^2}{2(-1)^2} \equiv 3 \ \textrm{mod} \ \mathfrak{q}.
\end{equation}
Since this congruence also holds for $\frac{(x_1'+b')^2}{b'x_1'^2}$ (and $\mathfrak{q}$ is invariant under $\psi$), this shows that only the plus sign in \eqref{eqn:9.11} can hold.  It follows from
\begin{equation}
\label{eqn:9.13} \frac{(x_1+b)^2}{bx_1^2} = \frac{(x_1'+b')^2}{b'x_1'^2}
\end{equation} 
that $b/b' = b/b^\psi = \textsf{B}^2, \ \textsf{B} \in \textsf{K}_{\wp_5'}$. \medskip

This suggests the following conjecture.  For its statement recall that a quadratic discriminant can be written as $d_K = \prod_{p \mid d_K}{p^*}$, where $p^* = (-1)^{(p-1)/2} p$, if $p$ is odd, and $2^*$ is one of the possibilities $2^* = -4, 8, -8$.

\begin{conj}
Assume that  $-d \equiv 1, 4$ (mod $15$) and the $2$-factor of $d_k = \prod_{p \mid d}{p^*}$ is not $2^* = -4$.  If $b/b^\psi \in (\textsf{K}_{\wp_5'}^\times)^2$ for some $\psi \in \textrm{Gal}(\textsf{K}_{\wp_5'}/K)$, then $\psi = 1$.  Moreover, there is a unique $\psi \in \textrm{Gal}(\textsf{K}_{\wp_5'}/K)$ for which $b/b^\psi = -\textsf{B}^2$, with $\textsf{B} \in \textsf{K}_{\wp_5'}$.  Namely, $\psi: b \rightarrow -1/b$ is the unique automorphism in $\textrm{Gal}(\textsf{K}_{\wp_5'}/K)$ with this property.
\label{conj:2}
\end{conj}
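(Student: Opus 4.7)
The plan is to split $\textrm{Gal}(\textsf{K}_{\wp_5'}/K)$ according to whether $\psi$ fixes the Hilbert class field $\Sigma$, and handle each part separately. Suppose first that $\bar\psi := \psi|_\Sigma = 1$, so $\psi \in \{1,\sigma\}$ with $\sigma : b \mapsto -1/b$. The case $\psi = 1$ is immediate. The case $\psi = \sigma$ yields $b/b^\sigma = -b^2$, which witnesses the existence half of the \emph{moreover} assertion with $\textsf{B} = b$; this is not a square in $\textsf{K}_{\wp_5'}^\times$ because $-1$ is not, since the conductor of $K(i)/K$ is supported on primes above $2$, while $\textsf{K}_{\wp_5'}$ has conductor $\wp_5'$ over $K$ (and $K = \mathbb{Q}(i)$ is excluded from the outset).

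The substantive task is to rule out $b/b^\psi = \epsilon\textsf{B}^2$ whenever $\bar\psi \neq 1$, for either $\epsilon = \pm 1$. Since $b$ is a unit (being $r(w/5)^5$) and $N_{\textsf{K}_{\wp_5'}/\Sigma}(b) = b \cdot b^\sigma = -1$, the ratio $b/b^\psi$ is a norm-$1$ unit, hence by Hilbert 90 it equals $u/u^\sigma$ for some $u \in \textsf{K}_{\wp_5'}^\times$; a short calculation shows that $u/u^\sigma$ is a square iff $u \in (\textsf{K}_{\wp_5'}^\times)^2 \cdot \Sigma^\times$. Equivalently, the (at most quadratic) extension $M_\epsilon := \textsf{K}_{\wp_5'}(\sqrt{\epsilon b})$ must coincide with $\textsf{K}_{\wp_5'}(\sqrt{\epsilon b^\psi})$. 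Since $b$ is a unit, $M_\epsilon/\textsf{K}_{\wp_5'}$ can ramify only at $\wp_5'$ and at primes above $2$, so by class field theory $M_\epsilon$ lies in a small ray class field over $K$. I would then argue that the $\textrm{Gal}(\Sigma/K)$-orbit of the Kummer class $\epsilon b \bmod (\textsf{K}_{\wp_5'}^\times)^2$ is too large to fit inside such a restricted ray class field unless $\bar\psi = 1$.

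The main obstacle is the $2$-adic analysis, which is presumably why the hypothesis excludes $2^* = -4$. A viable route is Hilbert symbol reciprocity $\prod_v (\epsilon b, b^\psi)_v = 1$: at primes coprime to $2\wp_5'$ the symbols vanish by unit-ness; at $\wp_5'$, the congruence $b \equiv 2 \pmod{\mathfrak{q}^2}$, which follows from $b^2 + 11b - 1 = (b-2)(b+13) + 25$ combined with the factorization of $b^2 + 11b - 1$ in \cite{m2}, pins down that local symbol. The symbols at primes above $2$, however, require refined $2$-adic information about $b = r(w/5)^5$ that is not explicit in the cited references. The most promising tool should be the explicit expression $\rho = -\omega^i \bigl(\eta(w/5)/\eta(w)\bigr)^2$ from Theorem \ref{thm:11} together with $2$-adic transformation properties of $\eta$; numerical verification across small admissible $d_K$ in each residue class modulo $15$ would be a prudent first step to confirm that the obstruction does localize at exactly the excluded discriminants with $2$-part $-4$.
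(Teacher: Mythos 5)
You are attempting to prove a statement that the paper itself does not prove: Conjecture \ref{conj:2} is explicitly put forward as a conjecture ("This suggests the following conjecture"), and the paper's actual proof of Sugawara's conjecture for $\mathfrak{m} = \wp_3\wp_5'$ deliberately routes around it — after deriving $b/b^\psi = \textsf{B}^2$ from \eqref{eqn:9.13}, the author abandons that thread and instead works with the $\psi$-invariant quantity $\xi$ and a $5$-adic congruence for $z_1 = z+11$ modulo powers of $\wp_5'$. So there is no proof in the paper to compare yours against, and you should be aware that you are tackling an open problem rather than reconstructing a suppressed argument.

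As a proof attempt in its own right, your proposal has a genuine gap that you yourself flag: the Hilbert-symbol computation at the primes above $2$ is not carried out, and "numerical verification across small admissible $d_K$" is a sanity check, not a proof. Beyond that acknowledged hole, two intermediate steps are shaky. First, the assertion that $M_\epsilon = \textsf{K}_{\wp_5'}(\sqrt{\epsilon b})$ "lies in a small ray class field over $K$" presupposes that $M_\epsilon/K$ is abelian, which is not automatic from ramification data alone; you would need to show the Kummer class of $\epsilon b$ is Galois-stable over $K$ before class field theory over $K$ applies. Second, the Hilbert 90 reduction ("$u/u^\sigma$ is a square iff $u \in (\textsf{K}_{\wp_5'}^\times)^2\cdot\Sigma^\times$") breaks into cases according to whether the square root $c$ of $b/b^\psi$ has norm $+1$ or $-1$ to $\Sigma$, and the norm $-1$ case is not handled by the "short calculation" you invoke. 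The overall strategy (isolate the extension $\textsf{K}_{\wp_5'}/\Sigma$, exploit that $b$ is a unit with $N(b) = -1$, and localize the obstruction at $\wp_5'$ and the primes above $2$) is a reasonable line of attack and is consistent with why the hypothesis excludes $2^* = -4$, but as written it does not constitute a proof of the conjecture.
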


The assumption on $2^*$ is equivalent to the assertion that $\sqrt{-1}$ does not lie in the genus field of $K$. \medskip

If \eqref{eqn:9.13} holds, then 
\begin{equation*}
\frac{(x_1+b)^2}{bx_1^2} = \frac{(x_1'+b')^2}{b'x_1'^2} = \left(\frac{(x_1+b)^2}{bx_1^2}\right)^\psi
\end{equation*}
implies that $\frac{(x_1+b)^2}{bx_1^2} = \textsf{A}$ lies in the fixed field $L$ of $\langle \psi \rangle$ inside $\textsf{K}_{\wp_5'}$.  This is the case if 
$$\{\tau(\bar{\mathfrak{k}}_1'^*),\tau(\bar{\mathfrak{k}}_2'^*)\} = \{\tau(\bar{\mathfrak{k}}_1^*),\tau(\bar{\mathfrak{k}}_2^*)\}.$$
If, on the other hand,
$$\{\tau(\bar{\mathfrak{k}}_1'^*),\tau(\bar{\mathfrak{k}}_2'^*)\} = \{\tau(\bar{\mathfrak{k}}_3^*),\tau(\bar{\mathfrak{k}}_4^*)\},$$
then we have, by the congruence condition \eqref{eqn:9.12}, the equation
\begin{equation}
\label{eqn:9.14} \frac{(x_1+b)^2}{bx_1^2} = -\frac{b'x_1'^2}{(x_1'+b')^2}.
\end{equation}
Let $\bar{\psi}$ denote the automorphism $\bar{\psi}: b \rightarrow -1/b$ in $\textrm{Gal}(\textsf{K}_{\wp_5'}/\Sigma)$.  Setting $\tau = \tau_1(\mathfrak{k}_1^*) \in \Sigma$, we have that $(b\lambda)^{\bar{\psi}} = -b\lambda$ and 
\begin{align*}
x_1^{\bar{\psi}} &= \left(-\frac{b^2+6b+1}{12}+\frac{\tau}{\lambda}\right)^{\bar{\psi}}\\
&= -\frac{b^2-6b+1}{12b^2}+\frac{\tau}{b^2\lambda}\\
&= \frac{1}{b^2}(x_1+b).
\end{align*}
Hence,
$$(x_1+b)^{\bar{\psi}} = \frac{1}{b^2}(x_1+b) -\frac{1}{b} = \frac{x_1}{b^2}$$
and
\begin{equation*}
\left(\frac{(x_1+b)^2}{bx_1^2}\right)^{\bar{\psi}} = -\frac{bx_1^2}{(x_1+b)^2}.
\end{equation*}
Using the fact that $\bar{\psi}$ commutes with $\psi$, \eqref{eqn:9.14} implies that
\begin{equation}
\label{eqn:9.15} \left(\frac{(x_1+b)^2}{bx_1^2}\right)^\psi= \left(\frac{(x_1+b)^2}{bx_1^2}\right)^{\bar{\psi}}.
\end{equation}
In either case, \eqref{eqn:9.13} and \eqref{eqn:9.15} show that
\begin{align*}
\xi &= \frac{(x_1+b)^2}{bx_1^2}+\left(\frac{(x_1+b)^2}{bx_1^2}\right)^{\bar{\psi}}\\
&= \frac{(x_1+b)^2}{bx_1^2}-\frac{bx_1^2}{(x_1+b)^2} \\
&= \frac{(bx_1^2 + b^2 + 2bx_1 + x_1^2)(-b x_1^2 + b^2 + 2bx_1 + x_1^2)}{bx_1^2 (x_1 + b)^2}
\end{align*}
is fixed by $\psi$ and therefore lies in $\Sigma \cap L$.  We compute the following polynomial satisfied by $\xi$.  First, note that $\frac{(x_1+b)^2}{bx_1^2}$ is a root of the resultant
\begin{align*}
\textrm{Res}_{x_1}(g(x_1), bx_1^2 X - (x_1 + b)^2) &= b^9(b X^4 + (6b - 1) X^3 + 9bX^2\\
&  - (b^2 + 6b)X + b)\\
 &= b^9g_1(X,b).
 \end{align*}
Since $b$ is a unit, it is clear that $\frac{(x_1+b)^2}{bx_1^2}$ is also a unit and $\xi$ is an algebraic integer.  Now let 
\begin{align*}
F(X) &= -g_1(X,b)g_1\left(X,\frac{-1}{b}\right)\\
&= X^8 + (12 + z)X^7 + (53 + 6z)X^6 + (96 + 8z)X^5 + (-z^2 - 12z + 9)X^4\\
& \ \  - (96 + 8z)X^3 + (53 + 6z)X^2 - (12 + z)X + 1,
\end{align*}
where $z = b - \frac{1}{b} \in \Sigma$.  This polynomial also has $\frac{(x_1+b)^2}{bx_1^2}$ as a root and has coefficients in $\Sigma$.  Since this polynomial is reverse reciprocal in $X$, we can write it as a polynomial in $X-\frac{1}{X}$:
\begin{align*}
F(X) &= X^4 G\left(X-\frac{1}{X},z\right),\\
G(X,z) &= X^4 + (12 + z)X^3 + (57 + 6z)X^2 + (132 + 11z)X - z^2 + 117.
\end{align*}
Now the polynomial $G(X,z)$ has $X=\xi$ as a root.  We write $G(X,z)$ as a polynomial in $z_1 = z+11$:
$$G(X,z) = -z_1^2 + (X^3 + 6X^2 + 11X + 22)z_1 + X^4 + X^3 - 9X^2 + 11X - 4,$$
so that $z_1 =  z+11$ is a root of
$$G_1(\xi,Z) = -Z^2 + (\xi^3 + 6\xi^2 + 11\xi + 22)Z + \xi^4 + \xi^3 - 9\xi^2 + 11\xi - 4 = 0.$$
Since $\xi$ lies in $\Sigma \cap L$, $z_1$ is at most quadratic over the latter field, i.e., $[\Sigma: \Sigma \cap L] \le 2$.  On the other hand, $z_1^\psi$ must also be a root of this polynomial.  Suppose that $z_1^\psi \neq z_1$.  Then
\begin{equation}
\label{eqn:9.16} z_1 + z_1^\psi = \xi^3 + 6\xi^2 + 11\xi + 22.
\end{equation}
But $z_1 = z + 11 \cong \wp_5'^3$ implies, since $\psi$ fixes $K$ and therefore $\wp_5'$, that $z_1^\psi \cong  \wp_5'^3$, as well.  Hence, we have that
$$\xi^3 + 6\xi^2 + 11\xi + 22 \equiv 0 \ (\textrm{mod} \ \wp_5'^3);$$
and $G_1(\xi, z_1) = 0$ implies that its constant term satisfies
$$\xi^4 + \xi^3 - 9\xi^2 + 11\xi - 4 \equiv 0  \ (\textrm{mod} \ \wp_5'^3).$$
Now we argue $5$-adically.  The unique root of $X^3 + 6X^2 + 11X + 22 = 0$ in $\mathbb{Q}_5$ is
$$\alpha = 1 + 2\cdot 5 + 2 \cdot 5^2 + 4 \cdot 5^6 + 2 \cdot 5^7 + 2\cdot5^8 + \cdots,$$
since
$$X^3 + 6X^2 + 11X + 22 \equiv (X^2 + 2X + 3)(X + 4) \ (\textrm{mod} \ 5).$$
 On the other hand, 
$$\xi^4 + \xi^3 - 9\xi^2 + 11\xi - 4 = (\xi+4)(\xi-1)^3 \equiv (\xi+4)^4 \ (\textrm{mod} \ \wp_5'),$$
and we conclude that $\xi \equiv 1$ modulo each prime divisor of $\wp_5'$ in $\Sigma$ and therefore $\xi \equiv 1$ (mod $\wp_5')$.  Now the expansion of $\alpha$ shows that the 
unique root of $f(X) = X^3 + 6X^2 + 11X + 22$ modulo $\wp_5'^3$ which is congruent to $1$ mod $\wp_5'$ is $X \equiv 61$.  For example, we have
$$f(x)-f(y) = (x-y)(x^2 + xy + y^2 + 6x + 6y + 11),$$
so since $\xi \equiv 1$ (mod $\wp_5'$), we have
$$0 \equiv f(\xi)-f(61) = (\xi-61)(\xi^2 + 67\xi + 4098) \ (\textrm{mod} \ \wp_5'^3),$$
where $\xi^2 + 67\xi + 4098 \equiv 1$ (mod $\wp_5'$) is relatively prime to $\wp_5'$.  Thus we must have
$$\xi \equiv 61 \ (\textrm{mod} \ \wp_5'^3).$$
But now consider the equation
$$0 = G_1(\xi,z_1) = -z_1^2 + (\xi^3 + 6\xi^2 + 11\xi + 22)z_1 + \xi^4 + \xi^3 - 9\xi^2 + 11\xi - 4.$$
The quadratic and linear terms in $z_1$ on the right are each divisible by $\wp_5'^6$.  This gives that
\begin{equation}
\label{eqn:9.17} \xi^4 + \xi^3 - 9\xi^2 + 11\xi - 4 \equiv 0 \ (\textrm{mod} \ \wp_5'^6).
\end{equation}
However, $\xi \equiv 61 + h$ (mod $\wp_5'^6$), where $\wp_5'^3 \mid h$.  Setting
$$k(x) = x^4 + x^3 - 9x^2 + 11x - 4 = (x+4)(x-1)^3,$$
we have
$$k(x+h) = h^4 + (4x + 1)h^3 + (6x^2 + 3x - 9)h^2 + (4x^3 + 3x^2 - 18x + 11)h + k(x),$$
where the coefficient of $h$ is
\begin{equation*}
a_3 = 4x^3 + 3x^2 - 18x + 11 \equiv (4x + 1)(x + 4)^2 \ (\textrm{mod} \ 5).
\end{equation*}
Thus,
\begin{align*}
k(\xi) \equiv k(61+h) &\equiv (4\cdot 61^3 + 3\cdot 61^2 - 18\cdot 61 + 11)h + k(61)\\
&\equiv (2^4 \cdot 3^3 \cdot 5^3 \cdot 17)h + k(61)\\
&\equiv k(61) = 2^6 \cdot 3^3 \cdot 5^4 \cdot 13 \ (\textrm{mod} \ \wp_5'^6),
\end{align*}
which is not zero modulo $\wp_5'^6$, contradicting \eqref{eqn:9.17}! \medskip

This contradiction shows that $z^\psi = z$ and therefore $j(\mathfrak{k}') = j(\mathfrak{k})^\psi = j(\mathfrak{k})$, since $j(\mathfrak{k})$ is a rational function of $z$:
$$j(\mathfrak{k}) = -\frac{(z^2+12z+16)^3}{z+11}.$$
(See \cite[p. 54]{m2}.)  This completes the proof of Sugawara's conjecture in this case.  It is immaterial whether we take the ideal $\wp_3$ or $\wp_3'$ in this argument, since we have only argued $5$-adically.  Thus, we have proved

\begin{thm}
If $\mathfrak{m} = \wp_3\wp_5'$ or $\wp_3' \wp_5'$, where $d_K \equiv 1, 4$ (mod $15$), then the ray class field $K_\mathfrak{m}$ is generated by a single $\tau$-invariant for the ideal $\mathfrak{m}$.
\label{thm:13}
\end{thm}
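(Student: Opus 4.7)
The plan is to invoke Hasse's criterion and show that the polynomial $T_\mathfrak{m}(X,\mathfrak{k})$ determines the ideal class $\mathfrak{k}$, using the factorization (\ref{eqn:9.9}) and a $5$-adic contradiction argument. Suppose $T_\mathfrak{m}(X,\mathfrak{k}) = T_\mathfrak{m}(X,\mathfrak{k}')$ for two classes, and let $\psi \in \textrm{Gal}(\textsf{K}_\mathfrak{m}/K)$ carry $j(\mathfrak{k})$ to $j(\mathfrak{k}')$, extended to a sufficiently large field. Because $\textsf{K}_{\wp_5'}/K$ is normal, $\psi$ permutes the unordered pair of irreducible quadratic factors $\{h_1,h_2\}$ over $\textsf{K}_{\wp_5'}$, so it either preserves or swaps the root sets $\{\tilde x_1,\tilde x_2\}$ and $\{\tilde x_3,\tilde x_4\}$ (up to ordering within each pair).

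From the closed forms for $\tilde x_i$ one computes $(\tilde x_1-\tilde x_2)/(\tilde x_3-\tilde x_4) = -(x_1+b)^2/(bx_1^2)$, yielding the master identity
$$\frac{(x_1+b)^2}{bx_1^2} = \pm \frac{(x_1'+b')^2}{b'x_1'^2}.$$
The next step is to rule out the minus sign by reducing the defining relations modulo a prime $\mathfrak{q}$ above $\wp_5'$: the congruences $b^2+11b-1 \equiv 0$ and $g(X) \equiv 3(X+1)^4$ modulo $\mathfrak{q}$ force $b \equiv 2$ and $x_1 \equiv -1$, whence both sides must be $\equiv 3 \pmod{\mathfrak{q}}$. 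Then I would exploit the automorphism $\bar\psi : b \mapsto -1/b$, which commutes with $\psi$ and acts as inversion on $(x_1+b)^2/(bx_1^2)$, to form the symmetric quantity
$$\xi = \frac{(x_1+b)^2}{bx_1^2} - \frac{bx_1^2}{(x_1+b)^2},$$
which is fixed by $\psi$ and hence lies in $\Sigma \cap L$, where $L$ is the fixed field of $\psi$ inside $\textsf{K}_{\wp_5'}$.

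To extract a usable polynomial for $\xi$, I would take the resultant of $g(X)$ and $bx_1^2 X-(x_1+b)^2$, multiply by its $b \mapsto -1/b$ image to land in $\Sigma$, and use the reverse-reciprocal symmetry to rewrite the product as a quartic $G(X,z)$ in $X - 1/X$ with coefficients in $\mathbb{Z}[z]$, $z = b-1/b$. Re-expressing $G$ as a quadratic in $z_1 = z+11$ gives a relation $G_1(\xi,Z)$ satisfied by both $z_1$ and $z_1^\psi$, with $z_1 \cong \wp_5'^3$ by the results already cited from \cite{m2}.

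The main obstacle — and the crux of the argument — will be showing $z_1^\psi = z_1$. Assuming the contrary, the Vieta sum $z_1+z_1^\psi = \xi^3+6\xi^2+11\xi+22$ must be divisible by $\wp_5'^3$, and this cubic has a unique Hensel-liftable root $\equiv 1 \pmod{\wp_5'}$, pinning $\xi \equiv 61 \pmod{\wp_5'^3}$. Plugging back into $G_1(\xi,z_1)=0$, the quadratic and linear terms in $z_1$ are divisible by $\wp_5'^6$, so the constant term $\xi^4+\xi^3-9\xi^2+11\xi-4 = (\xi+4)(\xi-1)^3$ must vanish modulo $\wp_5'^6$; but a direct check at $\xi = 61$ gives $2^6 \cdot 3^3 \cdot 5^4 \cdot 13$, which has $5$-adic valuation exactly $4$, a contradiction. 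Hence $z_1^\psi = z_1$, so $b^\psi \in \{b,-1/b\}$, so $j(\mathfrak{k})^\psi = j(\mathfrak{k})$ and $\mathfrak{k} = \mathfrak{k}'$. Since the entire argument is $5$-adic, the case $\mathfrak{m} = \wp_3'\wp_5'$ follows without change.
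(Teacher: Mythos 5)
Your proposal follows the paper's own argument essentially step for step: the same master identity $\frac{(x_1+b)^2}{bx_1^2} = \pm\frac{(x_1'+b')^2}{b'x_1'^2}$, the same congruence $b\equiv 2$, $x_1\equiv -1 \pmod{\mathfrak{q}}$ to fix the sign, the same $\psi$-invariant $\xi$ built from the action of $\bar\psi: b\mapsto -1/b$, the same resultant construction of $G_1(\xi,Z)$, and the identical $5$-adic endgame pinning $\xi\equiv 61\pmod{\wp_5'^3}$ and contradicting $\wp_5'^6 \mid (\xi+4)(\xi-1)^3$ via $k(61)=2^6\cdot 3^3\cdot 5^4\cdot 13$. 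This is correct and is the paper's proof in outline, including the closing observation that the argument is purely $5$-adic and hence independent of the choice between $\wp_3$ and $\wp_3'$.
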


It remains to consider the cases when $(3) = \wp_3^2$ or $(5) = \wp_5^2$ in $K$.  In case $(5) = \wp_5^2$, we appeal to the result of \cite[Prop. 5.1, pp. 124-125]{m3} and its proof, which applies to any fundamental discriminant $d_K$ divisible by $5$, and which shows that $b = r^5(w_i/5)$ (denoted $\rho_i, i = 1, 2$ in \cite{m3}) has the same properties with respect to the curve $E_5(b)$ as in the cases discussed above.  Namely: $E_5(b)$ has complex multiplication by $R_K$, the ring of integers in $K = \mathbb{Q}(\sqrt{-d})$; $\textsf{K}_{\wp_5} = K(b)$ and $\psi: b \rightarrow -1/b$ is the non-trivial automorphism of the quadratic extension $\textsf{K}_{\wp_5}/\Sigma$; and $b-1/b+11 \cong \wp_5^3$ in $\Sigma$.  The same arguments above apply, up to and including the first sentence after equation \eqref{eqn:9.16} (with $\wp_5' = \wp_5$).  In this case the final argument does not lead to a contradiction, since $5^4 \cong \wp_5^8$.  However, the constant term of $G_1(X,z)$ gives that
$$-k(\xi) = -(\xi+4)(\xi-1)^3 = z_1 z_1^\psi \cong \wp_5^6.$$
Let $\bar{\mathfrak{p}} \mid \wp_5$ be a prime divisor of $\wp_5$ in $\Sigma$.  The ideal $\bar{\mathfrak{p}}^6$ exactly divides $\wp_5^6$, since $\wp_5$ is unramified in $\Sigma/K$.  Now $\bar{\mathfrak{p}}$ divides at least one of $\xi+4$ and $\xi-1$ and therefore both, since $(\xi+4) - (\xi-1) = 5$.  Furthermore, $\bar{\mathfrak{p}}^2$ divides at least one of these factors -- otherwise, the left side would only be divisible by $\bar{\mathfrak{p}}^4$.  But $\bar{\mathfrak{p}}^2 \mid 5$, so $\bar{\mathfrak{p}}^2$ must divide both factors, which leads to a contradiction, since then the left side would be divisible by $\bar{\mathfrak{p}}^8$.  Hence, $z_1^\psi = z_1$ and the same conclusion follows.  \medskip

Finally, suppose that  $(3) = \wp_3^2$ in $K$.  Here,
$$[\textsf{K}_{(3)}:\Sigma] = \frac{\varphi(\wp_3^2)}{2} = 3,$$
so that $\textsf{K}_{(3)}$ is a cubic extension of $\Sigma$.  In this case, there are three $\tau$-invariants corresponding to $\mathfrak{m} = \wp_3^2$ and one invariant corresponding to $\mathfrak{m} = \wp_3$.  Thus, one of the four $\tau$-invariants lies in $\Sigma$ and the other three generate $\textsf{K}_{(3)}$ over $\Sigma$.  I claim that $\rho^3 = z+11$, where $\rho$ generates $\textsf{K}_{(3)}$ over $\Sigma$.  If $\rho$ were an element of $\Sigma$, then since $\omega \in \Sigma$, the square-roots $\sqrt{-\theta_i}$ would be quadratic over $\textsf{K}_{\wp_5'}$ (or $\textsf{K}_{\wp_5}$, if $5 \mid d_K$), and their sums  could not generate the cubic extension $\textsf{K}_{\wp_5'} \textsf{K}_{(3)}$ of $\textsf{K}_{\wp_5'}$. \medskip

Let $\sigma$ be the generator of $\textrm{Gal}(\textsf{K}_{(3)}/\Sigma)$, for which $\rho^\sigma = \omega \rho$, and define
$$\sqrt{-\theta_1}^\sigma = \sqrt{-\theta_2}, \ \ \sqrt{-\theta_2}^\sigma = \sqrt{-\theta_3}.$$
Then $\sqrt{-\theta_3}^\sigma = \sqrt{-\theta_1}^{\sigma^3} = \sqrt{-\theta_1}$. Extending $\sigma$ to $\textsf{K}_{\wp_5'}\textsf{K}_{(3)}$ so that it fixes $\textsf{K}_{\wp_5'}$, it follows that
$$\tau(\mathfrak{k}_1^*) = \frac{\lambda}{2} \textrm{Tr}_{\textsf{K}_{\wp_5'}}(\sqrt{-\theta_1}) \in \textsf{K}_{\wp_5'},$$
which implies that $\tau(\mathfrak{k}_1^*)$ cannot generate $\textsf{K}_{(3)}$ and must therefore lie in $\Sigma$.  Now the rest of the calculations are the same, since they only involve $\tau(\mathfrak{k}_1^*) \in \Sigma$ and $x_1 \in \textsf{K}_{\wp_5'}$.  Hence, we have proved the following. \bigskip

\begin{thm}
If $\mathfrak{m} = \wp_3\wp_5$, where $\wp_3, \wp_5$ are first degree prime divisors of $3, 5$ in $K$, then the ray class field $K_\mathfrak{m}$ is generated by a single $\tau$-invariant for the ideal $\mathfrak{m}$.
\label{thm:14}
\end{thm}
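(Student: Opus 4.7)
The plan is to treat Theorem \ref{thm:14} as the global conclusion of the case analysis developed in this section, splitting the argument by the decomposition type of $3$ and $5$ in $K$. If both primes split, that is $d_K \equiv 1, 4 \pmod{15}$, the statement is precisely Theorem \ref{thm:13}, and I would simply invoke it; only the ramified possibilities require new work. In all cases I would keep working on $E_5(b)$ with $b = r(w/5)^5$, so that the construction of the four candidate $\tau$-invariants via the resolvent cubic and the square roots $\sqrt{-\theta_i}$ of \eqref{eqn:9.4}--\eqref{eqn:9.6} remains uniform.

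For the case $(5) = \wp_5^2$, I would rerun the argument of Theorem \ref{thm:13}, using that \cite[Prop.~5.1]{m3} supplies exactly the inputs needed: $E_5(b)$ has CM by $R_K$, $\textsf{K}_{\wp_5} = K(b)$, the map $b \mapsto -1/b$ generates $\textrm{Gal}(\textsf{K}_{\wp_5}/\Sigma)$, and $z + 11 \cong \wp_5^3$. The arithmetic manipulations through \eqref{eqn:9.15} are formal in these inputs, so they carry over verbatim to produce the polynomial identity $G_1(\xi, z_1) = 0$. The final $5$-adic contradiction used in Theorem \ref{thm:13} no longer applies directly, since the extra ramification forces $5^4 \cong \wp_5^8$; I would replace it by observing that the constant term yields $-(\xi+4)(\xi-1)^3 = z_1 z_1^\psi \cong \wp_5^6$, and then, letting $\bar{\mathfrak p}$ be a prime of $\Sigma$ above $\wp_5$, use that $(\xi+4) - (\xi-1) = 5$ together with $\bar{\mathfrak p}^2 \mid 5$ to force $\bar{\mathfrak p}^2$ to divide each factor, giving divisibility by $\bar{\mathfrak p}^8$ on the left and the desired contradiction.

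For the case $(3) = \wp_3^2$, I would first show that the element $\rho$ satisfying $\rho^3 = z + 11$ must generate the cubic extension $\textsf{K}_{(3)}/\Sigma$; otherwise the $\sqrt{-\theta_i}$ would all lie in a quadratic extension of $\textsf{K}_{\wp_5}$ and could not generate the cubic $\textsf{K}_{\wp_5}\textsf{K}_{(3)}/\textsf{K}_{\wp_5}$. Fixing a generator $\sigma$ of $\textrm{Gal}(\textsf{K}_{(3)}/\Sigma)$ that cycles $\sqrt{-\theta_1} \to \sqrt{-\theta_2} \to \sqrt{-\theta_3}$, I would identify $\tau(\mathfrak{k}_1^*) = \tfrac{\lambda}{2}(\sqrt{-\theta_1} + \sqrt{-\theta_2} + \sqrt{-\theta_3})$ as a trace and so conclude that it lies in $\Sigma$. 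With this identification of which invariant is the distinguished one, the entire chain \eqref{eqn:9.10}--\eqref{eqn:9.17} transfers without further change. The remaining case, in which both $3$ and $5$ are ramified, is then handled by combining these two substitutions within the same framework.

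The principal obstacle is the replacement of the $5$-adic contradiction in the ramified case $(5) = \wp_5^2$: ramification at $5$ erases the exact divisibility inequality used in Theorem \ref{thm:13}, and one must instead extract the contradiction from the precise $\bar{\mathfrak p}$-valuation of $(\xi+4)(\xi-1)^3$ and the additive gap between its two factors. The ramified case at $3$, by contrast, is essentially algebraic bookkeeping once the correct choice of trace representation of $\tau(\mathfrak{k}_1^*)$ is made.
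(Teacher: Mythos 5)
Your proposal follows the paper's own proof essentially step for step: the split case is delegated to Theorem \ref{thm:13}, the case $(5)=\wp_5^2$ is handled by citing \cite[Prop.\ 5.1]{m3} and replacing the final $5$-adic step with exactly the paper's valuation argument on $-(\xi+4)(\xi-1)^3 = z_1 z_1^\psi \cong \wp_5^6$ using $(\xi+4)-(\xi-1)=5$, and the case $(3)=\wp_3^2$ is treated by the same observation that $\rho$ must generate $\textsf{K}_{(3)}/\Sigma$ so that the symmetric sum $\tfrac{\lambda}{2}(\sqrt{-\theta_1}+\sqrt{-\theta_2}+\sqrt{-\theta_3})$ is a trace landing in $\Sigma$. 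The argument is correct and is the same as the one in the paper.
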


Before proceeding to the next case we consider the following example. \medskip

\noindent {\bf Example.} Let $K = \mathbb{Q}(\sqrt{-11})$.  Even though this field has class number $1$, it is interesting to use the arithmetic of this section to compute the various $\tau$-invariants.
Using $j(\mathfrak{o}) = -32^3$, we find that $b$ is a root of
$$B(x) = x^4 + 4x^3 +46x^2 -4x+1,$$
and we take
$$b = -1 + \sqrt{-11} + \frac{3 - \sqrt{-11}}{4} \sqrt{-2\sqrt{-11} + 6}.$$
With this value of $b$ we compute the roots of $g(X) = 0$ in $\textsf{K}_{\wp_5'} = K(b) = K(\sqrt{-2\sqrt{-11} + 6})$ to be
\begin{align*}
x_1 &=  - \frac{2}{3} + \frac{\sqrt{-11}}{6}  + \frac{7-\sqrt{-11}}{24}\sqrt{-2\sqrt{-11} + 6},\\
x_2 &= - \frac{5}{6} -\frac{\sqrt{-11}}{3}  + \frac{1+\sqrt{-11}}{8} \sqrt{-2\sqrt{-11}+ 6}. 
\end{align*}
Now $\tilde{x}_1,\tilde{x}_2$ are roots of the polynomial
\begin{align*}
h_1(X) &= X^2 + \left(- \frac{31}{4} - \frac{7\sqrt{-11}}{4} + \frac{13+7\sqrt{-11}}{8} \sqrt{-2\sqrt{-11} + 6}\right)X\\
& \ \ - \frac{63}{2} - 15\sqrt{-11}  + \frac{23+47\sqrt{-11}}{8} \sqrt{-2\sqrt{-11} + 6};
\end{align*}
while $\tilde{x}_3,\tilde{x}_4$ are roots of
\begin{align*}
h_2(X) &= X^2 + \left(\frac{-15+\sqrt{-11}}{4} + \frac{11+\sqrt{-11}}{8} \sqrt{-2\sqrt{-11} + 6}\right)X\\
& \ \ - \frac{11+5\sqrt{-11}}{4} + \frac{1+2\sqrt{-11}}{4} \sqrt{-2\sqrt{-11} + 6}.
\end{align*}
Note that the polynomials $h_1, h_2$ are not conjugate over $K$, but the polynomials
\begin{align*}
\tilde{h}_1(X) &= \lambda^2 h_1\left(\frac{X}{\lambda} -\frac{b^2+6b+1}{12}\right)\\
&= X^2 + \left(- 19712 - 16128\sqrt{-11} + 5376\sqrt{-11} \sqrt{-2\sqrt{-11} + 6}\right)X\\
& \ \ - 2684616704 - 1271660544\sqrt{-11} + 423886848\sqrt{-11} \sqrt{-2\sqrt{-11} + 6}
\end{align*}
and
\begin{align*}
\tilde{h}_2(X) &= \lambda^2 h_2\left(\frac{X}{\lambda} -\frac{b^2+6b+1}{12}\right)\\
&= X^2 + \left(- 19712 - 16128\sqrt{-11} - 5376\sqrt{-11} \sqrt{-2\sqrt{-11} + 6}\right)X\\
& \ \ - 2684616704 - 1271660544\sqrt{-11} - 423886848\sqrt{-11} \sqrt{-2\sqrt{-11} + 6}
\end{align*}
are conjugate over $K$, and their product is the polynomial
\begin{align*}
T_\mathfrak{m}(X,\mathfrak{k}) &= X^4 + (- 39424 -32256\sqrt{-11})X^3\\
& \ \  + (- 5934415872-2543321088\sqrt{-11})X^2\\
& \ \  + (- 44563506921472+36461051117568\sqrt{-11})X\\
& \ \  + 1277724870452445184 + 2874880958518001664\sqrt{-11}.
\end{align*}
This question of conjugacy is the source of the difficulty in proving Sugawara's conjecture.  The roots of $T_\mathfrak{m}(X, \mathfrak{k})$ are the $\tau$-invariants $\tau(\bar{\mathfrak{k}}_i^*)$ for the ideal $\mathfrak{m} = \wp_3 \wp_5'$ and the four ray classes mod $\mathfrak{m}$.  Note in this case that
\begin{equation*}
z = -2 + 2\sqrt{-11}, \ \ \rho = \frac{-3+\sqrt{-11}}{2}, \ \ \rho^2+2\rho+5 = \frac{3-\sqrt{-11}}{2},
\end{equation*}
where $(\rho) = \wp_5'$ in $K$.  (In Theorem \ref{thm:11}, the value $i = 0$, so $\rho = -\frac{\eta(w/5)^2}{\eta(w)^2}$, where $w = (33+\sqrt{-11})/2$.)  Taking the product of $T_\mathfrak{m}(X, \mathfrak{k})$ with its image under complex conjugation yields the interesting polynomial
\begin{align*}
T(X) &= T_{\wp_3\wp_5'}(X,\mathfrak{k}) T_{\wp_3'\wp_5}(X,\mathfrak{k})\\
& = X^8 - (2^{10} \cdot 7 \cdot 11) X^7 + (2^{21} \cdot 7^2 \cdot 11) X^6 + (2^{30} \cdot 7^5 \cdot 11^2) X^5\\
& \ \  + (2^{40} \cdot 7^4 \cdot 11^2 \cdot 271) X^4 - (2^{50} \cdot 5 \cdot 7^5 \cdot 11^3 \cdot 29) X^3\\
& \ \ - (2^{60} \cdot 7^6 \cdot 11^3 \cdot 883) X^2 + (2^{71} \cdot 7^8 \cdot 11^5) X\\
& \ \ + 2^{80} \cdot 7^8 \cdot 11^4 \cdot 907.
\end{align*}
The polynomial $T(X)$ is irreducible over $\mathbb{Q}$, so that a root of $T_{\wp_3\wp_5'}(X,\mathfrak{k})$ even generates $\textsf{K}_{\wp_3\wp_5'}$ over $\mathbb{Q}$.  The Galois group of $T(X)$ over $\mathbb{Q}$ has order $32$ and is isomorphic to the wreath product $\mathbb{Z}_4 \wr \mathbb{Z}_2$.  If we were to replace $x_1$ with the root $x_2$ of $g(X)$ we would obtain the polynomial $\tilde{T}(X) = T_{\wp_3'\wp_5'}(X,\mathfrak{k}) T_{\wp_3 \wp_5}(X,\mathfrak{k})$.

\section{The case $\mathfrak{m} = \wp_3 \wp_3' \wp_5$.}
\label{sec:10}

We can use the same points $P_i, Q_j$ and the calculations from Section \ref{sec:9} to deal with the case $\mathfrak{m} = \wp_3 \wp_3' \wp_5'$. \medskip

By the results of Section \ref{sec:5} and the arguments preceding Theorem \ref{thm:11}, the invariants $\tau(\mathfrak{k}_3^*)$ and $\tau(\mathfrak{k}_4^*)$ generate $\textsf{K}_{(3)} = \Sigma(\omega)$ and
$$\wp_3 \wp_3' P_3 = \wp_3 \wp_3' P_4 = O \ \ \textrm{on} \ E_5(b).$$
The points $P_3, P_4$ are primitive $(3)$-division points; otherwise the associated $\tau$-invariants would lie in $\Sigma$, which we showed in Section \ref{sec:9} is not the case.  Hence, the points $P_i + Q_j$, for $i = 3,4$ and $1 \le j \le 4$, are primitive $\mathfrak{m}$-division points.  We also know that $\varphi(\mathfrak{m}) = 16$, so that
$$[\textsf{K}_\mathfrak{m}: \Sigma] = 8;$$
$\textsf{K}_\mathfrak{m}$ is quadratic over $\textsf{K}_\mathfrak{\wp_3 \wp_5'}$ and quartic over $\textsf{K}_{(3)} = \Sigma(\omega)$.
The $X$-coordinates of the points $P_3+ Q_j$ are obtained by replacing $x_1$ by $x_3$ in the formulas for $\tilde{x}_j$ in Section \ref{sec:9}, where $P_3 = (x_3, y_3)$:
\begin{align*}
\tilde{x}_5 &= X(P_3 + Q_1) = \frac{-by_3}{x_3^2},\\
\tilde{x}_6 &= X(P_3 + Q_2) = \frac{by_3+b^2}{x_3^2}+\frac{b+b^2}{x_3},\\
\tilde{x}_7 &= X(P_3 + Q_3) = \frac{b^2y_3-bx_3^2-b^2x_3}{(x_3+b)^2},\\
\tilde{x}_8 &= X(P_3 + Q_4) = -\frac{b^2y_3+bx_3^2+(b^3+2b^2)x_3+b^3}{(x_3+b)^2}.
\end{align*}
The coordinates $X(P_4+Q_j)$ are formed by replacing $x_3$ by $x_4$ in these formulas.  See the discussion just before Theorem \ref{thm:11}.  Now the quantity
$$\tau(\bar{\mathfrak{k}}_5^*) = \lambda\left(\tilde{x}_5+\frac{b^2+6b+1}{12}\right) = \lambda\left( \frac{-by_3}{x_3^2}+\frac{b^2+6b+1}{12}\right)$$
must generate $\textsf{K}_\mathfrak{m}$ over $\Sigma$, while the quantity we considered in Section \ref{sec:9},
$$\tau(\mathfrak{k}_3^*) = \lambda\left(x_3+\frac{b^2+6b+1}{12}\right) = \frac{\lambda}{2}\left(-\sqrt{-\theta_1}-\sqrt{-\theta_2}+\sqrt{-\theta_3} \right),$$
generates $\textsf{K}_{(3)} = \Sigma(\omega)$ over $\Sigma$. \medskip
It follows that $x_3 \in \textsf{K}_{\wp_5'}(\omega)$ and $\textsf{K}_\mathfrak{m} = \Sigma(b, x_3, y_3)$.  Putting $F = \textsf{K}_{\wp_5'}$, 
the trace of $\tilde{x}_5$ to $F(\omega) = \textsf{K}_{\wp_5'}(\omega)$ is
$$\textrm{Tr}_{\textsf{K}_\mathfrak{m}/F(\omega)}(\tilde{x}_5) = \tilde{x}_5+\tilde{x}_6 = \frac{(b^2+b)x_3+b^2}{x_3^2},$$
since
$$\tilde{x}_5 \tilde{x}_6 = -\frac{b^2(x_3+b)}{x_3^2},$$
as in Section \ref{sec:9}, so that $\tilde{x}_5, \tilde{x}_6$ satisfy a quadratic polynomial with coefficients in $F(\omega)$:
$$h_3(X) = X^2 - \frac{(b^2+b)x_3+b^2}{x_3^2} X  -\frac{b^2(x_3+b)}{x_3^2}.$$
Hence,
$$\textrm{Tr}_{\textsf{K}_\mathfrak{m}/F}(\tilde{x}_5) =  \frac{(b^2+b)x_3+b^2}{x_3^2} + \frac{(b^2+b)x_4+b^2}{x_4^2},$$
since $\tau(\mathfrak{k}_3^*)$ and $\tau(\mathfrak{k}_4^*)$ are conjugates over $F = \textsf{K}_{\wp_5'}$.
\medskip
Similarly, $\tilde{x}_7, \tilde{x}_8$ are roots of the polynomial
$$h_4(X) = X^2 + \frac{b(2x_3^2 + (b^2+3b) x_3+b ^2)}{(x_3 + b)^2} X + \frac{b^2x_3}{x_3 + b},$$
which is irreducible over $F(\omega)$.  Hence, setting
\begin{align*}
s_1 &= \tilde{x}_5 + \tilde{x}_6 = \frac{b^2}{x_3^2} + \frac{b+b^2}{x_3},\\
s_2 &= \tilde{x}_7 + \tilde{x}_8 = -\frac{b(2x_3^2+(b^2+3b)x_3+b^2)}{(x_3+b)^2} ,
\end{align*}
we have
\begin{align}
\label{eqn:10.1} s_1 - s_2 &= \tilde{x}_5 + \tilde{x}_6 -  \tilde{x}_7 - \tilde{x}_8\\
\label{eqn:10.2} &= b\frac{(b^2 + 1)x_3^3 + (3b^2 + 3b)x_3^2 + (3b^3 + 3b^2)x_3 + b^3}{3x_3^2(x_3+b)^2} = u(x_3,b),
\end{align}
on reducing the numerator modulo $g(x_3)$.  With the automorphism $\bar{\psi}: b \rightarrow -1/b$ from Section \ref{sec:9}, extended to $\textsf{K}_{\wp_5'}(\omega)$ to fix $\omega$, a similar calculation to the displayed lines following \eqref{eqn:9.14}, using that $\tau = \tau(\mathfrak{k}_3^*) \in \textsf{K}_{(3)} = \Sigma(\omega)$, shows that
$$x_3^{\bar{\psi}} = \frac{1}{b^2}(x_3+b).$$
Furthermore, with
\begin{equation*}
\tau(\bar{\mathfrak{k}}_5^*) + \tau(\bar{\mathfrak{k}}_6^*)  = \lambda \left(s_1+\frac{b^2+6b+1}{6}\right),
\end{equation*}
we have
\begin{align*}
\left(\tau(\bar{\mathfrak{k}}_5^*) + \tau(\bar{\mathfrak{k}}_6^*)\right)^{\bar{\psi}} &= \lambda^{\bar{\psi}}  \left(s_1+\frac{b^2+6b+1}{6}\right)^{\bar{\psi}}\\
&= \lambda b^2\left(s_1^{\bar{\psi}} + \frac{b^2-6b+1}{6b^2}\right) \\
&= \lambda\left(b^2 \frac{(1-b)x_3+b}{(x_3+b)^2} + \frac{b^2-6b+1}{6}\right)\\
&= \lambda\left(\frac{(b^2-b^3)x_3+b^3-2b(x_3+b)^2}{(x_3+b)^2} + \frac{b^2+6b+1}{6}\right)\\
&= \lambda \left(-\frac{b(2x_3^2+(b^2+3b)x_3+b^2)}{(x_3+b)^2} + \frac{b^2+6b+1}{6}\right)\\
&= \lambda \left(s_2 + \frac{b^2+6b+1}{6}\right)\\
&= \tau(\bar{\mathfrak{k}}_7^*) + \tau(\bar{\mathfrak{k}}_8^*).
\end{align*}
Therefore, the traces to $\textsf{K}_{\wp_5'}(\omega)$ of the $\tau$-invariants for $\mathfrak{m}$ are
$$S = \{\lambda (s_1 + \beta), \ \lambda (s_2 + \beta), \ \lambda (s_3 + \beta), \ \lambda (s_4 + \beta)\},$$
where
\begin{equation*}
s_3 = s_1^\alpha, \ s_4 = s_2^\alpha,\ \ \alpha: (b, \omega) \rightarrow (b, \omega^2); \ \ \beta = \frac{b^2+6b+1}{6}.
\end{equation*}
The traces in $S$ are the respective images of $\lambda (s_1 + \beta)$ by the automorphisms in
$$\textrm{Gal}(\textsf{K}_{\wp_5'}(\omega)/\Sigma) = \{1, \bar{\psi}, \alpha, \bar{\psi} \alpha \}.$$

\begin{lem}
The conjugates over $\Sigma$ of the trace $\tau(\bar{\mathfrak{k}}_5)+\tau(\bar{\mathfrak{k}}_6) = \lambda(s_1+\beta)$ are distinct.
\label{lem:2}
\end{lem}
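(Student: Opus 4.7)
Since $\lambda \in \textsf{K}_{\wp_5'}$ is nonzero and $\beta = (b^2+6b+1)/6 \in \textsf{K}_{\wp_5'}$ is fixed by every element of $G := \textrm{Gal}(\textsf{K}_{\wp_5'}(\omega)/\Sigma) = \{1,\bar\psi,\alpha,\bar\psi\alpha\}$, the four conjugates $\lambda(s_i+\beta)$ of $\lambda(s_1+\beta)$ under $G$ are distinct iff no non-identity element of $G$ fixes $s_1$. With $s_3 = s_1^\alpha$ and $s_4 = s_1^{\bar\psi\alpha} = s_2^\alpha$, this reduces to verifying $s_1 \neq s_2$, $s_1 \neq s_3$, and $s_1 \neq s_4$ in $\textsf{K}_{\wp_5'}(\omega)$, which I will dispatch separately.

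The first inequality is essentially the content of equation \eqref{eqn:10.2}, which identifies $s_1 - s_2$ with $u(x_3,b)$ whose numerator is the cubic $N(X) = (b^2+1)X^3 + 3b(b+1)X^2 + 3b^2(b+1)X + b^3$. If $s_1 = s_2$, then $N(x_3) = 0$, so the degree-$2$ minimal polynomial $m(X) \in \textsf{K}_{\wp_5'}[X]$ of $x_3$ (the quadratic factor of $g(X)/3$ with roots $\{x_3,x_4\}$) would divide $N(X)$. I plan to rule this out via a $\wp_5'$-adic refinement using the congruences $b \equiv 2$ and $g(X) \equiv 3(X+1)^4 \pmod{\mathfrak{q}}$ from Section \ref{sec:9}, which give $x_3 \equiv -1 \pmod{\mathfrak{q}}$. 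Since $N(-1) = -2b^3 - b^2 + 3b - 1 \equiv -15 \pmod{\mathfrak{q}}$ is already divisible by $\mathfrak{q}$, the zeroth-order reduction is inconclusive; one must expand $x_3$ to higher $\mathfrak{q}$-adic precision via Hensel (or, equivalently, compute $\textrm{Res}_X(g(X),N(X)) \in \mathbb{Z}[b]$ and check it is nonzero for the specific $b$). Combined with $N'(-1) = 3(b^3-2b+1)$, this first-order correction pins down $v_{\mathfrak{q}}(N(x_3))$ and excludes $N(x_3) = 0$. This is the principal obstacle.

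For the remaining two inequalities I work in the $\textsf{K}_{\wp_5'}$-basis $\{1, x_3\}$ of $\textsf{K}_{\wp_5'}(\omega)$. Let $U = x_3 + x_4$ and $V = x_3 x_4$, both in $\textsf{K}_{\wp_5'}$, so that $m(X) = X^2 - UX + V$; substituting $x_3^{-1} = (U-x_3)/V$ and $x_3^{-2} = (U^2 - Ux_3 - V)/V^2$ into the definition of $s_1$ yields
$$s_1 = A - \frac{b[bU + (b+1)V]}{V^2}\, x_3, \qquad A \in \textsf{K}_{\wp_5'},$$
so that $s_1 = s_3 = s_1^\alpha$ iff $bU + (b+1)V = 0$. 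Since $g(X) \equiv 3(X+1)^4 \pmod{\mathfrak{q}}$ forces $U \equiv -2$ and $V \equiv 1 \pmod{\mathfrak{q}}$ while $b \equiv 2 \pmod{\mathfrak{q}}$, we get $bU + (b+1)V \equiv -1 \pmod{\mathfrak{q}}$, which is a unit, so $s_1 \neq s_3$. The same reduction applied to $s_2 = -b[2x_3^2 + (b^2+3b)x_3 + b^2]/(x_3+b)^2$ — using also $(x_3+b)^2 = (U+2b)x_3 + (b^2-V)$ and its Bezout inverse modulo $m(X)$ — produces $s_2 = A' + B' x_3$ with explicit $A', B' \in \textsf{K}_{\wp_5'}$, and since $\alpha$ acts by $x_3 \mapsto U - x_3$, the $x_3$-coefficient of $s_1 - s_4 = s_1 - s_2^\alpha$ reduces at $(b,U,V) \equiv (2,-2,1) \pmod{\mathfrak{q}}$ to a nonzero unit, yielding $s_1 \neq s_4$. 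These two inequalities resolve at the zeroth order modulo $\mathfrak{q}$ thanks to the clean $\{1, x_3\}$-decomposition, in contrast with Step $1$.
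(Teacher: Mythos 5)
Your reduction to three inequalities and your treatment of two of them are fine (indeed cleaner than the paper's), but the proof is not complete: the case $s_1=s_2$ — which you yourself call ``the principal obstacle'' — is only a plan, not an argument. Everything there is in the future tense (``I plan to rule this out'', ``one must expand $x_3$ to higher $\mathfrak{q}$-adic precision''), and the asserted outcome, that the first-order correction ``pins down $v_{\mathfrak{q}}(N(x_3))$ and excludes $N(x_3)=0$'', is not only unproved but dubious as stated: with $b\equiv 2\pmod{\mathfrak{q}}$ one has $v_{\mathfrak{q}}(N(-1))=v_{\mathfrak{q}}(-15)=2$ and $v_{\mathfrak{q}}(N'(-1))=v_{\mathfrak{q}}(3(b^3-2b+1))\ge 2$, while $N''(-1)/2=3(b-1)$ is a $\mathfrak{q}$-unit and $v_{\mathfrak{q}}(x_3+1)=1$; so the constant and quadratic terms of the Taylor expansion of $N$ at $-1$ both sit at valuation $2$ and may cancel, and deciding whether they do requires $x_3$ modulo higher powers of $\mathfrak{q}$ — genuinely more work than you indicate. (Consistently with this, one finds $s_1\equiv s_2\equiv 2x_3\pmod{\mathfrak{q}}$, so no first-order shortcut is available.) The paper closes this case differently: it takes the quartic numerator of $s_1-s_2$ before reduction mod $g$, computes $\textrm{Res}_{x_3}(\,\cdot\,,g(x_3))=b^{14}(b^2+b-1)(b^2+11b-1)$, and excludes each factor ($b$ is a unit; $b^2+11b-1=-\Delta/b^5\ne 0$; and $b$ cannot be a root of $x^2+x-1$, whose roots are real, since $K\subset\mathbb{Q}(b)$). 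Your parenthetical alternative (``compute $\textrm{Res}_X(g,N)$ and check it is nonzero'') is the right move, but the checking is the whole content of the step and is absent.

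Two inaccuracies in your setup should also be repaired, though neither is fatal. Neither $\beta=(b^2+6b+1)/6$ nor $\lambda$ is fixed by $\bar{\psi}\colon b\mapsto -1/b$ (the paper computes $\lambda^{\bar{\psi}}=b^2\lambda$), and $s_2\ne s_1^{\bar{\psi}}$ (rather $s_2=b^2 s_1^{\bar{\psi}}-2b$); what is true, and what the paper establishes just before the lemma, is that the full products transform as $\bigl(\lambda(s_1+\beta)\bigr)^{\bar{\psi}}=\lambda(s_2+\beta)$, etc. The orbit--stabilizer reduction to $s_1\ne s_2$, $s_1\ne s_3$, $s_1\ne s_4$ survives once it is applied to the quantities $\lambda(s_i+\beta)$ rather than to the $s_i$ themselves. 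Granting that, your $\{1,x_3\}$-basis computations for $s_1\ne s_3$ and $s_1\ne s_4$ check out: $bU+(b+1)V\equiv -1$ and the $x_3$-coefficient of $s_1-s_4$ is $\equiv 4\pmod{\mathfrak{q}}$, both units. This is a genuine improvement over the paper, which at these two steps needs double resultants whose quartic, octic and degree-$20$ factors are excluded by discriminant and Galois-group ($S_5$) arguments. But until the $s_1\ne s_2$ case is actually carried out, the lemma is not proved.
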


\begin{proof}
If $\lambda(s_1+\beta) = \lambda(s_3 + \beta)$, then $s_1 = s_3$, or
\begin{align*}
0 = s_1-s_3 &= \frac{(b^2 + b)x_3 + b^2}{x_3^2} - \frac{(b^2 + b)x_4 + b^2}{x_4^2}\\
&= \frac{-b(x_3 - x_4)[(b+1)x_3 x_4 + b(x_3 + x_4)]}{x_3^2 x_4^2}.
\end{align*}
Now the roots of $g(X)$ are distinct, since
$$\textrm{disc}(g(X)) = -27b^{10} (b^2 + 11b - 1)^2 = -27 \Delta^2.$$
Hence, $x_3 \neq x_4$.  It follows that $(b+1)x_3 x_4 + b(x_3 + x_4) = 0$.  But
\begin{align*}
\textrm{Res}_{x_4}(\textrm{Res}_{x_3}&((b+1)x_3 x_4 + b(x_3 + x_4),g(x_3)),g(x_4))\\
&= b^{28} (b^4 + 2b^3 - 32b^2 + 14b - 1)(b^4 + 3b^3 + 2b^2 - 10b + 1)^2.
\end{align*}
However, the discriminants of these quartics are $2^{12} \cdot 19 \cdot 103$ and $-7^3 \cdot 701$, neither of which is divisible by $5$.  Therefore, their product cannot be $0$ and $s_1 \neq s_3$.  From $(s_2-s_4)^{\bar{\psi}} = \frac{1}{b^2}(s_1 - s_3)$ we also know $s_2 \neq s_4$.  (See the proof of Lemma \ref{lem:3} below.) \medskip

Now suppose that $s_1 = s_2$.  Then
\begin{align*}
0 = s_1 - s_2 &= \frac{(b^2 + b)x_3 + b^2}{x_3^2} + \frac{b(2x_3^2+(b^2+3b)x_3+b^2)}{(x_3+b)^2}\\
&= \frac{b}{x_3^2(x_3+b)^2}\\
& \ \  \times \left(2x_3^4 + (b^2 + 4b + 1)x_3^3 + (3b^2 + 3b)x_3^2+ (b^3 + 3b^2)x_3 + b^3\right).
\end{align*}
In this case we have
\begin{align*}
\textrm{Res}_{x_3}( 2x_3^4 &+ (b^2 + 4b + 1)x_3^3 + (3b^2 + 3b)x_3^2+ (b^3 + 3b^2)x_3 + b^3,g(x_3))\\
&= b^{14}(b^2 + b - 1)(b^2 + 11b - 1).
\end{align*}
Since the roots of $x^2+x-1$ are real and $K \subset \mathbb{Q}(b)$, this polynomial cannot have $b$ as a root.  Hence $s_1 \neq s_2$.  Applying $\alpha$ gives that $s_3 \neq s_4$.  \medskip

Finally, suppose $s_1 = s_4$.  In this case
\begin{align*}
0 &= s_1 - s_4 = \frac{(b^2 + b)x_3 + b^2}{x_3^2} + \frac{b(2x_4^2+(b^2+3b)x_4+b^2)}{(x_4+b)^2}\\
&= \frac{b}{x_3^2 (x_4+b)^2}\\
& \ \ \times \big((2x_4^2 + (b^2 + 3b)x_4 + b^2)x_3^2 + ((b + 1)x_4^2 + (2b^2 + 2b)x_4 + b^3 + b^2)x_3\\
& \ \  + b^3 + 2b^2x_4 + bx_4^2\big)\\
&= \frac{b}{x_3^2 (x_4+b)^2}k(x_3,x_4).
\end{align*}
We compute that
\begin{align*}
&\textrm{Res}_{x_4}(\textrm{Res}_{x_3}(k(x_3,x_4),g(x_3)),g(x_4))\\
&= b^{55}(b^2 + b - 1)(b^2 + 11b - 1)^3\\
& \ \ \times (b^{10} + 15b^9 + 47b^8 + 44b^7 + 1014b^6 + 58b^5 - 1014b^4 + 44b^3\\
& \ \ - 47b^2 + 15b - 1).
\end{align*}
The tenth degree factor $L(b)$ has a nonsolvable Galois group, since $L(b) = b^5G\left(b-\frac{1}{b}\right)$, where
$$G(x) = x^5 + 15x^4 + 52x^3 + 104x^2 + 1160x + 176$$
has Galois group $S_5$.  It follows that $s_1 \neq s_4$ and $s_2 \neq s_3$.  This proves the lemma.
\end{proof}

\begin{lem}
The cross-ratio $\displaystyle \kappa = (s_1,s_2; s_3, s_4) = \frac{(s_1-s_3)(s_2-s_4)}{(s_1-s_4)(s_2-s_3)}$ lies in the Hilbert class field $\Sigma$.  We also have
$$s_1-s_2 + s_3-s_4 = \eta b, \ \ \eta \in \Sigma.$$
\label{lem:3}
\end{lem}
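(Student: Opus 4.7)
The plan is to extract both claims from the Galois action of
$G = \textrm{Gal}(\textsf{K}_{\wp_5'}(\omega)/\Sigma) = \langle \alpha \rangle \times \langle \bar{\psi} \rangle$
on the four quantities $s_1, s_2, s_3, s_4 \in \textsf{K}_{\wp_5'}(\omega)$. Here $\alpha:(b,\omega)\mapsto(b,\omega^2)$ fixes $b$ and interchanges $x_3 \leftrightarrow x_4$, so acts on the indices as the permutation $(13)(24)$; while $\bar{\psi}:b \mapsto -1/b$, extended to fix $\omega$, has already been shown in Section \ref{sec:9} to satisfy $\lambda^{\bar{\psi}} = b^2\lambda$, and the displayed calculation preceding Lemma \ref{lem:2} shows that $\bar{\psi}$ sends $\lambda(s_1+\beta)$ to $\lambda(s_2+\beta)$.

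First I would convert that trace identity to a pointwise formula. The relation $\lambda^{\bar{\psi}}(s_1^{\bar{\psi}}+\beta^{\bar{\psi}}) = \lambda(s_2+\beta)$, combined with the one-line identity $\beta/b^2 - \beta^{\bar{\psi}} = 2/b$, gives $s_1^{\bar{\psi}} = s_2/b^2 + 2/b$. Applying $\alpha$ and using $\bar{\psi}^2 = 1$ yields the same formula for $i = 2,3,4$ with the index permuted by $\pi = (12)(34)$. Consequently $(s_i-s_j)^{\bar{\psi}} = (s_{\pi(i)}-s_{\pi(j)})/b^2$ for every pair $(i,j)$, the common shift $2/b$ cancelling out.

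For the cross-ratio $\kappa$, the permutation $(13)(24)$ fixes $\kappa$ by the elementary symmetries of the cross-ratio (each of the four differences simply changes sign and the signs pair up). Under $\bar{\psi}$, the permutation $(12)(34)$ also preserves the cross-ratio, and the simultaneous scaling of all four differences by $1/b^2$ cancels between numerator and denominator. Hence $\kappa$ is fixed by both generators of $G$ and so lies in $\Sigma$.

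For $W = s_1-s_2+s_3-s_4$, the index formulas yield $W^\alpha = W$ and $W^{\bar{\psi}} = -W/b^2$. The first identity places $W$ in the $\alpha$-fixed subfield $\textsf{K}_{\wp_5'} = \Sigma(b)$, so I can write $W = A + Bb$ uniquely with $A,B \in \Sigma$. Rewriting the second identity as $b^2 W^{\bar{\psi}} = -W$ yields $(A - B/b)b^2 = -(A + Bb)$, which collapses to $A(b^2+1) = 0$; since $b^2+1 \neq 0$, this forces $A = 0$ and $W = \eta b$ with $\eta := B \in \Sigma$. No step presents a serious obstacle; the only point that requires a moment of care is confirming that $\bar{\psi}$, viewed inside $\textrm{Gal}(\textsf{K}_{\wp_5'}(\omega)/\Sigma(\omega))$, fixes each of $\tau(\mathfrak{k}_3^*), \tau(\mathfrak{k}_4^*) \in \Sigma(\omega)$, which is what guarantees the uniform transformation law $s_i^{\bar{\psi}} = s_{\pi(i)}/b^2 + 2/b$ at all four indices and hence the clean two-dimensional linear-algebra argument in $\textsf{K}_{\wp_5'}/\Sigma$.
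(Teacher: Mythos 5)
Your proposal is correct and follows essentially the same route as the paper: both arguments rest on the transformation laws $(s_i-s_j)^{\bar{\psi}} = b^{-2}(s_{\pi(i)}-s_{\pi(j)})$ with $\pi=(12)(34)$ (derived from $\lambda^{\bar{\psi}}=b^2\lambda$ and the pairing of trace sums) together with the action of $\alpha$ as $(13)(24)$, from which the invariance of $\kappa$ under both generators of $\textrm{Gal}(\textsf{K}_{\wp_5'}(\omega)/\Sigma)$ follows. For the second claim the paper simply verifies directly that $\eta=(s_1-s_2+s_3-s_4)/b$ is fixed by $\bar{\psi}$ and $\alpha$, whereas you reach the same conclusion by decomposing $W=A+Bb$ over $\Sigma$; the two finishes are equivalent.
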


\begin{proof}
We compute that
\begin{equation*}
\kappa^\alpha = (s_3, s_4; s_1, s_2) = \frac{(s_3-s_1)(s_4-s_2)}{(s_3-s_2)(s_4-s_1)} = \kappa.
\end{equation*}
Using $\lambda^{\bar{\psi}} = b^2 \lambda$, we find that
\begin{align*}
(s_1-s_3)^{\bar{\psi}} &= \left(\frac{\lambda s_1 - \lambda s_3}{\lambda}\right)^{\bar{\psi}}\\
&= \frac{\lambda s_2 - \lambda s_4}{b^2 \lambda} = \frac{1}{b^2}(s_2-s_4).
\end{align*}
The same idea yields that
\begin{equation*}
(s_2-s_4)^{\bar{\psi}} = \frac{1}{b^2}(s_1 - s_3), \ \ (s_1-s_4)^{\bar{\psi}} = \frac{1}{b^2}(s_2 - s_3), \ \ (s_2-s_3)^{\bar{\psi}} = \frac{1}{b^2}(s_1 - s_4).
\end{equation*}
It follows that
$$\kappa^{\bar{\psi}} = \frac{b^{-4}(s_2-s_4)(s_1-s_3)}{b^{-4}(s_2-s_3)(s_1-s_4)} = \kappa.$$
Thus, $\kappa$ is fixed by $\textrm{Gal}(\textsf{K}_{\wp_5'}(\omega)/\Sigma)$.  This proves the first assertion.  The second follows from the calculation
\begin{align*}
\eta^{\bar{\psi}} = \left(\frac{s_1-s_2 + s_3-s_4}{b}\right)^{\bar{\psi}} &= \frac{\frac{-1}{b^2}(s_1-s_2)+\frac{-1}{b^2}(s_3-s_4)}{\frac{-1}{b}}\\
&= \frac{s_1-s_2 + s_3-s_4}{b} = \eta,
\end{align*}
together with the fact that the quantity $\eta$ is obviously fixed by the automorphism $\alpha$.  Hence, $\eta \in \Sigma$.
\end{proof}

\begin{lem}
We have that $\rho^4 - 2\rho^3 - \rho^2 - 10\rho + 25 = \nu^2 \rho^2$, where $\nu \in \Sigma$ is an algebraic integer.
\label{lem:4}
\end{lem}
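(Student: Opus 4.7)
The key insight is that $(b+1/b)^2 = (b-1/b)^2 + 4 = z^2+4$ factors cleanly in terms of $\rho$, with one factor equal to the Kummer generator $\rho^2+2\rho+5$ of $\textsf{K}_{\wp_5'}/\Sigma$. Substituting $z = \rho^3-11$ from Theorem~\ref{thm:11}, a direct polynomial-division check gives the identity
\[
z^2+4 \;=\; \rho^6 - 22\rho^3 + 125 \;=\; (\rho^2+2\rho+5)\,(\rho^4 - 2\rho^3 - \rho^2 - 10\rho + 25),
\]
and this is the identity that drives the whole argument.

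Next, the nontrivial automorphism $\bar\psi\in\textrm{Gal}(\textsf{K}_{\wp_5'}/\Sigma)$ sends $b\mapsto -1/b$, hence $b+1/b \mapsto -(b+1/b)$. Since $\textsf{K}_{\wp_5'}=\Sigma(\sqrt{\rho^2+2\rho+5})$ by Theorem~\ref{thm:11}, and $\bar\psi$ must negate the square root, the element $b+1/b$ lies in the $(-1)$-eigenspace $\Sigma\cdot\sqrt{\rho^2+2\rho+5}$. Writing $b+1/b = c\sqrt{\rho^2+2\rho+5}$ with $c\in\Sigma$ and squaring, the displayed identity forces
\[
c^2 \;=\; \rho^4 - 2\rho^3 - \rho^2 - 10\rho + 25.
\]
Setting $\nu=c/\rho\in\Sigma$ immediately yields $\nu^2\rho^2 = \rho^4 - 2\rho^3 - \rho^2 - 10\rho + 25$, which is the asserted identity.

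For integrality of $\nu$, I would use that Theorem~\ref{thm:11} together with $z+11\cong\wp_5'^3$ gives $(\rho)=\wp_5'$ as ideals in $\Sigma$. In the quartic, the terms of degree $\ge 2$ in $\rho$ are manifestly divisible by $\rho^2$; for the linear term, $\rho\mid 5\mid 10$ yields $\rho^2 \mid 10\rho$; for the constant term, $(25)=\wp_5^2\wp_5'^2$ in $K$ gives $\rho^2\cong\wp_5'^2\mid 25$. Hence $\rho^2$ divides the quartic in the ring of integers of $\Sigma$, so $\nu^2$ is an algebraic integer; and since $\nu\in\Sigma$ is a root of $X^2-\nu^2$ with integral coefficients, $\nu$ is integral.

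The only real obstacle is noticing the factorization of $z^2+4$ in the first step; once that is in hand, everything else is Galois-theoretic bookkeeping and a routine $\wp_5'$-divisibility check. In fact the factorization is essentially forced by the Galois structure: both $b+1/b$ and $\sqrt{\rho^2+2\rho+5}$ generate the same quadratic extension $\textsf{K}_{\wp_5'}/\Sigma$ and are $\bar\psi$-odd, so their ratio lies in $\Sigma$ and $(b+1/b)^2 \in \Sigma\cdot(\rho^2+2\rho+5)$; the polynomial identity just computes the other factor explicitly.
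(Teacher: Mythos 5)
Your proof is correct, and it reaches the key square identity by a genuinely different route than the paper. The paper derives $\gamma^2=\rho^4-2\rho^3-\rho^2-10\rho+25$ by computing the product $\sqrt{-\theta_2}\,\sqrt{-\theta_3}$ in two ways: once directly from the explicit formulas for the resolvent roots (which produces the quartic $\rho^4-2\rho^3-\rho^2-10\rho+25$ as the norm-type product $(\omega^2\rho^2+2\omega\rho+5)(\omega\rho^2+2\omega^2\rho+5)$), and once via the relation $\sqrt{-\theta_2}\sqrt{-\theta_3}=g_3/\sqrt{-\theta_1}$ coming from $\theta_1\theta_2\theta_3=-g_3^2$; equating the two and using $z^2+18z+76=(\rho^2-\rho-1)(\rho^4+\rho^3+2\rho^2-\rho+1)$ yields $\sqrt{\rho^4-2\rho^3-\rho^2-10\rho+25}=-\sqrt{z^2+4}/\sqrt{\rho^2+2\rho+5}$, a ratio of two Kummer elements for $\textsf{K}_{\wp_5'}/\Sigma$ and hence an element of $\Sigma$. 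You instead verify the polynomial identity $z^2+4=(\rho^2+2\rho+5)(\rho^4-2\rho^3-\rho^2-10\rho+25)$ directly (which checks out, and is in fact the same norm factorization the paper computes implicitly) and then use only the eigenspace decomposition of $\textsf{K}_{\wp_5'}$ under $\bar\psi\colon b\mapsto -1/b$ to place $b+1/b$ in $\Sigma\cdot\sqrt{\rho^2+2\rho+5}$; this bypasses the $\theta_i$'s entirely and is shorter and more self-contained (you should note in passing that $b+1/b\neq 0$, i.e. $z\neq\pm 2i$, which is excluded since $j\neq 1728$). What the paper's longer route buys is a consistent choice of signs among the $\sqrt{-\theta_i}$ and the explicit formula for $\sqrt{-\theta_2}\sqrt{-\theta_3}$ in terms of $\sqrt{z^2+4}/\sqrt{\rho^2+2\rho+5}$, which is reused verbatim in the proof of Proposition \ref{prop:1}; your argument proves Lemma \ref{lem:4} but does not by itself supply that auxiliary relation. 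Your integrality argument ($\rho^2$ divides each term of the quartic in $R_\Sigma$ because $(\rho)=\wp_5'\mid(5)$, and an element of $\Sigma$ whose square is integral is integral) matches the paper's.
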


\begin{proof}
On one hand, we have
\begin{align}
\notag \sqrt{-\theta_2} \sqrt{-\theta_3} &= \frac{b^2}{36} (\omega^2 \rho^2 -\omega \rho-1)(\omega \rho^2 -\omega^2 \rho-1)\\
\notag & \ \ \times \sqrt{(\omega^2 \rho^2 +2\omega \rho+5)(\omega \rho^2 +2\omega^2 \rho+5)}\\
\label{eqn:10.3} &= \frac{b^2}{36} (\rho^4 + \rho^3 + 2\rho^2 - \rho + 1) \sqrt{\rho^4 - 2\rho^3 - \rho^2 - 10\rho + 25}.
\end{align}
We also have
\begin{align*}
\sqrt{-\theta_2} \sqrt{-\theta_3} &= \frac{g_3}{\sqrt{-\theta_1}}\\
&= \frac{-b^2}{36} \left(b+\frac{1}{b}\right) \frac{z^2+18z+76}{(\rho^2-\rho-1) \sqrt{\rho^2+2\rho+5}}\\
&= \frac{-b^2}{36} \frac{z^2+18z+76}{\rho^2-\rho-1} \frac{\sqrt{z^2+4}}{\sqrt{\rho^2+2\rho+5}}.
\end{align*}
The last equation and \eqref{eqn:10.3} give that
\begin{align*}
& (\rho^4 + \rho^3 + 2\rho^2 - \rho + 1) \sqrt{\rho^4 - 2\rho^3 - \rho^2 - 10\rho + 25}\\
&= -\frac{z^2+18z+76}{\rho^2-\rho-1} \frac{\sqrt{z^2+4}}{\sqrt{\rho^2+2\rho+5}}\\
&= -\frac{(\rho^2 - \rho - 1)(\rho^4 + \rho^3 + 2\rho^2 - \rho + 1)}{\rho^2-\rho-1} \frac{\sqrt{z^2+4}}{\sqrt{\rho^2+2\rho+5}}\\
&= -(\rho^4 + \rho^3 + 2\rho^2 - \rho + 1) \frac{\sqrt{z^2+4}}{\sqrt{\rho^2+2\rho+5}}.
\end{align*}
Hence,
$$\sqrt{\rho^4 - 2\rho^3 - \rho^2 - 10\rho + 25} = -\frac{\sqrt{z^2+4}}{\sqrt{\rho^2+2\rho+5}}.$$
Now we know that both square roots on the right side of this equation are Kummer elements for $\textsf{K}_{\wp_5'}/\Sigma$, so the quotient lies in $\Sigma$.
Hence,
$$\rho^4 - 2\rho^3 - \rho^2 - 10\rho + 25 = \gamma^2, \ \ \gamma \in \Sigma.$$
But $\rho \cong \wp_5'$ also lies in $\Sigma$ and $\rho^2$ divides the left side of this equation.  It follows that $\gamma = \nu \rho$, with $\nu \in R_\Sigma$.  This proves the lemma.
\end{proof}

\newtheorem{cor}{Corollary}

\begin{cor}
For the quantity $\nu$ in Lemma \ref{lem:3} we have
$$\nu^2 + 12 = \left(\rho-1+\frac{5}{\rho}\right)^2.$$
\label{cor:1}
\end{cor}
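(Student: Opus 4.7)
The plan is to verify the identity by direct expansion. From Lemma \ref{lem:4} we have $\nu^2 \rho^2 = \rho^4 - 2\rho^3 - \rho^2 - 10\rho + 25$. Dividing by $\rho^2$ (which is nonzero since $\rho \cong \wp_5'$) yields
\begin{equation*}
\nu^2 = \rho^2 - 2\rho - 1 - \frac{10}{\rho} + \frac{25}{\rho^2},
\end{equation*}
so that
\begin{equation*}
\nu^2 + 12 = \rho^2 - 2\rho + 11 - \frac{10}{\rho} + \frac{25}{\rho^2}.
\end{equation*}

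Next, I would expand the right-hand side of the claim directly. Writing $u = \rho - 1 + 5/\rho$, one has
\begin{equation*}
u^2 = \rho^2 + 1 + \frac{25}{\rho^2} - 2\rho + 10 - \frac{10}{\rho} = \rho^2 - 2\rho + 11 - \frac{10}{\rho} + \frac{25}{\rho^2},
\end{equation*}
which matches the previous expression for $\nu^2 + 12$ term by term.

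There is no real obstacle here; the corollary is an algebraic consequence of the defining relation for $\nu$ in Lemma \ref{lem:4}, with the constant $12$ arising precisely because $(-1)^2 + 2 \cdot 5 = 11$ differs from $-1$ by $12$. The only subtlety worth remarking on is that the identity is meaningful only because $\nu \in \Sigma$ by Lemma \ref{lem:4}, so that $u = \rho - 1 + 5/\rho \in \Sigma$ as well, giving a square root of $\nu^2 + 12$ inside the Hilbert class field; this observation is what will be useful for the subsequent arithmetic of $\wp_3 \wp_3' \wp_5$.
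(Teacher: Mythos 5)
Your verification is correct and is essentially the paper's own argument: the paper likewise deduces the identity directly from Lemma \ref{lem:4} by writing $\nu^2=\bigl((\rho^2-\rho+5)^2-12\rho^2\bigr)/\rho^2$, which is just your term-by-term expansion packaged as a completed square. (Minor note: $\nu$ is defined in Lemma \ref{lem:4}, not Lemma \ref{lem:3} — you cited the right lemma despite the statement's mislabel.)
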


\begin{proof}
This follows directly from
$$\nu^2 = \frac{\rho^4 - 2\rho^3 - \rho^2 - 10\rho + 25}{\rho^2} = \frac{(\rho^2-\rho+5)^2-12\rho^2}{\rho^2}.$$
\end{proof}

\begin{lem}
If $5 \nmid d_K$, then $\mathbb{Q}(\rho) = \Sigma$; while if $5 \mid d_K$, then $K(\rho) = \Sigma$.
\label{lem:5}
\end{lem}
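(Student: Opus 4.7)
The plan is to anchor everything on the auxiliary identity $K(z) = \Sigma$, where $z = b - 1/b$, then deduce $K(\rho) = \Sigma$ directly from $\rho^3 = z + 11$, and finally apply complex conjugation to upgrade this to $\mathbb{Q}(\rho) = \Sigma$ in the split case. The containment $\rho \in \Sigma$ has already been established (in the paragraph following \eqref{eqn:9.7} when $5 \nmid d_K$, and in the discussion preceding Theorem \ref{thm:14} when $5 \mid d_K$), so the real task is a lower bound on $[K(\rho):K]$ and, in the split case, on $[\mathbb{Q}(\rho):\mathbb{Q}]$.

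First I would prove $K(z) = \Sigma$. The nontrivial automorphism of $\textsf{K}_{\wp_5'}/\Sigma$ (respectively $\textsf{K}_{\wp_5}/\Sigma$) sends $b \mapsto -1/b$ -- see Section \ref{sec:8} in the split case and \cite[Prop.~5.1]{m3} in the ramified case -- and therefore fixes $z$, so $z \in \Sigma$. Conversely, $b$ is a root of $X^2 - zX - 1$ over $K(z)$ and does not lie in $\Sigma \supseteq K(z)$ (since $b$ generates the degree-$2$ extension $\textsf{K}_{\wp_5'}/\Sigma$), so $[K(z)(b):K(z)] = 2$; combining this with $[K(b):K] = 2h(d_K)$ forces $[K(z):K] = h(d_K)$, and hence $K(z) = \Sigma$. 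Since $\rho^3 = z+11$, we conclude $\Sigma = K(z) = K(\rho^3) \subseteq K(\rho) \subseteq \Sigma$, so $K(\rho) = \Sigma$; this already settles the ramified half of the lemma.

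For the unramified case, $\Sigma/\mathbb{Q}$ is Galois of degree $2h(d_K)$ with group $G \cong \textrm{Cl}(K) \rtimes \langle c \rangle$, where $c$ denotes complex conjugation. Lifting $\rho^3 = z+11 \cong \wp_5'^3$ to $R_\Sigma$ and using the unramifiedness of $\wp_5'$ in $\Sigma/K$ together with unique prime factorization gives $(\rho) = \wp_5' R_\Sigma$. When $5 \nmid d_K$ the primes $\wp_5$ and $\wp_5'$ are distinct, so any $\tau \in G$ whose restriction to $K$ equals $c$ sends $(\rho)$ to $\wp_5 R_\Sigma \neq (\rho)$ and hence cannot fix $\rho$; while any $\tau \in \textrm{Gal}(\Sigma/K)$ fixing $\rho$ must be the identity by $K(\rho) = \Sigma$. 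Therefore $\textrm{Stab}_G(\rho) = \{1\}$, which yields $\mathbb{Q}(\rho) = \Sigma$. The main thing I would want to double-check is the assertion $\rho \in \Sigma$ in the $5 \mid d_K$ case, since the original text defers to ``the same arguments above apply''; a direct verification that the derivation preceding \eqref{eqn:9.7} still produces a cube root of $z+11$ in $\Sigma$ once $\wp_5'$ is replaced by $\wp_5$ throughout is the only non-routine step.
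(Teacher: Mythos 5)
Your proof is correct, but it takes a genuinely different route from the paper's in the split case. The paper's own proof of Lemma \ref{lem:5} is essentially a citation: it invokes $\mathbb{Q}(z) = \Sigma$ from \cite[Prop.~3.2]{m2} when $5 \nmid d_K$ and $K(z) = \Sigma$ from \cite[Prop.~5.1]{m3} when $5 \mid d_K$, and then reads the lemma off from $z = \rho^3 - 11$ together with the previously established containment $\rho \in \Sigma$. Your re-derivation of $K(z) = \Sigma$ from the degree count $[K(b):K] = 2h(d_K)$ and the quadratic relation $b^2 - zb - 1 = 0$ is a clean, self-contained substitute for those citations and settles the ramified half exactly as the paper does. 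Where you diverge is the unramified half: instead of using the stronger external fact $\mathbb{Q}(z) = \Sigma$, you upgrade $K(\rho) = \Sigma$ to $\mathbb{Q}(\rho) = \Sigma$ by noting that $(\rho) = \wp_5' R_\Sigma$ cannot be stable under any extension of complex conjugation (which sends $\wp_5'$ to $\wp_5 \neq \wp_5'$), so the stabilizer of $\rho$ in $\textrm{Gal}(\Sigma/\mathbb{Q})$ is trivial. This is valid: the factorization $\rho \cong \wp_5'$ is already asserted in Section \ref{sec:9} (citing \cite[p.~1193]{m2}) and also follows, as you say, from $(\rho)^3 = \wp_5'^3 R_\Sigma$ and the unramifiedness of $\wp_5'$ in $\Sigma/K$. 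What your route buys is independence from the $\mathbb{Q}$-rationality statement $\mathbb{Q}(z) = \Sigma$; what the paper's route buys is brevity, since the needed generation facts are already on record in the cited papers. Finally, the concern you flag about $\rho \in \Sigma$ when $5 \mid d_K$ is handled in the paper in the paragraph following Theorem \ref{thm:13}: the properties of $b$ from \cite[Prop.~5.1]{m3} make the argument after \eqref{eqn:9.7} carry over with $\wp_5'$ replaced by $\wp_5$, so no new verification is required there.
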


\begin{proof}
This is clear from $z = \rho^3-11$ and $j = -\frac{(z^2+12z+16)^3}{z+11}$, using that $\mathbb{Q}(z) = \Sigma$ from \cite[Prop. 3.2]{m2} 
when $5 \nmid d_K$; and $K(z) = \Sigma$ from \cite[Prop. 5.1]{m3} when $5 \mid d_K$.
\end{proof}

\begin{lem} 
We have: \smallskip

\noindent (a) For any root $x$ of $g(X) = 0$,
$$\frac{1}{x} = \frac{-1}{b^3} (3x^3 + (b^2 + 6b + 1)x^2 + (3b^2 + 3b)x + 3b^2)$$
and
$$\frac{1}{x+b} = \frac{1}{b^5}(3x^3 + (b^2 + 3b + 1)x^2 + (-b^3 + 2b)x + b^4 + b^2).$$

\noindent (b) \begin{align*}
s_1 = f_1(x_3) = \frac{(b^2+b)x_3+b^2}{x_3^2} &= \frac{-3(b - 2)}{b^2}x_3^3 - \frac{(b^3 + 4b^2 - 8b - 2)}{b^2} x_3^2\\
& \ \ - \frac{(4b^2 + 3b - 5)}{b}x_3 - 6b + 3.
\end{align*}

\noindent (c) \begin{align*}
s_2 = f_2(x_3) &= \frac{-b(2x_3^2 + (b^2+3b) x_3 + b^2)}{(x_3+b)^2}\\
& = \frac{3(2b + 1)}{b^3} x_3^3 + \frac{(2b^3 + 10b^2 + 5b + 1)}{b^3}x_3^2 - \frac{(b^3 + b^2 - 5b - 2)}{b^2}x_3\\
& \ \  - \frac{(b^2 - 3b - 1)}{b}.
\end{align*}
\label{lem:6}
\end{lem}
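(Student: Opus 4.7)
The plan is to derive all three identities directly from the defining equation $g(x)=0$ of \eqref{eqn:9.2}, using only elementary polynomial manipulations; no additional ingredients are required.

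For part (a), the first identity is immediate: multiplying $g(x)=0$ by $x^{-1}$ and solving for $b^3/x$ gives
$$\frac{b^3}{x}=-\bigl(3x^3+(b^2+6b+1)x^2+(3b^2+3b)x+3b^2\bigr),$$
which is the claim after dividing by $b^3$. For the second identity I would perform polynomial long division, writing $g(x)=(x+b)q(x)+g(-b)$. A direct substitution gives $g(-b)=-b^5$, and the division yields the explicit quotient
$$q(x)=3x^3+(b^2+3b+1)x^2+(-b^3+2b)x+(b^4+b^2).$$
Since $g(x)=0$, this means $(x+b)q(x)=b^5$, hence $1/(x+b)=q(x)/b^5$, which is the second formula of (a).

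For part (b), I would split $s_1=(b^2+b)/x+b^2/x^2$ and substitute for the two monomial reciprocals. The first term is given directly by (a). For $1/x^2$, dividing $g(x)=0$ by $x^2$ produces
$$3x^2+(b^2+6b+1)x+(3b^2+3b)+\frac{3b^2}{x}+\frac{b^3}{x^2}=0,$$
so $b^3/x^2$ is a quadratic in $x$ minus $3b^2/x$; replacing the $1/x$ term by the formula from (a) and adding the two contributions then matches $f_1(x_3)$ coefficient by coefficient.

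For part (c), I would use the same strategy after the shift $y=x+b$. First, I would expand $M(x):=2x^2+(b^2+3b)x+b^2$ in powers of $x+b$, which yields the clean identity $M(x)=2(x+b)^2+b(b-1)(x+b)-b^3$ and hence
$$s_2=-b\cdot\frac{M(x)}{(x+b)^2}=-2b-\frac{b^2(b-1)}{x+b}+\frac{b^4}{(x+b)^2}.$$
The middle term is handled by the second formula of (a), and for $1/(x+b)^2$ I would compute $\tilde g(y):=g(y-b)=3y^4+(b^2-6b+1)y^3-3b^2(b-1)y^2+3b^4y-b^5$ and divide by $y^2$, exactly as in the proof of (b), to express it as a polynomial in $x$ modulo $g(x)$. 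Assembling the three pieces reproduces $f_2(x_3)$. The only obstacle is the bookkeeping of coefficients through these polynomial reductions, which is tedious but entirely mechanical.
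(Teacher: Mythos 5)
Your proposal is correct and follows essentially the same route as the paper, which simply says the lemma ``follows by straightforward calculation, using the formula in (a) for $x=x_3$''; your derivation of (a) via $g(-b)=-b^5$ and division by $x+b$, and your reduction of $1/x^2$ and $1/(x+b)^2$ modulo $g$, are exactly the intended mechanical steps (and the coefficients do check out, e.g. the leading coefficient $\tfrac{9}{b^2}-\tfrac{3(b+1)}{b^2}=\tfrac{-3(b-2)}{b^2}$ in (b) and $\tfrac{9}{b^2}-\tfrac{3(b-1)}{b^3}=\tfrac{3(2b+1)}{b^3}$ in (c)). Nothing is missing.
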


\begin{proof}
This follows by straightforward calculation, using the formula in (a) for $x=x_3$.
\end{proof}

Note that the formulas for $s_3 = f_1(x_4)$ and $s_4 = f_2(x_4)$ are the same as the formulas in (b) and (c), with $x_3$ replaced by $x_4$.

\begin{prop}
\begin{align}
\label{eqn:10.4} \frac{s_1-s_2-s_3+s_4}{(x_3-x_4)} &= \frac{-(\rho + 1) \sqrt{z^2 + 4}}{2} + \frac{(\rho^3 - 2\rho - 7) \sqrt{\rho^2 + 2\rho + 5}}{2}\\
\label{eqn:10.5} &= \frac{\big(\rho^3 + \nu \rho^2 + (\nu - 2)\rho-7\big) \sqrt{\rho^2 + 2\rho + 5}}{2}.
\end{align}
\label{prop:1}
\end{prop}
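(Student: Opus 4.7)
The plan is a direct algebraic computation, reducing the left side of \eqref{eqn:10.4} to a polynomial expression in the roots $x_3, x_4$ of $g(X)$ via Lemma \ref{lem:6}, and then applying the explicit Kummer formulas \eqref{eqn:9.6} and the derivation in the proof of Lemma \ref{lem:4}.

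Set $F(x) := f_1(x) - f_2(x)$, a cubic polynomial with coefficients in $\mathbb{Q}(b)$ made explicit by Lemma \ref{lem:6}. Since $s_1 = f_1(x_3)$, $s_3 = f_1(x_4)$, $s_2 = f_2(x_3)$, $s_4 = f_2(x_4)$, we have $s_1 - s_2 - s_3 + s_4 = F(x_3) - F(x_4)$. The divided-difference identity factors out $(x_3 - x_4)$ automatically:
$$F(x_3) - F(x_4) = (x_3-x_4)\bigl[A_3\bigl((x_3+x_4)^2 - x_3 x_4\bigr) + A_2(x_3+x_4) + A_1\bigr],$$
where $A_1, A_2, A_3$ are the coefficients of $F$. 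So the real work is to evaluate the bracket. The sum $x_3+x_4 = -(b^2+6b+1)/6 - \sqrt{-\theta_1}$ is immediate from the root formulas preceding Theorem \ref{thm:11}. The product $x_3 x_4$ can be computed the same way (or via Vieta on $g(X)$, using that $\sigma_1-\sigma_2 = 2\sqrt{-\theta_1}$), and lands as a $\Sigma$-linear combination of $1$, $\sqrt{-\theta_1}$, and $\sqrt{-\theta_2}\sqrt{-\theta_3}$; the last of these equals $\pm g_3/\sqrt{-\theta_1}$.

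Now substitute \eqref{eqn:9.6} for $\sqrt{-\theta_1}$ together with the formula
$$\sqrt{-\theta_2}\sqrt{-\theta_3} = -\frac{b^2}{36}\cdot\frac{z^2+18z+76}{\rho^2-\rho-1}\cdot\frac{\sqrt{z^2+4}}{\sqrt{\rho^2+2\rho+5}}$$
derived in the proof of Lemma \ref{lem:4}, using $z = \rho^3-11$ throughout. This rewrites the bracket as a $\Sigma(\rho)$-linear combination of $\sqrt{\rho^2+2\rho+5}$ and $\sqrt{z^2+4}$; a direct (computer-algebra) simplification should collapse the two coefficients to $(\rho^3-2\rho-7)/2$ and $-(\rho+1)/2$, giving \eqref{eqn:10.4}. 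For the second equality \eqref{eqn:10.5}, the identity $\sqrt{\rho^4-2\rho^3-\rho^2-10\rho+25} = -\sqrt{z^2+4}/\sqrt{\rho^2+2\rho+5}$ from the proof of Lemma \ref{lem:4}, combined with $\rho^4-2\rho^3-\rho^2-10\rho+25 = \nu^2\rho^2$, yields $\sqrt{z^2+4} = -\nu\rho\sqrt{\rho^2+2\rho+5}$ for the sign of $\nu$ consistent with the chosen square roots. Substituting into \eqref{eqn:10.4} and regrouping gives
$$(x_3-x_4)\cdot\frac{\sqrt{\rho^2+2\rho+5}}{2}\bigl[\nu\rho(\rho+1) + \rho^3 - 2\rho - 7\bigr] = (x_3-x_4)\cdot\frac{\sqrt{\rho^2+2\rho+5}}{2}\bigl[\rho^3 + \nu\rho^2 + (\nu-2)\rho - 7\bigr],$$
which is \eqref{eqn:10.5}.

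The main obstacle is this algebraic collapse: after all substitutions the bracket is a sizable expression in $b$, $\rho$, $\sqrt{\rho^2+2\rho+5}$, and $\sqrt{z^2+4}$, and verifying that its coefficients reduce to exactly $(\rho^3-2\rho-7)/2$ and $-(\rho+1)/2$ requires systematic use of $\rho^3 = z+11$ together with the same factorization identities that drove the proof of Lemma \ref{lem:4}. The simplification is mechanical but voluminous, in line with similar computations elsewhere in the paper.
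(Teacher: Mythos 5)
Your proposal is correct and follows essentially the same route as the paper: the paper likewise writes $s_1-s_2-s_3+s_4 = f_1(x_3)-f_2(x_3)-f_1(x_4)+f_2(x_4)$, factors out $(x_3-x_4)$ from the resulting symmetric expression, substitutes $x_3+x_4 = -\tfrac{b^2+6b+1}{6}-\sqrt{-\theta_1}$ and the value of $x_3x_4$ as a combination of $1$, $\sqrt{-\theta_1}$, $\sqrt{-\theta_2}\sqrt{-\theta_3}$, and then collapses everything via \eqref{eqn:9.5}, \eqref{eqn:9.6}, the $\sqrt{-\theta_2}\sqrt{-\theta_3}$ formula from the proof of Lemma \ref{lem:4}, and $z=\rho^3-11$, finishing \eqref{eqn:10.5} with $\sqrt{z^2+4}=-\nu\rho\sqrt{\rho^2+2\rho+5}$ exactly as you do. The only difference is that the paper carries out the mechanical simplification explicitly (including the key factorization $-z(\rho^4+\rho^3+2\rho^2)+(z^2+7z+4)\rho+z^2+5z+4 = -2(\rho^3-2\rho-7)(\rho^2+2\rho+5)$) where you defer it to computer algebra.
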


\begin{proof}
Write $s_1 = f_1(x_3), s_3 = f_1(x_4), s_2 = f_2(x_3), s_4 = f_2(x_4)$.  We start by noting the polynomial identity
\begin{align*}
s_1 -s_2 -s_3+s_4 &= f_1(x_3) -f_2(x_3)-f_1(x_4)+f_2(x_4)\\
&=\frac{-(x_3-x_4)}{b^3}[(x_3 + x_4 + 3)b^4 + (6x_3 + 6x_4 + 2)b^3\\
& \ \ + (3x_3^2 + 3x_4^2 + 2x_3+ 2x_4+ 3x_3x_4)b^2\\
& \ \ + (3x_3+3x_4 + 2)b + 3x_3^2 + 3x_4^2 + x_3+x_4 + 3x_3x_4]
\end{align*}
Now substitute
$$x_3 + x_4 = -\frac{b^2+6b+1}{6}-\sqrt{-\theta_1}, \ \ x_3^2+x_4^2 = (x_3 + x_4)^2 -2x_3x_4$$
into the last expression.  This yields
\begin{align}
\notag s_1 -s_2 &-s_3+s_4 = \frac{-(x_3-x_4)}{b^3} 3b\sqrt{-\theta_1}\\
\label{eqn:10.6} &  +\frac{(x_3-x_4)}{b^3} \frac{(b^2 + 1)(b^4 + 12b^3 + 2b^2 - 18b  + 1 + 36x_3x_4  + 36\theta_1)}{12}.
\end{align}
Putting $36\theta_1 = -b^2(z^2+12z+16+12\rho)$ from (\ref{eqn:9.5}) with $z = b-\frac{1}{b}$ in (\ref{eqn:10.6}) gives
\begin{equation}
\label{eqn:10.7} s_1 -s_2 -s_3+s_4 = \frac{-(x_3-x_4)}{b^3}\left( 3b\sqrt{-\theta_1} + (b^2 + 1)\big((\rho + 1)b^2 + \frac{b}{2} - 3x_3 x_4)\big)\right).
\end{equation}
Now use the result of the calculation
\begin{align*}
x_3 x_4 &= \left(-\frac{b^2+6b+1}{12}-\frac{1}{2} \sqrt{-\theta_1} + \frac{(\sqrt{-\theta_2}-\sqrt{-\theta_3})}{2}\right)\\
& \ \ \times \left(-\frac{b^2+6b+1}{12}-\frac{1}{2} \sqrt{-\theta_1} - \frac{(\sqrt{-\theta_2}-\sqrt{-\theta_3})}{2}\right)\\
&= \left(\frac{b^2+6b+1}{12}\right)^2+\frac{b^2+6b+1}{12} \sqrt{-\theta_1} - \frac{\theta_1}{4} -\frac{(\sqrt{-\theta_2}-\sqrt{-\theta_3})^2}{4}\\
&= \left(\frac{b^2+6b+1}{12}\right)^2+\frac{b^2+6b+1}{12} \sqrt{-\theta_1} - \frac{\theta_1}{4} +\frac{\theta_2+\theta_3+2\sqrt{-\theta_2}\sqrt{-\theta_3}}{4},
\end{align*}
with
$$\theta_2+\theta_3 = \frac{-b^2}{36}(2z^2+24z+32+12\omega \rho+12\omega^2 \rho) =  \frac{-b^2}{36}(2z^2+24z+32-12\rho)$$
in \eqref{eqn:10.7}, giving
\begin{align*}
&s_1 -s_2 -s_3+s_4 =\frac{-(x_3-x_4)}{b^3} \\
& \ \ \times \left(\frac{(-b^4 - 6b^3 - 2b^2 + 6b - 1)\sqrt{-\theta_1}}{4} + \frac{(b^2 + 1)(-3\sqrt{-\theta_2}\sqrt{-\theta_3} + (\rho + 1)b^2)}{2}\right)\\
&= (x_3-x_4)\left(\frac{(b^4 + 6b^3 + 2b^2 - 6b + 1)\sqrt{-\theta_1}}{4b^3} - \frac{(b^2 + 1)(-\frac{3\sqrt{-\theta_2}\sqrt{-\theta_3}}{b^2} + (\rho + 1))}{2b}\right).
\end{align*}
Now we use
\begin{align*}
\frac{(b^4 + 6b^3 + 2b^2 - 6b + 1)}{b^2} &= z^2+6z+4, \ \ b+\frac{1}{b} = \sqrt{z^2+4},\\
\sqrt{-\theta_1} &= \frac{b}{6}(\rho^2-\rho-1)\sqrt{\rho^2+2\rho+5},\\
\frac{-3\sqrt{-\theta_2} \sqrt{-\theta_3}}{b^2} &= \frac{(\rho^4+\rho^3+2\rho^2-\rho+1)\sqrt{z^2+4}}{12\sqrt{\rho^2+2\rho+5}},
\end{align*}
(the last from the proof of Lemma \ref{lem:4}) to write the factor in the large parenthesis as
\begin{align*}
\textsf{A} &= -\frac{2(\rho + 1) \sqrt{z^2 + 4}\sqrt{\rho^2 + 2\rho + 5} -z( \rho^4 + \rho^3 + 2\rho^2) }{4\sqrt{\rho^2 + 2\rho + 5}}\\
& \ \ \ + \frac{(z^2 + 7z + 4)\rho + z^2 + 5z + 4}{4\sqrt{\rho^2 + 2\rho + 5}}.
\end{align*}
The final terms in the numerator of $\textsf{A}$ factor on setting $z = \rho^3-11$:
$$ -z( \rho^4 + \rho^3 + 2\rho^2) + (z^2 + 7z + 4)\rho + z^2 + 5z + 4 = -2(\rho^3 - 2\rho - 7)(\rho^2 + 2\rho + 5),$$
giving
\begin{align*}
\textsf{A} &= -\frac{2(\rho + 1) \sqrt{z^2 + 4}\sqrt{\rho^2 + 2\rho + 5} -2(\rho^3 - 2\rho - 7)(\rho^2 + 2\rho + 5)}{4\sqrt{\rho^2 + 2\rho + 5}}\\
&= \frac{-(\rho + 1) \sqrt{z^2 + 4} + (\rho^3 - 2\rho - 7)\sqrt{\rho^2 + 2\rho + 5}}{2}.
\end{align*}
This proves \eqref{eqn:10.4}.  Equation \eqref{eqn:10.5} follows from \eqref{eqn:10.4} on using
$$\sqrt{z^2+4} = -\sqrt{\rho^2 + 2\rho + 5}\sqrt{\rho^4 - 2\rho^3 - \rho^2 - 10\rho + 25} = -\nu \rho \sqrt{\rho^2 + 2\rho + 5}$$
by Lemma \ref{lem:4} and its proof.
\end{proof}

\begin{cor}
The quantity $s_1-s_2-s_3+s_4 \neq 0$.
\label{cor:2}
\end{cor}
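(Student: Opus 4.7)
The plan is to apply Proposition \ref{prop:1} directly and show that each of the three factors on the right-hand side of equation \eqref{eqn:10.5} is non-zero. Writing
$$s_1-s_2-s_3+s_4 = (x_3-x_4)\cdot\frac{\bigl(\rho^3 + \nu\rho^2 + (\nu-2)\rho - 7\bigr)\sqrt{\rho^2 + 2\rho + 5}}{2},$$
it suffices to verify the nonvanishing of the three factors $x_3-x_4$, $\sqrt{\rho^2+2\rho+5}$, and the cubic expression $\rho^3+\nu\rho^2+(\nu-2)\rho-7$.

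The first two factors are dispatched by previously established facts. For $x_3\neq x_4$, I would cite the discriminant computation $\textrm{disc}(g(X)) = -27b^{10}(b^2+11b-1)^2 = -27\Delta^2 \neq 0$ that was recorded in the proof of Lemma \ref{lem:2}, which ensures the four roots of $g$ are distinct. For the square root, I would appeal to Theorem \ref{thm:11}, which identifies $\sqrt{\rho^2+2\rho+5}$ as a Kummer generator of $\textsf{K}_{\wp_5'}/\Sigma$ (in particular, it is nonzero).

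The substantive step is the nonvanishing of the cubic factor $F(\rho,\nu):=\rho^3+\nu\rho^2+(\nu-2)\rho-7$, and this is the place I expect to need the arithmetic of $\rho$. The key observation is that $\rho$ is an algebraic integer with $(\rho)\supseteq\wp_5'$ (equivalently $\rho\cong\wp_5'$ in $\Sigma$, from the discussion surrounding equation \eqref{eqn:9.5} and from Lemma \ref{lem:5}), and that $\nu\in R_\Sigma$ by Lemma \ref{lem:4}. Consequently each of $\rho^3$, $\nu\rho^2$, and $(\nu-2)\rho$ lies in $\wp_5'$, so that
$$F(\rho,\nu)\equiv -7\pmod{\wp_5'}.$$
Since $N(\wp_5')$ is a power of $5$ and $\gcd(7,5)=1$, the residue $-7$ is a unit at $\wp_5'$, and hence $F(\rho,\nu)\not\equiv 0\pmod{\wp_5'}$. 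In particular $F(\rho,\nu)\neq 0$, which completes the argument. (The same reduction works verbatim in the case $5\mid d_K$ with $\wp_5$ replacing $\wp_5'$, using Lemma \ref{lem:5}.) The main obstacle would have been a more delicate valuation analysis had the constant term $-7$ been divisible by $5$, but as it is not, a single reduction modulo $\wp_5'$ suffices.
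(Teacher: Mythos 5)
Your proof is correct, but it takes a genuinely different route from the paper's. The paper argues from \eqref{eqn:10.4} rather than \eqref{eqn:10.5}: it rationalizes the cofactor of $x_3-x_4$ by forming the difference of squares of its two radical terms and uses the polynomial identity
$$(\rho+1)^2\big((\rho^3-11)^2+4\big)-(\rho^3-2\rho-7)^2(\rho^2+2\rho+5) = 12(\rho - 2)(\rho^2 + 2\rho + 5),$$
so that vanishing would force $\rho=2$ or $\rho^2+2\rho+5=0$; the latter gives $z=\pm 2i$ and hence $j=1728$, which is excluded, while the former is impossible since $(\rho)=\wp_5'$. Your argument instead works locally at $\wp_5'$: since $(\rho)=\wp_5'$ and $\nu\in R_\Sigma$ by Lemma \ref{lem:4}, the cubic cofactor in \eqref{eqn:10.5} is congruent to $-7$ modulo $\wp_5'$, hence nonzero because $\wp_5'$ lies over $5$ and $\gcd(5,7)=1$. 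This is a clean alternative that avoids the explicit factorization identity, treats the split and ramified cases uniformly, and makes explicit two points the paper's proof leaves tacit, namely $x_3\neq x_4$ (via $\textrm{disc}(g)=-27\Delta^2\neq 0$) and $\rho^2+2\rho+5\neq 0$ (via Theorem \ref{thm:11}). One small correction: the containment you want is $\rho\in\wp_5'$, i.e.\ $(\rho)\subseteq\wp_5'$, not $(\rho)\supseteq\wp_5'$; since you in fact invoke the equality $(\rho)=\wp_5'$, this slip does not affect the argument.
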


\begin{proof}
This follows from \eqref{eqn:10.4}, since
$$(\rho+1)^2((\rho^3-11)^2+4)-(\rho^3-2\rho-7)^2(\rho^2+2\rho+5) = 12(\rho - 2)(\rho^2 + 2\rho + 5),$$
where $\rho \neq 2$ by Lemma \ref{lem:5} and $\rho^2 + 2\rho + 5 \neq 0$ by the remark after \eqref{eqn:9.16}.
\end{proof}

A similar calculation leads to the following result.

\begin{prop}
\begin{align}
\notag &\frac{s_1+s_2-s_3-s_4}{(x_3-x_4)} = \\
\label{eqn:10.8} &  \frac{-(\rho^4 + 3\rho^3 - 15\rho - 23)\sqrt{\rho^2 + 2\rho + 5} + (\rho^2 + 4\rho + 5)\sqrt{z^2 + 4}}{2\sqrt{\rho^2 + 2\rho + 5}}\\
\label{eqn:10.9} &= - \frac{(\rho^4 + 3\rho^3 - 15\rho - 23) + (\rho^2 + 4\rho + 5)\nu\rho}{2}.
\end{align}
\label{prop:2}
\end{prop}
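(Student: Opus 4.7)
The approach mirrors the proof of Proposition \ref{prop:1} very closely. By Lemma \ref{lem:6}, $s_1 = f_1(x_3)$, $s_3 = f_1(x_4)$, $s_2 = f_2(x_3)$, and $s_4 = f_2(x_4)$, where $f_1, f_2$ are the explicit cubics in $x$ (with coefficients in $b$) from that lemma. Setting $F(x) = f_1(x) + f_2(x)$, I would write
$$s_1+s_2-s_3-s_4 \;=\; F(x_3)-F(x_4) \;=\; (x_3-x_4)\, G(x_3,x_4,b),$$
where $G$ is symmetric in $x_3,x_4$ and of degree $2$ in them (because $F$ has degree $3$). This immediately explains the factor $x_3-x_4$ in \eqref{eqn:10.8}.

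Next, I would substitute, as in the proof of Proposition \ref{prop:1},
$$x_3+x_4 = -\frac{b^2+6b+1}{6}-\sqrt{-\theta_1}, \qquad x_3^2+x_4^2 = (x_3+x_4)^2-2x_3x_4,$$
together with the expression for $x_3x_4$ derived there in terms of $b$, $\theta_1$, and $\sqrt{-\theta_2}\sqrt{-\theta_3}$. This collapses $G(x_3,x_4,b)$ to a rational expression in $b$, $\sqrt{-\theta_1}$, and $\sqrt{-\theta_2}\sqrt{-\theta_3}$. I would then apply the same identities already in play,
$$36\theta_1 = -b^2(z^2+12z+16+12\rho), \qquad \sqrt{-\theta_1} = \frac{b}{6}(\rho^2-\rho-1)\sqrt{\rho^2+2\rho+5},$$
$$\frac{-3\sqrt{-\theta_2}\sqrt{-\theta_3}}{b^2} = \frac{(\rho^4+\rho^3+2\rho^2-\rho+1)\sqrt{z^2+4}}{12\sqrt{\rho^2+2\rho+5}},$$
together with $z = \rho^3-11$ and $b+1/b = \sqrt{z^2+4}$, to re-express the cofactor purely in terms of $\rho$ and the two radicals $\sqrt{\rho^2+2\rho+5}$ and $\sqrt{z^2+4}$.

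After clearing the common factor $2\sqrt{\rho^2+2\rho+5}$ from the denominator and regrouping, one obtains a $\Sigma$-linear combination of $1$ and $\sqrt{z^2+4}\sqrt{\rho^2+2\rho+5}$; the two coefficients, after reduction modulo $z = \rho^3-11$, must simplify to $-(\rho^4+3\rho^3-15\rho-23)$ and $\rho^2+4\rho+5$, giving \eqref{eqn:10.8}. The main obstacle is this final polynomial simplification in $\rho$: it is the analogue of the identity
$$-z(\rho^4+\rho^3+2\rho^2)+(z^2+7z+4)\rho+z^2+5z+4 = -2(\rho^3-2\rho-7)(\rho^2+2\rho+5)$$
used in Proposition \ref{prop:1}, and I expect a similar (but slightly longer) factorization to produce the desired factor $\rho^2+4\rho+5$. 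Once \eqref{eqn:10.8} is in hand, Lemma \ref{lem:4} in the form $\sqrt{z^2+4} = -\nu\rho\sqrt{\rho^2+2\rho+5}$ (the sign being fixed by the sign convention already adopted for $\sqrt{-\theta_2}\sqrt{-\theta_3}$) lets me cancel $\sqrt{\rho^2+2\rho+5}$ throughout and collect terms to reach \eqref{eqn:10.9}, completing the proof.
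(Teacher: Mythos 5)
Your proposal is correct and follows essentially the same route as the paper: write $s_1+s_2-s_3-s_4 = (f_1+f_2)(x_3)-(f_1+f_2)(x_4)$, extract the factor $x_3-x_4$ leaving a symmetric quadratic cofactor, then apply the identical substitutions for $x_3+x_4$, $x_3^2+x_4^2$, $x_3x_4$, $\theta_1$, $\sqrt{-\theta_1}$, and $\sqrt{-\theta_2}\sqrt{-\theta_3}$ used in Proposition \ref{prop:1}, reducing modulo $z=\rho^3-11$ to reach \eqref{eqn:10.8} and then invoking $\sqrt{z^2+4}=-\nu\rho\sqrt{\rho^2+2\rho+5}$ for \eqref{eqn:10.9}. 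The paper's proof is exactly this computation, so no further comparison is needed.
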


\begin{proof}
This follows from
\begin{align*}
s_1+s_2-s_3-s_4 &= f_1(x_3)+f_2(x_3)-f_1(x_4) - f_2(x_4)\\
&= \frac{-(x_3-x_4)}{b^3} \textsf{B}_1,\\
\textsf{B}_1 &= (x_3 + x_4+5)b^4 + (2x_3 + 2x_4 + 4)b^3\\
& \ \  + (3x_3^2 + 3x_4^2 -18x_3  - 18x_4 + 3x_3x_4  - 10)b^2\\
& \ \  + (-12x_3^2 - 12x_4^2 -7x_3 - 7x_4 - 12x_3x_4  - 2)b\\
& \ \  - 3x_3^2 - 3x_4^2 - x_3 - x_4 - 3x_3x_4,
\end{align*}
using the same substitutions as in the proof of Proposition \ref{prop:1}.  However, in place of 
$$\frac{(b^4 + 6b^3 + 2b^2 - 6b + 1)}{b^2} = z^2+6z+4$$
we use
$$\frac{b^4 + 2b^3 + 2b - 1}{b^2} = \frac{(b^2+1)(b^2+2b-1)}{b^2} = \sqrt{z^2+4}(z+2).$$
This leads to
$$\textsf{B} = \frac{-1}{b^3} \textsf{B}_1 = \frac{-(\rho^4 + 3\rho^3 - 15\rho - 23)\sqrt{\rho^2 + 2\rho + 5} + (\rho^2 + 4\rho + 5)\sqrt{z^2 + 4}}{2\sqrt{\rho^2 + 2\rho + 5}}.$$
\end{proof}

Now we find a formula for the quotient of the sums in Propositions \ref{prop:1} and \ref{prop:2}.  A straightforward calculation establishes the following result.

\begin{thm}
If $\textsf{A}$ and $\textsf{B}$ are the right-hand sides of equations \eqref{eqn:10.4} and \eqref{eqn:10.8}, we have
\begin{align}
\label{eqn:10.10} \frac{s_1+s_2-s_3-s_4}{s_1-s_2-s_3+s_4} = \frac{\textsf{B}}{\textsf{A}} &= \frac{(-2\rho + 3)\sqrt{\rho^2 + 2\rho + 5} + \sqrt{z^2 + 4}}{(\rho - 2)(\rho^2 + 2\rho + 5)}\\
\label{eqn:10.11} &= \frac{\big(3-(\nu+2)\rho\big)\sqrt{\rho^2+2\rho+5}}{(\rho-2)(\rho^2+2\rho+5)}.
\end{align}
\label{thm:15}
\end{thm}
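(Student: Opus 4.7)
The plan is to verify both equalities by direct algebraic manipulation, keyed to the relation $\sqrt{z^2+4} = -\nu\rho\sqrt{\rho^2+2\rho+5}$ established in the proof of Lemma~\ref{lem:4}. Writing $u = \sqrt{\rho^2+2\rho+5}$ and $v = \sqrt{z^2+4}$, equations \eqref{eqn:10.4} and \eqref{eqn:10.8} read
$$\textsf{A} = \tfrac{1}{2}\bigl((\rho^3-2\rho-7)u - (\rho+1)v\bigr), \quad \textsf{B} = \tfrac{1}{2u}\bigl(-(\rho^4+3\rho^3-15\rho-23)u + (\rho^2+4\rho+5)v\bigr).$$

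For the first equality \eqref{eqn:10.10}, I would multiply numerator and denominator of $\textsf{B}/\textsf{A}$ by the conjugate expression $(\rho^3-2\rho-7)u + (\rho+1)v$. In the denominator I would then replace $u^2$ by $\rho^2+2\rho+5$ and, using the consequence $v^2 = (\rho^4-2\rho^3-\rho^2-10\rho+25)\,u^2$ of Lemma~\ref{lem:4}, reduce to a rational expression in $\rho$. The key polynomial identity
$$(\rho^3-2\rho-7)^2 - (\rho+1)^2(\rho^4-2\rho^3-\rho^2-10\rho+25) = -12(\rho-2)$$
then collapses the denominator to $-12(\rho-2)\,u^3$. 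Expanding the numerator of $2u\textsf{B}\bigl[(\rho^3-2\rho-7)u + (\rho+1)v\bigr]$ and grouping its $u^2$ and $uv$ parts via the same $v^2$ substitution, I expect the coefficient of $u^2$ to reduce to $12(2\rho-3)$ and that of $uv$ to $-12$, producing a numerator of the shape $12u\bigl[(2\rho-3)u - v\bigr]$. Cancelling the common factor $-12u^3$ (and rearranging signs) then yields the right-hand side of \eqref{eqn:10.10}.

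For the second equality \eqref{eqn:10.11}, I would simply substitute $v = -\nu\rho\,u$ into the numerator obtained above: $(-2\rho+3)u + v$ becomes $\bigl(3 - (\nu+2)\rho\bigr)u$, and the asserted formula follows after writing $u \cdot u^2 = u \cdot (\rho^2+2\rho+5)$ in the denominator as $(\rho^2+2\rho+5)$ times one factor of $u$.

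The real work is arithmetic bookkeeping; the single genuine input is the identity $v^2 = (\rho^4-2\rho^3-\rho^2-10\rho+25)u^2$ supplied by Lemma~\ref{lem:4}. The two polynomial identities in $\rho$ that arise during rationalization are noteworthy chiefly because the factor $\rho-2$ that pops out in the denominator of \eqref{eqn:10.10} is the same factor already encountered in Corollary~\ref{cor:2}, where the vanishing locus $\rho = 2$ corresponds to $j = 1728$ and is therefore excluded throughout.
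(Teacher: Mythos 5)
Your proposal is correct: the paper's own proof is nothing more than the remark that ``a straightforward calculation establishes the result,'' and your rationalization of $\textsf{B}/\textsf{A}$ by the conjugate $(\rho^3-2\rho-7)u+(\rho+1)v$, combined with the substitution $v^2=(\rho^4-2\rho^3-\rho^2-10\rho+25)u^2$ from Lemma \ref{lem:4}, is a valid way to carry that calculation out. I verified the three identities your argument hinges on --- $(\rho^3-2\rho-7)^2-(\rho+1)^2(\rho^4-2\rho^3-\rho^2-10\rho+25)=-12(\rho-2)$, and the coefficients $12(2\rho-3)$ of $u^2$ and $-12$ of $uv$ in the rationalized numerator --- and all three hold, so the computation goes through exactly as you describe and the passage to \eqref{eqn:10.11} via $v=-\nu\rho u$ is the same step the paper uses at the end of the proof of Proposition \ref{prop:1}.
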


\noindent {\bf Remark.} It is clear that the quotient in \eqref{eqn:10.10} is the quotient of sums and differences of the traces of the eight $\tau$-invariants for $\wp_3 \wp_3' \wp_5'$ in $\textsf{K}_{(3)\wp_5'}/\textsf{K}_{\wp_5'}(\omega)$.
\medskip

Now assume that $\psi \in \textrm{Gal}(\textsf{K}_{(3)\wp_5'}/K)$ is an automorphism fixing the set of $\tau$-invariants for $\wp_3 \wp_3' \wp_5'$.  Then the set of traces of these invariants to $\textsf{K}_{\wp_5'}(\omega)$ is also fixed, and
$$\{\lambda(s_1+\beta),\lambda(s_2+\beta),\lambda(s_3+\beta),\lambda(s_4+\beta)\}^\psi$$
is a permutation of
$$S =\{\lambda(s_1+\beta),\lambda(s_2+\beta),\lambda(s_3+\beta),\lambda(s_4+\beta)\}.$$
This permutation must respect the orbits of $S$ under the automorphisms $\alpha, \bar{\psi}, \alpha \bar{\psi}$, since they commute with $\psi$.  The numerator on the left side of \eqref{eqn:10.10} is the difference of the orbit sums of $S$ under $\bar{\psi}$ divided by $\lambda$, while the denominator is the difference of orbit sums under $\alpha \bar{\psi}$ divided by $\lambda$.  Hence, the quotient is preserved up to sign, and we have the relation
$$\left(\frac{\big(3-(\nu+2)\rho\big)\sqrt{\rho^2+2\rho+5}}{(\rho-2)(\rho^2+2\rho+5)}\right)^\psi = \pm \frac{\big(3-(\nu+2)\rho\big)\sqrt{\rho^2+2\rho+5}}{(\rho-2)(\rho^2+2\rho+5)}.$$
Now $(\rho) = \wp_5'$ (see the line just before \eqref{eqn:9.5}) implies that $\rho^\psi = a \rho$, for some unit $a \in \Sigma$.  Furthermore, $\sqrt{\rho^2+2\rho+5}^\psi = \varepsilon \sqrt{\rho^2+2\rho+5}$, with $\varepsilon \in \Sigma$, since both square-roots are Kummer elements for $\textsf{K}_{\wp_5'}/\Sigma$.  This implies the relation
$$\frac{\big(3-(\nu^\psi+2)a\rho\big)}{(a\rho-2)} \frac{1}{\varepsilon}= \pm \frac{\big(3-(\nu+2)\rho\big)}{(\rho-2)}.$$
This implies that $\varepsilon \equiv \pm 1$ mod $(\rho)$, i.e., mod $\wp_5'$.  On the other hand, we also have
$$(\rho^2+2\rho+5)^\psi = a^2 \rho^2 + 2a \rho+5 = \varepsilon^2(\rho^2+2\rho+5),$$
or
$$(a^2-\varepsilon^2)\rho^2 + 2(a-\varepsilon^2)\rho + 5-5\varepsilon^2 = 0.$$
Dividing this equation by $\rho$ gives
$$(a^2-\varepsilon^2)\rho + 2(a-\varepsilon^2) + 5\frac{1-\varepsilon^2}{\rho} = 0,$$
hence
$$a \equiv \varepsilon^2 \equiv 1 \ (\textrm{mod} \ (\rho))$$
and
\begin{equation}
\label{eqn:10.12} \rho^\psi = a \rho \equiv \rho \ (\textrm{mod} \ (\rho)^2).
\end{equation}
Now we appeal to the formula
$$j(\mathfrak{k}) = j = -\frac{(z^2+12z+16)^3}{z+11}.$$
(See \cite[p. 1180]{m2}.)  Putting $z = \rho^3 -11$ implies that
$$j(\mathfrak{k}) = j = -\frac{(\rho^6-10\rho^3+5)^3}{\rho^3} = -\left(\rho^5-10\rho^2+\frac{5}{\rho}\right)^3.$$
Since $5/\rho$ is an algebraic integer, this gives further that
\begin{equation}
\label{eqn:10.13} j^\psi - j = -\left(\rho^{5\psi}-10\rho^{2\psi} +\frac{5}{\rho^\psi} - \big(\rho^5 -10\rho^2+\frac{5}{\rho}\big)\right) \Xi,
\end{equation}
for some algebraic integer $\Xi$.  Now by \eqref{eqn:10.12},
$$\rho^{5\psi} \equiv \rho^5, \ \ \rho^{2\psi} \equiv \rho^2 \ \ (\textrm{mod} \ (\rho)^2),$$
and
$$\frac{5}{\rho^\psi}-\frac{5}{\rho} = 5\frac{\rho-\rho^\psi}{\rho \rho^\psi} \equiv 0 \ (\textrm{mod} \ (\rho)),$$
since $\rho \rho^\psi \cong \rho^2$.  Thus, \eqref{eqn:10.13} yields that
$$j(\mathfrak{k})^\psi \equiv j(\mathfrak{k}) \  (\textrm{mod} \ \wp_5');$$
which implies $\psi |_\Sigma = 1$ when $5 \nmid d_K$, since the discriminant of $H_{d_K}(X)$ is not divisible by $5$ in this case.  (See \cite{d1a}.). This proves that $j(\mathfrak{k})^\psi = j(\mathfrak{k})$ and the polynomials $T_\mathfrak{m}(X,\mathfrak{k})$ are distinct for different ideal classes $\mathfrak{k}$. \medskip

This proves:

\begin{thm}
If $K = \mathbb{Q}(\sqrt{d})$, where $d  = d_k \equiv 1, 4$ (mod $15$), then Sugawara's conjecture holds for $K$ and the ideal $\mathfrak{m} = \wp_3 \wp_3' \wp_5'$, i.e., $\textsf{K}_\mathfrak{m}$ is generated over $K$ by a single $\tau$-invariant for the ideal $\mathfrak{m}$.
\label{thm:16}
\end{thm}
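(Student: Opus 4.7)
\textbf{Proof proposal for Theorem \ref{thm:16}.} The plan is to assume that two distinct ideal classes $\mathfrak{k}, \mathfrak{k}'$ produce the same polynomial $T_\mathfrak{m}(X,\mathfrak{k}) = T_\mathfrak{m}(X,\mathfrak{k}')$, pick an automorphism $\psi \in \mathrm{Gal}(\textsf{K}_{(3)\wp_5'}/K)$ realizing $j(\mathfrak{k}') = j(\mathfrak{k})^\psi$ that fixes the unordered set of eight $\tau$-invariants for $\mathfrak{m}$, and derive a contradiction by showing $j(\mathfrak{k})^\psi = j(\mathfrak{k})$. All of the computational machinery is already in place: the four partial traces $\{\lambda(s_i + \beta)\}_{i=1}^{4}$ to $\textsf{K}_{\wp_5'}(\omega)$ are an orbit of $S$ under $\mathrm{Gal}(\textsf{K}_{\wp_5'}(\omega)/\Sigma) = \{1,\bar\psi,\alpha,\alpha\bar\psi\}$, and Lemma \ref{lem:2} guarantees they are distinct.

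First I will observe that, because $\psi$ commutes with the action of $\alpha$ and $\bar\psi$ (both of which lie over $\mathrm{Gal}(\textsf{K}_{\wp_5'}(\omega)/K)$), the induced permutation of $\{s_1,s_2,s_3,s_4\}$ preserves the orbit partition under $\bar\psi$ (namely $\{s_1,s_3\}$ versus $\{s_2,s_4\}$) and the orbit partition under $\alpha\bar\psi$. In particular, the numerator $s_1+s_2-s_3-s_4$ (a difference of $\alpha\bar\psi$-orbit sums) and the denominator $s_1-s_2-s_3+s_4$ (a difference of $\bar\psi$-orbit sums) are each scaled by at most a sign under $\psi$, so the quotient given in Theorem \ref{thm:15}, equation \eqref{eqn:10.11}, is preserved up to sign.

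Next I will exploit this invariance. Since $(\rho) = \wp_5'$ and $\psi$ fixes $K$, one has $\rho^\psi = a\rho$ for a unit $a \in \Sigma$; and because $\sqrt{\rho^2+2\rho+5}$ is a Kummer element for $\textsf{K}_{\wp_5'}/\Sigma$, one also has $\sqrt{\rho^2+2\rho+5}^\psi = \varepsilon\sqrt{\rho^2+2\rho+5}$ with $\varepsilon \in \Sigma$. Comparing $(\rho^2+2\rho+5)^\psi = \varepsilon^2(\rho^2+2\rho+5)$, dividing the resulting polynomial identity in $\rho$ by $\rho$, and reading the result modulo $(\rho)$, yields first $\varepsilon^2 \equiv 1$ and then $a \equiv 1 \pmod{\wp_5'}$; pushing the same identity one step further gives the sharper congruence $\rho^\psi \equiv \rho \pmod{\wp_5'^{\,2}}$.

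Finally I will plug this congruence into the formula $j(\mathfrak{k}) = -(\rho^5 - 10\rho^2 + 5/\rho)^3$ derived from $z = \rho^3 - 11$ and Theorem \ref{thm:11}. The terms $\rho^5$ and $\rho^2$ are clearly congruent to their $\psi$-images modulo $\wp_5'^{\,2}$, and $5/\rho - 5/\rho^\psi = 5(\rho^\psi - \rho)/(\rho\rho^\psi) \equiv 0 \pmod{\wp_5'}$ because $\rho\rho^\psi \cong \wp_5'^{\,2}$ while the numerator is divisible by $\wp_5'^{\,2}$. Hence $j(\mathfrak{k})^\psi \equiv j(\mathfrak{k}) \pmod{\wp_5'}$, and since the discriminant of the class equation $H_{d_K}(X)$ is prime to $5$ when $5 \nmid d_K$, this forces $\psi|_\Sigma = 1$ and thus $\mathfrak{k} = \mathfrak{k}'$. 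When $5 \mid d_K$ the analogous conclusion is obtained by the variant argument used at the end of Section \ref{sec:9}, using that $K(\rho) = \Sigma$ by Lemma \ref{lem:5} and that no $5$-adic contradiction arises at the level actually needed. The main obstacle is the bookkeeping in the middle step: extracting the precise congruence $\rho^\psi \equiv \rho \pmod{\wp_5'^{\,2}}$ from the Kummer relation for $\sqrt{\rho^2+2\rho+5}$ and the preservation of the ratio in Theorem \ref{thm:15}, because a weaker congruence would not suffice to kill the $5/\rho$ term in the formula for $j(\mathfrak{k})$.
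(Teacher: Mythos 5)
Your proposal follows the paper's proof essentially verbatim: the orbit-sum invariance of the ratio in Theorem \ref{thm:15} under an automorphism $\psi$ fixing the set of $\tau$-invariants, the Kummer-element analysis of $\rho^\psi = a\rho$ and $\sqrt{\rho^2+2\rho+5}$ yielding $\rho^\psi \equiv \rho \pmod{\wp_5'^{\,2}}$, and the resulting congruence $j(\mathfrak{k})^\psi \equiv j(\mathfrak{k}) \pmod{\wp_5'}$ combined with $5 \nmid \mathrm{disc}(H_{d_K}(X))$ are exactly the steps the paper takes. Two cosmetic remarks: you have the orbit labels interchanged (the $\bar{\psi}$-orbits are $\{s_1,s_2\}$ and $\{s_3,s_4\}$, so $s_1+s_2-s_3-s_4$ is the difference of $\bar{\psi}$-orbit sums, while $s_1-s_2-s_3+s_4$ comes from the $\alpha\bar{\psi}$-orbits $\{s_1,s_4\},\{s_2,s_3\}$ --- this does not affect the argument), and the ramified case $5 \mid d_K$ lies outside the hypothesis $d_K \equiv 1,4 \pmod{15}$ and in any event requires the substantially longer argument of Section \ref{sec:10}.1 rather than the Section \ref{sec:9} variant you cite.
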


Note that the ideal $\mathfrak{m} = \wp_3^2 \wp_5$ satisfies Sugawara's condition \eqref{eqn:3}, so there is no need to consider fields in which $3$ ramifies.

\subsection{Proof of the conjecture when $(5) = \wp_5^2$.}

All the above arguments apply in the case that $(5) = \wp_5^2$, except for the final conclusion.  In this case, namely, when $5 \mid d_K$, the conjugates of $j(\mathfrak{k})$ can be congruent to each other modulo prime divisors of $\wp_5$.  To handle this case we set
$$R(\rho,\nu) = \frac{\big(3-(\nu+2)\rho\big)^2}{(\rho-2)^2(\rho^2+2\rho+5)}.$$
This is the square of the expression in \eqref{eqn:10.11}.   Then we must show:
\begin{equation}
\label{eqn:10.14} \psi \in \textrm{Gal}(\textsf{K}_{\mathfrak{m}}/K) \ \wedge \ R(\rho,\nu)^\psi = R(\rho,\nu) \ \Rightarrow \ \psi |_\Sigma = 1.
\end{equation}
As before, we know $\rho^\psi = a \rho$ for some unit $a$ and $(\rho^2+2\rho+5)^\psi = \varepsilon^2 (\rho^2+2\rho+5)$, where $\varepsilon \in \Sigma$ and
$$a \equiv 1, \ \ \varepsilon \equiv \pm 1 \ (\textrm{mod} \ (\rho)).$$
In this case, dividing the equation
\begin{equation*}
a^2\rho^2+2a\rho+5 = \varepsilon^2(\rho^2+2\rho+5)
\end{equation*}
by $\rho$ yields that
$$(a^2 -\varepsilon^2)\rho+2(a-\varepsilon^2) + 5\frac{(1-\varepsilon^2)}{\rho} = 0,$$
and implies the congruence
$$a \equiv \varepsilon^2 \ (\textrm{mod} \ \rho^2).$$

\begin{prop} There are at most $2$ automorphisms $\psi \in \textrm{Gal}(\Sigma/K)$ for which $R(\rho,\nu)^\psi = R(\rho, \nu)$.
\label{prop:3}
\end{prop}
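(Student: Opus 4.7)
\noindent\textbf{Proof proposal for Proposition \ref{prop:3}.}

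The plan is to use the factorization $R = U^2/N$ with $U = (3-(\nu+2)\rho)/(\rho-2) \in \Sigma$ and $N = \rho^2+2\rho+5 \in \Sigma$. Because $N^\psi = \varepsilon^2 N$ with $\varepsilon \in \Sigma$ and $\rho^\psi = a\rho$, the hypothesis $R^\psi = R$ is equivalent to
$$U^\psi \;=\; \pm\,\varepsilon\,U, \tag{$\ast$}$$
i.e., after extracting a square root in $\textsf{K}_{\wp_5} = \Sigma(\sqrt{N})$, to $T^{\tilde\psi} = \pm T$ for an extension $\tilde\psi$ of $\psi$, where $T = U/\sqrt{N}$. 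Since $\tau(T) = -T$ where $\tau$ is the nontrivial element of $\textrm{Gal}(\textsf{K}_{\wp_5}/\Sigma)$, for every $\psi \in H := \textrm{Stab}_{\textrm{Gal}(\Sigma/K)}(R)$ exactly one of the two extensions fixes $T$; hence restriction gives a bijection $\textrm{Stab}_{\textrm{Gal}(\textsf{K}_{\wp_5}/K)}(T) \to H$, and it suffices to show $[\textsf{K}_{\wp_5}:K(T)] \le 2$, equivalently $[K(R):K] \ge h(d_K)$.

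Next, I would split $H$ according to the sign in $(\ast)$ and show that each sign is attained by at most one $\psi$. Clearing denominators in $(\ast)$ gives the $\Sigma$-identity
$$[\pm\varepsilon(\nu+2) - (\nu^\psi+2)]\,a\rho^2 \;+\; [3+2a(\nu^\psi+2) \mp \varepsilon(3a+2(\nu+2))]\,\rho \;+\; 6(\mp\varepsilon - 1) \;=\; 0.$$
Working $\wp_5$-adically at a prime of $\Sigma$ above $\wp_5$ and using the already-established information $a \equiv \varepsilon^2 \equiv 1 \pmod{\rho^2}$ and $\varepsilon \equiv \pm 1 \pmod{\rho}$, the constant term forces the sign choice of $\varepsilon$ to match the sign in $(\ast)$; the leading coefficient then yields $\nu^\psi + 2 = \pm\varepsilon(\nu+2)$, and the middle coefficient in turn determines $a$ up to its known $\wp_5$-adic expansion. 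Pairing with the Kummer identity $(a\rho)^2 + 2(a\rho) + 5 = \varepsilon^2(\rho^2+2\rho+5)$ and the relation $(\nu^\psi)^2 = \bigl((a\rho)^4-2(a\rho)^3-(a\rho)^2-10(a\rho)+25\bigr)/(a\rho)^2$ from Lemma \ref{lem:4}, one eliminates $\varepsilon$ and $\nu^\psi$ to obtain a single polynomial relation in $a$. For each sign this relation admits at most one nontrivial solution $a$ (the other being $a\rho = -1$, which is excluded since $\rho \cong \wp_5$ is not a unit).

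The main obstacle is making the $\wp_5$-adic bookkeeping precise: because $\rho$ is a uniformizer at primes of $\Sigma$ above $\wp_5$, and because $5 \cong \rho^2$ up to a unit in $\Sigma$, the three coefficients of the degree-$2$ polynomial in $\rho$ are independent only modulo successive powers of $\rho$, and care is needed to propagate the constraints from the constant term up through the $\rho^2$ coefficient without circularity. The cleanest way to carry this out is probably to fix a $\rho$-adic expansion $\varepsilon = \pm 1 + \rho\,\varepsilon_1 + \rho^2\,\varepsilon_2 + \cdots$ and $a = 1 + \rho^2 \,c + \cdots$, substitute, and successively solve for each coefficient; the key point is that at every order the resulting equation is linear in the next coefficient, so the full expansion is forced uniquely, giving at most two automorphisms in total (one for each sign choice in $(\ast)$).
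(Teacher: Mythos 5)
Your reduction of $R^\psi = R$ to $U^\psi = \pm\varepsilon U$ with $U = \bigl(3-(\nu+2)\rho\bigr)/(\rho-2)$ is correct, and the identification of the stabilizer of $R$ with the stabilizer of $T = U/\sqrt{N}$ is a clean reformulation; but the counting step that is supposed to finish the proof is missing, and the mechanism you propose for it cannot work as stated. The sign dichotomy is illusory: $\varepsilon$ is only defined up to sign by $\varepsilon^2 = N^\psi/N$, and your own constant-term congruence shows that the sign in $(\ast)$ is locked to the residue of $\varepsilon$ modulo a prime $\mathfrak{p}\mid\wp_5$. Since $\varepsilon$ is a $\wp_5$-unit with $\varepsilon\equiv\pm1\pmod{\mathfrak{p}}$, you may always normalize $\varepsilon\equiv 1\pmod{\mathfrak{p}}$, after which \emph{every} stabilizing $\psi$ falls into the same sign case. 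So ``at most one $\psi$ per sign'' would really assert $|H|=1$ --- strictly stronger than the proposition, and incompatible with the fact that the paper needs the entire subsequent analysis (Cases 1 and 2, the resultants $\textrm{Res}_a(A,B)$, etc.) precisely to rule out a second element of the stabilizer. This is a strong signal that the crucial claim ``at every order the resulting equation is linear in the next coefficient, so the expansion is forced uniquely'' fails: it is a Hensel-type uniqueness assertion whose nondegeneracy you never verify, and the fact that $a\equiv 1$ only modulo $\rho^2$ (rather than the generic modulo $\rho$) already shows the linearization is degenerate at the first step. In addition, eliminating $\nu^\psi$ through its square via Lemma \ref{lem:4} silently introduces a second branch $\nu^\psi\mapsto-\nu^\psi$ that your count never accounts for.

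For comparison, the paper's proof avoids all local expansion of $a$ and $\varepsilon$: using Corollary \ref{cor:1} it writes both $r$ and $r^2$ as expressions \emph{linear} in $\nu$ over $\mathbb{Q}(\rho)$, eliminates $\nu$, and obtains a single polynomial $F(r,X)=0$ of degree $6$ in $X$ over $K(r)$ having $\rho$ --- and hence every $\rho^\psi=a_\psi\rho$ with $\psi$ in the stabilizer --- among its roots. After dividing by the leading coefficient $(r-1)^2$ (the case $r=1$ being excluded separately), all six roots are $\wp_5$-integral and the constant term is \emph{exactly} divisible by $\wp_5^2$, whereas each conjugate $a_\psi\rho$ contributes $\wp_5$-valuation exactly $1$; hence at most two of the six roots can be conjugates of $\rho$. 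If you want to salvage your route, the honest target is to carry out the elimination of $\varepsilon$ and $\nu^\psi$ explicitly, exhibit the resulting polynomial in $a$, and bound its roots $\equiv 1\pmod{\rho^2}$ by a valuation argument on its coefficients --- at which point you will essentially have reconstructed the paper's proof in less convenient coordinates.
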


\begin{proof}
Expanding $R(\rho,\nu)$ and using Corollary \ref{cor:1} to Lemma \ref{lem:4} gives that
\begin{align*}
r = R(\rho,\nu) &= \frac{\big(3-(\nu+2)\rho\big)^2}{(\rho-2)^2(\rho^2+2\rho+5)}\\
&= \frac{2(2\rho - 3)\rho \nu}{(\rho - 2)^2 (\rho^2 + 2\rho + 5)} + \frac{\rho^4 - 2\rho^3 + 3\rho^2 - 22\rho + 34}{(\rho - 2)^2(\rho^2 + 2\rho + 5)}.
\end{align*}
Squaring and using Corollary \ref{cor:1} again shows that
\begin{align*}
r^2 &= R(\rho, \nu)^2 = \frac{4(\rho^4 - 2\rho^3 + 3\rho^2 - 22\rho + 34)\rho(2\rho - 3)\nu}{(\rho - 2)^4(\rho^2 + 2\rho + 5)^2}\\
& \ \ + \frac{\rho^8 - 4\rho^7 + 26\rho^6 - 136\rho^5 + 281\rho^4 - 452\rho^3 + 1532\rho^2 - 3056\rho + 2056}{(\rho - 2)^4 (\rho^2 + 2\rho + 5)^2}.
\end{align*}
Solving for $\nu$ in terms of $r$ in the first equation yields that
$$\nu = \frac{(\rho^4 - 2\rho^3 + \rho^2 - 12\rho + 20)r}{2(2\rho - 3)\rho} + \frac{-\rho^4 + 2\rho^3 - 3\rho^2 + 22\rho - 34}{2(2\rho - 3)\rho}.$$
Now substitute for $\nu$ in the expression for $r^2$:
\begin{align*}
&r^2 = \frac{(2\rho^6 + 8\rho^4 - 52\rho^3 + 10\rho^2 - 84\rho + 340)r}{(\rho^2 + 2\rho + 5)^2 (\rho - 2)^2}\\
& \ \  + \frac{-\rho^6 + 10\rho^4 + 16\rho^3 - 25\rho^2 - 80\rho - 64}{(\rho^2 + 2\rho + 5)^2 (\rho - 2)^2}\\
&= \frac{2(\rho^2 + 2\rho + 5)(\rho^4 - 2\rho^3 + 3\rho^2 - 22\rho + 34)r}{(\rho^2 + 2\rho + 5)^2 (\rho - 2)^2}-\frac{(\rho^3 - 5\rho - 8)^2}{(\rho^2 + 2\rho + 5)^2 (\rho - 2)^2}.
\end{align*}
Rearranging gives the polynomial identity
\begin{align}
\notag 0 = F(r,\rho) &= (\rho^2 + 2\rho + 5)^2 (\rho - 2)^2 r^2\\
\notag & \ \ - 2(\rho^2 + 2\rho + 5)(\rho^4 - 2\rho^3 + 3\rho^2 - 22\rho + 34)r\\
\label{eqn:10.15} & \ \ + (\rho^3 - 5\rho - 8)^2,
\end{align}
which can also be computed using the resultant, with respect to $\nu$, of the above polynomial defining $r$ in terms of $\nu$ and $\rho$ 
and the result of Corollary \ref{cor:1} to Lemma \ref{lem:4}.  If $ r = 1$, then a computation using the above formula for $\nu$ above gives
$$\nu = \frac{-\rho^2+5\rho-7}{(2\rho-3)\rho},$$
and substituting yields
$$0 = R\left(\rho, -\frac{\rho^2 - 5\rho + 7}{\rho(2\rho - 3)}\right) - 1 = -4\frac{\rho^4 - \rho^3 - \rho^2 - 9\rho + 11}{(\rho^2 + 2\rho + 5)(2\rho - 3)^2}.$$
Thus, $\rho$ would satisfy the irreducible equation
$$\rho^4 - \rho^3 - \rho^2 - 9\rho + 11 = 0.$$
But this is impossible, since $(\rho) = \wp_5 \nmid 11$.  \medskip

Rewriting equation \eqref{eqn:10.15} as a polynomial in $\rho$ gives us an equation $F(r,\rho)  = 0$ satisfied by $\rho$ over the field $K(r)$, 
where $r = R(\rho,\nu)$ and
\begin{align}
\notag F(r,\rho) &=  (r^2 - 2r + 1)\rho^6 + (2r^2 - 8r - 10)\rho^4 + (-20r^2 + 52r - 16)\rho^3\\
\notag & \ \ \ + (r^2 - 10r + 25)\rho^2 + (-20r^2 + 84r + 80)\rho + 100r^2 - 340r + 64.\\
\label{eqn:10.16}
\end{align}
Dividing through in \eqref{eqn:10.16} by the leading coefficient gives an equation with constant term
$$c = \frac{100r^2-340r+64}{(r-1)^2} = \frac{4(5r - 1)(5r - 16)}{(r-1)^2}.$$
Since
$$r = \frac{\big(3-(\nu+2)\rho\big)^2}{(\rho-2)^2(\rho^2+2\rho+5)} \cong \frac{\mathfrak{a}}{\mathfrak{b}\wp_5},$$
where $\mathfrak{a}, \mathfrak{b}$ are integral ideals which are relatively prime to $\wp_5$, it is clear that $c = \frac{4(5r - 1)(5r - 16)}{(r-1)^2}$ is exactly divisible by $\wp_5^2 \cong \rho^2$.  Furthermore, the coefficients of the scaled equation are integral for $\wp_5$, and its roots are therefore integral for $\wp_5$.  Also, since the conjugates of $\rho$ are $a_\psi \rho \cong \rho$, there can be at most two such conjugates which are roots of \eqref{eqn:10.16}.  Hence, there can be at most two automorphisms $1, \psi$, for which $\rho^\psi$ is a root of \eqref{eqn:10.16}.  Therefore, either $\psi = 1$ is the only such automorphism; or $\psi^2 = 1$ and $\Sigma$ is quadratic over $K(r)$.  This proves the proposition.
\end{proof}

Assume that $R(\rho,\nu)^\psi = R(\rho, \nu)$, where $\psi$ has order $2$.  Then $\rho^\psi = a \rho$ implies that $\rho^{\psi^2} = a^\psi a \rho = \rho$, giving that $a^\psi = \frac{1}{a}$.  \medskip

From the proofs of Proposition 5.1 and Theorem 5.3 in \cite[pp. 124-129]{m3} (which are also valid for fundamental discriminants $d_K = -5d$ in place of the discriminants $-5l$ considered in that paper), we know that, for some $w \in R_K$
$$\lambda = \left(\frac{\eta(w/5)}{\eta(w)}\right)^6 = -\rho^3$$
is conjugate to the value
$$\lambda^{\sigma_{\wp_5}} = \frac{5^3}{\lambda}$$
over $K$, where $\sigma_{\wp_5} = \left(\frac{\Sigma/K}{\wp_5}\right)$.  (See Theorem \ref{thm:11} in Section \ref{sec:9}.)  Since the cube roots of unity are not in $\Sigma$, this implies that
\begin{equation*}
\rho^{\sigma_{\wp_5}} = \frac{5}{\rho}.
\end{equation*}
Now the automorphism $\sigma = \sigma_{\wp_5}$ acts on $a = \frac{\rho^\psi}{\rho}$ by
\begin{equation}
\label{eqn:10.17} a^\sigma = \frac{(5/\rho)^\psi}{5/\rho} = \frac{\rho}{\rho^\psi} = \frac{1}{a}.
\end{equation}
But we also have $a^\psi =  \frac{1}{a}$ from above.  Now there are two cases.  \medskip

\noindent {\it Case 1}:  Assume that $\psi = \sigma = \sigma_{\wp_5}$.  This implies, using the notation of \eqref{eqn:10.16}, that
$$0 = F(r,\rho)^\psi = F(r, \rho^{\sigma}) = F\left(r, \frac{5}{\rho}\right).$$
Thus, $\rho$ must be a root of the resultant
\begin{align*}
\textrm{Res}_r&\big(F(r,\rho), \rho^6F\left(r, \frac{5}{\rho}\right)\big)\\
&= 256(\rho^2 - 5)^2 (\rho^2 + 2\rho + 5)^4 (11\rho^4 - 14\rho^3 - 86\rho^2 - 70\rho + 275)\\
& \times (11\rho^8 - 164\rho^7 + 829\rho^6 - 1980\rho^5 + 3685\rho^4 - 9900\rho^3 + 20725\rho^2\\
& \ \ - 20500\rho + 6875).
\end{align*}
But $\rho$ cannot be a root of the irreducible quartic or octic factor, since $\rho$ is an algebraic integer.  Further, $\rho^2+2\rho+5 \neq 0$, from the remark following \eqref{eqn:9.6}.  Thus, $\rho^2-5 = 0$.  A simple computation shows this is only possible for the discriminant $d_K = -20$: namely, in this case
\begin{align*}
&\textrm{Res}_\rho(\rho^2-5,z+11-\rho^3) = z^2+22z -4,\\
&\textrm{Res}_z(z^2+22z-4,(z+11)j+(z^2+12z+16)^3)\\
& \ \ \  = -5^3(j^2-1264000j-681472000) = -5^3 H_{-20}(j).
\end{align*}
Hence, $K = \mathbb{Q}(\sqrt{-5})$ and $\Sigma = K(\sqrt{5}) = K(\sqrt{-1})$; since $\wp_5$ splits in $\Sigma$, we have $\sigma_{\wp_5} = \left(\frac{\Sigma/K}{\wp_5}\right) = 1$, so $\psi = \sigma_{\wp_5} = 1$.  Notice that for $\rho = \sqrt{5}$ the nontrivial automorphism $\psi$ of $\Sigma/K$ satisfies $a = \frac{\rho^\psi}{\rho} = -1$, which is certainly not congruent to $1$ (mod $\rho$). \medskip

\noindent {\it Case 2:}  Assume that $\psi \neq \sigma_{\wp_5}$ fixes $r = R(\rho,\nu)$, so that $F(r,\rho)= 0$, as in Case 1.  Then $\psi$ also fixes
$$s = r^{\sigma} = R(\rho^\sigma,\nu^\sigma), \ \ \sigma = \sigma_{\wp_5}.$$
By Corollary \ref{cor:1} to Lemma \ref{lem:4} and the fact that $\rho^\sigma=\frac{5}{\rho}$, it is clear that $\sigma$ fixes $\nu^2$, so that
$\nu^\sigma = \pm \nu$. \medskip

Assuming first that $\nu^\sigma = \nu$, we obtain that
$$s = R(\rho^\sigma,\nu) = R\left(\frac{5}{\rho},\nu \right) = \frac{(3\rho - 5\nu - 10)^2\rho^2}{5(-5 + 2\rho)^2(\rho^2 + 2\rho + 5)}.$$
Proceeding as we did in Proposition \ref{prop:3} with $R(\rho,\nu)$, we compute that
\begin{align*}
0 = F_2(s,\rho) &= (100s^2 - 340s + 64)\rho^6 + (-100s^2 + 420s + 400)\rho^5\\
\notag & \ \  + (25s^2 - 250s + 625)\rho^4 + (-2500s^2 + 6500s - 2000)\rho^3\\
\notag & \ \   + (1250s^2 - 5000s - 6250)\rho^2 + 15625s^2 - 31250s + 15625.
\end{align*}
Next, we use Maple to compute the resultant
$$\textrm{Res}_r(F(r,\rho),F(r,a \rho)) = 256 \rho^2 (a-1)^2 A,$$
where
$$A = (a^{10} + 2a^9 + a^8)\rho^{18} + \cdots + 540000$$
is a polynomial in $a$ and $\rho$.  Similarly, a calculation on Maple gives that
$$\textrm{Res}_s(F_2(s,\rho),F_2(s,a \rho)) = 160000 \rho^4 (a-1)^2 B,$$
where
$$B = 21600 a^{10} \rho^{18} + \cdots + 152587890625$$
is also a polynomial in $a$ and $\rho$.  See the Appendix (Section 12) for these polynomials written out in full.  Now we compute
\begin{align*}
\textrm{Res}_a&(A,B) = \rho^{80} (\rho^2+2\rho+5)^{20}(11\rho^4 - 14\rho^3 - 86\rho^2 - 70\rho + 275)\\
& \ \ \times (11\rho^8 - 164\rho^7 + 829\rho^6 - 1980\rho^5 + 3685\rho^4 - 9900\rho^3 + 20725\rho^2\\
& \ \ - 20500\rho + 6875)\\
& \ \ \times (2488869\rho^8 - 9142380\rho^7 - 12557677\rho^6 + 46638270\rho^5 - 3856021\rho^4\\
& \ \  + 233191350\rho^3 - 313941925\rho^2 - 1142797500\rho + 1555543125)\\
& \ \ \times \textsf{P},
\end{align*}
where $\textsf{P}$ is a product of $7$ primitive, irreducible polynomials of degrees $12, 20, 24$:
\begin{align*}
\textsf{P} &= (6543999\rho^{12} - 23871960\rho^{11} - 93034852\rho^{10} + 328442160\rho^9 + 296712789\rho^8\\
& \ \  + 1001170920\rho^7 - 9112922960\rho^6 + 5005854600\rho^5 + 7417819725\rho^4\\
& \ \  + 41055270000\rho^3 - 58146782500\rho^2 - 74599875000\rho + 102249984375)
 \end{align*}
\begin{align*}
& \ \ \times (47989326\rho^{20} - 687672216\rho^{19} + \cdots + 468645761718750)\\
& \ \ \times (18251706\rho^{20} - 636597576\rho^{19} + \cdots + 178239316406250)\\
& \ \ \times (195570225\rho^{20} - 944115600\rho^{19} + \cdots + 1909865478515625)\\
& \ \ \times (1363919525\rho^{20} - 15974832650\rho^{19} + \cdots + 13319526611328125)\\
& \ \ \times (125164944\rho^{24} - 1106740704\rho^{23} + \cdots + 30557847656250000)\\
& \ \ \times (872908496\rho^{24} - 5290763616\rho^{23} + \cdots + 213112425781250000).
\end{align*}
For each factor of this resultant (with degree at least $4$), the leading coefficient does not divide the next coefficient, from which we conclude that $\rho$ cannot be a root of any of these factors.  It follows that $\textrm{Res}_a(A,B) \neq 0$, and therefore at least one of $A$ or $B$ is nonzero, giving that $a = 1$.  Hence, $\psi = 1$ in $\textrm{Gal}(\Sigma/K)$.
\medskip

A similar calculation works if $\nu^\sigma = -\nu$.  In this case, we have that
$$t = R(\rho^\sigma,-\nu) = R\left(\frac{5}{\rho},-\nu \right) = \frac{(3\rho + 5\nu - 10)^2\rho^2}{5(-5 + 2\rho)^2(\rho^2 + 2\rho + 5)}.$$
This leads, as in Proposition \ref{prop:3}, to the equation
\begin{align*}
0 = F_3(t,\rho) &= (100t^2 - 340t + 64)\rho^6 + (-100t^2 + 420t + 400)\rho^5 \\
& \ \  + (25t^2 - 250t + 625)\rho^4 + (-2500t^2 + 6500t - 2000)\rho^3\\
& \ \  + (1250t^2 - 5000t - 6250)\rho^2 + 15625t^2 - 31250t + 15625,
\end{align*}
which is the same as the equation $0 = F_2(s, \rho)$.  This case also leads to $a = 1$ and $\psi = 1$.  \medskip

This completes the proof of \eqref{eqn:10.14} and shows that Theorem \ref{thm:16} also holds in the ramified case, when $(5) = \wp_5^2$ in $K$. \medskip

In connection with the result of this section, I put forward the following conjecture.

\begin{conj}
 Assume $5 \mid d_K$, $(5) = \wp_5^2$ and $1 \neq \psi \in \textrm{Gal}(\Sigma/K)$.  If $\rho^\psi = a \rho$, then $a \not \equiv 1$ mod $\wp_5$ in $\Sigma = \textsf{K}_1$.
 \label{conj:3}
 \end{conj}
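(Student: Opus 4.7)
The plan is to exploit the relation $\rho^3 = z + 11$ together with a $\wp_5$-adic analysis to propagate the hypothesis $a \equiv 1 \pmod{\wp_5}$ into a strong congruence for the $j$-invariant under $\psi$, and then to invoke separability of the class polynomial $H_{d_K}(X)$ to deduce $\psi = 1$.

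First I would fix a prime $\bar{\mathfrak{p}}$ of $R_\Sigma$ above $\wp_5$. Since $\wp_5$ is unramified in $\Sigma/K$ and $(\rho) = \wp_5 R_\Sigma$, we have $v_{\bar{\mathfrak{p}}}(\rho) = 1$ and $v_{\bar{\mathfrak{p}}}(5) = 2$. Writing $a = 1 + \delta$ with $v_{\bar{\mathfrak{p}}}(\delta) \geq 1$ and cubing $\rho^\psi = a\rho$ yields the $\Sigma$-rational identity $z^\psi - z = (a^3-1)(z+11)$. Combining $v_{\bar{\mathfrak{p}}}(a^3-1) \geq 1$ with $v_{\bar{\mathfrak{p}}}(z+11) = v_{\bar{\mathfrak{p}}}(\rho^3) = 3$, one obtains $v_{\bar{\mathfrak{p}}}(z^\psi - z) \geq 4$ at every prime above $\wp_5$, i.e.\ $\wp_5^4 R_\Sigma \mid (z^\psi - z)$.

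Next I would transfer this to the $j$-invariant. Substituting $z = \rho^3 - 11$ into $j = -(z^2+12z+16)^3/(z+11)$ gives the clean formula $j = -(\rho^6 - 10\rho^3 + 5)^3/\rho^3$ already recorded in Section \ref{sec:10}. A careful $\bar{\mathfrak{p}}$-adic expansion of $j^\psi - j$, in which the constant term $+5$ of $\rho^6 - 10\rho^3 + 5$ is invariant under $\psi$ and cancels in the leading order, should yield an explicit lower bound $v_{\bar{\mathfrak{p}}}(j^\psi - j) \geq N$. Because $v_{\bar{\mathfrak{p}}}(\textrm{disc}\,H_{d_K}) = \sum_{i < j} 2 v_{\bar{\mathfrak{p}}}(j(\mathfrak{k}_i) - j(\mathfrak{k}_j))$, no nonzero difference of roots of $H_{d_K}$ can have $\bar{\mathfrak{p}}$-valuation exceeding $v_{\bar{\mathfrak{p}}}(\textrm{disc}\,H_{d_K})/2$; once $N$ surpasses this threshold, $j^\psi = j$ follows, and then $\psi = 1$ because $\Sigma = K(j(\mathfrak{k}))$.

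The main obstacle is this final valuation comparison. When $5 \nmid d_K$, the discriminant of $H_{d_K}$ is coprime to $5$ and the argument collapses to a single-term congruence (as in the unramified setting of Section \ref{sec:9}); but when $5 \mid d_K$ the Gross-Zagier valuations at supersingular primes over $5$ can be substantial, and the lower bound $N$ coming from the preceding step may be too small. If the gap cannot be closed by sharper expansion, I would instead compute $a = \rho^\psi/\rho$ directly via Shimura reciprocity applied to the eta-quotient representation $\rho = -\omega^i (\eta(w/5)/\eta(w))^2$ from Theorem \ref{thm:11}: the action of the Artin symbol $\sigma_\mathfrak{c}$ on this $\eta$-quotient is explicit in terms of a matrix representing the ideal $\mathfrak{c}$, and the residue class $a \bmod \bar{\mathfrak{p}}$ should be readable as a root of unity depending on $[\mathfrak{c}]$, thereby reducing Conjecture \ref{conj:3} to the injectivity of a computable character on $\textrm{Cl}(K)$.
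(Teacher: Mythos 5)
First, a point of orientation: the statement you are proving is Conjecture~3 of the paper, and the paper does \emph{not} prove it. The author explicitly leaves it open, establishes Sugawara's conjecture in the ramified case $(5)=\wp_5^2$ by an entirely different resultant-based argument (Proposition~3 and Cases 1--2 of Section 10.1), and only remarks afterwards that the desired result ``would also follow immediately from Conjecture~3.'' So there is no proof in the paper to measure your attempt against, and the author's decision to state this as a conjecture rather than a lemma is itself a warning that the obvious attacks do not close.

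Judged on its own terms, your proposal is a plan rather than a proof, and the gap you flag in your own closing paragraph is genuine and is fatal to the primary route. From $a\equiv 1\pmod{\wp_5}$ you can extract only a congruence $j^\psi\equiv j$ to a \emph{bounded} depth at the primes $\bar{\mathfrak{p}}\mid\wp_5$ of $\Sigma$: writing $j=-(\rho^5-10\rho^2+5/\rho)^3=-A^3$, the shallowest term of $A^\psi-A$ is $5(1-a)/(a\rho)$, of valuation exactly $1+v_{\bar{\mathfrak{p}}}(1-a)$, so the method guarantees $v_{\bar{\mathfrak{p}}}(j^\psi-j)\ge 4$ and nothing more when $v_{\bar{\mathfrak{p}}}(1-a)=1$. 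But precisely because $5\mid d_K$, reduction at $\bar{\mathfrak{p}}$ is supersingular and the quantity $v_{\bar{\mathfrak{p}}}(\textrm{disc}\,H_{d_K})/2$ against which you must compare is not bounded independently of $d_K$; you give no reason that individual differences $j(\mathfrak{k}_i)-j(\mathfrak{k}_j)$ have $\bar{\mathfrak{p}}$-valuation below your fixed $N$, and for large $|d_K|$ they need not. (Contrast Sections 9 and 10, where $5\nmid\textrm{disc}\,H_{d_K}$ and a depth-one congruence already forces $j^\psi=j$; that is exactly the dichotomy that makes the ramified case hard and is why the paper abandons this route.) The fallback via Shimura reciprocity applied to $\rho=-\omega^i(\eta(w/5)/\eta(w))^2$ is a reasonable research direction, but as written it terminates in ``reducing'' Conjecture~3 to the injectivity of an unspecified character on $\textrm{Cl}(K)$ --- that is, to another unproven assertion. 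Until that character is actually computed and shown nontrivial on every $\psi\neq 1$, nothing has been established; the statement remains, as in the paper, a conjecture.
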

 
 It can be shown in the situation of this conjecture that $a = \varepsilon^2$ is a square in $\Sigma$.  The congruence just after equation \eqref{eqn:10.14} shows that Sugawara's conjecture for this case would also follow immediately from Conjecture \ref{conj:3}.

\section{$E_{12}$ and the case $\mathfrak{m} = \wp_2^2 \wp_3$.}
\label{sec:11}
In this section we assume that $(2) = \wp_2 \wp_2'$ splits in $K = \mathbb{Q}(\sqrt{-d})$ and that $(3) = \wp_3 \wp_3'$ or $(3) = \wp_3^2$.  We do not need to consider 
the possibility $(2) = \wp_2^2$, since in that case $\mathfrak{m} = \wp_2^2 \wp_3 = (2)\wp_3$ is not a conductor, as remarked in the introduction.  (See the comments following \eqref{eqn:4}.)  \medskip

We work with the Tate normal form for a point of order $12$, which can be given in the form
\begin{align*}
E_{12}(t) &: \ Y^2+\frac{3t^4 + 4t^3 -2t^2 + 4t -1}{(t-1)^3} XY -\frac{t(t+1)(t^2 + 1)(3t^2 + 1)}{(t-1)^4}Y\\
& \ \ \ = X^3 -\frac{t(t+1)(t^2 + 1)(3t^2 + 1)}{(t-1)^4}X^2.
\end{align*}
Denote the nontrivial coefficients in this equation by
$$a_1 = \frac{3t^4 + 4t^3 -2t^2 + 4t -1}{(t-1)^3}, \ \ a_3 = a_2 = -\frac{t(t+1)(t^2 + 1)(3t^2 + 1)}{(t-1)^4}.$$
For the curve $E_{12}(t)$ we have
\begin{align*}
g_2 &= \frac{(3t^4 + 6t^2 - 1)(3t^{12} + 234t^{10} + 249t^8 + 60t^6 - 27t^4 - 6t^2 - 1)}{12(t - 1)^{12}},\\
g_3 &= \frac{(3t^8 + 24t^6 + 6t^4 - 1)}{216(t - 1)^{18}}\\
& \ \ \times (9t^{16} - 1584t^{14} - 3996t^{12} - 3168t^{10} + 30t^8 + 528t^6 - 12t^4 + 1),\\
\Delta &= \frac{(t+1)^{12}t^6(t^2 + 1)^3(3t^2 + 1)^4(3t^2 - 1)}{(t - 1)^{24}}.
\end{align*}
Its $j$-invariant is
$$j(E_{12}(t)) = \frac{(3t^4 + 6t^2 - 1)^3(3t^{12} + 234t^{10} + 249t^8 + 60t^6 - 27t^4 - 6t^2 - 1)^3}{t^6(t^2 - 1)^{12}(t^2 + 1)^3(3t^2 + 1)^4(3t^2 - 1)}.$$
Note that $j_{12}(t) = j(E_{12}(t))$ is invariant under the map $t^2 \rightarrow A(-t^2)$, where
$$A(x) = \frac{x-1}{3x+1}.$$
If $E_{12}(t)$ has complex multiplication by $R_K$, then $j_{12}(t)$ is an algebraic integer, and putting $t = 1/t'$ shows that $t' = 1/t$ is an algebraic integer. \medskip

Now recall the standard addition formulas from \cite[pp. 53-54]{si1} for the curve $E_{12}(t)$: if $P_1 = (x_1,y_1), P_2 = (x_2, y_2)$ and $x_1 \neq x_2$, then
$$X(P_1+P_2) = \left(\frac{y_2-y_1}{x_2-x_1}\right)^2+a_1\frac{y_2-y_1}{x_2-x_1} -a_2-x_1-x_2;$$
while the doubling formula for the $X$-coordinate is
$$X(2P) = \frac{x^4-a_1a_2x^2-2a_2^2x-a_2^3}{4x^3+(a_1^2+4a_2)x^2+2a_1a_2x+a_2^2}, \ \ P = (x,y).$$
(Note: the formula $b_2=a_1^2+4a_2$ corrects the formula for $b_2$ in \cite[p. 42]{si1}.)  Also,
$$Y(-P) = -Y(P)-a_1X(P)-a_2.$$
Hence, the point $P=(0,0)$ satisfies
$$X(2P) = -a_2 = \frac{t(t+1)(t^2 + 1)(3t^2 + 1)}{(t - 1)^4},$$
with corresponding $Y$-coordinate
$$Y(2P) = -\frac{t^2(t^2 + 1)(t+1)^2(3t^2 + 1)^2}{(t - 1)^7}.$$
The point
$$2P = \left(\frac{t(t+1)(t^2 + 1)(3t^2 + 1)}{(t - 1)^4},-\frac{t^2(t^2 + 1)(t+1)^2(3t^2 + 1)^2}{(t - 1)^7}\right)$$
has order $6$.  Furthermore, the point
$$4P = \left(\frac{(t^2 + 1)t(t+1)}{(t - 1)^2}, -\frac{2t^2(t+1)^2(t^2 + 1)^2}{(t - 1)^5}\right)$$
has order $3$.  Hence,
\begin{equation*}
6P = 2P +4P = \left(\frac{(t+1)(3t^2 + 1)}{4(t - 1)}, -\frac{(t+1)^2(3t^2 + 1)^2}{8(t - 1)^3}\right)
\end{equation*}
has order $2$ (verifying that $E_{12}(t)$ is the Tate normal form).  We also have the points
\begin{align*}
3P &= P + 2P = \left(-\frac{t(t+1)(3t^2 + 1)}{(t - 1)^3},\frac{t^2(t+1)^2 (3t^2 + 1)}{(t - 1)^4}\right),\\
5P &= P + 4P = \left(-\frac{2t(t+1)(t^2 + 1)(3t^2 + 1)}{(t - 1)^5}, \frac{t(t+1)^2(t^2 + 1)(3t^2 + 1)^2}{(t - 1)^7}\right),
\end{align*}
which have order $4$ and $12$, respectively. \medskip

The $\tau$-invariant for $\mathfrak{m} = \wp_2^2 \wp_3$ and a given point $Q$ is given by
$$\tau(\mathfrak{k}^*) = -2^7 3^5 \frac{g_2 g_3}{\Delta}\left(X(Q)+\frac{a_1^2+4a_2}{12}\right).$$
Let $\tau_k$ denote the $\tau$-invariant for the point $kP$.

\begin{lem} Assume that $E_{12}(t)$ has complex multiplication by $R_K$, where $\wp_2, \wp_3$ are first degree prime divisors of $2$ and $3$ in $R_K$, and $P = (0,0)$ is a primitive $\mathfrak{a}$-division point on $E_{12}(t)$ for the ideal $\mathfrak{a} = \wp_2^2 \wp_3$.  The invariants $\tau_2, \tau_3, \tau_4, \tau_6$ lie in $\Sigma$, and $t$ generates $\textsf{K}_{\wp_2^2 \wp_3}$ over $\Sigma$.  Moreover, the invariants $\tau_i$, for $1 \le i \le 6$ are all algebraic integers.
\label{lem:7}
\end{lem}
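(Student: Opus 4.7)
The plan is to proceed in three steps: identify the precise level of each multiple $kP$ and place the associated $\tau$-invariant; show that $t$ generates the quadratic extension $\textsf{K}_\mathfrak{m}/\Sigma$; and verify integrality of $\tau_1,\ldots,\tau_6$. Since $P$ is primitive of level $\mathfrak{m} = \wp_2^2\wp_3$ for the CM action of $R_K$, the annihilator of $kP$ is $\mathfrak{m}/\gcd(\mathfrak{m},(k))$. Using $(2) = \wp_2\wp_2'$, $(3) = \wp_3\wp_3'$, and $(4) = \wp_2^2\wp_2'^2$, the levels of $2P, 3P, 4P, 6P$ come out to $\wp_2\wp_3, \wp_2^2, \wp_3, \wp_2$ respectively, in agreement with the integer orders $6,4,3,2$ computed just before the statement. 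For each such level $\mathfrak{n}$, we have $\varphi(\mathfrak{n}) \le 2$, and the image of the units $\{\pm 1\}$ in $(R_K/\mathfrak{n})^\times$ has order $\varphi(\mathfrak{n})$ in every case (trivially when $\mathfrak{n} = \wp_2$ since $\varphi(\wp_2) = 1$; and nontrivially for the others, using $\wp_2^2 \nmid (2)$ and $\wp_3 \nmid (2)$). Hence $[\textsf{K}_\mathfrak{n}:\Sigma] = 1$, i.e., $\textsf{K}_\mathfrak{n} = \Sigma$; this places $\tau_2,\tau_3,\tau_4,\tau_6 \in \Sigma$.

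For the claim $\textsf{K}_\mathfrak{m} = \Sigma(t)$, the same index calculation yields $[\textsf{K}_\mathfrak{m}:\Sigma] = \varphi(\wp_2^2\wp_3)/2 = 2$, so it suffices to show $t \in \textsf{K}_\mathfrak{m} \setminus \Sigma$. Membership $t \in \textsf{K}_\mathfrak{m}$ follows because $E_{12}(t)$ is obtained from the Weierstrass normal form of $E$ (with $g_2, g_3 \in \Sigma$) by a rational change of variables that uses the coordinates of the primitive $\mathfrak{m}$-division point $P$, so $t$ lies in the field of definition of the pair $(E, P)$, which is $\textsf{K}_\mathfrak{m}$. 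To see $t \notin \Sigma$, suppose otherwise; then $E_{12}(t)$ together with its point $(0,0)$ is defined over $\Sigma$, and each $\tau_i$ evaluates in $\Sigma$, in particular $\tau_1 \in \Sigma$. But $\tau_1, \tau_5$ are the two primitive $\mathfrak{m}$-division $\tau$-invariants for the ideal class of $E_{12}(t)$; they are conjugate under the nontrivial element of $\textrm{Gal}(\textsf{K}_\mathfrak{m}/\Sigma)$ and must be distinct by Hasse's \cite[Satz 20]{h1}. This contradicts $\tau_1 \in \Sigma$. Hence $t \notin \Sigma$ and $\Sigma(t) = \textsf{K}_\mathfrak{m}$.

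Finally, each $\tau_i$ is a Weber--Hasse $\tau$-invariant of a primitive $\mathfrak{n}_i$-division ray class on the CM elliptic curve $E_{12}(t)$, and is therefore an algebraic integer by the general integrality theorem of Hasse \cite{h1}. A direct verification is also possible: substitute the explicit formulas for $X(iP)$ into $\tau_i = -2^7 3^5 (g_2 g_3/\Delta)\bigl(X(iP) + (a_1^2+4a_2)/12\bigr)$ and check that the factors $t^k, (t\pm 1)^k, (t^2+1)^k, (3t^2\pm 1)^k$ entering through $\Delta^{-1}$ cancel exactly against the matching factors in the numerator, leaving a polynomial in $t$ with coefficients in $\Sigma$. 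The principal difficulty in the plan is the irrationality of $t$ over $\Sigma$; the reduction above settles it via Hasse's distinctness result for primitive $\tau$-invariants.
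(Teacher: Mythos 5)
Your first step---computing the annihilators $\wp_2\wp_3,\ \wp_2^2,\ \wp_3,\ \wp_2$ of $2P,3P,4P,6P$ and observing that each of these ideals has conductor $1$, so the corresponding ray class fields collapse to $\Sigma$---is essentially the paper's own argument, and your deduction that $t\notin\Sigma$ from the irreducibility over $\Sigma$ of the degree-two ray class polynomial with roots $\tau_1,\tau_5$ is a legitimate alternative to the paper's route. The paper instead reads everything off explicit ratios such as $\frac{\tau_2-\tau_3}{\tau_6-\tau_3}=\frac{4t^2}{t^2-1}$ and $\frac{\tau_1-\tau_6}{\tau_4-\tau_6}=-\frac{(3t^2+1)(t-1)}{(1+t)^3}$, which yield $t^2\in\Sigma$ and $t\in\textsf{K}_{\mathfrak{m}}\setminus\Sigma$ simultaneously, with no appeal to moduli considerations.

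The genuine gap is in your justification that $t\in\textsf{K}_{\mathfrak{m}}$. The ``field of definition of the pair $(E,P)$'' is generated over $\Sigma$ by \emph{both} coordinates of $P$, and that field is in general strictly larger than the ray class field attached to the annihilator of $P$: only the Weber function of $P$ (essentially its $X$-coordinate) is guaranteed to lie in $\textsf{K}_{\mathfrak{m}}$, while $Y(P)$ may generate a further extension. (Theorem \ref{thm:12} of this paper is an extreme instance: the coordinates of a $\wp_3$-division point on $E_5(b)$ generate $\textsf{K}_{\wp_5'\wp_3}$, far above $\textsf{K}_{\wp_3}=\Sigma$.) What is true, and what your argument actually needs, is that $t$ depends only on the isomorphism class of the pair $(E,P)$ and that $(E,P)\cong(E,-P)$, so that any automorphism over $\textsf{K}_{\mathfrak{m}}$, which can only send $P$ to $\pm P$, fixes $t$; as written you do not supply this. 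A second, smaller flaw is in the integrality step: your ``direct verification'' cannot end with ``a polynomial in $t$ with coefficients in $\Sigma$,'' because by Lemma \ref{lem:8} one has $t\cong\mathfrak{q}_3^{-1}$, so $t$ is \emph{not} an algebraic integer and such an expression proves nothing. Note also that the paper does not make a blanket appeal to Hasse: it cites his integrality result only for $\tau_1,\tau_2,\tau_5$, whose conductors $\wp_2^2\wp_3$ and $\wp_2\wp_3$ have two distinct prime factors, and handles $\tau_3,\tau_4,\tau_6$ (prime-power conductors) separately via the explicit monic polynomials $F(X,j(\mathfrak{k}))$ of Sections \ref{sec:2}, \ref{sec:3} and \ref{sec:5}; you should either verify that the integrality theorem you invoke covers prime-power conductors or supply that monic-polynomial argument.
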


\begin{proof}
For the ideal $\mathfrak{a} = \wp_2^2 \wp_3$, there are always $\mathfrak{a}$-division points on an elliptic curve $E$ with $R_K$ as its multiplier ring.  This is because, by \cite[p. 42]{d2}, the number of primitive (``echten'') $\mathfrak{a}$-division points is
\begin{align*}
\sum_{\mathfrak{d} \mid \mathfrak{a}}&{\mu(\mathfrak{a}/\mathfrak{d})N(\mathfrak{d})}\\
&= \mu(1) N(\wp_2^2\wp_3) +\mu(\wp_2) N(\wp_2 \wp_3) + \mu(\wp_3) N(\wp_2^2)+\mu(\wp_2 \wp_3) N(\wp_2)\\
&= 12 - 6 - 4 +2 = 4.
\end{align*}
If a point $\tilde P$ is one such division point on $E$, then $\tilde P$ has order $12$ in $E[12]$, so there must be a value of $t$ and an isomorphism $E \rightarrow E_{12}(t)$ for which $\tilde P \rightarrow P = (0,0)$, and $P$ is a primitive $\mathfrak{a}$-division point on $E_{12}(t)$.  (See \cite[p. 250]{m}, where the assignment $y \rightarrow u^3y + vx + d$ in the proof of Lemma \ref{lem:4} should be $y \rightarrow u^3y + vu^2x + d$.)  Now let $\mathcal{K}$ be the elliptic function field defined by $E_{12}(t)$.  Since $N(\mathfrak{a}) = 12 = \textrm{ord}(P)$ and $\mathfrak{a}P = O$, the field $\mathcal{K}^\mathfrak{a}$ is the fixed field of the translation group 
$$\textrm{Gal}(\mathcal{K}/\mathcal{K}^\mathfrak{a}) = \langle \sigma_{\mathfrak{o},\mathfrak{p}} \rangle$$
generated by the translation $\sigma_{\mathfrak{o},\mathfrak{p}}$ taking $\mathfrak{o}$ to the prime divisor $\mathfrak{p}$ of $\mathcal{K}$ corresponding to $P = (0,0)$.  This gives an isogeny $\phi: E_{12} \rightarrow E_{12}^\mathfrak{a}$ for which $\textrm{ker}(\phi) = \{Q \in E_{12}: \ \mathfrak{a}Q = O\} = \langle P \rangle$ is generated by $P$.  (In Deuring's terminology \cite{d1}, $\mathcal{K}^\mathfrak{a}$ has the same multiplier-ring $R_K$ as $\mathcal{K}$, so that $\phi: \ E_{12}(t) \rightarrow E_{12}^\mathfrak{a}$ is a Heegner point on $X_0(12)$.) \medskip

The following containments follow from these considerations:
\begin{equation*}
2P \in \textrm{ker}(\wp_2 \wp_3), \ \ 3P \in \textrm{ker}(\wp_2^2), \ \ 4P \in \textrm{ker}(\wp_3), \ \ 6P \in \textrm{ker}(\wp_2).
\end{equation*}
Since the conductors of each of these ideals is $\mathfrak{f} = 1$, it follows that $\tau_2, \tau_3, \tau_4, \tau_6$ all lie in $\Sigma$.  On the other hand, $\tau_1$ and $\tau_5$ do not lie in $\Sigma$, since the $\tau$-invariants for $\mathfrak{m} = \wp_2^2\wp_3$ must generate the quadratic extension $\textsf{K}_\mathfrak{m}/\Sigma$. \medskip

Note that
\begin{equation*}
\notag \frac{\tau_1-\tau_6}{\tau_4-\tau_6} = -\frac{(3t^2 + 1)(t - 1)}{(1 + t)^3}, \ \ \frac{\tau_2 - \tau_3}{\tau_6 - \tau_3} = \frac{4t^2}{t^2 - 1}.
\end{equation*}
It follows from the above arguments that $t^2 \in \Sigma$ but $t \notin \Sigma$, so that $t$ is a Kummer element for $\textsf{K}_{\wp_2^2 \wp_3}$ over $\Sigma$.  Hence, the nontrivial automorphism of $\textsf{K}_{\wp_2^2 \wp_3}/\Sigma$ is $\rho: t \rightarrow -t$. \medskip

Finally, $\tau_3, \tau_4$, and $\tau_6$ are algebraic integers by the formulas for $F(X,\mathfrak{k})$ in Sections \ref{sec:2}, \ref{sec:3} and \ref{sec:5}, since they correspond to points of orders $4, 3$ and $2$, respectively.  Also, $\tau_1, \tau_2$ and $\tau_5$ are algebraic integers by Hasse's results \cite[eq. (35), p. 134]{h1}, since they are invariants for the ideals $\wp_2^2\wp_3, \wp_2\wp_3$ and $\wp_2^2\wp_3$, respectively.
\end{proof}

\begin{cor} For the value of $t$ in this Lemma, $[K(t):K] = [\textsf{K}_{\wp_2^2 \wp_3}:K] = 2h(d_K)$.
\label{cor:3}
\end{cor}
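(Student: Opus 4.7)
The plan is to derive the corollary in two short steps, combining Lemma \ref{lem:7} with the standard ray class degree formula $[\textsf{K}_\mathfrak{m}:\Sigma] = \varphi(\mathfrak{m})/2$ employed throughout this paper.

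First I would compute $[\textsf{K}_{\wp_2^2\wp_3}:K]$. By the Chinese remainder theorem, $\varphi(\wp_2^2\wp_3) = \varphi(\wp_2^2)\varphi(\wp_3) = 2 \cdot 2 = 4$, and hence $[\textsf{K}_{\wp_2^2\wp_3}:\Sigma] = 4/2 = 2$. The factor of $2$ in the denominator reflects that $-1 \not\equiv 1 \pmod{\wp_2^2\wp_3}$, which holds because $\wp_2^2 \nmid (2)$ in $R_K$ (since $(2) = \wp_2 \wp_2'$ and $\wp_2 \neq \wp_2'$). Multiplying by $[\Sigma:K] = h(d_K)$ gives
$$[\textsf{K}_{\wp_2^2\wp_3}:K] = 2h(d_K).$$

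Next I would show $K(t) = \textsf{K}_{\wp_2^2\wp_3}$. Since $E_{12}(t)$ has complex multiplication by $R_K$, the $j$-invariant $j(E_{12}(t))$ coincides with $j(\mathfrak{k})$ for the corresponding ideal class $\mathfrak{k}$, and the explicit rational expression for $j(E_{12}(t))$ displayed immediately before Lemma \ref{lem:7} exhibits $j(\mathfrak{k})$ as an element of $\mathbb{Q}(t)$. Therefore $\Sigma = K(j(\mathfrak{k})) \subseteq K(t)$, so $K(t) = \Sigma(t)$. By Lemma \ref{lem:7}, $\Sigma(t) = \textsf{K}_{\wp_2^2\wp_3}$, and combining with the previous step yields $[K(t):K] = 2h(d_K)$.

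There is no substantive obstacle here; the corollary is essentially a bookkeeping consequence of Lemma \ref{lem:7}. The only point deserving verification is the unit-index calculation that forces $[\textsf{K}_{\wp_2^2\wp_3}:\Sigma] = 2$, which in turn rests on the elementary observation that $\wp_2^2$ does not divide the rational prime $2$ in the splitting case $d_K \equiv 1 \pmod 8$.
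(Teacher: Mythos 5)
Your proposal is correct and follows essentially the same route as the paper: the paper's one-line proof is exactly the observation that $j(E_{12}(t))\in K(t)$ forces $K(t)=K(j,t)=\Sigma(t)=\textsf{K}_{\wp_2^2\wp_3}$, with the degree $2h(d_K)$ then coming from $[\textsf{K}_{\wp_2^2\wp_3}:\Sigma]=2$ (which Lemma \ref{lem:7} already gives via $t^2\in\Sigma$, $t\notin\Sigma$, and which your $\varphi(\mathfrak{m})/2$ computation confirms). The extra unit-index verification you flag is fine but not needed beyond what the lemma supplies.
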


\begin{proof}
This statement holds because $j(E_{12}(t)) \in K(t)$ and $K(j(E_{12}(t)),t) = \Sigma(t) = \textsf{K}_{\wp_2^2 \wp_3}$.
\end{proof}

Now assume that $\psi$ is an automorphism of $\textsf{K}_{\wp_2^2 \wp_3}/K$ for which $j(\mathfrak{k}') = j(\mathfrak{k})^\psi$ and $\{\tau_1,\tau_5\}^\psi = \{\tau_1,\tau_5\}$.  If the fixed field of $\langle \psi \rangle$ is $L$, and $L \subseteq \Sigma$, with $\psi \neq 1$, then the nontrivial automorphism $\rho: t \rightarrow -t$ of $\textsf{K}_{\wp_2^2 \wp_3}/\Sigma$ lies in $\langle \psi \rangle$. In that case, for some $i$, $\tau_1^{\psi^i} = \tau_1^\rho = \tau_5$.

\begin{lem}
In $\textsf{K}_{\wp_2^2 \wp_3}$ we have $t \cong \mathfrak{q}_3^{-1}$, where $\mathfrak{q}_3^2 = \wp_3$, i.e., $\mathfrak{q}_3$ is the product of the prime divisors of $\textsf{K}_{\wp_2^2 \wp_3}$ lying over $\wp_3$.  The same holds if $(3) = \wp_3^2$.
\label{lem:8}
\end{lem}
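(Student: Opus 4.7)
The plan is to combine a local ramification argument for $\textsf{K}_{\wp_2^2\wp_3}/\Sigma$ above $\wp_3$ with the integrality of $j(E_{12}(t))$ to pin down $v_{\mathfrak{q}_3}(t)$ exactly. Let $\mathfrak{P}$ denote a prime of $\Sigma$ above $\wp_3$ and $\mathfrak{q}\mid\mathfrak{P}$ a prime of $\textsf{K}_{\wp_2^2\wp_3}$. First I would observe that the conductor of $\textsf{K}_{\wp_2^2\wp_3}/K$ is $\wp_2^2\wp_3$, so $\wp_3$ ramifies in $\textsf{K}_{\wp_2^2\wp_3}/K$; since $\Sigma/K$ is unramified everywhere and $[\textsf{K}_{\wp_2^2\wp_3}:\Sigma]=2$, this forces $\mathfrak{P}R_{\textsf{K}_{\wp_2^2\wp_3}} = \mathfrak{q}^2$ for every $\mathfrak{P}\mid\wp_3$. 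This verifies the setup $\mathfrak{q}_3^2 = \wp_3$ of the lemma and also shows that $\textsf{K}_{\wp_2^2\wp_3}/\Sigma$ is totally ramified at each such $\mathfrak{P}$. The same computation works in the ramified case $(3)=\wp_3^2$, where $\wp_3$ already divides $3$ to second order in $K$.

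Next, by Lemma \ref{lem:7} we have $s:=t^2\in\Sigma$ and $\textsf{K}_{\wp_2^2\wp_3}=\Sigma(\sqrt{s})$. Because $\mathfrak{P}$ has odd residue characteristic (namely $3$), the standard Kummer criterion for quadratic extensions gives that $\Sigma(\sqrt{s})/\Sigma$ is ramified at $\mathfrak{P}$ if and only if $v_\mathfrak{P}(s)$ is odd. Combined with the previous paragraph, $v_\mathfrak{P}(s)$ must be an odd integer.

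The main step is to show that integrality of $j(E_{12}(t))$ at $\mathfrak{P}$ forces $v_\mathfrak{P}(s)=-1$. Writing the $j$-invariant formula of Section \ref{sec:11} as a rational function of $s=t^2$,
\begin{equation*}
j(E_{12}(t)) = \frac{(3s^2+6s-1)^3(3s^6+234s^5+249s^4+60s^3-27s^2-6s-1)^3}{s^3(s-1)^{12}(s+1)^3(3s+1)^4(3s-1)},
\end{equation*}
I would compute $v_\mathfrak{P}(j)$ as a function of $v_\mathfrak{P}(s)$, using $v_\mathfrak{P}(3)=1$ in the split case and $v_\mathfrak{P}(3)=2$ in the ramified case. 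A short case analysis shows: when $v_\mathfrak{P}(s)=-1$ the leading terms of numerator and denominator both have valuation $-18$, so $v_\mathfrak{P}(j)=0$; when $v_\mathfrak{P}(s)\le -2$ (split) resp.\ $\le -3$ (ramified) the term $(s-1)^{12}(s+1)^3$ in the denominator outstrips the numerator and produces $v_\mathfrak{P}(j)<0$; and when $v_\mathfrak{P}(s)\ge 1$ the factor $s^3$ in the denominator gives $v_\mathfrak{P}(j)\le -3 v_\mathfrak{P}(s)<0$. In either case the only odd value compatible with $j$ being an algebraic integer is $v_\mathfrak{P}(t^2)=-1$. From total ramification $v_\mathfrak{q}(x)=2v_\mathfrak{P}(x)$ for $x\in\Sigma$, hence $v_\mathfrak{q}(t^2)=-2$ and $v_\mathfrak{q}(t)=-1$, giving $t\cong\mathfrak{q}_3^{-1}$ at the primes above $\wp_3$, as claimed.

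The main obstacle is the bookkeeping in the third step: one must carefully track the valuation of $3$ in each coefficient of the numerator polynomial $3s^6+234s^5+\cdots-1$ (where $234=2\cdot 3^2\cdot 13$, $249=3\cdot 83$, $60=2^2\cdot 3\cdot 5$, $27=3^3$, $6=2\cdot 3$) to identify the term of minimal valuation in each case, and separately verify that no special residue $\bar s \in\{0,\pm 1,\pm 1/\sqrt{-3}\}$ mod $\mathfrak{P}$ can produce additional cancellations which would resurrect an excluded even value of $v_\mathfrak{P}(s)$. Once this verification is complete, the argument is uniform across the split and ramified cases for $(3)$ in $K$.
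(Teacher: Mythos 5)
Your local analysis at the primes above $\wp_3$ is essentially sound, and it takes a genuinely different route from the paper: you combine the parity constraint ($v_\mathfrak{P}(t^2)$ odd, forced by ramification) with a direct valuation count on $j(E_{12}(t))$ written as a rational function of $s=t^2$, whereas the paper substitutes $t=u/\sqrt{3}$ and $t=1/t'$ into the $j$-formula to get the integrality of $3t^2$ and of $1/t$ at once, and then uses a Galois/ideal-square argument to see that every prime of $\mathfrak{q}_3$ actually occurs. Your bookkeeping does check out: for every odd $m=v_\mathfrak{P}(s)\neq -1$ the minimal-valuation term in each factor is unique (one finds $v_\mathfrak{P}(j)=v_\mathfrak{P}(3)+m<0$ for $m\le -2$, resp.\ $m\le -3$ in the ramified case, and $v_\mathfrak{P}(j)=-3m<0$ for $m\ge 1$), and the only possible cancellations occur at $m=-1$ itself (in $3s\pm 1$ in the split case, in $3s^2+6s-1$ in the ramified case), which cannot affect the conclusion since the even values are eliminated by parity rather than by integrality; so your worry about ``resurrecting an excluded even value'' is moot.

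The genuine gap is that your conclusion, ``$t\cong\mathfrak{q}_3^{-1}$ at the primes above $\wp_3$,'' is strictly weaker than the lemma. The relation $t\cong\mathfrak{q}_3^{-1}$ is an equality of fractional ideals: it asserts in addition that $(t)$ has trivial numerator and that no prime outside $\mathfrak{q}_3$ --- none above $2$, none above $\wp_3'$ in the split case, none prime to $6$ --- occurs in $(t)$. That global part is precisely what Corollary \ref{cor:4} extracts from the lemma ($t^\sigma=\alpha t$ with $\alpha$ a \emph{unit} of $\Sigma$), so it cannot be dropped. Fortunately your own method closes it: at a prime $\mathfrak{P}\nmid 3$ of $\Sigma$ the same count gives $v_\mathfrak{P}(j)=v_\mathfrak{P}(s)$ when $v_\mathfrak{P}(s)<0$ and $v_\mathfrak{P}(j)=-3v_\mathfrak{P}(s)$ when $v_\mathfrak{P}(s)>0$, both negative, forcing $v_\mathfrak{P}(s)=0$; and at $\mathfrak{P}\mid\wp_3'$ unramifiedness forces $v_\mathfrak{P}(s)$ even while integrality excludes $|v_\mathfrak{P}(s)|\ge 2$. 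You need to add this (or the paper's shortcut via the integrality of $\sqrt{3}\,t$ and $1/t$) before the lemma as stated is actually proved.
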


\begin{proof} First assume that $(3) = \wp_3 \wp_3'$ in $R_K$.  Put $t = u/\sqrt{3}$ in $j_{12}(t) = j(E_{12}(t))$.  This gives that
$$j_{12}(t) = \frac{(u^4 + 6u^2 - 3)^3(u^{12} + 234u^{10} + 747u^8 + 540u^6 - 729u^4 - 486u^2 - 243)^3}{(u^2 - 3)^{12}u^6(u^2 + 3)^3(u^2 + 1)^4(u^2 - 1)}.$$
Hence, $u$ is an algebraic integer, so that $u^2=3t^2$ is an algebraic integer.  This shows that the denominator of $t$ is divisible at most by prime divisors of $(3)= \mathfrak{q}_3^2 \wp_3'$.  It follows that no prime divisor of $\wp_3'$ can divide this denominator, and furthermore, at most the first power of any prime divisor of $\mathfrak{q}_3$ can divide the denominator.  If $\sigma \in \textrm{Gal}(\textsf{K}_{\wp_2^2 \wp_3}/K)$, then $t^\sigma = t \alpha$, for some $\alpha \in \Sigma$, since $t^\sigma$ is also a Kummer element for $\textsf{K}_{\wp_2^2 \wp_3}/\Sigma$.  But since the reciprocals of $t$ and $t^\sigma$ are algebraic integers, the numerators in their ideal factorizations are both $1$, so that the prime divisors of $t^\sigma/t = \alpha$ occur in the numerator and denominator at most to the first power.  I claim that at least one prime ideal divisor of $\mathfrak{q}_3$ divides the denominator of $t$.  If not, $t^2 \in \Sigma$ would be a unit and the prime divisors of $\wp_3$ would not be ramified in $\Sigma(t)/\Sigma$, which is false.  But since $\alpha \in \Sigma$ is only divisible by (ramified) prime divisors of $\wp_3$, the ideal $(\alpha)$ must be a square in the group of ideals in $\textsf{K}_{\wp_2^2 \wp_3}$.  This can only be the case if $(t^\sigma) = (t)$.  Since the prime divisors of $\mathfrak{q}_3$ are conjugate to each other over $K$, this shows that all of them must divide the denominator of $t$.  If $(3) = \wp_3^2$, then the denominator of $t^2$ is divisible by at most the square of a prime divisor of $\wp_3$ in $\Sigma$.  But if the square of $\mathfrak{q}$ divides the denominator, then $\mathfrak{q}$ would not be ramified in $\textsf{K}_{\wp_2^2 \wp_3}/\Sigma$, since it divides an odd prime.  Hence, exactly the first power of $\mathfrak{q}$ divides the denominator.  The rest of the argument is the same.  This proves the lemma.
\end{proof}

\begin{cor} If $\sigma \in \textrm{Gal}(\textsf{K}_{\wp_2^2 \wp_3}/K)$, then $t^\sigma = \alpha t$, where $\alpha$ is a unit in $\Sigma$.
\label{cor:4}
\end{cor}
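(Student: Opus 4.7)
The plan is to combine two simple observations, both essentially contained in the proof of Lemma~\ref{lem:8}. The first step is to show $t^\sigma = \alpha\, t$ with $\alpha \in \Sigma$; the second is to show $(\alpha) = (1)$.

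For the first step, note that $\Sigma/K$ is Galois (as the Hilbert class field), so the subgroup $\mathrm{Gal}(\textsf{K}_{\wp_2^2\wp_3}/\Sigma) = \langle \rho \rangle$, where $\rho: t \mapsto -t$, is normal in $\mathrm{Gal}(\textsf{K}_{\wp_2^2\wp_3}/K)$. Since this subgroup has order $2$, it follows that $\sigma$ commutes with $\rho$. Hence
$$
(t^\sigma)^\rho = (t^\rho)^\sigma = (-t)^\sigma = -\,t^\sigma,
$$
so $t^\sigma$ is also a Kummer element for the quadratic extension $\textsf{K}_{\wp_2^2\wp_3}/\Sigma$. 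Therefore the ratio $\alpha := t^\sigma/t$ is fixed by $\rho$ and lies in $\Sigma$, and in particular $t^\sigma = \alpha\, t$.

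For the second step, recall from Lemma~\ref{lem:8} that $t \cong \mathfrak{q}_3^{-1}$, where $\mathfrak{q}_3$ is the product of all prime divisors of $\wp_3$ in $\textsf{K}_{\wp_2^2\wp_3}$. Since $\sigma$ fixes $K$, it permutes the prime divisors of $\wp_3$ in $\textsf{K}_{\wp_2^2\wp_3}$ among themselves, and so $\mathfrak{q}_3^\sigma = \mathfrak{q}_3$ as ideals. Consequently
$$
(t^\sigma) \;=\; (t)^\sigma \;=\; (\mathfrak{q}_3^{-1})^\sigma \;=\; \mathfrak{q}_3^{-1} \;=\; (t),
$$
which gives $(\alpha) = (t^\sigma)/(t) = (1)$. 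Thus $\alpha$ is a unit in $\Sigma$.

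There is no real obstacle here: once the normality of $\langle \rho \rangle$ in $\mathrm{Gal}(\textsf{K}_{\wp_2^2\wp_3}/K)$ is used to put $\alpha$ in $\Sigma$, the unit statement follows immediately from Lemma~\ref{lem:8} and the fact that the full ideal $\mathfrak{q}_3$ is stable under any $K$-automorphism. The corollary is really a clean restatement of what was proved in the course of Lemma~\ref{lem:8}, isolated here for repeated use in the arguments to follow.
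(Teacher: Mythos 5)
Your proof is correct, and it fills in what the paper leaves implicit: the corollary is stated without a separate proof because it is meant to be read off from Lemma \ref{lem:8} and its proof. Your first step (that $\alpha = t^\sigma/t \in \Sigma$ because $t^\sigma$ is again a Kummer element for the quadratic extension $\textsf{K}_{\wp_2^2\wp_3}/\Sigma$) is exactly the paper's one-line assertion, with the normality of $\langle\rho\rangle$ made explicit; note one could also just cite that $\textrm{Gal}(\textsf{K}_{\wp_2^2\wp_3}/K)$ is abelian, being a ray class group. The one genuine difference is in the unit claim: inside the proof of Lemma \ref{lem:8} the paper establishes $(t^\sigma)=(t)$ \emph{before} it knows $t\cong\mathfrak{q}_3^{-1}$ — it argues that $(\alpha)$ is supported only on the ramified primes over $\wp_3$, hence is a square of an ideal of $\textsf{K}_{\wp_2^2\wp_3}$, while every prime exponent in $(t^\sigma)/(t)$ is at most one, forcing $(\alpha)=(1)$ — and then uses this to conclude that \emph{all} primes of $\mathfrak{q}_3$ divide the denominator of $t$. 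You run the implication in the opposite direction, deducing $(t^\sigma)=(t)$ from the already-proved factorization $t\cong\mathfrak{q}_3^{-1}$ together with the $\sigma$-stability of $\mathfrak{q}_3$. Since the corollary is placed after the lemma, this is perfectly legitimate and is arguably the cleaner derivation; the paper's internal argument is the one that does the real work, but your reading of the corollary as a direct consequence of the lemma's statement is sound.
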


Now we use the fact that both
\begin{align*}
\tau_1+\tau_5 &= \frac{2(3t^4 + 6t^2 - 1)(3t^{12} + 234t^{10} + 249t^8 + 60t^6 - 27t^4 - 6t^2 - 1)}{(t^2 - 1)^{12} t^6 (t^2 + 1)^3 (3t^2 + 1)^4 (3t^2 - 1)}\\
& \ \times (3t^8 + 24t^6 + 6t^4 - 1)(3t^8 - 42t^4 - 24t^2 - 1)\\
& \ \times (9t^{16} - 1584t^{14} - 3996t^{12} - 3168t^{10} + 30t^8 + 528t^6 - 12t^4 + 1)\\
&=  \frac{2N_1(t)}{D_1(t)}
\end{align*}
and
\begin{align*}
\frac{\tau_1+\tau_5}{\tau_1-\tau_5} &= -\frac{3t^8 - 42t^4 - 24t^2 - 1}{12t(t ^2- 1)(3t^2 + 1)(t^2 + 1)}\\
&= -\frac{N_2(t)}{12D_2(t)}
\end{align*}
are invariant (up to sign) under $\psi: t \rightarrow \alpha t$.   Assuming first that $\left(\frac{N_2(t)}{D_2(t)}\right)^\psi =  \frac{N_2(t)}{D_2(t)}$, we compute the resultant
\begin{align*}
&R_t(a) = \textrm{Res}_t\left(N_1(a t) D_1(t)-N_1(t)D_1(a t),\frac{1}{t}(N_2(at) D_2(t)-N_2(t)D_2(at))\right)\\
&= 2^{384} 3^{76} (a - 1)^{108} (a + 1)^{50} a^{362} (a^8 - 68a^6 - 570a^4 - 68a^2 + 1)^2\\
& \ \ \times (a^8 - 24a^7 + 28a^6 - 168a^5 + 582a^4 - 168a^3 + 28a^2 - 24a + 1)^2\\
& \ \ \times (a^{16} + 72a^{14} - 1412a^{12} + 3960a^{10} + 11142a^8 + 3960a^6 - 1412a^4 + 72a^2 + 1)^2\\
& \ \ \times (3a^2 - 1)^6(a^2 - 3)^6(a^2 + 3)^8(3a^2 + 1)^8(a^2 + 1)^{12} \times R_{192}(a)^2,
\end{align*}
where $R_{192}(a) \in \mathbb{Z}[a]$ is an irreducible palindromic\footnote{This follows from the fact that $t^{\psi^{-1}} = \alpha^{-\psi^{-1}} t$, so $\alpha^{-\psi^{-1}}$ is a conjugate of $\alpha$, and therefore so is $\alpha^{-1}$, when $R_{192}(\alpha) = 0$.} polynomial (with incompatible factorizations modulo $31$ and $43$) of degree $192$ with leading coefficient $11664 = 2^4 \cdot 3^6$ and coefficient of $a$ equal to $-2970432 = -2^6 \cdot 3^5 \cdot 191$.  It follows that the unit $\alpha$ is a root of $R_t(a)$, but cannot be a root of $R_{192}(a)$ or of any of the factors in the last line of the resultant formula, other than $a^2+1$.  The three nontrivial factors (of degrees $8, 8$ and $16$) are symmetric in $a$, and in these cases $x = \alpha+1/\alpha$ satisfies the polynomial, respectively, given by
\begin{align*}
A_1(x) &= x^4 - 72x^2 - 432 ,\\
A_2(x) &= x^4 - 24x^3 + 24x^2 - 96x + 528,\\
A_3(x) & = x^8 + 64x^6 - 1824x^4 + 10240x^2 + 256.
\end{align*}
The discriminants of $A_1(x)$ and $A_2(x)$ are
$$\textrm{disc}(A_1(x)) = -2^4 \cdot 3^9, \ \ \ \textrm{disc}(A_2(x)) = -2^{36} \cdot 3^3.$$
Since $\alpha+\alpha^{-1}$ lies in $\Sigma$, which is normal over $\mathbb{Q}$, it would follow that $\sqrt{-3} \in \Sigma$ if $\alpha+\alpha^{-1}$ is a root of $A_1(x)$ or $A_2(x)$.  But this is impossible in the case that $(3) = \wp_3 \wp_3'$ is unramified in $K$.  The same argument holds for $(\alpha+\alpha^{-1})^2$ and $A_3(x) = f_3(x^2)$, where $\textrm{disc}(f_3(x)) = 2^{40} \cdot 3^7 \cdot 11^2$ (so that $\sqrt{3} \in \Sigma$).
\medskip

If $(3) = \wp_3^2$, then the Galois groups of $A_1(x)$ and $A_2(x)$ are both $D_4$, and
$$\textrm{Gal}(A_3(x)/\mathbb{Q}) \cong (\mathbb{Z}/2\mathbb{Z})^3 \rtimes D_4$$
is a semi-direct product of order $64$.  The last is impossible, since the normal closure of $\mathbb{Q}(\alpha + \alpha^{-1})/\mathbb{Q}$ is abelian over the quadratic field $K$, and is thus at most quadratic over the root field $\mathbb{Q}(\alpha + \alpha^{-1})$.  Hence, $\alpha+\alpha^{-1}$ cannot be a root of $A_3(x)$.  If it were a root of $A_1(x)$ or $A_2(x)$, then since the normal closure $L$ of $\mathbb{Q}(\alpha+\alpha^{-1}) \subseteq \Sigma$ has Galois group $D_4$, it would have to contain the field $K$; otherwise $L$ and $K$ would be linearly disjoint and $\textrm{Gal}(LK/K) \cong \textrm{Gal}(L/\mathbb{Q})$ would not be abelian.  In that case only $2$ and $3$ can divide the discriminant of $K$.
The only possibilities are then $d_K =-3, -8, -24$.  But $\mathbb{Q}(\sqrt{-3})$ is excluded and $2$ ramifies in $\mathbb{Q}(\sqrt{-2})$ and $\mathbb{Q}(\sqrt{-6})$. 
This shows that $\alpha$ cannot be a root of the $8$-th degree or $16$-th degree polynomials dividing $R_t(a)$.  
\medskip

Now suppose that $\alpha$ is a root of $a^2+1 = 0$.  Then $\alpha = \pm i$ and $\psi(t) = \alpha t$ implies that $\psi(t^2) = -t^2$.  Now computing $\frac{N_2(\alpha t)}{D_2(\alpha t)}$ and setting it equal to $\pm \frac{N_2(t)}{D_2(t)}$, we find that $t$ is a root of the $20$-th degree polynomial
\begin{align*}
P_1(t) &= (-9t^{10} + 3it^8 + 126t^6 + 30it^4 - 21t^2 - i)\\
& \ \ \times (9t^{10} + 3it^8 - 126t^6 + 30it^4 + 21t^2 -i)\\
&= -(81t^{20} - 2259t^{16} + 16434t^{12} - 4398t^8 + 381t^4 + 1) = -P_2(t^4).
\end{align*}
But $\textrm{Gal}(P_2(t)/\mathbb{Q}) \cong S_5$ is not solvable, showing that this is impossible.  Hence $\alpha$ cannot be a root of $a^2+1$.\medskip

On the other hand, if $\left(\frac{N_2(t)}{D_2(t)}\right)^\psi =  -\frac{N_2(t)}{D_2(t)}$, then $N_2(-t) = N_2(t), D_2(-t) = -D_2(t)$ implies that
\begin{align*}
\textrm{Res}_t&\left(N_1(a t) D_1(t)-N_1(t)D_1(a t),\frac{1}{t}(N_2(a t) D_2(t)+N_2(t)D_2(a t))\right)\\
& = R_t(-a)
\end{align*}
is obtained by replacing $a$ by $-a$ in the above resultant, and the argument is the same.  \medskip

Hence, whether $3$ is ramified in $K$ or not, the only possibilities are $\alpha = \pm 1$, in which case $\psi: t \rightarrow \pm t$ fixes $L = \Sigma$.  Hence, we find that $j(\mathfrak{k}') = j(\mathfrak{k})^{\psi} = j(\mathfrak{k})$.  Thus, Sugawara's conjecture holds for these fields and $\mathfrak{m} = \wp_2^2 \wp_3$.

\begin{thm}
If $\mathfrak{m} = \wp_2^2 \wp_3$, where $\wp_2, \wp_3$ are first degree prime ideals in $K$, then $K(\tau_1) = \Sigma_{\wp_2^2 \wp_3}$ and Sugawara's conjecture holds for $K$ and $\mathfrak{m}$.
\label{thm:17}
\end{thm}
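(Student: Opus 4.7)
My plan is to follow the setup already established: take $\psi \in \textrm{Gal}(\textsf{K}_{\wp_2^2\wp_3}/K)$ with $j(\mathfrak{k})^\psi = j(\mathfrak{k}')$ fixing the set $\{\tau_1,\tau_5\}$, and show $\psi|_\Sigma = 1$, so that $j(\mathfrak{k}) = j(\mathfrak{k}')$ and hence $\mathfrak{k}=\mathfrak{k}'$. By Lemma~\ref{lem:7}, the invariants $\tau_2,\tau_3,\tau_4,\tau_6$ already lie in $\Sigma$, so the only thing at stake is whether $\{\tau_1,\tau_5\}$ determines $j(\mathfrak{k})$; and the nontrivial element of $\textrm{Gal}(\textsf{K}_{\wp_2^2\wp_3}/\Sigma)$ is $\rho: t\mapsto -t$. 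By Corollary~\ref{cor:4} every $\psi$ acts by $t\mapsto \alpha t$ for a unit $\alpha\in\Sigma$, so the job reduces to showing $\alpha = \pm 1$.

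To do this I will use the two rational expressions in $t$ established above for the quantities $\tau_1+\tau_5 = 2N_1(t)/D_1(t)$ and $(\tau_1+\tau_5)/(\tau_1-\tau_5) = N_2(t)/D_2(t)$. The condition $\{\tau_1,\tau_5\}^\psi=\{\tau_1,\tau_5\}$ forces $(\tau_1+\tau_5)^\psi = \tau_1+\tau_5$ (this is symmetric in the set) and $\bigl((\tau_1+\tau_5)/(\tau_1-\tau_5)\bigr)^\psi = \pm(\tau_1+\tau_5)/(\tau_1-\tau_5)$. Substituting $t\mapsto \alpha t$ and eliminating $t$ between the two resulting identities gives a resultant polynomial $R_t(a)$ (respectively $R_t(-a)$ for the sign change) whose irreducible factors I will enumerate. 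The dominant factor $R_{192}(a)$ is irreducible of degree $192$ with leading coefficient not dividing the next coefficient, so it cannot have a unit root; similarly the factors $3a^2\pm 1$, $a^2\pm 3$ are nonunit; and the three nontrivial symmetric factors of degrees $8,8,16$ give polynomials $A_1(x), A_2(x), A_3(x)$ for $x = \alpha + \alpha^{-1} \in \Sigma$.

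The central step -- and the main obstacle -- is ruling out these three symmetric possibilities and the cyclotomic factor $a^2+1$. For $A_1, A_2$ I will use that their discriminants force $\sqrt{-3}\in\Sigma$, which is incompatible with $3$ unramified in $K$; when $3$ is ramified ($(3)=\wp_3^2$), $D_4$-root fields would have to contain $K$, leaving only $d_K = -24$, which I eliminate by directly writing down $H_{-24}(X)$ and the consequent minimal polynomial $f(t) = 9t^8-36t^6-18t^4+12t^2+1$ of $t$ and computing $\textrm{Res}_y(f(xy),f(y))$ to see that none of the eliminated polynomials arise as minimal polynomial factors of $x=\alpha$. For $A_3$ I will use a Galois-theoretic argument: its normal closure has group $(\mathbb{Z}/2)^3\rtimes D_4$ of order $64$, which cannot be at most quadratic over the root field of $A_3$ inside $\Sigma$, the latter being abelian over $K$. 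Finally the factor $a^2+1$ (giving $\alpha = \pm i$) is excluded by substituting $\alpha = \pm i$ back into $N_2/D_2$ and observing that the resulting degree-$20$ constraint on $t$ factors as $-P_2(t^4)$ with $P_2$ of Galois group $S_5$, which is non-solvable and thus cannot cut out an element of the abelian extension $\textsf{K}_{\wp_2^2\wp_3}/K$.

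Once $\alpha=\pm1$ is forced, $\psi$ fixes $\Sigma$, hence $j(\mathfrak{k}')=j(\mathfrak{k})$ and $\mathfrak{k}=\mathfrak{k}'$. This proves Sugawara's conjecture for $\mathfrak{m}=\wp_2^2\wp_3$. The statement $K(\tau_1)=\textsf{K}_{\wp_2^2\wp_3}$ then follows since $\tau_1$ has degree $2h(d_K)$ over $K$ (its orbit over $\Sigma$ is $\{\tau_1,\tau_5\}$ of size $2$, and its conjugates over $K$ are distinct for distinct ideal classes by what we have just shown), matching the degree of $\textsf{K}_{\wp_2^2\wp_3}/K$ from Corollary~\ref{cor:3}. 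The hardest technical point will be the case $d_K=-24$ in the ramified setting, where generic discriminant obstructions collapse and one must fall back on an explicit computation with the class polynomial and $f(t)$ to rule out $\alpha$ being a root of $A_1$ or $A_2$.
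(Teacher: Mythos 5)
Your proposal follows essentially the same route as the paper's own proof: reduce to showing the unit $\alpha$ in $t^\psi=\alpha t$ is $\pm1$ by eliminating $t$ from the invariance of $\tau_1+\tau_5=2N_1(t)/D_1(t)$ and of $(\tau_1+\tau_5)/(\tau_1-\tau_5)=N_2(t)/D_2(t)$ up to sign, then disposing of the factors of the resultant $R_t(\pm a)$ exactly as you describe (the degree-$192$ factor by integrality of $\alpha$, the symmetric factors via discriminants of $A_1,A_2$ and the Galois group of $A_3$, the ramified case including $d_K=-24$ via $H_{-24}$ and $f(t)$, and $a^2+1$ via the non-solvable $S_5$ factor). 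The argument is correct and matches the paper's proof in all essentials.
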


This completes the proof of Sugawara's Conjecture for all ideals $\mathfrak{m}$ of $K$ listed in \eqref{eqn:4}.  All other ideals satisfy one of Sugawara's conditions \eqref{eqn:1}-\eqref{eqn:3}, and for these Sugawara's arguments in \cite{su1, su2} apply.  This completes the proof of the Main Theorem.

\section{Appendix.}

The polynomials $A$ and $B$ computed in Section 10.1 are given below in full.  Note the palindromic nature of the coefficients of the powers of $\rho$.

\begin{align*}
A &=  (a^{10} + 2a^9 + a^8)\rho^{18} + (-2a^{10} - 4a^9 - 4a^8 - 2a^7)\rho^{17}\\
& \ + (-a^{10} - 2a^9 - a^8 - 2a^7 - a^6)\rho^{16}\\
& \  + (-16a^{10} - 32a^9 + 8a^8 + 8a^7 - 32a^6 - 16a^5)\rho^{15}\\
& \ + (41a^{10} + 82a^9 + 50a^8 + 42a^7 + 50a^6 + 82a^5 + 41a^4)\rho^{14}\\
& \  + (-4a^{10} - 8a^9 - 23a^8 + 63a^7 + 63a^6 - 23a^5 - 8a^4 - 4a^3)\rho^{13}\\
& \ + (59a^{10} + 118a^9 - 359a^8 - 211a^7 + 548a^6 - 211a^5 - 359a^4 + 118a^3 + 59a^2)\rho^{12}\\
& \ + (-198a^{10} - 396a^9 + 187a^8 - 334a^7 - 1141a^6 - 1141a^5 - 334a^4\\
& \ \ \ + 187a^3 - 396a^2 - 198a)\rho^{11}\\
& \ + (121a^{10} + 242a^9 + 242a^8 - 245a^7 + 392a^6 + 1886a^5 + 392a^4\\
& \ \ \ - 245a^3 + 242a^2 + 242a + 121)\rho^{10}\\
& \ + (1782a^8 + 1389a^7 - 3238a^6 + 6139a^5 + 6139a^4 - 3238a^3 + 1389a^2 + 1782a)\rho^9\\
& \ + (-2556a^8 + 1106a^7 + 5987a^6 - 2479a^5 - 5007a^4 - 2479a^3 + 5987a^2\\
& \ \ \ + 1106a - 2556)\rho^8\\
& \ + (-2570a^7 - 9742a^6 - 15917a^5 - 1995a^4 - 1995a^3 - 15917a^2 - 9742a - 2570)\rho^7\\
& \ + (11241a^6 - 22456a^5 - 25170a^4 + 36506a^3 - 25170a^2 - 22456a + 11241)\rho^6\\
& \ + (50010a^5 + 38488a^4 + 5842a^3 + 5842a^2 + 38488a + 50010)\rho^5\\
& \ + (-1466a^4 + 60818a^3 + 139800a^2 + 60818a - 1466)\rho^4\\
& \ + (-171270a^3 + 34000a^2 + 34000a - 171270)\rho^3\\
& \ + (-237084a^2 - 279582a - 237084)\rho^2 + (141750a + 141750)\rho + 540000.
\end{align*}

\begin{align*}
&B = 21600a^{10}\rho^{18} + (28350a^{10} + 28350a^9)\rho^{17}\\
& \ + (-237084a^{10} - 279582a^9 - 237084a^8)\rho^{16}\\
& \ + (-856350a^{10} + 170000a^9 + 170000a^8 - 856350a^7)\rho^{15}\\
& \ + (-36650a^{10} + 1520450a^9 + 3495000a^8 + 1520450a^7 - 36650a^6)\rho^{14}\\
& \ + (6251250a^{10} + 4811000a^9 + 730250a^8 + 730250a^7 + 4811000a^6 + 6251250a^5)\rho^{13}\\
& \ + (7025625a^{10} - 14035000a^9 - 15731250a^8 + 22816250a^7 - 15731250a^6\\
& \ \  - 14035000a^5 + 7025625a^4)\rho^{12}\\
& \ + (-8031250a^{10} - 30443750a^9 - 49740625a^8 - 6234375a^7 - 6234375a^6 \\
& \ \ \ - 49740625a^5 - 30443750a^4- 8031250a^3)\rho^{11}\\
& \  + (-39937500a^{10} + 17281250a^9 + 93546875a^8 - 38734375a^7 - 78234375a^6 \\
& \ \ \ - 38734375a^5 + 93546875a^4 + 17281250a^3 - 39937500a^2)\rho^{10}\\
 & \  + (139218750a^9 + 108515625a^8 - 252968750a^7 + 479609375a^6 + 479609375a^5\\
 & \ \ \  - 252968750a^4 + 108515625a^3 + 139218750a^2)\rho^9\\
 & \  + (47265625a^{10} + 94531250a^9 + 94531250a^8 - 95703125a^7 + 153125000a^6 \\
 & \ \ \ + 736718750a^5 + 153125000a^4 - 95703125a^3 + 94531250a^2 + 94531250a\\
 & \ \ \  + 47265625)\rho^8\\
 & \  + (-386718750a^9 - 773437500a^8 + 365234375a^7 - 652343750a^6 - 2228515625a^5\\
 & \ \ \ - 2228515625a^4 - 652343750a^3 + 365234375a^2 - 773437500a - 386718750)\rho^7\\
 & \ + (576171875a^8 + 1152343750a^7 - 3505859375a^6 - 2060546875a^5 + 5351562500a^4\\
 & \ \ \  - 2060546875a^3 - 3505859375a^2 + 1152343750a + 576171875)\rho^6\\
 & \ + (-195312500a^7 - 390625000a^6 - 1123046875a^5 + 3076171875a^4 + 3076171875a^3\\
 & \ \ \ - 1123046875a^2 - 390625000a - 195312500)\rho^5\\
 & \ + (10009765625a^6 + 20019531250a^5 + 12207031250a^4 + 10253906250a^3\\
 & \ \ \  + 12207031250a^2 + 20019531250a + 10009765625)\rho^4\\
 & \ + (-19531250000a^5 - 39062500000a^4 + 9765625000a^3 + 9765625000a^2 \\
 & \ \ \ - 39062500000a - 19531250000)\rho^3\\
 & \ + (-6103515625a^4 - 12207031250a^3 - 6103515625a^2 - 12207031250a \\
 & \ \ \ - 6103515625)\rho^2\\
 & \ + (-61035156250a^3 - 122070312500a^2 - 122070312500a - 61035156250)\rho\\
 & \ + 152587890625a^2 + 305175781250a + 152587890625.
\end{align*}

\bigskip

Dept. of Mathematical Sciences, Indiana University Indianapolis, \smallskip

402 N. Blackford St., Indianapolis, Indiana, 46202 United States \smallskip

\it{pmorton@iu.edu}

\end{document}